\renewcommand{\mathbf}[1]{\bm{\mathrm{#1}}}
\newcommand{\forcebold}[1]{#1}
\renewcommand*\showkeyslabelformat[1]{
  \fbox{\parbox[t]{\marginparwidth}{\raggedright\normalfont\path{#1}}}}
\def\SK@refcolor{\color{OrangeRed}} \makeatother
\def\SK@labelcolor{\color{Aquamarine}} \makeatother
\newcommandx{\unsure}[2][1=]{\todo[linecolor=red,backgroundcolor=red!25,bordercolor=red,#1]{#2}}
\newcommandx{\change}[2][1=]{\todo[linecolor=blue,backgroundcolor=blue!25,bordercolor=blue,#1]{#2}}
\newcommandx{\info}[2][1=]{\todo[linecolor=OliveGreen,backgroundcolor=OliveGreen!25,bordercolor=OliveGreen,#1]{#2}}
\newcommandx{\improvement}[2][1=]{\todo[linecolor=Plum,backgroundcolor=Plum!25,bordercolor=Plum,#1]{#2}}
\newcommand{\specialcell}[1]{\ifmeasuring@#1\else\omit$\displaystyle#1$\ignorespaces\fi}
\newcommand{\nhphantom}[1]{\sbox0{#1}\hspace{-\the\wd0}}
\newcommand{\numberset}[1]{{\mathbb{#1}}}
\newcommand{\N}{\numberset{N}}
\newcommand{\R}{\numberset{R}}
\newcommand{\dd}{\mathop{}\mathopen{}\mathrm{d}}
\newcommand{\dhx}{\mathop{}\mathopen{}\mathrm{d}\hat{x}} 
\newcommand{\dx}{\mathop{}\mathopen{}\mathrm{d}x} 
\newcommand{\dy}{\mathop{}\mathopen{}\mathrm{d}y} 
\newcommand{\dz}{\mathop{}\mathopen{}\mathrm{d}z} 
\newcommand{\dt}{\mathop{}\mathopen{}\mathrm{d}t} 
\newcommand{\dr}{\mathop{}\mathopen{}\mathrm{d}r} 
\newcommand{\ds}{\mathop{}\mathopen{}\mathrm{d}s}
\newcommand{\dH}{\mathop{}\mathopen{}\mathrm{d}\mathcal{H}} 
\newcommand{\dHn}{\mathop{}\mathopen{}\mathrm{d}\mathcal{H}^{n-1}} 
\newcommand{\ddt}{\mathop{}\mathopen{}\frac{\mathrm{d}}{\mathrm{d}t}}
\newcommand{\bbs}{\mathbb{S}}
\newcommand{\bbsn}{\mathbb{S}^{n-1}}
\newcommand{\calF}{{\mathcal{F}}}
\newcommand{\calH}{{\mathcal{H}}}
\newcommand{\scrE}{{\mathscr{E}}}
\newcommand{\scrF}{{\mathscr{F}}}
\def\lt{\left}
\def\rt{\right}
\newcommand{\f}{\mathbf{f}}
\DeclareMathOperator{\Sha}{Sh}
\newcommand{\cy}{\mathbf{C}}
\newcommand{\ky}{\mathbf{K}}
\newcommand{\unD}{\mathrm{1D}}
\newcommand{\Per}{\mathit{P}}
\newcommand{\hn}{{\calH^{n-1}}}
\newcommand{\dhn}{\dH^{n-1}}
\newcommand{\e}{{\mathbf{e}}}
\newcommand{\ind}{{\mathbf{1}}}
\newcommand{\Rpn}{{\R^n}} 
\newcommand{\Om}{\Omega}
\newcommand{\om}{\omega}
\newcommand{\eps}{\varepsilon}
\newcommand{\vphi}{\varphi}
\newcommand{\wt}{\widetilde}
\newcommand{\loc}{{\mathrm{loc}}}
\newcommand{\id}{{\mathrm{Id}}}
\newcommand{\compl}{{\mathrm{c}}}
\newcommand{\Lip}{{\mathrm{Lip}}}
\newcommand{\grad}{\nabla}
\newcommand{\subsq}{\subset}
\newcommand{\csubset}{\subset\subset} 
\newcommand{\LL}{\mathop{\hbox{\vrule height 6pt width .5pt depth 0pt \vrule height .5pt width 3pt depth 0pt}}\nolimits}
\newcommand{\Citeauthorsc}[1]{\textsc{\Citeauthor{#1}}}
\DeclareMathOperator{\dvg}{\mathrm{div}}
\DeclareMathOperator{\dist}{\mathrm{d}}
\DeclareMathOperator{\spt}{\mathrm{spt}}
\DeclarePairedDelimiter{\abs}{\lvert}{\rvert}
\DeclarePairedDelimiter{\norm}{\lVert}{\rVert}
\numberwithin{equation}{section} 
\declaretheorem[name=Theorem,numberwithin=section]{mainres}
\declaretheorem[name=Theorem,numberwithin=section]{thm}
\declaretheorem[name=Lemma,numberlike=thm]{lem}
\declaretheorem[name=Proposition,numberlike=thm]{prp}
\declaretheorem[name=Definition,numberlike=thm,style=definition]{dfn}
\declaretheorem[name=Remark,numberlike=thm,style=remark]{rmk}
\crefname{equation}{}{}
\crefname{enumi}{}{}
\crefname{thm}{Theorem}{Theorems}
\crefname{mainthm}{Theorem}{Theorems}
\crefname{mainres}{Theorem}{Theorems}
\crefname{maindfn}{Definition}{Definitions}
\crefname{lem}{Lemma}{Lemmas}
\crefname{mainapp}{Application}{Applications}
\crefname{prp}{Proposition}{Propositions}
\crefname{cor}{Corollary}{Corollaries}
\crefname{dfn}{Definition}{Definitions}
\crefname{rmk}{Remark}{Remarks}
\crefname{conj}{Conjecture}{Conjectures}
\crefname{ex}{Example}{Examples}
\crefname{appsec}{Appendix}{Appendices}
\def\Xint#1{\mathchoice
{\XXint\displaystyle\textstyle{#1}}%
{\XXint\textstyle\scriptstyle{#1}}%
{\XXint\scriptstyle\scriptscriptstyle{#1}}%
{\XXint\scriptscriptstyle
\scriptscriptstyle{#1}}%
\!\int}
\def\XXint#1#2#3{{%
\setbox0=\hbox{$#1{#2#3}{\int}$}
\vcenter{\hbox{$#2#3$}}\kern-.5\wd0}}
\def\dashint{\Xint-}
\renewcommand{\iint}{\int}
\renewcommand{\leq}{\leqslant}
\renewcommand{\geq}{\geqslant}
\newcommand{\les}{\lesssim}
\title[Uniform regularity for nonlocal perturbations of the perimeter]{Uniform \forcebold{$C^{1,\alpha}$}-regularity for almost-minimizers of some nonlocal perturbations of the perimeter}
\author{M. Goldman}
\address{CMAP, CNRS, \'Ecole polytechnique, Institut Polytechnique de Paris, 91120 Palaiseau, France}
\email{michael.goldman@cnrs.fr}
\author{B. Merlet}
\author{M. Pegon}
\address{Univ. Lille, CNRS, UMR 8524, Inria - Laboratoire Paul Painlev\'e, F-59000 Lille}
\email{benoit.merlet@univ-lille.fr}
\email{marc.pegon@univ-lille.fr}
\date{}
\subjclass{28A75,  49Q05,  49Q10,  49Q20}
\keywords{Geometric variational problems, nonlocal isoperimetric problems, nonlocal perimeters, regularity, liquid drop model}
\begin{document}

\begin{abstract}
In this paper, we establish a $C^{1,\alpha}$-regularity theorem for almost-minimizers of the
functional $\mathcal{F}_{\varepsilon,\gamma}=P-\gamma P_{\varepsilon}$, where $\gamma\in(0,1)$ and
$P_{\varepsilon}$ is a nonlocal energy converging to the perimeter as $\varepsilon$ vanishes.
Our theorem provides a criterion for $C^{1,\alpha}$-regularity at a point of the boundary which is
\textsl{uniform} as the parameter $\varepsilon$ goes to $0$. Since the two  terms in the energy are
of the  same order when $\varepsilon$ is small, we are considering here much  stronger nonlocal
interactions than those considered in most related works.
As a consequence of our regularity result, we obtain that, for $\varepsilon$ small enough, volume-constrained minimizers of
$\mathcal{F}_{\varepsilon,\gamma}$ are balls. For small $\varepsilon$, this minimization problem
corresponds to the large mass regime for a Gamow-type problem where the nonlocal repulsive term is
given by an integrable kernel $G$ with sufficiently fast decay at infinity.
\end{abstract}

\maketitle

\tableofcontents

\section{Introduction}
\addtocontents{toc}{\protect\setcounter{tocdepth}{1}}
The aim of this paper is to complete the program started in~\cite{Peg2021,MP2021} regarding the behavior of minimizers 
of a variant of Gamow's liquid drop model (see~\cite{CMT2017}) in the regime of large mass. 
We are interested in the minimization problem
\begin{equation}\label{cminpb}\tag{$\mathcal{P}$}
\min~ \Big\{ \calF_{\gamma,\eps}(E)\coloneqq P(E)-\gamma P_\eps(E)~:~ E\subsq\R^n \text{ with } \abs{E}=\abs{B_1}\Big\},
\end{equation}
where $n\ge 2$, $\gamma\in (0,1)$, $P$ is the Euclidean perimeter and $P_\eps$ is a nonlocal
perimeter functional such that $P_\eps\to P$ (both pointwise and in the sense of $\Gamma$-convergence) as $\eps\to 0$.
More precisely, given a  radial function $G:\R^n\mapsto(0,\infty)$ with finite first moment, we define the rescaled
kernels $G_\eps$ by~$G_\eps(z)\coloneqq \eps^{-(n+1)}G(\eps^{-1}z)$ for all $z\in\R^n$ and the nonlocal
perimeter $P_\eps$ by
\[
P_\eps(E)\coloneqq \iint_{\Rpn\times\Rpn} \abs{\ind_E(x)-\ind_E(y)} G_\eps(x-y)\dx\dy =
2\int_{E\times E^\compl} G_\eps(x-y)\dx\dy.
\] 
It is well-known that when $G$ is integrable,~\cref{cminpb} is indeed equivalent to Gamow's model
after appropriate rescaling (see~\cite{Peg2021} for instance). In particular the regime of small
$\eps$ in~\cref{cminpb} corresponds to \emph{large mass} in Gamow's model.

The main contribution of this paper is a $C^{1,\alpha}$-regularity theorem for almost-minimizers of
$\calF_{\eps,\gamma}$ which is
\textsl{uniform} as $\eps$ goes to $0$, under suitable assumptions on the kernel $G$. This result is stated further on as~\cref{mainthm:holdreg}. To the best of our knowledge, this latter is the first uniform regularity result (in dimension higher than 2) for
a problem involving the competition of two local/nonlocal perimeters, where at the macroscopic scale, \emph{neither of the terms is
negligible in front of the other}.

In combination with the Fuglede type computations done in~\cite{MP2021}, we obtain the following characterization of minimizers of~\cref{cminpb} for
small~$\eps$. This extends to any arbitrary space dimension the two-dimensional result~\cite[Theorem
A]{MP2021} which is not based on regularity theory.

\begin{mainres}[Minimality of the unit ball]\label{mainthm:minball}
Assume that $n\ge 2$, $\gamma\in(0,1)$ and that $G$ satisfies the assumptions~\crefrange{Hposrad}{Hdec} described below. Then,
there exists $\eps_{\mathrm{ball}}=\eps_{\mathrm{ball}}(n,G,\gamma)>0$, such that, for every
$\eps\le\eps_{\mathrm{ball}}$, the unit ball is the unique minimizer of~\cref{cminpb}, up to
translations.
\end{mainres}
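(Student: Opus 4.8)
The plan is to combine the uniform $C^{1,\alpha}$-regularity theorem (stated as \cref{mainthm:holdreg}) with a compactness/contradiction argument, following the classical Fuglede-type scheme adapted to the nonlocal setting. Suppose for contradiction that there exists a sequence $\eps_k\to 0$ and minimizers $E_k$ of \cref{cminpb} for $\eps=\eps_k$ which are \emph{not} balls (up to translation). The first step is to establish uniform density estimates and a uniform bound on $\calF_{\eps_k,\gamma}(E_k)$: since the ball $B_1$ is a competitor with $|B_1|=|E_k|$ and $\calF_{\eps_k,\gamma}(B_1)$ is bounded (uniformly in $k$, using that $P_{\eps_k}(B_1)\to P(B_1)$), one gets $P(E_k)\le C$ uniformly. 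Then I would invoke compactness in $BV$: up to translation and a subsequence, $\ind_{E_k}\to\ind_E$ in $L^1_{\loc}$ for some set $E$ of finite perimeter with $|E|=|B_1|$.

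The second step is to identify the limit $E$ as the unit ball. Since $P_{\eps_k}\to P$ both pointwise and in the sense of $\Gamma$-convergence, the functionals $\calF_{\eps_k,\gamma}$ $\Gamma$-converge to $(1-\gamma)P$, and minimizers converge to a minimizer of the limit problem $\min\{(1-\gamma)P(F):|F|=|B_1|\}$, which by the classical isoperimetric inequality is the unit ball (uniqueness up to translation). Hence $E=B_1$ up to translation, so $\ind_{E_k}\to\ind_{B_1}$ in $L^1$ and $P(E_k)\to P(B_1)$. The third step is the crucial one: apply the \emph{uniform} $C^{1,\alpha}$-regularity theorem \cref{mainthm:holdreg}. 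The convergence $E_k\to B_1$ in $L^1$ together with the uniform almost-minimality and uniform density estimates puts us in the hypotheses of that theorem at every boundary point, with constants independent of $k$; this yields that for $k$ large, $\partial E_k$ is a $C^{1,\alpha}$ graph over $\partial B_1$ with $C^{1,\alpha}$-norm $\to 0$. In particular, writing $\partial E_k$ as a normal graph $x\mapsto (1+u_k(x))x$ over $\bbsn$, we have $\|u_k\|_{C^{1,\alpha}}\to 0$.

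The final step is the Fuglede-type quantitative estimate proved in \cite{MP2021}: for sets whose boundary is a sufficiently small $C^{1,\alpha}$ normal graph over $\bbsn$ with the correct volume and barycenter, one has $\calF_{\eps,\gamma}(E)-\calF_{\eps,\gamma}(B_1)\ge c\,\|u\|_{H^{1/2}}^2$ for some $c>0$ independent of $\eps$ (this is exactly the content of the Fuglede computation in \cite{MP2021}, which handles the second variation of both $P$ and $P_\eps$ uniformly in $\eps$ for small $\eps$). Since $E_k$ is a minimizer with the same volume as $B_1$, translating so that the barycenter is at the origin, we get $0\ge\calF_{\eps_k,\gamma}(E_k)-\calF_{\eps_k,\gamma}(B_1)\ge c\|u_k\|_{H^{1/2}}^2\ge 0$, forcing $u_k\equiv 0$, i.e.\ $E_k=B_1$, contradicting our assumption. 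The main obstacle is ensuring that all constants — the almost-minimality parameters, the density estimates, the $C^{1,\alpha}$ bounds from \cref{mainthm:holdreg}, and the coercivity constant in the Fuglede inequality — are genuinely uniform in $\eps$; the uniformity of the regularity theorem is the new ingredient that makes this work, while the uniformity of the Fuglede estimate relies on the decay assumptions \crefrange{Hposrad}{Hdec} on $G$ that guarantee the nonlocal second variation is a bounded perturbation controlled by the leading perimeter term.
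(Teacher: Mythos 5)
Your overall scheme is essentially the paper's: minimizers converge (up to translation) to the unit ball, the uniform regularity theorem \cref{mainthm:holdreg} upgrades this convergence so that for small $\eps$ the boundaries are small $C^{1,\alpha}$ graphs over $\partial B_1$, and the Fuglede-type stability result of \cite[Theorem~3]{MP2021} then forces any minimizer to be the ball. The paper simply cites \cite[Theorem~B]{Peg2021} for the convergence statement instead of re-running a $\Gamma$-convergence/compactness argument, and uses a standard covering argument to pass from Hausdorff convergence of the boundaries to the small-graph representation; your contradiction formulation is a cosmetic variant of this.

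Two points in your sketch are genuine gaps as written. First, the contradiction argument only proves that every minimizer of \cref{cminpb} is a ball for small $\eps$; it never establishes that a minimizer exists, and without existence the conclusion \enquote{the unit ball is the unique minimizer} does not follow. Existence in this regime is not free (for Riesz kernels and large mass minimizers fail to exist); the paper obtains it, together with the convergence to $B_1$, from \cite[Theorem~B]{Peg2021}. Second, to apply \cref{mainthm:holdreg} you must verify $\e(E_k,x,R)+\Lambda R\le\tau_{\mathrm{reg}}$ at every point $x\in\partial E_k$ at a scale $R$ uniform in $k$; $L^1$ convergence plus density estimates alone does not give this. The paper uses Hausdorff convergence of the boundaries together with convergence of the perimeters (again from \cite[Theorem~B]{Peg2021}) to obtain continuity of the excess, and the smoothness of $B_1$ to make it small at a fixed scale; moreover the uniform almost-minimality parameter $\Lambda$, independent of $\eps$, is the content of \cref{prp:equivpbs}, which you invoke but do not justify. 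Once these ingredients are supplied, your argument coincides with the paper's proof (the precise norm in the stability inequality of \cite{MP2021} is a detail of that reference and does not affect the structure).
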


In terms of Gamow's problem~\cref{cminpb}, this result  provides a very general set of assumptions on the
kernel $G$ for which minimizers of
\[
\min~ \Big\{ P(E)+\int_{E\times E} G(x-y)\dx\dy~:~ E\subsq\R^n \text{ with } \abs{E}=m\Big\}
\]
for \emph{large masses $m$} are balls: namely, if $G$ satisfies~\cref{Hposrad,,Hmoment2,,Hint0,,Hdec} and its first
moment is below the explicit treshold of~\cref{fixedmoment1}. This is in sharp contrast with the case
of Riesz kernels $G(x)=\frac{1}{\abs{x}^{\alpha}}$, $\alpha\in(0,n)$, where we observe the following
dichotomy: for \emph{small masses} the nonlocal term is \emph{negligible} in front of the
perimeter, and minimizers are balls, while for \emph{large masses}, the problem does not admit
minimizers (at least if $\alpha\in (0,2]$).

For our $C^{1,\alpha}$-regularity theorem, we work with a classical notion of almost-minimality for
$\calF_{\eps,\gamma}$.

\begin{dfn}[Almost-minimizers]\label{def:almostmin}
Let $\gamma\in(0,1)$ and $\eps>0$. For any positive constants $\Lambda$ and~$r_0$, we say that $E$
is a $(\Lambda,r_0)$-minimizer of $\calF_{\eps,\gamma}$ if for every set of finite perimeter
$F\subsq\R^n$ such that $E\triangle F\csubset B_r(x)$ with $0<r\le r_0$ and $x\in\R^n$, we have
\[
\calF_{\eps,\gamma}(E)\le\calF_{\eps,\gamma}(F)+\Lambda\abs{E\triangle F}.
\]
\end{dfn}

We will see in~\cref{prp:equivpbs} that (volume constrained) minimizers of~\cref{cminpb} are indeed (unconstrained) $(\Lambda,r_0)$-minimizers
of $\calF_{\eps,\gamma}$ for any $r_0$ and some constant $\Lambda$, not depending on $\eps$. This type of relaxation of the volume constraint is standard for this kind of problems (see e.g.
\cite{Rig2000,FE2011,GN2012,FFM+2015}).

For $k\in\N$ and a general kernel $K$, it is convenient to introduce the $k$-th moment of $K$,
which is defined by
\begin{equation}\label{defIk}
 I_K^k\coloneqq \int_{\R^n} |z|^k |K(z)| \dz.
\end{equation}
In this work, $G$ always satisfies the following hypotheses:
\begin{enumerate}[(H1),topsep=6pt,itemsep=4pt]
\item\label{Hposrad} $G$ is a measurable, nonnegative, radial function, that is, there exists $g:(0,\infty)\to [0,\infty)$ such that $G(z)=g(\abs{z})$ for every
$z\in\Rpn\setminus\{0\}$;
\item\label{Hmoment1} $z\mapsto \abs{z}G(z)\in L^1(\Rpn)$ and the first moment is normalized by
\begin{equation}\label{fixedmoment1}
I_G^1  = \frac{1}{\mathbb{K}_{1,n}},
\end{equation}
where $\displaystyle\mathbb{K}_{1,n}\coloneqq \dashint_{\bbsn} \abs{x_n} \dHn$.
\end{enumerate}
We will also use the following additional assumptions on $G$:
\begin{enumerate}[(H1),resume,topsep=6pt,itemsep=4pt]
\item\label{Hmoment2} $G\in W^{1,1}_{\loc}(\Rpn\setminus\{0\})$, $\displaystyle I_{\abs{\nabla G}}^2<\infty$, and $|g'(r)|\le \frac{C}{r^{n+1}}$ for $r\ge 1$	;
\item\label{Hint0} $\displaystyle\int_{B_1 \setminus B_r}
G(z)\dz \le \frac{C}{r^{s_0}}$ for every $r\in(0,1)$, for some constants $C>0$ and $s_0\in (0,1)$;
\item\label{Hdec} 
Denoting 
$Q:\R_+\to\R_+$ the rate function defined by
\begin{equation}\label{eq:defQ}
Q(r)\coloneqq \int_{\R^n\setminus B_r} \abs{z}G(z)\dz,\qquad\forall r\in[0,\infty),
\end{equation} 
there holds $\displaystyle Q(r)\le \frac{C}{r^{n-1+p_0}}$ for every $r>0$, for some
constants $C>0$ and $p_0>0$.
\end{enumerate}

Let us briefly comment on these assumptions.
\begin{enumerate}[(i),topsep=6pt,itemsep=4pt]
\item~\cref{Hposrad,Hmoment1} guarantee that $P_\eps$ is well-defined on sets of finite perimeter and converges to the standard perimeter. It is shown in~\cite{Peg2021} that these two assumptions are sufficient to ensure both the existence of minimizers for small $\eps$ and their convergence to the ball as $\eps$ vanishes.
\item~\cref{Hmoment2} (in the form of $I^2_{|\nabla G|}<\infty$) is used to compute the first variation of $\calF_{\eps,\gamma}$ (see~\cref{lem:firstvarf}).
It is also needed in its full version in order to apply the stability result~\cite[Theorem~3]{MP2021} for nearly spherical sets.
\item~\cref{Hint0} states that for small scales the perimeter is dominant over $P_\eps$, leading to regularity at small scales by classical regularity
theory for almost minimizers of the perimeter (see~\cref{prp:almostregsmallscale} and~\cref{prp:smallscales_decexcess}). The hypothesis is weaker than assuming $G$ to be locally integrable. It roughly states that, near $0$, $G$ behaves at most like the kernel of the $s_0$-fractional perimeter (see for instance~\cite{CV2013,DNRV2015,FFM+2015}).
\item Finally,~\cref{Hdec} limits the contributions of long-range interactions in $P_\eps$. We do not believe this condition to be sharp. 
\end{enumerate}
We now also comment on the restrictions these conditions impose on the kernel and give a few examples where  these assumptions are satisfied.
\begin{enumerate}[(i),topsep=6pt,itemsep=4pt]
\item With~\cref{Hmoment1}, one can check that assumption~\cref{Hdec} is equivalent to
$I_G^{n+q_0}<\infty$ for some positive~$q_0$ (possibly different from $p_0$).
\item If $G$ is a power law function near the origin, that is, $G\propto \abs{z}^{-\alpha}$ for some
$\alpha>0$ in a neighborhood of $0$, then~\cref{Hint0} states that $\alpha\le n+s_0$.
Notice that in that particular example, $\abs{\cdot}^2\grad G$ is integrable near the origin, which
is a part of~\cref{Hmoment2}.
\item If $G$ is a power law function at infinity, that is $G\propto \abs{z}^{-\beta}$ at infinity,~\cref{Hdec} states that $\beta \ge 2n+p_0$. In that
particular example, $\abs{\cdot}^2\grad G$ is integrable at infinity, and $|g'(r)|\le\frac{C}{r^{n+1}}$ when
$r\to\infty$, which is the other part of~\cref{Hmoment2}.
\item From the two previous points, we see that the kernel $G$ defined by
\[
G(z)\propto \min\lt(\frac{1}{\abs{z}^{n+s_0}},\frac{1}{\abs{z}^{2n+p_0}}\rt)
\]
with $s_0\in(0,1)$, $p_0>0$, satisfies assumptions
\crefrange{Hposrad}{Hdec}.\\
Other admissible kernels are multiples of the Bessel kernels $\mathcal{B}_{\alpha,\kappa}$, defined
for any $\alpha>0$ and~$\kappa>0$ as the fundamental solution of the operator
$(\id-\kappa\Delta)^{\frac{\alpha}{2}}$. Indeed, Bessel kernels are smooth away from zero, decay
exponentially at infinity and, near the origin
\[
\mathcal{B}_{\alpha,\kappa}\propto
\begin{cases}
\frac{1}{\abs{z}^{n-\alpha}}&\qquad\text{ for }\alpha\in(0,n),\\
-\log(\abs{z})&\qquad\text{ for }\alpha=n,\\
1&\qquad\text{ for }\alpha>n.\\
\end{cases}
\]
Let us point out that they correspond to screened Coulomb kernels in the case $\alpha=2$, see
\cite{KMN2020}. 
Eventually, our paper covers the case of integrable and  compactly supported kernels (with the
extra assumption~\cref{Hmoment2}), which was  first studied in~\cite{Rig2000}.
\end{enumerate}

\vspace{2pt}

To state our $C^{1,\alpha}$-regularity theorem and sketch its proof, we need to introduce the
notion of (spherical) excess, which measures the variation of the normal vector to the boundary of a
set near a point.

For a set of finite perimeter $E$, we always implicitly assume that $E$ denotes a well-chosen
representative in the class of Borel sets $F$ such that $|F\Delta E|=0$, that is, we assume that  $\partial E$ satisfies (see e.g.
\cite[Proposition~12.19]{Mag2012})
\[
\partial E=\spt |D\ind_E| =\Big\{x\in \Rpn~:~ 0< \abs{E\cap B_r(x)}<\abs{B_r(x)} \text{ for all }
r>0\Big\}.
\]
We denote by $\partial^* E$ the reduced boundary of $E$, and by $\nu_E(x)$ the outer unit normal to
$\partial^* E$ at $x$.

\begin{dfn}[Spherical excess]\label{dfn:sphericalexcess}
For any set of finite perimeter $E\subsq\R^n$ we define the spherical excess (or simply excess) of
$E$ in $x\in\partial E$ at scale $r>0$ by
\[
\e(E,x,r)\coloneqq \inf_{\nu\in\bbsn} \frac{1}{r^{n-1}}\int_{\partial^* E\cap B_r(x)}
\frac{\abs{\nu-\nu_E(y)}^2}{2}\dhn_y,
\]
where we used the short-hand notation $\dhn_y$ for $\dhn(y)$.
\end{dfn}

 When $x=0$ we simply denote $\e(E,r)=\e(E,0,r)$ (which we will usually assume by translation invariance of the statements). We can now state our main \enquote{epsilon-regularity} theorem.

\begin{mainres}\label{mainthm:holdreg}
Assume that $G$ satisfies~\crefrange{Hposrad}{Hdec}, and let $\gamma\in(0,1)$ and
$\Lambda>0$.
Then there exist positive constants $\tau_{\mathrm{reg}}$, $\eps_{\mathrm{reg}}$,
$\beta\in(0,1)$, and $\alpha\in(0,1)$ depending only on $n$, $G$ and $\gamma$ such that the
following holds.
Let  $E$ be a $(\Lambda,r_0)$-minimizer of $\calF_{\eps,\gamma}$ with $\eps\in(0,\eps_{\mathrm{reg}})$
and  $0\in\partial E$. Assume that for some $R\in[\eps^{1-\beta}, r_0]$,
\[
\e(E,R)+\Lambda R\le \tau_{\mathrm{reg}},
\]
then, up to a rotation, $\partial E$ coincides in $B_{R/2}$ with the graph of a $C^{1,\alpha}$
function $u:\R^{n-1}\mapsto \R$. Moreover,
\[
[\nabla u]_{\alpha, \frac{R}{2}}^2 \le C\lt( \frac{1}{R^{2\alpha}}(\e(E,R)+\Lambda R) +1\rt)
\]
for some $C=C(n,G,\gamma)>0$. 
Here $[\cdot]_{\alpha,R}$ denotes the H\"older semi-norm  in the ball of radius $R$ in~$\R^{n-1}$.
\end{mainres}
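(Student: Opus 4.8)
The strategy is the classical De Giorgi-style excess-decay scheme, adapted so that all constants are tracked uniformly in $\eps$ by exploiting the three regimes $R \approx \eps^{1-\beta}$ (small), intermediate, and $R$ large relative to $\eps$. The core of the matter is an \emph{excess improvement} statement: there exist $\eta \in (0,1)$ and $\tau_* > 0$, depending only on $n$, $G$, $\gamma$, such that if $\e(E,x,r) + \Lambda r \le \tau_*$ and $\eps^{1-\beta} \le r \le r_0$, then $\e(E,x,\eta r) + \Lambda \eta r \le \eta^{2\alpha}\bigl(\e(E,x,r) + \Lambda r\bigr) + C(\Lambda r)$ (schematically), which when iterated gives a geometric decay $\e(E,x,\rho) \lesssim (\rho/R)^{2\alpha}\bigl(\e(E,R) + \Lambda R\bigr) + C\rho$ for all $\rho \in (0, R/2]$. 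This dichotomy is exactly what is assembled in \cref{thm:decexcess} from the pieces \cref{prp:largescales_decexcess}, \cref{prp:smallscales_decexcess}, and bridged using \cref{Hdec}; so I would invoke that excess-decay theorem and spend the bulk of the argument converting the decay into $C^{1,\alpha}$ regularity of the boundary with the stated quantitative bound on $[\nabla u]_\alpha$.

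The steps, in order. First, from the excess decay one upgrades to a density/flatness statement: the decay $\e(E, x, \rho) + \Lambda\rho \le C(\rho/R)^{2\alpha}(\e(E,R)+\Lambda R) + C\rho$ holds at \emph{every} $x \in \partial E \cap B_{R/2}$, after checking that the hypothesis $\e(E,x,R/2) + \Lambda (R/2) \le \tau_{\mathrm{reg}}$ propagates to nearby boundary points (using that the excess is almost monotone up to the $\Lambda r$ error, together with the elementary bound $\e(E,x,r) \le C\e(E,0,2r)$ for $|x|$ small — the usual comparison of excesses at different centers). Second, one converts the normalized $L^2$-control of $\nu_E$ into pointwise control: the standard consequence of excess decay is that $\nu_E$ has a well-defined Lebesgue representative at every such $x$, and $|\nu_E(x) - \nu_E(y)|^2 \lesssim \fint_{\partial^* E \cap B_{|x-y|}(x)} |\nu_E - \nu_E(x)|^2 \lesssim |x-y|^{2\alpha}\bigl(R^{-2\alpha}(\e(E,R)+\Lambda R) + 1\bigr)$; this is where the precise form of the claimed estimate on $[\nabla u]_{\alpha, R/2}$ comes out, since the $+1$ absorbs the linear-in-$\rho$ term $C\rho$ once one divides by $\rho^{2\alpha}$ and uses $\rho \le R$. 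Third, Hölder continuity of $\nu_E$ on $\partial E \cap B_{R/2}$ forces $\partial E$ to be locally a $C^{1,\alpha}$ graph by the usual argument (the reduced boundary is relatively open and equals the topological boundary, the excess being small rules out holes, and one writes $\partial E$ as a graph in the direction of the optimal $\nu$ via an implicit-function/Lipschitz-approximation step), and $\nabla u$ inherits the Hölder seminorm bound from that of $\nu_E$ up to a dimensional constant.

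I would structure the write-up as: (1) cite \cref{thm:decexcess} for the decay at the base scale $R$; (2) a lemma transferring the hypothesis and the decay to all centers $x \in \partial E \cap B_{R/2}$ (here one uses $R \ge \eps^{1-\beta}$ to stay in the regime covered by \cref{thm:decexcess}, and shrinks $\tau_{\mathrm{reg}}$ relative to $\tau_*$ to absorb the loss from changing centers); (3) the pointwise Hölder estimate for $\nu_E$ with the explicit constant; (4) the graph representation and identification of $[\nabla u]_\alpha$ with (a constant times) $[\nu_E]_\alpha$. The exponent $\alpha$ is the one produced by the decay rate $\eta^{2\alpha}$ in \cref{thm:decexcess}, and $\tau_{\mathrm{reg}}, \eps_{\mathrm{reg}}, \beta$ are inherited from there (possibly shrunk).

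**Main obstacle.** The delicate point is not the De Giorgi iteration itself — that is standard once one has excess improvement — but ensuring \emph{uniformity in $\eps$} throughout the center-change and graph steps. Concretely: the excess-decay theorem \cref{thm:decexcess} is only available down to scale $\eps^{1-\beta}$, so when I move the center to $x \in \partial E \cap B_{R/2}$ and want decay all the way down to scale $0$, I must patch the regime $\rho < \eps^{1-\beta}$ using the \emph{small-scale} regularity theory for almost-minimizers of the perimeter (\cref{prp:almostregsmallscale}, \cref{prp:smallscales_decexcess}), where the point is that below scale $\eps^{1-\beta}$ the nonlocal term $\gamma P_\eps$ is a genuine volume-type perturbation (controlled via \cref{Hint0}), so $E$ is a bona fide $(\Lambda', r')$-perimeter-almost-minimizer with $\Lambda'$ independent of $\eps$. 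Stitching the two decay estimates at $\rho \approx \eps^{1-\beta}$ — matching the geometric rate from above with the classical one from below without any $\eps$-dependent constant degrading — is exactly the bridging role of \cref{Hdec}, and getting the bookkeeping of constants right there is the crux. The rest (graph representation, Hölder seminorm identification) is routine and I would reference \cite{Mag2012} and \cite{CL2012} for the covering/graph details rather than reproving them.
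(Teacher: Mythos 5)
Your proposal is correct and follows essentially the same route as the paper, whose proof of this theorem is simply to invoke the excess decay of \cref{thm:decexcess} and then run the standard Campanato-type argument (change of center, H\"older continuity of the normal, graph representation) as in \cite{Mag2012}; your steps (2)--(4) are exactly that standard conversion. The only slight inaccuracy is in your ``main obstacle'': the conclusion of \cref{thm:decexcess} already holds for \emph{all} scales $r\in(0,R]$ (the restriction $R\ge\eps^{1-\beta}$ concerns only the starting scale), so the patching below $\eps^{1-\beta}$ via \cref{prp:smallscales_decexcess} and \cref{Hdec} that you describe is carried out inside that theorem and need not be redone here.
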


Let us notice that by~\cref{Hint0}, we already know that  $(\Lambda,r_0)$-minimizers with small excess at scale $\eps$ are $C^{1,\alpha}$.  This follows from the classical regularity theory of almost-minimizers of the perimeter. The main point of~\cref{mainthm:holdreg}
is that the estimate holds at a fixed scale $R$ \emph{uniformly as $\eps$ goes to 0}.

\begin{rmk}
We could generalize the definition~\cref{def:almostmin} of $(\Lambda,r_0)$-minimizers to an open subset $\Om\subsq\R^n$, imposing that competitors $F$ differ from $E$ only in balls $B_r(x)\subsq \Om$. 
We would obtain a uniform regularity result for $(\Lambda,r_0)$-minimizers in compact subsets of $\Om$. This applies for instance to sets $E$ which are prescribed outside $\Om$ and minimize $\calF_{\eps,\gamma}$ locally in $\Om$. Our arguments work just the same.
\end{rmk}

We now give the main steps of the proof of~\cref{mainthm:holdreg} or more precisely of the excess decay stated in~\cref{thm:decexcess}. While the overall strategy follows the classical regularity theory for minimizers of the perimeter
as pioneered by De Giorgi, Federer, Almgren and Allard (to name a few), the presence of a
\emph{competing nonlocal term} makes it quite delicate to adapt. In particular, the arguments
described below in Steps 2 \& 4 are new. Indeed, as we go from large scales to
smaller scales the nonlocal nature of $P_\eps$ is increasingly important. Even at scales $r\gg\eps$
the nonlocal contributions at distances $s\gtrsim r$ has to be precisely estimated. Besides, for the
Caccioppoli inequality we introduce a new strategy based on the slicing techniques of~\cite{MP2021,Lud2014}
and on the study of the new \emph{shadow} functional $\Sha$.\medskip

Here we follow the order of the presentation of the classical theory of~\cite{Mag2012} and we use the same notation.

We first establish density upper and lower bounds, both for the volume and the perimeter. 
This is  a direct  consequence of a  weak quasi-minimality property for $(\Lambda,r_0)$-minimizers
$E$ of $\calF_{\eps,\gamma}$ (see~\cite[Theorem~5.6]{Fus2015} or~\cite{DS1995}). Indeed, we prove
in~\cref{prp:wquasimin} that $E$ satisfies
\[
P(E;B_r(x))\le C P(F;B_r(x))
\]
for every  $F$ such that $E\triangle F\csubset B_r(x)$ with $x\in\R^n$, 
$0<\Lambda r\le 1-\gamma$ and~$C$ depends only on $n$, $G$ and~$\gamma$.

We then prove the excess decay itself. We argue differently for scales $r\ge r_+\gg \eps$ and $r\le r_-\ll \eps$. 

The decay of the excess in the situation $r\gg \eps$ is done in~\cref{prp:largescales_decexcess} and represents most of the work.
The proof goes through a Campanato iteration scheme which relies on the improvement of the excess by tilting proven in~\cref{lem:excess_improv}. In turn this Lemma states that if
the excess is small at some scale $r\gg \eps$ then up to tilting and an error of the order of $Q(r/\eps)$ (with $Q$ defined by~\eqref{eq:defQ}), the excess is much smaller at a scale $\lambda r$ for some $\lambda\ll1$. 
Let us stress again that as opposed to the usual applications of this idea, here the
error term gets \emph{larger as $r$ decreases}.\\
Roughly speaking, the idea of the proof of the tilt Lemma is that for $r\gg \eps$, $P_\eps$
is close to the perimeter functional (at least for sufficiently smooth sets). Writing $\calF_{\eps,\gamma}= (1-\gamma) P +\gamma (P-P_\eps)$ we may hope to reproduce the classical strategy for the excess decay by treating $P-P_\eps$ as a negligible term. 
This is actually quite delicate. Indeed, formal computations show that on smooth sets the energy  $P-P_\eps$ penalizes curvature rather than volume (or even perimeter). The remainder $P-P_\eps$ is therefore a  singular perturbation of the main term $(1-\gamma) P$.\\ 
%
%
We now sketch the four main steps of this strategy. We first need to introduce the cylindrical
excess and some more notation.
Let
\begin{equation}\label{eq:cyl}
\cy(x,r,\nu)=x+\Big\{ y+t\nu~:~ y\in \nu^\perp
\text{ with } \abs{y}<r \text{ and } -r<t<r\Big\}
\end{equation}
denote the (truncated) cylinder centered at $x\in\R^n$ with direction $\nu\in\bbs^{n-1}$, basis
radius $r$ and height~$2r$. 

\begin{dfn}[Cylindrical excess]\label{dfn:cylindricalexcess}
For any set of finite perimeter $E\subsq\R^n$ and any cylinder $\cy(x,r,\nu)$ centered at
$x\in\partial E$ we define the cylindrical excess of $E$ in $\cy(x,r,\nu)$ by
\begin{equation}\label{eq:defcylexcess}
\e(E,x,r,\nu)\coloneqq \frac{1}{r^{n-1}}\int_{\partial^* E\cap \cy(x,r,\nu)}
\frac{\abs{\nu-\nu_E(y)}^2}{2}\dhn_y.
\end{equation}
\end{dfn}
As above, if $x=0$ we simply denote $\cy(r,\nu)=\cy(0,r,\nu)$ and $\e(E,r,\nu)=\e(E,0,r,\nu)$. We can now proceed with the sketch of the proof for large scales.\medskip\\
\textit{Step 1.}
We show in~\cref{thm:lipapprox1} that if the excess of a $(\Lambda,r_0)$-almost minimizer $E$ of $\calF_{\eps,\gamma}$ is
small in a cylinder $\cy(4r,\nu)$, then $\partial E\cap\cy(2r,\nu)$ is almost flat and almost
entirely covered by the graph of a Lipschitz function $u$. As observed in~\cite{DHV2022}, this is based on the so-called \textsl{height bound} (see~\cref{prp:heightbound}) which relies only
on the density estimates so that we can directly appeal to~\cite{Mag2012}.  \medskip\\
\textit{Step 2.} In~\cref{thm:lipapprox2}, we show that the function $u$ almost satisfies an equation of the form
\[
\lt(\Delta-\gamma \Delta_{G_{\eps}}\rt)u=0\qquad \text{ in }\ \cy(r,\nu),
\] 
where $\Delta_{G_{\eps}}$ is a nonlocal
operator converging to $\Delta$ as $\eps\to 0$. For this part, we proceed as follows.
\begin{enumerate}[1.,topsep=6pt,itemsep=4pt]
\item In~\cref{lem:firstvarf}, we write the Euler--Lagrange equation associated with deformations of $E$ in the direction of $\nu$. 
\item In~\cref{lem:lipapprox2_step1}, we localize the equation to the cylinder $\cy(2r,\nu)$ by carefully discarding the negligible long-range interaction terms. 
\item In~\cref{lem:lipapprox2_step2}, we pass the equation on $\partial E$ to the graph of $u$ using their proximity.
\item We linearize the equation.
\item 
Eventually, since $r$ is much larger than $\eps$, we have formally
$(\Delta-\gamma\Delta_{G_{\eps}})\simeq(1-\gamma)\Delta$ in $\cy(r,\nu)$, so that $u$ is close almost harmonic in this set, see~\cref{prp:harmapprox}.
\end{enumerate}
\noindent
\textit{Step 3.} Using that $u$ is close to a harmonic function, we show that the \textsl{flatness} of
$E$ (see~\cref{dfn:flatness}) at some smaller scale $\lambda r$ is much smaller than the excess
at scale $4r$, up to tilting the direction.  This part is relatively  standard.\medskip\\
%
\textit{Step 4.} By analogy with functions, one should think of the excess of $E$ as the Dirichlet
energy of $u$, and think of the flatness of $E$ as the $L^2$ norm of $u$. To transfer the smallness
of the flatness at scale $\lambda r$ to the excess, we prove in~\cref{prp:strongcaccio} a Caccioppoli-type inequality (or
Reverse Poincaré), stating roughly
\[
\e(E,\lambda r/2,\nu)\, \lesssim\, \f(E,\lambda r,\nu)
+\left(\frac{\eps}{\lambda r}\right)^{\theta}\e(E,\lambda r,\nu)+\text{ \enquote{smaller terms}}
\]
whenever $\lambda r$ is still much larger than $\eps$, and where $\theta\in(0,1)$.
Our proof of the Caccioppoli inequality relies on an improved quasi-minimality condition when the
set $E$ is already known to be sufficiently flat (see~\cref{prp:benoitmin}).
Here the most delicate point is to bound the short-range nonlocal
interactions. For this, we use a 1D slicing method from ~\cite{MP2021,Lud2014}. It consists in
writing the nonlocal perturbation $(P-P_\eps)(E)$ as the integral over all lines $L$ of the  1D
energies $(P^{1D}-P^{1D}_\eps)(E\cap L)$. We are led to study an auxiliary optimization problem
which involves the \enquote{shadow funtional} $\Sha$. Although the latter is a fairly natural variant
of the integral geometric measure, we are not aware of any work about~it. We will comment more on
this functional after the conclusion of this sketch of proof.

\medskip
The above method establishes the decay of the excess from the macroscopic scales down  to a scale $r_+\gg\eps$. To prove excess decay for smaller scales, we first jump from $r_+$ to a  scale $r_-\ll\eps$ by using a crude estimate, see~\cref{prp:scaleexcess}. To compensate for the loss introduced at this step we need the excess to be already small enough at scale $r_+$. This explains the introduction of both hypothesis~\cref{Hdec} and condition $R\ge \eps^{1-\beta}$ in~\cref{mainthm:holdreg}. Eventually, at smaller scales $r\le r_-\ll\eps$, the nonlocal perturbation $ P_\eps$ behaves as a volume term and we are able to rely on the classical regularity theory for almost-minimizers of the perimeter, see~\cref{prp:smallscales_decexcess}.

\bigskip

 We now comment on the shadow functional $\Sha$. Denoting $B'$ the unit $(n-1)$-ball and $C':=B'\times\R$, we optimize $\Sha(\partial^*F \cap C')$  over the  sets $F$ obstructing the cylinder $C'$. Specifically:
\begin{enumerate}[$\circ$]
\item The obstruction condition reads
\[
B'\times (-\infty,-1/4)\ \subset\ F\ \subset\ B'\times (-\infty,1/4).
\]
\item For a Borel set $\Sigma\subset\R^n$, $\Sha(\Sigma)$ is the average over all lighting directions of the area of the shadow of $\Sigma$, namely,
\[
 \Sha(\Sigma):=\dfrac1{2\om_{n-1}}\int_{\bbsn} \calH^{n-1}\big(\pi_{\sigma^\perp}\Sigma\big) \dhn_\sigma,
\]
where $\pi_{\sigma^\perp}$ denotes the orthogonal projection on the hyperplane $\sigma^\perp$. By Fubini, we may equivalently integrate over all possible lines $L$ and count one if $L$ intersects  $\Sigma$ and 0 in the other cases:
\[
 \Sha(\Sigma)=\dfrac1{2\om_{n-1}}\int_{\bbsn} \int_{\sigma^\perp}\chi\lt((x+\R\sigma)\cap\Sigma\rt) \,\mathrm{d}\calH^{n-1}_x\,\dhn_\sigma,
\]
where $\chi(\emptyset)=0$ and  $\chi(A)=1$ for nonempty subsets $A\subset\R^n$.
\end{enumerate}
The prefactor is chosen so that $\Sha(P)=\calH^{n-1}(P)$ for Borel sets $P$ lying in a
$(n-1)$-affine subspace of~$\R^n$. Notice that substituting the counting measure $\calH^0$ for
$\chi$, we obtain the integral geometric  measure  $\mathcal{I}^{n-1}$, hence, $\Sha(\Sigma)$ is not
larger (and in general smaller) than $\mathcal{I}^{n-1}(\Sigma)$.\\
We prove in~\cref{lem_shadow} that  $B'\times(-\infty, 0)$ minimizes $\Sha(\partial^*F\cap [B'\times\R])$ among  obstructing sets $F$. 
However, substituting $\Sha$ for $\mathcal{H}^{n-1}$ in the Plateau problem, the solutions (if any) are in general not minimal surfaces anymore. The obvious reason is that, denoting 
\[
\wt\Sigma :=\{x\in\R^n : (x+\R \sigma) \cap\Sigma\ne \emptyset\text{ for every }\sigma\in\bbsn\},
\]
there holds $\Sha(\wt\Sigma)=\Sha(\Sigma)$  but for nonplanar hypersurfaces, $\calH^n(\wt\Sigma)>0$.  Considering for instance the Steiner problem associated with the set of vertices $V$ of some convex polygon $P\subset\R^2$, the minimizers of $\Sha$  among closed connected sets containing $V$ are exactly the closed connected sets $\Sigma$ such that $V\subset \Sigma\subset P$. This includes sets with Hausdorff dimensions ranging from 1 to 2. The situation may be more subtle in higher dimension, as minimizers of $\Sha$  among  sets spanning a given boundary $\Gamma$ may not contain a solution of the corresponding Plateau problem. For instance, for $n=3$, if $\Gamma=\bbs^1\times\{-t,t\}$ for some $t>0$ then for $t$ small enough the minimizer of $\Sha$ is the (empty) cylinder $\bbs^1\times(-t,t)$.



\subsection*{Motivation and related results}
As already alluded to, under the additional hypothesis that  $G\in L^1$ and after rescaling,
$\calF_{\eps,\gamma}$ is equivalent to the  generalized Gamow functionals (see~\cite{CFP2021}),
\begin{equation}\label{probGamow}
\min~ \Big\{ P(E)+\gamma \int_{E\times E} G(x-y) \dx\dy~:~ \abs{E}=m\Big\}.
\end{equation}
Besides the case of compactly supported kernels studied in~\cite{Rig2000} and for which existence of minimizers holds for any $m$, the main example studied in the literature is the case of Riesz
interaction energies, $G(z)= |z|^{-(n-\alpha)}$ with $\alpha\in(0,n)$. The case $n=3$ and $\alpha=2$ corresponding to Gamow's
liquid drop model for the atomic nucleus, see~\cite{CMT2017} for a short overview on this problem.
In this problem it has been shown in~\cite{KM2014,FFM+2015} that, for small $m$, minimizers
are balls while for large $m$ there is non-existence of minimizers under the assumption that $\alpha\in [n-2,n)$, see~\cite{KM2014,LO2014,FN2021}. The question of the existence of minimizers in the case $\alpha\in (0,n-2)$  is still open.
The proof of the rigidity of balls for small $m$  in~\cite{KM2014,FFM+2015} (see also
\cite{CFP2021} for the case of quite general kernels $G$) is of the same spirit as for
\cref{mainthm:minball} and goes through a Selection Principle along the lines of~\cite{CL2012}.
Notice however that in that case one can directly rely on the classical regularity theory for quasi-minimizers of the perimeter, see~\cref{eq:difPe2}. 
Let us point out that  in a related direction, it has been shown in~\cite{ABTZ2021} that if we
replace the Euclidean perimeter in~\cref{probGamow} by a weighted perimeter $P_a(E) \coloneqq
\int_{\partial^* E} a(x)\dH_x^{n-1}$, with a weight $a$ growing fast enough at infinity, then balls
are the unique minimizers for large $m$.\\

Recently, there has been a growing interest for related nonlocal isoperimetric problems which do not fall within the standard regularity theory for perimeter almost-minimizers.
A first example comes from a variational model for charged liquid drops where the kernel is still given by the Riesz interaction kernel but
now the charge is not assumed to be uniformly distributed on $E$. This leads in general to much more singular interactions, see~\cite{GNR2015}. 
However, introducing either an additional penalization of the charge as in~\cite{DHV2022}
or restricting to $\alpha\le 1$ as in~\cite{GNR2022}, it is possible to obtain an epsilon-regularity theorem in the same spirit as the one for minimal surfaces.
A major difference between our setting and~\cite{DHV2022,GNR2022} is that in the charged liquid drop model, it is possible to show that for smooth
sets the nonlocal term is actually a volume term (while for us it penalizes curvature). Another
example studied in~\cite{MS2019} and which is strongly related to~\cref{cminpb},
is formally~\cref{probGamow} with the Riesz kernel but for $n=2$ and $\alpha=-1$. This is 
motivated by dipolar repulsion. In order to make the model meaningful, a small-scale cut-off has to be introduced (otherwise the energy is always infinite).
This cut-off plays a similar role in that model as our parameter $\eps$.  In particular, just like in our case, in the limit of vanishing cut-off length and after proper renormalization,
the nonlocal term converges to the perimeter. Among many other things, it is shown in~\cite{MS2019} that as in~\cref{mainthm:minball}, in the sub-critical regime (in our language $\gamma<1$)
minimizers are disks for small but finite cut-off lengths. Just like in our problem, the main issue
in~\cite{MS2019} is to obtain regularity estimates which hold at a macroscopic scale.
However, our strategy to obtain these estimates is very different from~\cite{MS2019}. Indeed, while we propagate regularity from the macroscopic scale down to the microscopic scale (in the form of excess decay), 
\cite{MS2019} relies on  the Euler--Lagrange equation to bootstrap the regularity from the microscopic scale up to the macroscopic scale.
Let us point out that on the one hand, the proof in~\cite{MS2019} does not seem to be easily adapted to dimensions higher than two and
that on the other hand the logarithmic scaling in~\cite{MS2019} allows to directly pass (in our notation) from a scale $r\gg \eps$ to a scale $r\ll \eps$. 
We refer to~\cite{CN2020a,KS2023,DKP2022} for other related models where however rigidity of the ball has not been investigated. \\      

In conclusion, besides~\cite{MS2019} which concerns a two dimensional problem,~\cref{mainthm:holdreg} is the first uniform regularity theorem for quasi-minimizers of a functional built with two competing local/nonlocal perimeters which remain asymptotically of the same order.\\ 
One may compare our regularity result with the one of \cite{CV2013}. Therein, the authors establish a uniform $C^{1,\alpha}$-regularity result for local
minimizers of the $s$-perimeter which is uniform in $s$ as $s\to 1^-$.
However, due to the lack of a competing term, the problem and its analysis are rather different from
the ones of the present work.

\subsection*{Outline of the paper}

The structure of the paper is as follows. In~\cref{sec:prelim}, we recall and prove a few facts
about nonlocal perimeters as well as some useful results from~\cite{Peg2021,MP2021} on minimizers of
\cref{cminpb}. We then show that minimizers of~\cref{cminpb} are almost-minimizers of
$\calF_{\eps,\gamma}$, and establish uniform density estimates for almost-minimizers of
$\calF_{\eps,\gamma}$. Eventually, we recall some basic properties of the excess and argue that
almost-minimizers satisfy the height bound property.  In~\cref{sec:lipapprox} we prove the Lipschitz
approximation theorem at scales much larger than $\eps$
(\cref{thm:lipapprox1,thm:lipapprox2}).
In~\cref{sec:caccio}, we establish the Caccioppoli inequality for $(\Lambda,r_0)$-minimizers of
$\calF_{\eps,\gamma}$.
Finally, building upon~\cref{sec:lipapprox,sec:caccio},~\cref{sec:reg} is devoted to establishing
power decay of the excess from large scales down to arbitrarily small scales.

\subsection*{Notation.}\label{subsec:notation}~

We write any point $x\in \R^n$ as $x=(x',x_n)$.
We denote by $B_r(x)\subsq \R^n$ the open ball of radius $r$ in~$\R^n$
centered at $x$. When $x=0$ we simply write $B_r$ for $B_r(0)$.  For open balls in $\R^{n-1}$, we
write $D_r(x')$ and $D_r$ when $x'=0$.
For any $m\in\N$, $\om_m$ denotes the $m$-volume of the unit ball in $\R^m$, that is, its
Lebesgue measure in $\R^m$.\\
For any set $E\subsq\R^n$, we denote its complement by $E^\compl\coloneqq \R^n\setminus E$ and
by $\abs{E}$ its volume whenever it is measurable.
For any $m\in\N$ we denote by $\calH^m$ the $m$-dimensional Hausdorff measure in $\R^n$. When
integrating with respect to the measure $\calH^m$ in a variable $x$, we use the compact notation $\dH_x^m$
instead of the standard $\dH^m(x)$. If $A$ is of dimension $m$ and $f$ is
$\calH^m$-measurable, we may simply write
\[
 \int_A f\coloneqq \int_A f(x) \dH^m_x.
\]
Similarly, we sometimes use the notation $f_x\coloneqq f(x)$.\\
Recall the definition~\cref{eq:cyl} of the truncated cylinder
\[
\cy(x,r,\nu)=x+\Big\{ y+t\nu~:~ y\in \nu^\perp
\text{ with } \abs{y}<r \text{ and } -r<t<r\Big\}.
\]
When $\nu=e_n$ (the $n$-th vector of the canonical basis of $\R^n$) we write
$\cy_r(x)$ for the cylinder $\cy(x,r,\nu)$ (recall~\cref{eq:cyl}) and simply $\cy_r$ if in addition $x=0$.
We also write $\e_n(E,x,r)$ for $\e(E,x,r,e_n)$ (recall~\cref{eq:defcylexcess}) and for $x=0$, $\e_n(E,r)=\e_n(E,0,r)$.

\vspace{2mm}
In addition to the excess~\cref{eq:defcylexcess}, we introduce in~\cref{sec:caccio} other quantities.
We also gather their definitions here for the convenience of the reader.\\
The flatness of $\partial E$ in the cylinder $\cy(x,r,\nu)$ with $x\in\partial E$ is defined by
\[
\f(E,x,r,\nu) \coloneqq \inf_{c\in\R}~ \frac{1}{r^{n-1}}\int_{\partial^* E\cap\cy(x,r,\nu)}
\frac{\abs{(y-x)\cdot\nu-c}^2}{r^2} \dhn_y.
\]
When $\nu=e_n$, we write $\f_n(E,x,r)$ for $\f(E,x,r,e_n)$ and we write $\f_n(E,r)$ for
$\f_n(E,0,r)$.\\
We denote by $\ky_t(z)$ the cylinder $D_t(z)\times\left(-1,1\right)$, and we write $\ky_t$ when $z=0$.
For any cylinder $\ky_t(z)$, any set of locally finite perimeter $E$, and any constant $c\in \R$, we define
the quantities
\[
\scrF(E,\ky_t(z),c)\coloneqq \int_{\ky_t(z)\cap\partial^* E} \frac{(x_n-c)^2}{t^2}\dhn
\]
and
\[
\scrE(E,\ky_t(z))\coloneqq P(E;\ky_t(z))-\calH^{n-1}(D_t(z)).
\]
When $z=0$, we write $\scrE(E,t)=\scrE(E,\ky_t(0))$ and $\scrF(E,t,c)=\scrE(E,\ky_t(0),c)$.
Recalling the definition of $Q$ in~\cref{eq:defQ}, we shall also make use of the
function~$Q_{1-\theta}$, with $\theta\in[0,1]$, defined by
\[
Q_{1-\theta}(t)\coloneqq Q(t^{1-\theta}),\qquad\forall t>0.
\]

\addtocontents{toc}{\protect\setcounter{tocdepth}{2}}
\section{Preliminary}\label{sec:prelim}

\subsection{Nonlocal perimeter and first variation}

In this section we recall some basic properties of the nonlocal perimeter depending on our
assumptions on $G$.
The following proposition is a consequence of~\cite{Dav2002} and our choice of $I_G^1$. It ensures
that $P_\eps$ is well-defined on sets of finite perimeter and is bounded from above by the standard
perimeter. We also state it for a general kernel $K$, not necessarily normalized, since we will
often use it with other kernels than $G$. We use the notation
\[
 \Per_{K}(E)\coloneqq \iint_{\Rpn\times\Rpn} \abs{\ind_E(x)-\ind_E(y)} K(x-y)\dx\dy.
\]

\begin{prp}[Upper bound]\label{prp:boundpg}
Assume that $K:\Rpn\to [0,\infty)$ satisfies~\cref{Hposrad} and $x\mapsto\abs{x}K(x)\in L^1(\R^n)$.
Then, for every set of finite perimeter $E$ in $\R^n$, we have
\begin{equation}\label{boundpk}
\Per_{K}(E)\le \mathbb{K}_{1,n}I_K^1 P(E).
\end{equation}
In particular, for the kernels $G_\eps$, we have
\begin{equation}\label{boundpg}
\Per_{\eps}(E)\le P(E),\qquad\forall \eps>0.
\end{equation}
\end{prp}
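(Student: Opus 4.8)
The plan is to adapt the argument of~\cite{Dav2002} to our normalization; the proof rests on a \emph{directional} version of the standard translation estimate for $\bv$ functions, combined with Tonelli's theorem and a one-variable computation in polar coordinates. I would first reduce~\cref{boundpk} to an estimate on translates of $\ind_E$: setting $u=\ind_E$ and changing variables $(x,y)\mapsto(y,h)$ with $h=x-y$ (unit Jacobian), Tonelli's theorem (all integrands being nonnegative) gives
\[
\Per_K(E)=\int_{\Rpn}K(h)\left(\int_{\Rpn}\abs{\ind_E(y+h)-\ind_E(y)}\dy\right)\dh .
\]

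The key point is then the directional bound
\[
\int_{\Rpn}\abs{\ind_E(y+h)-\ind_E(y)}\dy\le\int_{\partial^*E}\abs{\nu_E(y)\cdot h}\dhn_y ,\qquad\forall\, h\in\Rpn .
\]
To obtain it I would mollify, $u_\delta\coloneqq\ind_E*\rho_\delta\in C^\infty$, write $u_\delta(y+h)-u_\delta(y)=\int_0^1\grad u_\delta(y+th)\cdot h\dt$, integrate in $y$, and use $\grad u_\delta=\rho_\delta*D\ind_E$ together with $\int_{\Rpn}\abs{\rho_\delta*\mu}\le\abs{\mu}(\Rpn)$ for the signed measure $\mu=h\cdot D\ind_E$, whose total variation is $\abs{h\cdot D\ind_E}(\Rpn)=\int_{\partial^*E}\abs{\nu_E\cdot h}\dhn$; letting $\delta\to0$ (continuity of translations in $L^1_{\loc}$, $L^1_{\loc}$-convergence of $u_\delta$, and Fatou) yields the claim. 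This is exactly where~\cite{Dav2002} enters, and it is the only genuinely delicate step — everything around it is soft.

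Plugging the directional bound into the first identity and swapping the order of integration (Tonelli again) gives
\[
\Per_K(E)\le\int_{\partial^*E}\left(\int_{\Rpn}K(h)\,\abs{\nu_E(y)\cdot h}\dh\right)\dhn_y .
\]
For a fixed unit vector $\nu$, radiality of $K$ (write $K(h)=k(\abs{h})$) and polar coordinates $h=r\omega$ give
\[
\int_{\Rpn}K(h)\,\abs{\nu\cdot h}\dh=\left(\int_0^\infty k(r)\,r^n\dr\right)\left(\int_{\bbsn}\abs{\nu\cdot\omega}\dhn_\omega\right)=\frac{I_K^1}{n\om_n}\cdot\mathbb{K}_{1,n}\,n\om_n=\mathbb{K}_{1,n}\,I_K^1 ,
\]
where I used $I_K^1=n\om_n\int_0^\infty k(r)\,r^n\dr$ and, by rotational invariance, $\int_{\bbsn}\abs{\nu\cdot\omega}\dhn_\omega=\int_{\bbsn}\abs{\omega_n}\dhn_\omega=\mathbb{K}_{1,n}\,\Hn(\bbsn)=\mathbb{K}_{1,n}\,n\om_n$ (recall $\Hn(\bbsn)=n\om_n$). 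Since the right-hand side is independent of $y$, integrating over $\partial^*E$ gives $\Per_K(E)\le\mathbb{K}_{1,n}I_K^1\,P(E)$, i.e.~\cref{boundpk}.

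Finally, for $K=G_\eps$, the change of variables $w=z/\eps$ shows $I_{G_\eps}^1=\int_{\Rpn}\abs{z}\eps^{-(n+1)}G(z/\eps)\dz=I_G^1$ for every $\eps>0$, so the normalization~\cref{fixedmoment1} yields $\mathbb{K}_{1,n}I_{G_\eps}^1=1$, hence~\cref{boundpg}. As noted, the main obstacle is purely the passage from smooth sets to general sets of finite perimeter in the directional translation bound; the rest is Tonelli plus an elementary spherical integral.
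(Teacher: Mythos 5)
Your proof is correct, and the constants all check out: the directional translation bound $\int_{\R^n}\abs{\ind_E(y+h)-\ind_E(y)}\dy\le\int_{\partial^*E}\abs{\nu_E\cdot h}\dhn$ is valid for any set of finite perimeter (your mollification argument is the standard one), and the polar-coordinate computation together with $\calH^{n-1}(\bbsn)=n\om_n$ and the scaling identity $I_{G_\eps}^1=I_G^1$ gives exactly $\mathbb{K}_{1,n}I_K^1 P(E)$ and then~\cref{boundpg} via the normalization~\cref{fixedmoment1}. The paper itself offers no proof — it simply records the proposition as a consequence of~\cite{Dav2002} and the choice of $I_G^1$ — so your self-contained argument is precisely the standard estimate underlying that citation rather than a different route.
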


Let us recall that $\Per_\eps$ is continuous with respect to the $L^1$ topology along sequences with
bounded perimeter.

\begin{lem}[Continuity]\label{lem:contpg}
Assume that $G$ satisfies~\cref{Hposrad,,Hmoment1}.
Let $E_k$ be a sequence of sets of finite perimeter in $\R^n$ and $E\subsq\R^n$ such that
\[
\sup_k~ P(E_k)<\infty
\qquad\text{and}\qquad
E_k \xrightarrow{L^1} E.
\]
Then, for every $\eps>0$, we have
\[
\lim_k P_\eps(E_k) = P_\eps(E).
\]
\end{lem}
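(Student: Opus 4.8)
The plan is to fix $\eps>0$ and reduce to showing that $P_\eps$ is continuous along $L^1$-convergent sequences with uniformly bounded perimeter and volume. First I would recall the double-integral formula
\[
P_\eps(E) = 2\int_{E\times E^\compl} G_\eps(x-y)\dx\dy,
\]
and observe that the integrand $(x,y)\mapsto \ind_E(x)\ind_{E^\compl}(y)$ converges pointwise a.e.\ along the sequence (up to passing to a subsequence for which $\ind_{E_k}\to\ind_E$ a.e., which is enough since the limit is independent of the subsequence). The obstacle is that the domain $\R^n\times\R^n$ is unbounded and $G_\eps$ is only in $L^1(\abs{z}\dz)$, not necessarily in $L^1(\dz)$, so one cannot directly dominate by an $L^1$ function on the whole product space.

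The key idea, which I would carry out in two steps, is to split the interaction into a short-range and a long-range part. Writing $G_\eps = G_\eps\ind_{B_\rho} + G_\eps\ind_{B_\rho^\compl}$ for a parameter $\rho>0$: for the long-range part, I would bound
\[
2\int_{E\times E^\compl} G_\eps(x-y)\ind_{\abs{x-y}\ge\rho}\dx\dy \le \frac{2}{\rho}\int_{\R^n}\int_{\R^n}\ind_E(x)\ind_{E^\compl}(y)\,\abs{x-y}G_\eps(x-y)\dx\dy,
\]
which is still not finite unless we also localize in one variable; instead the cleaner route is to use that this tail is controlled using the first moment together with the fact that $\abs{E_k}$ is bounded — more precisely $2\int_{E_k\times E_k^\compl}G_\eps(x-y)\ind_{\abs{x-y}\ge\rho}\dx\dy \le 2\abs{E_k}\int_{\abs{z}\ge\rho}G_\eps(z)\dz$, and $\int_{\abs{z}\ge\rho}G_\eps(z)\dz \le \rho^{-1}I_{G_\eps}^1 < \infty$ tends to $0$ as $\rho\to\infty$, uniformly in $k$ by the uniform volume bound. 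For the short-range part, I would use the standard fact (essentially \cref{prp:boundpg} applied to the truncated kernel $K = G_\eps\ind_{B_\rho}$, which satisfies \cref{Hposrad} and has finite first moment) to write $P_K(E_k)\le \mathbb{K}_{1,n}I_K^1 P(E_k)$, so the short-range contributions are uniformly bounded; then dominated convergence applies on this piece once we further restrict to a large ball $B_M$ in, say, the $x$-variable (the contribution from $x\notin B_M$ being again controlled by $\int_{B_M^\compl\cap E_k}(\int G_\eps) \le \abs{E_k\setminus B_M}\,I_{G_\eps}^1/\rho$ — handled by the uniform volume bound and $L^1$ convergence, or more simply one notes $\ind_{E_k}\to\ind_E$ in $L^1$ forces the sets to have no mass escaping to infinity).

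Concretely, the cleanest argument I would write is: pass to a subsequence so $\ind_{E_k}\to\ind_E$ a.e.; fix $\eta>0$; choose $\rho$ large so that $\sup_k 2\abs{E_k}\int_{\abs z\ge\rho}G_\eps\dz<\eta$; on the remaining short-range part, note the integrand is dominated by $\ind_E(x)G_\eps(x-y)\ind_{B_\rho}(x-y) + (\text{symmetrized version})$ — actually dominated by $G_\eps(x-y)\ind_{B_\rho}(x-y)\ind_{x\in E_k}$, whose integral over $y$ is $\le I_K^1/\rho_{\min}$-type finite and whose integral in $x$ over $E_k$ is then controlled; apply dominated convergence using the a.e.\ convergence and the $L^1$ bound on $\ind_{E_k}$ to pass to the limit, getting the short-range part of $P_\eps(E_k)$ converges to that of $P_\eps(E)$. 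Combining, $\limsup_k\abs{P_\eps(E_k)-P_\eps(E)}\le C\eta$, and $\eta$ arbitrary gives the claim; since the limit does not depend on the subsequence, the full sequence converges. The main obstacle is genuinely the uniform control of the long-range tail, which is exactly where the bounded-volume hypothesis (not just bounded perimeter) and the finite first moment \cref{Hmoment1} enter; everything else is a routine dominated-convergence argument.
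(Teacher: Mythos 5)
Your long-range step is fine (and correctly isolates where the volume bound and the finite first moment enter), but the short-range step has a genuine gap. Under~\cref{Hposrad,Hmoment1} the kernel is only assumed to have finite \emph{first} moment, so $G$ (hence $G_\eps\ind_{B_\rho}$) need not be integrable near the origin — the paper explicitly allows fractional-type singularities like $\abs{z}^{-(n+s_0)}$. Consequently the dominating function you propose for the short-range piece, $\ind_{E_k}(x)\,G_\eps(x-y)\ind_{B_\rho}(x-y)$, has infinite integral over the product space (its $y$-integral is $\int_{B_\rho}G_\eps=+\infty$ in general; the bound $I^1_K/\rho_{\min}$ only controls the region $\abs{x-y}\ge\rho_{\min}$, not the diagonal), and it is moreover $k$-dependent, so dominated convergence does not apply as stated. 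The uniform bound $P_K(E_k)\le \mathbb{K}_{1,n}I^1_K P(E_k)$ that you invoke from~\cref{prp:boundpg} only gives equiboundedness of the short-range energies, which by itself yields lower semicontinuity (Fatou) but not convergence: the whole difficulty of the lemma sits in the near-diagonal region, where finiteness is governed by the perimeter and not by any $k$-independent pointwise domination in the variables $(x,y)$.

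Two ways to close the gap. The paper's route avoids any splitting: change variables to $z=x-y$ and write $P_\eps(E_k)=\int_{\R^n}u_k(z)G_\eps(z)\dz$ with $u_k(z)=\int_{\R^n}\abs{\ind_{E_k}(x+z)-\ind_{E_k}(x)}\dx$; then $u_k(z)\to u(z)$ pointwise by $L^1$ convergence, and $u_k(z)G_\eps(z)\le P(E_k)\abs{z}G_\eps(z)\le C\abs{z}G_\eps(z)\in L^1(\R^n)$, so dominated convergence applies in the single variable $z$ (note this uses only the perimeter bound, not the volume bound). Alternatively, to salvage your decomposition, introduce a third regime $\abs{x-y}<\delta$ and apply~\cref{prp:boundpg} with the truncated kernel $G_\eps\ind_{B_\delta}$ to get that this contribution is at most $\mathbb{K}_{1,n}\,P(E_k)\int_{B_\delta}\abs{z}G_\eps(z)\dz\le C\int_{B_\delta}\abs{z}G_\eps(z)\dz$, which is small uniformly in $k$ as $\delta\to0$; on the intermediate range $\delta\le\abs{x-y}\le\rho$ the kernel is genuinely integrable ($\int_{\abs{z}\ge\delta}G_\eps\le I^1_{G_\eps}/\delta$) and your dominated-convergence argument goes through. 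As written, though, the proposal only works if one additionally assumes $G\in L^1$, which the lemma does not.
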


\begin{proof}
Let  $C\coloneqq \sup_k~ P(E_k)<\infty$.
Setting for $E\subsq \R^n$,
\[
u_k(z)\coloneqq \int_{\R^n} \abs{\ind_{E_k}(x+z)-\ind_{E_k}(x)}\dx
\qquad\text{ and }\qquad
u(z)\coloneqq \int_{\R^n} \abs{\ind_{E}(x+z)-\ind_{E}(x)}\dx,
\]
by the $L^1$ convergence of $E_k$ to $E$, for every $z\in\R^n$, $u_k$ converges to $u$ uniformly.
In addition, we have
\[
P_\eps(E_k)=\int_{\R^n} u_k(z)G_\eps(z)\dz
\]
and
\[
u_k(z)G_\eps(z)\le P(E_k)\abs{z}G_\eps(z)\le C\abs{z}G_\eps(z)\in L^1(\R^n).
\]
Hence by dominated convergence, $\lim_k P_\eps(E_k)=P_\eps(E)$.
\end{proof}

Depending on the integrability assumptions on $G$, we may estimate the difference
$P_\eps(E)-P_\eps(F)$ from above by a perimeter term, a volume term, or an interpolation of the two.
This type of estimates is relatively standard in the context of nonlocal perimeters (see for
instance~\cite[Lemma 5.3]{DNRV2015} for a similar statement in the case of $s$-perimeters).
The last interpolation estimate below will allow us to show a useful quasi-minimality property at
small scales for $(\Lambda,r_0)$-minimizers of $\calF_{\eps,\gamma}$
(see~\cref{prp:almostregsmallscale}). We will not use~\cref{eq:difPe2} but include it for
completeness.

\begin{lem}\label{lem:diffPeps}
Let $E,F\subsq\Rpn$ be two measurable sets with finite perimeter, and let $\eps>0$. We have:
\begin{enumerate}[(i)]
\item\label{diffPeps:i} if $G$ satisfies~\cref{Hposrad,Hmoment1}, then
\begin{equation}\label{eq:difPe}
P_\eps(E)-P_\eps(F) \le P_\eps(E\triangle F)\le P(E\triangle F);
\end{equation}
\item\label{diffPeps:ii} if $G$ satisfies~\cref{Hposrad} and $G\in L^1(\R^n)$, then  
\begin{equation}\label{eq:difPe2}
P_\eps(E)-P_\eps(F)\le \frac{2I_G^0}{\eps} \abs{E\triangle F};
\end{equation}	
\item\label{diffPeps:iii} if $G$ satisfies~\cref{Hposrad,Hmoment1,Hint0}, then there exists $C=C(n,G)>0$ such that
\begin{equation}\label{eq:difPe3}
P_\eps(E)-P_\eps(F)\le C\left(\frac{\abs{E\triangle F}}{\eps}\right)^{1-s_0}P(E\triangle F)^{s_0}.
\end{equation}
\end{enumerate}
\end{lem}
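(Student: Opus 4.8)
The plan is to start from the basic algebraic identity controlling the difference of the nonlocal perimeters. Writing $u_E(z)\coloneqq \int_{\R^n}\abs{\ind_E(x+z)-\ind_E(x)}\dx$ as in the proof of~\cref{lem:contpg}, and similarly $u_F$, one has $P_\eps(E)=\int u_E(z)G_\eps(z)\dz$. The key pointwise estimate is the subadditivity-type bound
\[
\abs{u_E(z)-u_F(z)}\le \int_{\R^n}\Bigl|\,\abs{\ind_E(x+z)-\ind_E(x)}-\abs{\ind_F(x+z)-\ind_F(x)}\,\Bigr|\dx\le 2\abs{E\triangle F},
\]
since $\bigl|\abs{a-b}-\abs{c-d}\bigr|\le\abs{a-c}+\abs{b-d}$ and $\ind_E-\ind_F$ vanishes off $E\triangle F$, while integrating the translate $x\mapsto x+z$ is measure-preserving. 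This immediately gives, for $G\in L^1$,
\[
P_\eps(E)-P_\eps(F)\le\int_{\R^n}\abs{u_E(z)-u_F(z)}G_\eps(z)\dz\le 2\abs{E\triangle F}\int_{\R^n}G_\eps(z)\dz=\frac{2I_G^0}{\eps}\abs{E\triangle F},
\]
using $\int G_\eps=\eps^{-1}\int G=\eps^{-1}I_G^0$ by the scaling $G_\eps(z)=\eps^{-(n+1)}G(\eps^{-1}z)$. This proves~\eqref{eq:difPe2}.

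For~\eqref{eq:difPe}, I would instead use the second, ``translation'' bound
\[
\abs{u_E(z)-u_F(z)}\le u_{E\triangle F}(z)=\int_{\R^n}\abs{\ind_{E\triangle F}(x+z)-\ind_{E\triangle F}(x)}\dx,
\]
which follows from the inequality $\bigl|\abs{\ind_E(x+z)-\ind_E(x)}-\abs{\ind_F(x+z)-\ind_F(x)}\bigr|\le\abs{(\ind_E-\ind_F)(x+z)}+\abs{(\ind_E-\ind_F)(x)}=\ind_{E\triangle F}(x+z)+\ind_{E\triangle F}(x)$, together with the elementary fact that for a characteristic function one always has $\abs{\ind_A(x+z)-\ind_A(x)}=\max(\ind_A(x+z),\ind_A(x))-\min(\ind_A(x+z),\ind_A(x))$, hence $u_{E\triangle F}(z)=\int \max - \int\min$; the point is merely that $\ind_{E\triangle F}(x+z)+\ind_{E\triangle F}(x)\ge\abs{\ind_{E\triangle F}(x+z)-\ind_{E\triangle F}(x)}$, so after integration $\abs{u_E(z)-u_F(z)}\le u_{E\triangle F}(z)$. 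Then $P_\eps(E)-P_\eps(F)\le\int u_{E\triangle F}(z)G_\eps(z)\dz=P_\eps(E\triangle F)\le P(E\triangle F)$ by~\cref{prp:boundpg} (i.e.~\eqref{boundpg}). This handles part~\eqref{diffPeps:i}.

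Finally,~\eqref{eq:difPe3} is obtained by interpolating between the two pointwise bounds just established: $\abs{u_E(z)-u_F(z)}\le\min\bigl(2\abs{E\triangle F},\,u_{E\triangle F}(z)\bigr)$, and additionally $u_{E\triangle F}(z)\le P(E\triangle F)\abs{z}$ (the standard bound $\int_{\R^n}\abs{\ind_A(x+z)-\ind_A(x)}\dx\le P(A)\abs{z}$, valid for sets of finite perimeter; if $P(E\triangle F)=\infty$ there is nothing to prove). Splitting the $z$-integral at a radius $\rho>0$ to be optimized, using the first bound on $B_\rho$ and the second on $\R^n\setminus B_\rho$:
\[
P_\eps(E)-P_\eps(F)\le 2\abs{E\triangle F}\int_{B_\rho}G_\eps+P(E\triangle F)\int_{\R^n\setminus B_\rho}\abs{z}G_\eps(z)\dz.
\]
By the scaling of $G_\eps$, $\int_{B_\rho}G_\eps(z)\dz=\eps^{-1}\int_{B_{\rho/\eps}}G$ and $\int_{\R^n\setminus B_\rho}\abs{z}G_\eps(z)\dz=\int_{\R^n\setminus B_{\rho/\eps}}\abs{z}G(z)\dz$; the first integral is estimated for $\rho/\eps\le 1$ by~\cref{Hint0} as $\le\eps^{-1}(C(\rho/\eps)^{-s_0}+I_G^1)$, actually it is cleaner to bound $\int_{B_{\rho/\eps}}G\le C(\eps/\rho)^{s_0}+\int_{B_1}G$ and note the second piece is absorbed, while the second is trivially $\le I_G^1<\infty$. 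Choosing $\rho$ so that $\abs{E\triangle F}\eps^{-1}(\eps/\rho)^{s_0}\simeq P(E\triangle F)$, i.e. $\rho\simeq\eps(\abs{E\triangle F}/(\eps P(E\triangle F)))^{1/s_0}$ — one must first check via the isoperimetric-type inequality $\abs{E\triangle F}\le C_n P(E\triangle F)^{n/(n-1)}$, or simply restrict to $\rho\le\eps$ which is the regime where~\cref{Hint0} applies and which holds automatically since $\abs{E\triangle F}\les\eps P(E\triangle F)$ in the relevant small-scale application — balances the two terms and yields $P_\eps(E)-P_\eps(F)\le C(\abs{E\triangle F}/\eps)^{1-s_0}P(E\triangle F)^{s_0}$. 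The only genuinely delicate point is the bookkeeping in this last optimization, in particular making sure the cutoff radius $\rho$ stays $\le\eps$ so that~\cref{Hint0} is applicable and the leftover $\int_{B_1}G$ term (which is finite but not small) does not spoil the estimate; this is why the statement is phrased as an inequality rather than with an explicit constant, and one may freely assume $\abs{E\triangle F}\le\eps P(E\triangle F)$ since otherwise~\eqref{eq:difPe} already gives a stronger bound.
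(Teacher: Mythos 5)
Parts \cref{diffPeps:i} and \cref{diffPeps:ii} of your proposal are correct and amount to the same mechanism as the paper's: the paper first proves $P_\eps(E)-P_\eps(F)\le P_\eps(E\triangle F)$ by a set-theoretic decomposition of the interaction functional $\Phi_\eps$, and your pointwise bound $\abs{u_E(z)-u_F(z)}\le u_{E\triangle F}(z)$, integrated against $G_\eps$, yields exactly this intermediate inequality before you invoke \cref{boundpg} (resp.\ the $L^1$ bound). One caveat: your justification of that pointwise bound is a non sequitur. You bound the integrand by $\ind_{E\triangle F}(x+z)+\ind_{E\triangle F}(x)$ and then note that this sum dominates $\abs{\ind_{E\triangle F}(x+z)-\ind_{E\triangle F}(x)}$ --- but that inequality goes in the wrong direction and only re-proves $\abs{u_E(z)-u_F(z)}\le 2\abs{E\triangle F}$. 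The inequality you want is nonetheless true, in fact with equality of the integrands: for $a,b,c,d\in\{0,1\}$ one checks directly that $\bigl|\,\abs{a-b}-\abs{c-d}\,\bigr|=\bigl|\,\abs{a-c}-\abs{b-d}\,\bigr|$, applied with $a=\ind_E(x+z)$, $b=\ind_E(x)$, $c=\ind_F(x+z)$, $d=\ind_F(x)$. Replace your argument by this elementary verification and \cref{diffPeps:i}--\cref{diffPeps:ii} are complete.

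The genuine gap is in \cref{diffPeps:iii}: your splitting of the $z$-integral is inverted, and it cannot work under the stated hypotheses. You use the crude bound $2\abs{E\triangle F}$ on the near region $B_\rho$ and the bound $\abs{z}P(E\triangle F)$ on the far region. But \cref{Hposrad,Hmoment1,Hint0} do not make $G$ integrable near the origin --- \cref{Hint0} only controls $\int_{B_1\setminus B_r}G$ and allows $G\propto\abs{z}^{-(n+s_0)}$, which is precisely the model case this lemma is meant to cover --- so $\int_{B_\rho}G_\eps$ may be $+\infty$ for every $\rho$, and your claimed estimate $\int_{B_{\rho/\eps}}G\le C(\eps/\rho)^{s_0}+\int_{B_1}G$ is false as stated ($\int_{B_1}G$ can be infinite; \cref{Hint0} bounds the integral over the annulus, not over the ball). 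Moreover your far-field term is $P(E\triangle F)\,Q(\rho/\eps)$, and without \cref{Hdec} (not assumed in this lemma) $Q$ has no quantitative decay, so no choice of $\rho$ can produce the exponents $1-s_0$ and $s_0$. The working split is the opposite one, as in the paper: on $B_R$ use $u_{E\triangle F}(z)\le\abs{z}P(E\triangle F)$ together with the bound $\int_{B_R}\abs{z}G_\eps(z)\dz\le C(R/\eps)^{1-s_0}$, which follows from \cref{Hint0} by a dyadic decomposition and holds for all $R>0$; on $\R^n\setminus B_R$ use $2\abs{E\triangle F}$ together with $\int_{\R^n\setminus B_R}G_\eps\le \frac{C}{\eps}(\eps/R)^{s_0}$ (from \cref{Hint0} for $R\le\eps$ and from $I_G^1$ for $R\ge\eps$). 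Optimizing $R=\abs{E\triangle F}/P(E\triangle F)$ then gives \cref{eq:difPe3} with no constraint relating $R$ and $\eps$, so your auxiliary reduction to the regime $\abs{E\triangle F}\le\eps P(E\triangle F)$ (legitimate in itself, via \cref{eq:difPe}) is not needed.
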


\begin{proof}
We decompose the proof in two steps.\\
\textit{Step 1. We establish $P_\eps(E)-P_\eps(F)\le P_\eps(E\triangle F)$.}
Let us note  for $A,B\subsq \R^n$,
\[
 \Phi_\eps(A,B)\coloneqq \iint_{A\times B} G_\eps(x-y) \dx \dy
\]
so that $P_\eps(E)=2\Phi_\eps(E,E^c)$. It is readily checked that
\[
 \begin{aligned}
  \Phi_\eps(E,E^c)-\Phi_\eps(F,F^c)
  &=\Phi_\eps(E\cap F,F\backslash E)+\Phi_\eps(E\backslash F, E^c\cap F^c)\\
  &\mathrel{\hphantom{=}}\hphantom{\Phi_\eps(E\cap F,F\backslash E)}-\Phi_\eps(E\cap F, E\backslash F) \hfill-\Phi_\eps(F\backslash E, F^c\cap E^c)\\
  &=\Phi_\eps(E\triangle F, (E\triangle F)^c)-2 \lt[\Phi_\eps(E\cap F, E\backslash F)+ \Phi_\eps(F\backslash E, F^c\cap E^c)\rt]\\
  &\le \Phi_\eps(E\triangle F, (E\triangle F)^c).
 \end{aligned}
\]
This concludes the first step.\medskip\\
\textit{Step 2. We deduce the different cases.}
Case~\cref{diffPeps:i} is a direct consequence of Step~1 and~\cref{boundpg}. If $G\in L^1(\R^n)$
then
\[
P_\eps(E)\le 2\norm{G_\eps}_{L^1(\R^n)}\abs{E}
\]
which gives~\cref{diffPeps:ii}.
For~\cref{diffPeps:iii}, let us write, for any $R>0$ and any $E\subsq \R^n$,
\[
P_{\eps}(E)
=\int_{\R^n\backslash B_R} G_\eps(z)\int_{\R^n} \abs{\chi_{E}(x+z)-\chi_{E}(x)}\dx\dz
+\int_{B_R}G_\eps(z)\int_{\R^n}\abs{\chi_{E}(x+z)-\chi_{E}(x)}\dx\dz.
\]
Using
\[
\int_{\R^n} \abs{\chi_{E}(x+z)-\chi_{E}(x)}\dx\le 2\abs{E}
\]
and
\[
\int_{\R^n} \abs{\chi_{E}(x+z)-\chi_{E}(x)}\dx\le \abs{z}P(E),
\]
we deduce
\begin{equation}\label{diffPeps:eq0}
\begin{aligned}
P_{\eps}(E)
&\le 2\abs{E}\int_{\R^n\backslash B_R} G_\eps(z)\dz
+P(E)\int_{B_R} \abs{z}G_\eps(z)\dz\\
&= \frac{2\abs{E}}{\eps}\int_{\R^n\backslash B_{R/\eps}} G(z)\dz
+P(E)\int_{B_{R/\eps}} \abs{z}G(z)\dz.
\end{aligned}
\end{equation}
Next, we claim that~\cref{Hint0} implies
\begin{equation}\label{diffPeps:claim1}
\int_{\R^n\setminus B_r} G(x)\dx \le \frac{C}{r^{s_0}},\qquad\forall r>0
\end{equation}
and
\begin{equation}\label{diffPeps:claim2}
\int_{B_r} \abs{x}G(x)\dx \le C r^{1-s_0},\qquad\forall r>0,
\end{equation}
for some $C=C(n,G)>0$.   It is of course enough to check these statements for either small or large~$r$. We start with~\cref{diffPeps:claim1}. Thanks to~\cref{Hint0}, it holds for small $r$. If instead  $r\ge 1$, 

\begin{equation*}\label{diffPeps:eq1}
\int_{\R^n\setminus B_r} G(x)\dx
\le \frac{1}{r} \int_{\R^n\setminus B_r} \abs{x}G(x)\dx
\le \frac{I^1_G}{r}
\le \frac{C}{r^{s_0}}.
\end{equation*}
We now turn to~\cref{diffPeps:claim2}. By~\cref{Hmoment1} it is enough to prove it for $r\in (0,1)$. In this case, we have
\begin{multline*}
\int_{B_r} \abs{x}G(x)\dx
=\sum_{k=0}^\infty \int_{B_{2^{-k}r}\setminus B_{2^{-(k+1)}r}} \abs{x}G(x)\dx
\le \sum_{k=0}^\infty \frac{r}{2^k} \int_{B_1\setminus B_{2^{-(k+1)}r}} G(x)\dx\\
\stackrel{\cref{diffPeps:claim1}}{\le} C\sum_{k=0}^\infty \frac{r}{2^k} \left(\frac{2^k}{r}\right)^{s_0}
= C r^{1-s_0}\sum_{k=0}^\infty \frac{1}{2^{k(1-s_0)}}
\le C r^{1-s_0},
\end{multline*}
 proving~\cref{diffPeps:claim2}.\\
Plugging~\cref{diffPeps:claim1,diffPeps:claim2} into~\cref{diffPeps:eq0} yields
\[
P_{\eps}(E)
\le C\lt(\frac{\abs{E}}{\eps}\left(\frac{\eps}{R}\right)^{s_0}
+P(E)\left(\frac{R}{\eps}\right)^{1-s_0}\rt).
\]
Finally choosing $R=\frac{\abs{E}}{P(E)}$, we get
\[
P_{\eps}(E)
\le C\left(\frac{\abs{E}}{\eps}\right)^{1-s_0}P(E)^{s_0}.
\]
 This concludes the proof of~\cref{diffPeps:iii}.
\end{proof}

We will use the following computation from~\cite[Lemma~2.3]{MP2021} to estimate the derivative
of the nonlocal perimeter under rescaling.

\begin{lem}\label{lem:derivpg}
Assume that $G$ satisfies~\cref{Hposrad,,Hmoment1,,Hmoment2}.
Then, for any set of finite perimeter $E\subsq\R^n$, the function $t\mapsto P_\eps(tE)$
is locally Lipschitz continuous in $(0,+\infty)$, and for almost every $t$, we have
\[
\begin{aligned}
\frac{\dd}{\dt} \left[P_\eps(tE)\right]
&= \frac{n}{t}P_\eps(tE)-\frac{1}{t}\wt{P}_{\eps}(tE),
\end{aligned}
\]
where $\wt{P}_{\eps}(E)$ is defined by 
\[
\wt{P}_{\eps}(E):= 2\int_E \int_{\partial^* E} G_\eps(x-y)\,(y-x)\cdot\nu_E(y)\dH^{n-1}_y\dx.
\]
\end{lem}

We now compute the first variation of the energy.

\begin{lem}\label{lem:firstvarf}
Assume that $G$ satisfies~\cref{Hposrad,,Hmoment1,,Hmoment2}.
Let $T\in C^1_c(\Rpn;\Rpn)$ be a compactly supported vector field, and let us define $f_t\coloneqq
\id_{\Rpn}+tT$.  Then for any set of finite perimeter $E\subsq\Rpn$, $\eps>0$,
$\gamma\in(0,1)$ and $\Lambda\ge 0$, the function $t\mapsto \calF_{\eps,\gamma}(f_t(E))$ is differentiable at $t=0$ with $\delta\calF_{\eps,\gamma}(E)[T]
\coloneqq \left[\ddt\calF_{\eps,\gamma}(f_t(E))\right]_{|t=0}$  given by
\[
\begin{aligned}
&\delta\calF_{\eps,\gamma}(E)[T]
=\int_{\partial^* E} \dvg_E T\dhn\\
&\qquad -2\gamma\left(\iint_{E\times E^\compl} \dvg T(x)G_\eps(x-y)
\dx\dy+\int_{\partial^* E}\int_E G_\eps(x-y)\left(T(x)-T(y)\right)\cdot\nu_E(y)\dx\dhn_y\right)
\end{aligned}
\]
where $\dvg_E T$ is the boundary divergence of $T$ on $E$, defined by
\[
\dvg_E T(x)\coloneqq \dvg T(x)-\nu_E(x)\cdot\grad T(x)\nu_E(x),
\quad\forall x\in\partial^* E.
\]
\end{lem}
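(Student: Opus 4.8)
The plan is to split $\calF_{\eps,\gamma}=P-\gamma P_\eps$ and treat the two pieces separately. For the perimeter, the differentiability of $t\mapsto P(f_t(E))$ at $0$ together with the formula $\delta P(E)[T]=\int_{\partial^*E}\dvg_E T\dhn$ is the classical first variation of the area functional and can be quoted directly (see e.g.~\cite{Mag2012}). All the work lies in the nonlocal term $P_\eps$.

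First I would pull the moving-set integral back to the fixed domain $E\times E^\compl$. For $\abs{t}$ small, $f_t=\id+tT$ is a $C^1$-diffeomorphism of $\R^n$ with Jacobian $J_t(a):=\det(\id+t\grad T(a))>0$, and being a global diffeomorphism it satisfies $f_t(E)^\compl=f_t(E^\compl)$; hence the change of variables $x=f_t(a)$, $y=f_t(b)$ in $P_\eps(f_t(E))=2\iint_{f_t(E)\times f_t(E^\compl)}G_\eps(x-y)\dx\dy$ yields
\[
P_\eps(f_t(E))=2\iint_{E\times E^\compl}G_\eps\big(f_t(a)-f_t(b)\big)\,J_t(a)\,J_t(b)\,\da\,\db.
\]
Differentiating under the integral sign at $t=0$ — using $J_0\equiv1$, the fact that the $t$-derivative at $0$ of $J_t(a)$ equals $\dvg T(a)$ (Jacobi's formula), and that of $G_\eps(f_t(a)-f_t(b))$ equals $\grad G_\eps(a-b)\cdot(T(a)-T(b))$ (justified for a.e.\ pair by writing the increment of $G_\eps$ along the segment from $a-b$ to $f_t(a)-f_t(b)$ as a line integral of $\grad G_\eps$, which makes sense since $a\ne b$ on $E\times E^\compl$) — gives both the differentiability of $t\mapsto P_\eps(f_t(E))$ at $0$ and
\[
\delta P_\eps(E)[T]=2\iint_{E\times E^\compl}\Big[\grad G_\eps(a-b)\cdot(T(a)-T(b))+G_\eps(a-b)\big(\dvg T(a)+\dvg T(b)\big)\Big]\da\,\db.
\]

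Next I would turn the two terms carrying a $b$-derivative into a boundary integral. For fixed $a$, a direct computation (using $\grad_b[G_\eps(a-b)]=-\grad G_\eps(a-b)$) gives the pointwise identity
\[
-\dvg_b\big[G_\eps(a-b)(T(a)-T(b))\big]=\grad G_\eps(a-b)\cdot(T(a)-T(b))+G_\eps(a-b)\,\dvg T(b),
\]
so the Gauss--Green theorem applied on the set of finite perimeter $E^\compl$, together with $\partial^*(E^\compl)=\partial^*E$ and $\nu_{E^\compl}=-\nu_E$, converts $\iint_{E\times E^\compl}[\cdots]\db\,\da$ into $\int_E\int_{\partial^*E}G_\eps(a-b)(T(a)-T(b))\cdot\nu_E(b)\dhn_b\,\da$; a Fubini exchange then moves the boundary variable outside. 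Combining this with the surviving bulk term $2\iint_{E\times E^\compl}G_\eps(a-b)\dvg T(a)\,\da\,\db$ and renaming $a\to x$, $b\to y$ yields
\[
\delta P_\eps(E)[T]=2\iint_{E\times E^\compl}G_\eps(x-y)\,\dvg T(x)\,\dx\,\dy+2\int_{\partial^*E}\int_E G_\eps(x-y)\big(T(x)-T(y)\big)\cdot\nu_E(y)\,\dx\,\dhn_y,
\]
and $\delta\calF_{\eps,\gamma}(E)[T]=\delta P(E)[T]-\gamma\,\delta P_\eps(E)[T]$ is precisely the claimed formula.

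The two genuinely delicate points are the differentiation under the integral and the Gauss--Green step. For the former, one needs an $L^1(E\times E^\compl)$-majorant of the $t$-derivative of the integrand, uniform for $t$ near $0$: on the region $\{\abs{a-b}\le1\}$ the singularity of $\grad G_\eps$ is absorbed by $\abs{T(a)-T(b)}\le\norm{\grad T}_\infty\abs{a-b}$ together with $\grad G\in L^1(B_1)$ (from $G\in W^{1,1}_{\loc}$), while on $\{\abs{a-b}>1\}$ one uses the compactness of $\spt T$, the finiteness of $P(E)$ (to control $z\mapsto\int\abs{\ind_E(a)-\ind_E(a-z)}\da\lesssim\abs{z}P(E)$), and the moment bounds $I_G^1<\infty$, $I_{\abs{\grad G}}^2<\infty$; the same inputs make the Fubini exchange legitimate. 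For the latter, for a.e.\ $a\in E$ the point $a$ belongs to the measure-theoretic interior of $E$, so $b\mapsto G_\eps(a-b)(T(a)-T(b))$ is $W^{1,1}_{\loc}$ in a neighbourhood of $\ol{E^\compl}$ and, by $I_{\abs{\grad G}}^2<\infty$ and $\spt T$ compact, decays fast enough at infinity for the divergence theorem on sets of finite perimeter to apply after a routine truncation and mollification reducing to Lipschitz fields. I expect this integrability bookkeeping — rather than any conceptual difficulty — to be the main obstacle; everything else is the standard "change variables, differentiate, integrate by parts" scheme.
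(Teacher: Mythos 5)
Your skeleton is the same as the paper's (quote the classical first variation of $P$; change variables in $P_\eps(f_t(E))$; differentiate; integrate by parts in the $y$-variable to create the boundary term), but the two steps you yourself flag as delicate are exactly where the argument does not close under \cref{Hposrad,Hmoment1,Hmoment2}, and your justifications there are not correct. The claim that $\grad G\in L^1(B_1)$ \enquote{from $G\in W^{1,1}_{\loc}$} is false: \cref{Hmoment2} only gives $G\in W^{1,1}_{\loc}(\Rpn\setminus\{0\})$, with the origin excluded, together with $I^2_{\abs{\grad G}}<\infty$. A kernel with $G(z)\simeq \abs{z}^{-n}$ near the origin is admissible, and for it even $\int_{B_1}\abs{z}\abs{\grad G(z)}\dz=\infty$; so the pointwise majorant you propose on $\{\abs{a-b}\le 1\}$ does not exist as stated (finiteness of $\iint_{E\times E^\compl}\abs{a-b}\,\abs{\grad G_\eps(a-b)}\dx\dy$ can only be rescued through \cref{boundpk} with $K=\abs{\cdot}\,\abs{\grad G_\eps}$, i.e.\ by bounding the integral, not the integrand). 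More importantly, dominated convergence would need a majorant \emph{uniform in $t$} of $\abs{\grad G_\eps(\xi_t(a,b))}\abs{T(a)-T(b)}$, where $\xi_t(a,b)$ moves along the segment; since no pointwise control of $\grad G$ near $0$ is assumed, no such majorant is available, and even the a.e.\ existence of the pointwise derivative of $t\mapsto G_\eps(f_t(a)-f_t(b))$ at $t=0$ is unclear for a merely $W^{1,1}_{\loc}$ kernel (Lebesgue points govern ball averages, not averages along the shrinking, $(a,b)$-dependent segments). The same failure of local integrability of $\abs{\cdot}\,\abs{\grad G_\eps}$ at the singular point $b=a$, which can lie in $\overline{E^\compl}$, blocks your per-$a$ application of Gauss--Green.

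The paper's proof avoids precisely this: it never differentiates pointwise under the integral. It writes the increment as $t\int_0^1\grad G(f_{st}(x)-f_{st}(y))\cdot(T(x)-T(y))\ds$, changes variables so that the deformation is moved onto the set and all error terms are bounded by $C\,I^2_{\abs{\grad G}}\,P(f_{st}(E))$ via \cref{boundpk} applied to $K=\abs{\cdot}\,\abs{\grad G}$ (a bound uniform in $s,t$), and then --- this is the device missing from your proposal --- proves both the convergence of the first-order term and the integration-by-parts identity by approximating $G$ with smooth, compactly supported radial kernels $G_k$ satisfying $\int_{\R^n}\abs{z}\abs{(G-G_k)(z)}\dz\to 0$ and $\int_{\R^n}\abs{z}^2\abs{\grad(G-G_k)(z)}\dz\to 0$: for $G_k$ the pointwise divergence identity and the divergence theorem on a set of finite perimeter are unproblematic, and the approximation errors are controlled uniformly by the same moment bounds. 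Your \enquote{change variables, differentiate, integrate by parts} scheme becomes correct once this approximation (or an equivalent substitute) is inserted; as written, the differentiation and Gauss--Green steps are genuinely unjustified under the stated hypotheses.
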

\begin{proof}
 Since the computation of the first variation of the perimeter is standard, see
 e.g.~\cite[Theorem~17.5]{Mag2012}, we only compute the first variation of $P_\eps$.  We show that
 (recall the notation~$T_x=T(x)$),
 \begin{multline*}
\left[\ddt\Per_\eps(f_t(E))\right]_{|t=0}
= 2\iint_{E\times E^\compl} \dvg T(x)G_\eps(x-y)\dx\dy\\
\qquad+2\int_{\partial^* E}\int_E G_\eps(x-y)\left(T_x-T_y\right)\cdot\nu_E(y)\dx\dhn_y.
\end{multline*}
Notice that using~\cref{boundpg} and the fact that $T$ is Lipschitz continuous,~\cref{Hposrad,,Hmoment1} imply that both terms on the right-hand side are well-defined. Since $\eps$ does not play any role we may assume without loss of generality that $\eps=1$.  We set  $ F_G(t)\coloneqq \frac{1}{2} P_1(f_t(E)) $. Note that by choosing $t_0\le 1/\norm{\grad T}_{L^\infty}$, $f_t$ is a diffeomorphism of $\R^n$ for
every $t$ such that $\abs{t}< t_0$.
In particular $f_t(E)$ is a set of finite perimeter (see e.g.~\cite[Proposition~17.1]{Mag2012}).
Thus, $F_G(t)$ is well-defined for every $t\in(-t_0,t_0)$. We then set (for the moment this is just a notation) 
\[
F'_G(0)\coloneqq \iint_{E\times E^\compl} \dvg T(x)G(x-y)\dx\dy+\int_{\partial^* E}\int_E G(x-y)\left(T_x-T_y\right)\cdot\nu_E(y)\dx\dhn_y.
\]
We claim that as $t\to 0$,
\begin{equation}\label{claimderiv}
 F_G(t) -F_G(0)-t F_G'(0)=o(t).
\end{equation}
This would show that $F_G$ is differentiable in $0$ with derivative $F'_G(0)$, concluding the proof.    
 Changing variables, for any $t$ small enough we have
\[
F_G(t)
=\iint_{E\times E^c} G(f_t(x)-f_t(y)) \det Df_t(x)\,\det Df_t(y)\dx\dy.
\]
Since $\det D f_t (x)=1+ t \dvg T(x) +O(t^2)$, we find 
\[
 F_G(t)=\iint_{E\times E^c} G(f_t(x)-f_t(y))(1+t\dvg T(x)+ t\dvg T(y)+ O(t^2))\dx\dy.  
\]
Notice that    by the reverse change of variables and~\cref{boundpg}, 
\[
 \iint_{E\times E^c} G(f_t(x)-f_t(y))\dx \dy\le C\iint_{f_t(E)\times f_t(E)^c} G(x-y) \dx \dy\le C P(f_t(E))\le C P(E).
\]
Therefore 
\[
 F_G(t)=\iint_{E\times E^c} G(f_t(x)-f_t(y))(1+t\dvg T(x)+ t\dvg T(y))\dx\dy + O(t^2).
\]
Now, using that 
\[
 G(f_t(x)-f_t(y))-G(x-y)=t\int_0^1 \nabla G(f_{st}(x)-f_{st}(y))\cdot (T_x-T_y) \ds
\]
 and the Lipschitz continuity of $T$, we have 
\[
\begin{aligned}
 &\lt|\iint_{E\times E^c} G(f_t(x)-f_t(y))\dvg T(x)\dx\dy - \iint_{E\times E^c} G(x-y)\dvg T(x)\dx\dy\rt|\\
 &\qquad\le C\abs{t} \int_0^1 \iint_{E\times E^c}|\nabla G(f_{st}(x)-f_{st}(y))||x-y|\dx \dy \ds\\
 &\qquad\le C\abs{t} \int_0^1 \iint_{E\times E^c}|\nabla G(f_{st}(x)-f_{st}(y))||f_{st}(x)-f_{st}(y)|\dx \dy \ds\\
 &\qquad\le C\abs{t} \int_0^1 \iint_{f_{st}(E)\times (f_{st}(E))^c}|\nabla G(x-y)||x-y|\dx \dy \ds\\
 &\qquad\le C I_{|\nabla G|}^2 \abs{t} \int_0^1 P(f_{st}(E)) \ds\le C I_{|\nabla G|}^2 \abs{t},
\end{aligned}
\]
where we used again~\cref{boundpk} but for the kernel $K=|\cdot| |\nabla G|$. Since the same holds with $\dvg T(x)$ replaced by $\dvg T(y)$, in order to prove 
~\cref{claimderiv} it is thus enough to show   
\begin{multline}\label{eq:toproveclaimderiv}
 \iint_{E\times E^c} G(f_t(x)-f_t(y)) \dx \dy- \iint_{E\times E^c} G(x-y) \dx \dy \\
 + t\lt(\iint_{E\times E^c} G(x-y)\dvg T(y)\dx\dy- \int_{\partial^* E}\int_E G(x-y)\left(T_x-T_y\right)\cdot\nu_E(y)\dx\dhn_y\rt)=o(t).
\end{multline}
Writing as above that 
\begin{multline*}
\iint_{E\times E^c} G(f_t(x)-f_t(y)) \dx \dy- \iint_{E\times E^c} G(x-y) \dx \dy \\
 =t \int_0^1 \int_{E\times E^c} \nabla G(f_{st}(x)-f_{st}(y))\cdot (T_x-T_y)  \dx \dy \ds 
\end{multline*}
we reduce it further to the proof of 
\begin{equation}\label{reducproofvar1}
 \lim_{t\to 0}\int_0^1 \int_{E\times E^c} \nabla G(f_{st}(x)-f_{st}(y))\cdot (T_x-T_y)  \dx \dy \ds = \int_{E\times E^c} \nabla G(x-y)\cdot (T_x-T_y)  \dx \dy 
\end{equation}
together with the integration by parts formula 
\begin{multline}\label{reducproofvar2}
 \int_{E\times E^c} \nabla G(x-y)\cdot (T_x-T_y) +G(x-y)\dvg T(y) \dx \dy\\
 = \int_{\partial^* E}\int_E G(x-y)\left(T_x-T_y\right)\cdot\nu_E(y)\dx\dhn_y.
\end{multline}
Assuming~\cref{reducproofvar1,,reducproofvar2}, we see that~\cref{eq:toproveclaimderiv} holds true,
which proves the lemma.
Notice that these identities would be easy to prove if $G$ were a smooth kernel with compact support. However,
since our assumptions on $G$ seem too weak to prove these directly we argue by approximation. Let~$G_k$ be a sequence of smooth compactly supported radial kernels with 
\[
 \lim_{k\to \infty} \int_{\R^n} |z| |(G-G_k)(z)|\dz=0 \qquad\qquad \textrm{and} \qquad\qquad \lim_{k\to \infty} \int_{\R^n} |z|^2 |\nabla (G-G_k)(z)|\dz=0.
\]
Since we assumed that $I^1_G+I^2_{|\nabla G|}<\infty$ it is not difficult to construct such a sequence. We start with~\cref{reducproofvar1}. For every fixed $s\in [0,1]$, we have 
\[
 \begin{aligned}
  &\lt|\int_{E\times E^c} \nabla G(f_{st}(x)-f_{st}(y))\cdot (T_x-T_y)  \dx \dy-\int_{E\times E^c} \nabla G_k(f_{st}(x)-f_{st}(y))\cdot (T_x-T_y)  \dx \dy\rt|\\
  &\qquad\le C \int_{E\times E^c} |(\nabla G- \nabla G_k)( f_{st}(x)-f_{st}(y))| |x-y| \dx \dy\\
  &\qquad\le C \int_{f_{st}(E)\times f_{st}(E)^c} |\nabla (G-G_k)( x-y)| |x-y| \dx \dy\\
  &\qquad\le C \lt(\int_{\R^n} |z|^2 |\nabla (G-G_k)(z)| \dz\rt) P(f_{st}(E))\\
  &\qquad\le C \int_{\R^n} |z|^2 |\nabla (G-G_k)(z)| \dz,
 \end{aligned}
\]
where we used~\cref{boundpk} with $K= |\cdot| |\nabla (G-G_k)|$ (which is radially symmetric). Integrating in $s$ and using a simple diagonal argument, this proves~\cref{reducproofvar1}. We now turn to~\cref{reducproofvar2}. Since 
\[-\dvg_y( G(x-y)(T_x-T_y))=\nabla G(x-y)\cdot (T_x-T_y)+ G(x-y) \dvg T(y),\]
the integration by parts formula~\cref{reducproofvar2} holds with $G$ replaced by $G_k$. By the previous computations it is therefore enough to observe that on the one hand
\[
 \lt|\int_{E\times E^c} G(x-y)\dvg T(y) \dx \dy-\int_{E\times E^c} G_k(x-y)\dvg T(y) \dx \dy\rt|\le C P(E) \int_{\R^n} |z| |(G-G_k)(z)|\dz
\]
and on the other hand,
\begin{multline*}\lt|\int_{\partial^* E}\int_E G(x-y)\left(T_x-T_y\right)\cdot\nu_E(y)\dx\dhn_y-\int_{\partial^* E}\int_E G_k(x-y)\left(T_x-T_y\right)\cdot\nu_E(y)\dx\dhn_y\rt|\\\le C \int_{\partial^* E} \int_{E} |x-y||(G-G_k)(x-y)| \dx\dhn_y\le C P(E)  \int_{\R^n} |z| |(G-G_k)(z)|\dz.
\end{multline*}
This proves the lemma.
\end{proof}

\subsection{Perimeter quasi-minimizing properties of minimizers}

 We recall from~\cite[(4.2)]{Peg2021} that using~\cref{boundpg} it follows that if $E$
 satisfies $\calF_{\eps,\gamma}(E)\le \calF_{\eps,\gamma}(B_1)$, then 
\begin{equation}\label{eq:boundpermin}
 P(E)\le P(B_1) +\frac{\gamma}{1-\gamma} \lt(P(B_1)-P_\eps(B_1)\rt)\le \frac{1}{1-\gamma} P(B_1).
\end{equation}

We now use the scaling properties given in~\cref{lem:derivpg} to prove the equivalence between
\cref{cminpb} and the unconstrained minimization problem
\begin{equation}\label{minpb}\tag{$\mathcal{P}'$}
\min~ \Big\{ \calF_{\eps,\gamma}(E)+\Lambda\big|\abs{E}-\abs{B_1}\big|~:~ E\subsq\R^n \text{
measurable}\Big\}
\end{equation}
if $\Lambda$ is large enough, not depending on $\eps$. As a consequence, minimizers of
\cref{cminpb} are $(\Lambda,r_0)$-minimizers of~$\calF_{\eps,\gamma}$.

\begin{prp}\label{prp:equivpbs}
Assume that $G$ satisfies~\cref{Hposrad,Hmoment1} and let $\gamma\in(0,1)$. There exists $C=C(n)>0$ such that  for every
$\gamma\in(0,1)$, $\eps>0$ and $\Lambda\ge C/(1-\gamma)$,  problems~\cref{cminpb} and
\cref{minpb} are equivalent, in the sense that~\cref{minpb} admits a minimizer if and only if~\cref{cminpb}
does, and their minimizers coincide. In particular, any minimizer of~\cref{cminpb} is a
$(\Lambda,r_0)$-minimizer of $\calF_{\eps,\gamma}$ for any $\Lambda\ge C/(1-\gamma)$ and any
$r_0>0$.
\end{prp}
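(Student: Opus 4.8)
The plan is to show that for $\Lambda$ large (independent of $\eps$), any candidate competitor for \cref{minpb} with $\abs{E}\neq\abs{B_1}$ can be strictly improved by rescaling to the correct volume, so that minimizers of \cref{minpb} automatically satisfy the volume constraint $\abs{E}=\abs{B_1}$; conversely, for volume-admissible sets the penalization term vanishes and \cref{minpb} reduces to \cref{cminpb}. The heart of the matter is a one-sided Lipschitz bound on $t\mapsto\calF_{\eps,\gamma}(tE)$ near $t=1$ that is \emph{uniform in $\eps$}. First I would restrict attention to sets $E$ with $\calF_{\eps,\gamma}(E)+\Lambda\big|\abs{E}-\abs{B_1}\big|\le\calF_{\eps,\gamma}(B_1)$ (otherwise $E$ is not a candidate minimizer), which via \cref{eq:boundpermin}-type reasoning forces a uniform bound $P(E)\le C(n,\gamma)$ and $\abs{E}\le C(n,\gamma)$, hence also $\abs{E}$ bounded below away from $0$ once $\Lambda$ is large (since $\big|\abs{E}-\abs{B_1}\big|$ must be small).

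Next I would analyze the rescaling $E_t\coloneqq tE$ for $t$ close to $1$, chosen so that $\abs{E_t}=t^n\abs{E}=\abs{B_1}$, i.e. $t=(\abs{B_1}/\abs{E})^{1/n}$. We have $P(tE)=t^{n-1}P(E)$ exactly, so $\frac{\dd}{\dt}P(tE)=\frac{n-1}{t}P(tE)$, and by \cref{lem:derivpg}, $\frac{\dd}{\dt}P_\eps(tE)=\frac{n}{t}P_\eps(tE)-\frac1t\wt P_\eps(tE)$. Using the elementary bound $\wt P_\eps(tE)\le 2 I^1_{G_\eps}\cdot(\text{something})$—more precisely, since $\abs{(y-x)\cdot\nu_E(y)}\le\abs{x-y}$, one gets $\big|\wt P_\eps(tE)\big|\le 2\int_{tE}\int_{\partial^*(tE)}\abs{x-y}G_\eps(x-y)\,\dhn_y\dx$, and by the same computation as in \cref{prp:boundpg} applied to the kernel $\abs{\cdot}G_\eps$ this is controlled by $\mathbb{K}_{1,n}I^2_{G_\eps}P(tE)$; here, crucially, $I^2_{G_\eps}=\eps I^2_G$ by scaling, so this term is actually \emph{small} in $\eps$, and in any case $P_\eps(tE)\le P(tE)$ by \cref{boundpg}. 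Combining, $\big|\frac{\dd}{\dt}\calF_{\eps,\gamma}(tE)\big|\le \frac{C(n)}{t}P(tE)$ for $t$ in a fixed neighborhood of $1$, uniformly in $\eps$, which after using the uniform perimeter bound $P(tE)\le C(n,\gamma)$ gives $\big|\calF_{\eps,\gamma}(E)-\calF_{\eps,\gamma}(tE)\big|\le C(n,\gamma)\abs{t-1}$. Since $\abs{t-1}\le C(n)\big|\abs{E}-\abs{B_1}\big|$ (as $\abs{E}$ is bounded above and below), we conclude
\[
\calF_{\eps,\gamma}(tE)\le\calF_{\eps,\gamma}(E)+\frac{C(n)}{1-\gamma}\big|\abs{E}-\abs{B_1}\big|.
\]
Tracking the constant carefully, the sharp statement is that $\big|\frac{\dd}{\dt}\calF_{\eps,\gamma}(tE)\big|_{t=1}\le \frac{C(n)}{1-\gamma}\abs{E}$-type bound holds, so that for $\Lambda> C(n)/(1-\gamma)$ the inequality is strict whenever $\abs{E}\neq\abs{B_1}$.

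With this in hand the equivalence follows formally. If $E^*$ minimizes \cref{minpb}, then $\abs{E^*}=\abs{B_1}$ (otherwise $tE^*$ strictly decreases the functional, contradiction), hence $E^*$ is admissible for \cref{cminpb} and, since every competitor for \cref{cminpb} is a competitor for \cref{minpb} with vanishing penalization, $E^*$ minimizes \cref{cminpb}. Conversely, if $E^*$ minimizes \cref{cminpb}, then for any measurable $F$, letting $t=(\abs{B_1}/\abs{F})^{1/n}$ (if $\abs{F}=0$ or $+\infty$ the inequality is trivial from the bounds above) we have $\calF_{\eps,\gamma}(E^*)\le\calF_{\eps,\gamma}(tF)\le\calF_{\eps,\gamma}(F)+\Lambda\big|\abs{F}-\abs{B_1}\big|$, so $E^*$ minimizes \cref{minpb}. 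Finally, the last assertion: given a minimizer $E$ of \cref{cminpb} and any competitor $F$ with $E\triangle F\csubset B_r(x)$, $0<r\le r_0$, we have $\calF_{\eps,\gamma}(E)\le\calF_{\eps,\gamma}(F)+\Lambda\big|\abs{F}-\abs{B_1}\big|=\calF_{\eps,\gamma}(F)+\Lambda\big|\abs{F}-\abs{E}\big|\le\calF_{\eps,\gamma}(F)+\Lambda\abs{E\triangle F}$, which is exactly \cref{def:almostmin}. The main obstacle is the uniform-in-$\eps$ control of the derivative of $P_\eps(tE)$; everything hinges on the scaling $I^2_{G_\eps}=\eps I^2_G$ together with $P_\eps\le P$, and on extracting the uniform perimeter and volume bounds from the energy competitor inequality with $B_1$.
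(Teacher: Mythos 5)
Your overall strategy is the same as the paper's: restrict to competitors whose penalized energy does not exceed $\calF_{\eps,\gamma}(B_1)$ (which gives the uniform perimeter bound of~\cref{eq:boundpermin}), rescale to fix the volume, and control the change of energy under dilation through~\cref{lem:derivpg}, concluding that for $\Lambda\ge C(n)/(1-\gamma)$ the penalization forces the volume constraint; the final deduction that minimizers of~\cref{cminpb} are $(\Lambda,r_0)$-minimizers via $\bigabs{\abs{F}-\abs{E}}\le\abs{E\triangle F}$ is also the paper's.

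There is, however, one step that fails as written: the bound on $\wt{P}_\eps(tE)$. You claim that $\bigabs{\wt{P}_\eps(tE)}$ is controlled by $\mathbb{K}_{1,n}\,I^2_{G_\eps}\,P(tE)$ with $I^2_{G_\eps}=\eps I^2_G$, ``so this term is actually small in $\eps$.'' This is false: for instance for $E=B_1$ one has $\wt{P}_\eps(tB_1)\to t^{n-1}P(B_1)$ as $\eps\to 0$ (consistently with $\tfrac{\dd}{\dt}P_\eps(tE)\to\tfrac{n-1}{t}P(tE)$ in~\cref{lem:derivpg}), so $\wt{P}_\eps$ is of order $P$, not $O(\eps)$. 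The error comes from invoking~\cref{prp:boundpg}, which controls the bulk--bulk integral $\iint_{E\times E^\compl}K(x-y)\dx\dy$, for the bulk--boundary integral $\int_{tE}\int_{\partial^*(tE)}\abs{x-y}G_\eps(x-y)\dhn_y\dx$; moreover $I^2_G$ is not assumed finite under~\cref{Hposrad,Hmoment1} alone. The correct (and sufficient) estimate is the one you gestured at first: for each $y\in\partial^*(tE)$, $\int_{\R^n}\abs{x-y}G_\eps(x-y)\dx=I^1_{G_\eps}=I^1_G=1/\mathbb{K}_{1,n}$ by the normalization~\cref{fixedmoment1}, hence $\bigabs{\wt{P}_\eps(tE)}\le \tfrac{2}{\mathbb{K}_{1,n}}P(tE)$, which together with $P_\eps(tE)\le P(tE)$ gives $\bigabs{\tfrac{\dd}{\dt}P_\eps(tE)}\le \tfrac{C(n)}{t}P(tE)$ uniformly in $\eps$ — exactly what the paper uses. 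With that one-line repair (no smallness in $\eps$ is available, nor needed), your argument goes through; as a minor point, the paper avoids your need for a lower bound on $\abs{E}$ by comparing $\abs{\lambda^{n-1}-1}$ with $\abs{\lambda^n-1}$ directly, but your route via the smallness of $\bigabs{\abs{E}-\abs{B_1}}$ for large $\Lambda$ is legitimate.
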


\begin{proof}
Let us set
\[
\Lambda_0 \coloneqq \frac{2}{1-\gamma}\left(1+\Big(n+\frac{2}{\mathbb{K}_{1,n}}\Big)\right)
\frac{P(B_1)}{\abs{B_1}}.
\]
Since 
\[
 \inf_{|E|=|B_1|} \calF_{\eps,\gamma}(E) \ge \inf_{E} \calF_{\eps,\gamma,\Lambda}(E),
\]
it is enough to prove that for $\Lambda\ge \Lambda_0$,  the converse inequality holds and that any
set minimizing $\calF_{\eps,\gamma,\Lambda}$ has measure $\omega_n$.
This in turn is equivalent to the claim that if $E$ is such that 
\[
\abs{E}\neq \om_n\qquad\qquad\text{ and }\qquad\qquad \calF_{\eps,\gamma,\Lambda}(E)\le \inf_{|E|=|B_1|} \calF_{\eps,\gamma}(E) 
\]
then $\Lambda< \Lambda_0$. Let  $E$ be such a set.  Recall that $E$ satisfies~\cref{eq:boundpermin}.
In the case $\abs{E} \leq \abs{B_1}/2$, testing the minimality of $E$ against $B_1$, we readily obtain
\[
\frac{\Lambda \om_n}{2} \leq \frac{P(B_1)}{1-\gamma},
\]
so that $\Lambda < \Lambda_0$.

Now, in the case $\abs{E}> \abs{B_1}/2$, let $\lambda>0$ be such that $|\lambda E|=|B_1|$. In particular, $\lambda^n>1/2$.
We then have 
\[
 P(E)-\gamma P_\eps(E)+\Lambda| |E|-|B_1||= \calF_{\eps,\gamma,\Lambda}(E)\le \calF_{\eps,\gamma}(\lambda E)= \lambda^{n-1} P(E)-\gamma P_\eps(\lambda E).
\]
Reorganizing terms we find 
\begin{equation}\label{proof:Lambda1}
 \Lambda \omega_n \lambda^{-n} |1-\lambda^n|\le (\lambda^{n-1}-1) P(E)+ \gamma |P_\eps(E)-P_\eps(\lambda E)|.
\end{equation}
By~\cref{lem:derivpg} and~\cref{boundpg}, for any 
$t>0$ we have
\[
\abs*{\ddt \left[P_\eps(tE)\right]}
\le \frac{1}{t}\left(nP_\eps(tE)+\abs{\wt{P}_{G_\eps}(tE)}\right)
\le \frac{1}{t}\left(nP(tE)+2P(tE)I_G^1\right)
\le t^{n-2}\left(n+\frac{2}{\mathbb{K}_{1,n}}\right),
\]
thus
\[
\abs{P_\eps(E)-P_\eps(\lambda E)}\le
\abs*{\int_{\lambda}^1 \ddt\left[P_\eps(tE)\right]\dt}
\le C_1 \abs{\lambda^{n-1}-1} P(E),
\]
where $C_1\coloneqq \left(n+\frac{2}{\mathbb{K}_{1,n}}\right)$. Inserting this into~\cref{proof:Lambda1} and using~\cref{eq:boundpermin}, this leads to 
\[
  \Lambda \omega_n \lambda^{-n}|1-\lambda^n|\le \frac{1}{1-\gamma}\lt(1 +  C_1\gamma \rt)P(B_1) |\lambda^{n-1}-1|.
\]
Since $1/2<\lambda^{-n}$ and $|\lambda^{n-1}-1|< |\lambda^n-1|$, we conclude that $\Lambda< \Lambda_0$.
\end{proof}

We recall the following elementary scaling properties of the energy which we will heavily use in the paper.

\begin{prp}\label{prp:rescaling}
For any set of finite perimeter $E$, any $\eps>0$ and any $r>0$ we have
\[
\calF_{\eps,\gamma}(E)=r^{n-1}\calF_{\eps/r,\gamma}(E/r).
\]
In particular $E$ is a $(\Lambda,r_0)$-minimizer of $\calF_{\eps,\gamma}$ if and only if $E/r$ is a
$(\Lambda r,\frac{r_0}{r})$-minimizer of $\calF_{\eps/r,\gamma}$.
\end{prp}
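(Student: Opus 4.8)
The plan is to verify the identity by a direct change of variables in each of the two terms composing $\calF_{\eps,\gamma}$, and then to transfer the minimality statement by transporting competitors under dilation.

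First I would recall that the Euclidean perimeter is $(n-1)$-homogeneous under dilations, i.e. $P(E)=r^{n-1}P(E/r)$ for every $r>0$, so that only the nonlocal term needs attention. Starting from $P_{\eps/r}(E/r)=2\int_{(E/r)\times(E/r)^\compl} G_{\eps/r}(x-y)\dx\dy$, I would substitute $x=x'/r$, $y=y'/r$ (so $\dx\dy=r^{-2n}\dx'\dy'$, with $x'$ ranging over $E$ and $y'$ over $E^\compl$) and use the defining relation $G_\eps(z)=\eps^{-(n+1)}G(\eps^{-1}z)$ to compute $G_{\eps/r}\big((x'-y')/r\big)=r^{n+1}G_\eps(x'-y')$. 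Combining the powers of $r$ gives $P_{\eps/r}(E/r)=r^{1-n}P_\eps(E)$, hence $P_\eps(E)=r^{n-1}P_{\eps/r}(E/r)$. Inserting both homogeneities into $\calF_{\eps,\gamma}=P-\gamma P_\eps$ yields the claimed identity $\calF_{\eps,\gamma}(E)=r^{n-1}\calF_{\eps/r,\gamma}(E/r)$.

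For the second assertion I would argue that $(\Lambda,r_0)$-minimality is preserved under dilation by transporting competitors. Given a competitor $F'$ for $E/r$ with $(E/r)\triangle F'\csubset B_\rho(x')$ and $0<\rho\le r_0/r$, the set $F\coloneqq rF'$ is admissible for $E$ since $E\triangle F\csubset B_{r\rho}(rx')$ with $0<r\rho\le r_0$. Writing the almost-minimality inequality $\calF_{\eps,\gamma}(E)\le\calF_{\eps,\gamma}(F)+\Lambda\abs{E\triangle F}$, applying the scaling identity to both energies, and using $\abs{E\triangle F}=r^n\abs{(E/r)\triangle F'}$, one divides through by $r^{n-1}$ to obtain $\calF_{\eps/r,\gamma}(E/r)\le\calF_{\eps/r,\gamma}(F')+\Lambda r\,\abs{(E/r)\triangle F'}$; that is, $E/r$ is a $(\Lambda r,r_0/r)$-minimizer of $\calF_{\eps/r,\gamma}$. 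The converse implication follows by the same transport argument in the other direction (equivalently, applying what was just proved with $r$ replaced by $1/r$ and $\eps$ by $\eps/r$).

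This is entirely elementary and I do not expect any real obstacle; the only point requiring care is the bookkeeping of the exponents of $r$ in the nonlocal term. The rescaling $G_\eps(z)=\eps^{-(n+1)}G(\eps^{-1}z)$ carries an exponent $n+1$ rather than $n$, which is exactly what makes $P_\eps$ scale like a perimeter (i.e. be $(n-1)$-homogeneous) rather than like a volume-type term, and getting this factor right is what the whole computation hinges on.
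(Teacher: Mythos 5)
Your proof is correct: the change of variables giving $P_{\eps/r}(E/r)=r^{1-n}P_\eps(E)$ (the exponent $n+1$ in $G_\eps$ is indeed the crucial bookkeeping point) and the transport of competitors under dilation are exactly the elementary argument the paper has in mind, which is why it states \cref{prp:rescaling} without proof.
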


We now prove that $(\Lambda,r_0)$-minimizers of $\calF_{\eps,\gamma}$ are quasi-minimizers of the
perimeter and thus have density bounds which are uniform in $\eps$. 

\begin{prp}[Weak quasi-minimality]\label{prp:wquasimin}
Assume that $G$ satisfies~\cref{Hposrad,Hmoment1} and let $\gamma\in(0,1)$, $\eps>0$, $\Lambda>0$
and $r_0>0$ with $\Lambda r_0\le 1-\gamma$.
Then, for  any $(\Lambda,r_0)$-minimizer $E$ of $\calF_{\eps,\gamma}$ and every set $F$ with
$E\triangle F\csubset B_r(x)$ with $0< r \le r_0$ we have 
\begin{equation}\label{wquasimin:ineq}
P(E; B_r(x)) \le \frac{4}{1-\gamma} P(F; B_r(x)).
\end{equation}
As a consequence, there exists $C= C(n)>0$  such that for every $x\in \partial E$ and every $0<r\le
r_0$,
\begin{equation}\label{eq:densityestim}
\left(\frac{1-\gamma}{4}\right)^n \le \frac{\abs{E\cap B_r(x)}}{r^n}
\le 1-\left(\frac{1-\gamma}{4}\right)^n
\quad\text{ and }\quad
 \frac{(1-\gamma)^{n-1}}{C} \le \frac{P(E;B_r(x))}{r^{n-1}} \le \frac{C}{1-\gamma}.
\end{equation}
In particular, we have
\begin{equation}\label{equivtopoboundary}
\calH^{n-1}(\partial E\setminus \partial^* E)=0.
\end{equation}
\end{prp}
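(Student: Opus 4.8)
The plan is to first establish the weak quasi-minimality inequality \cref{wquasimin:ineq}, then deduce the density estimates \cref{eq:densityestim} by comparison with balls, and finally obtain \cref{equivtopoboundary} as a standard consequence. For \cref{wquasimin:ineq}: fix a competitor $F$ with $E\triangle F\csubset B_r(x)$ and $0<r\le r_0$. Since $E$ is a $(\Lambda,r_0)$-minimizer, $\calF_{\eps,\gamma}(E)\le\calF_{\eps,\gamma}(F)+\Lambda\abs{E\triangle F}$, which reads
\[
P(E)-\gamma P_\eps(E)\le P(F)-\gamma P_\eps(F)+\Lambda\abs{E\triangle F}.
\]
Rearranging, $P(E)\le P(F)+\gamma\bigl(P_\eps(E)-P_\eps(F)\bigr)+\Lambda\abs{E\triangle F}$. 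I would bound $P_\eps(E)-P_\eps(F)$ using \cref{lem:diffPeps}\cref{diffPeps:i}, namely $P_\eps(E)-P_\eps(F)\le P(E\triangle F)$; and since $E\triangle F\csubset B_r(x)$, one has $P(E\triangle F)\le P(E;B_r(x))+P(F;B_r(x))$ (the isoperimetric-type inequality $P(A)\le P(E;A^{(1)})+P(F;A^{(1)})$ for $A=E\triangle F$ compactly contained in $B_r(x)$; this is standard, e.g.\ \cite{Mag2012}). Also $\abs{E\triangle F}\le\om_n r^n$ and, since $E$ and $F$ agree outside $B_r(x)$, $P(E)-P(F)=P(E;B_r(x))-P(F;B_r(x))$. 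Plugging in,
\[
P(E;B_r(x))\le P(F;B_r(x))+\gamma\bigl(P(E;B_r(x))+P(F;B_r(x))\bigr)+\Lambda\om_n r^n,
\]
so $(1-\gamma)P(E;B_r(x))\le(1+\gamma)P(F;B_r(x))+\Lambda\om_n r^n$. Using $P(F;B_r(x))\ge c_n r^n$-type lower bounds is not available in general, but instead I absorb the $\Lambda r^n$ term: choosing $r\le r_0$ with $\Lambda r_0\le1-\gamma$ and using $P(F;B_r(x))\gtrsim$ (a lower bound coming from the fact that, when $E\triangle F$ is nontrivial in $B_r(x)$, comparing with $F=E$ gives $P(E;B_r(x))\le P(F;B_r(x))+\Lambda\om_n r^n$ trivially, and otherwise the estimate is vacuous)—more cleanly, I note $\Lambda\om_n r^n\le\Lambda r_0\cdot\om_n r^{n-1}\cdot r/r_0\le(1-\gamma)\om_n r^{n-1}$ which by the perimeter lower density bound for $E$ (bootstrapped, or by noting we may first prove the weaker inequality $P(E;B_r)\le CP(F;B_r)+C r^{n-1}$ and then run the density argument) is controlled. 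The cleanest route, which I would follow, is: first derive $(1-\gamma)P(E;B_r(x))\le 2P(F;B_r(x))+(1-\gamma)\om_n r^{n-1}$ using $\Lambda r\le1-\gamma$ and $r^n\le r^{n-1}r_0$; then take $F$ to be $E\setminus B_\rho(x)$ or $E\cup B_\rho(x)$ for $\rho<r$ to get, via the standard argument in \cite[Section 21]{Mag2012} or \cite[Theorem 5.6]{Fus2015}, that the extra $\om_n r^{n-1}$ term can be reabsorbed into a constant multiple of $P(F;B_r(x))$, yielding the stated constant $4/(1-\gamma)$ after optimizing.

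For the density estimates \cref{eq:densityestim}, I would apply the classical argument for $\Lambda'$-minimizers / quasi-minimizers of the perimeter, which only uses an inequality of the form \cref{wquasimin:ineq}. Specifically, let $m(\rho)\coloneqq\abs{E\cap B_\rho(x)}$. Comparing $E$ with $F=E\setminus B_\rho(x)$, for a.e.\ $\rho<r$ one has $E\triangle F\csubset B_r(x)$ (for $\rho$ slightly less than $r$), and \cref{wquasimin:ineq} together with $P(E\setminus B_\rho(x);B_r(x))\le P(E;B_\rho(x)^{\compl}\cap B_r(x))+\Hn(E\cap\partial B_\rho(x))$ gives a differential inequality $m(\rho)^{(n-1)/n}\le C\,m'(\rho)$ by the isoperimetric inequality applied to $E\cap B_\rho(x)$. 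Integrating from $0$ yields $m(\rho)\ge c_n\bigl((1-\gamma)/4\bigr)^{?}\rho^n$; the symmetric comparison with $F=E\cup B_\rho(x)$ gives the upper bound on $\abs{E\cap B_\rho(x)}/\rho^n$. Once both volume density bounds hold, the perimeter density bounds \cref{eq:densityestim} follow: the upper bound is immediate from \cref{wquasimin:ineq} with $F=E\setminus B_r(x)$ (or $E\cup B_r(x)$), comparing $P(E;B_r(x))$ with $\Hn(\partial B_r(x))=n\om_n r^{n-1}$ plus a volume-density term; the lower bound follows from the relative isoperimetric inequality in the ball applied to $E\cap B_r(x)$, using that $\abs{E\cap B_r(x)}$ and $\abs{B_r(x)\setminus E}$ are both $\gtrsim r^n$. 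All of this is textbook once \cref{wquasimin:ineq} is in hand — I would simply cite \cite[Theorem 21.11]{Mag2012} or \cite{Fus2015} after verifying our inequality matches their hypotheses.

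Finally, \cref{equivtopoboundary} follows from the density estimates by a standard measure-theoretic argument: at every point $x\in\partial E$, the lower perimeter density $\liminf_{r\to0}P(E;B_r(x))/r^{n-1}\ge(1-\gamma)^{n-1}/C>0$, so by the upper density theorem for the measure $\Hn\res\partial E$ (since $P(E;\cdot)=\Hn\res\partial^*E$), the set $\partial E\setminus\partial^*E$ has $\Hn$-measure zero — indeed, any point of positive lower density for a Radon measure comparable to $\Hn\res\partial^*E$ must lie in the support, and $\Hn(\partial E\setminus\partial^*E)>0$ would contradict $\Hn(\partial^*E\cap B_r(x))\le C r^{n-1}$ via a Vitali covering argument (this is exactly \cite[Theorem 21.11]{Mag2012}).

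The main obstacle I anticipate is getting the precise constant $4/(1-\gamma)$ in \cref{wquasimin:ineq}: the naive argument gives $(1+\gamma)/(1-\gamma)$ times $P(F;B_r(x))$ plus a lower-order term $\Lambda\om_n r^n$, and the work is in cleanly absorbing that additive term using $\Lambda r_0\le1-\gamma$ without circularity (one typically first proves a version with an additive $C r^{n-1}$, runs the density lower bound, and then reabsorbs). The factor-of-$2$ slack between $(1+\gamma)/(1-\gamma)$ and $4/(1-\gamma)$ is presumably exactly the room reserved for this absorption step. Everything downstream — density estimates and \cref{equivtopoboundary} — is a direct invocation of the classical theory once the quasi-minimality inequality is established.
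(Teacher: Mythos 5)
Your reduction to $P(E;B_r(x))\le P(F;B_r(x))+\gamma P(E\triangle F)+\Lambda\abs{E\triangle F}$ via \cref{eq:difPe}, \cref{boundpg} and $P(E\triangle F)\le P(E;B_r(x))+P(F;B_r(x))$ is exactly the paper's starting point, but your handling of the volume term has a genuine gap. You bound $\Lambda\abs{E\triangle F}\le\Lambda\om_n r^n\le(1-\gamma)\om_n r^{n-1}$ and then claim this additive term \enquote{can be reabsorbed into a constant multiple of $P(F;B_r(x))$} after running a density argument. This cannot work for the statement as written: \cref{wquasimin:ineq} must hold for \emph{every} competitor $F$ with $E\triangle F\csubset B_r(x)$ and arbitrary $x\in\R^n$, and for such $F$ there is no lower bound on $P(F;B_r(x))$ at all (it may be arbitrarily small, even zero), so an inequality with an additive $Cr^{n-1}$ remainder is strictly weaker than the multiplicative one, and density estimates for $E$ do not bridge the gap -- they control $P(E;B_r(x))$, not $P(F;B_r(x))$. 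In other words, the scheme \enquote{prove the additive version, derive density bounds, then upgrade} does not recover \cref{wquasimin:ineq}; the circularity you yourself flag is not resolved.

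The paper's fix is a single extra observation, borrowed from \cite[Remark~21.7]{Mag2012}: after rescaling to $r=1$ and $x=0$ (so that, by \cref{prp:rescaling}, the effective constant becomes $\Lambda r\le\Lambda r_0\le 1-\gamma$), the isoperimetric inequality combined with $\abs{E\triangle F}\le\om_n$ gives $\abs{E\triangle F}=\abs{E\triangle F}^{1/n}\abs{E\triangle F}^{1-1/n}\le\frac{1}{n}P(E\triangle F)$, i.e.\ the volume penalty is itself converted into a perimeter term. One then gets $P(E;B_1)\le P(F;B_1)+(\gamma+\Lambda/n)\bigl(P(E;B_1)+P(F;B_1)\bigr)$, and since $\Lambda/n\le(1-\gamma)/2$ (as $n\ge 2$), absorbing $P(E;B_1)$ on the left yields $P(E;B_1)\le\frac{4}{1-\gamma}P(F;B_1)$ directly, with no additive remainder and no bootstrap. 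Your treatment of the downstream steps -- deducing \cref{eq:densityestim} and \cref{equivtopoboundary} from \cref{wquasimin:ineq} via \cite[Theorem~5.6]{Fus2015} or \cite[Theorem~21.11]{Mag2012} -- matches the paper and is fine; the only missing ingredient is this isoperimetric absorption step.
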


\begin{proof}
We only prove~\cref{wquasimin:ineq} since it is standard that weak quasi-minimality implies density
upper and lower bounds (see~\cite[Theorem~5.6]{Fus2015}), which then imply~\cref{equivtopoboundary}. To obtain the correct scaling in $\gamma$, one can repeat the proof in 
\cite[Theorem~21.11]{Mag2012}. By scaling and translation, we may assume that $r=1$ and $x=0$.
Testing the $(\Lambda,r_0)$-minimality of $E$ against $F$, we have 
\begin{align*}
P(E;B_1)
&~\le~\, P(F;B_1)+\gamma \left(P_\eps(E)-P_\eps(F)\right)+\Lambda\abs{E\triangle F}\\
&\stackrel{\mathclap{\cref{eq:difPe}\&\cref{boundpg}}\vphantom{\big|}}{~\le~}
\,P(F;B_1)+\gamma P(E\triangle F)+\Lambda\abs{E\triangle F}.
\end{align*}
We now argue as in~\cite[Remark~21.7]{Mag2012} and use  the isoperimetric
inequality to infer
\[
\abs{E\triangle F}
=\abs{E\triangle F}^{\frac{1}{n}}\abs{E\triangle F}^{1-\frac{1}{n}}
\le \frac{1}{n}P(E\triangle F).
\]
We thus find 
\[
 P(E;B_1)\le P(F;B_1)+\lt(\gamma+ \frac{\Lambda}{n}\rt)P(E\triangle F)\le P(F;B_1)+\lt(\gamma+ \frac{\Lambda}{n}\rt)(P(E;B_1)+P(F;B_1)).
\]
Rearranging terms and using that $\Lambda/n\le (1-\gamma)/2$ yields~\cref{wquasimin:ineq}.
\end{proof}
\begin{rmk}\label{rem:reducE}
 Thanks to~\cref{equivtopoboundary} when $E$ is a $(\Lambda,r_0)$-minimizer of $\calF_{\eps,\gamma}$
 with $\Lambda r_0\le 1-\gamma$, we do not distinguish anymore between $\partial E$ and
 $\partial^* E$ when integrating.
\end{rmk}

Under hypothesis~\cref{Hint0} we prove that $(\Lambda,r_0)$-minimizers of $\calF_{\eps,\gamma}$ are also almost-minimizers at scales which are small compared to $\eps$. 
\begin{prp}\label{prp:almostregsmallscale}
 Assume that $G$ satisfies~\cref{Hposrad,Hmoment1,Hint0}. Then there exists $C=C(n,G,\gamma)>0$ such
 that for every $\gamma\in(0,1)$, $\eps>0$, $\Lambda>0$ and $r_0>0$ with $\Lambda r_0\le 1-\gamma$,
 every $(\Lambda,r_0)$-minimizer $E$ of~$\calF_{\eps,\gamma}$ 
 and every set $F$ with $E\triangle F\csubset B_r(x)$ and $r\le r_0$, we have 
\[
P(E;B_r(x))
\le P(F;B_r(x))+\left(\frac{C}{\eps^{1-s_0}}\right)r^{n-s_0} +\Lambda |E\triangle F|.
\]
\end{prp}
\begin{proof}
We may assume that $P(F;B_r(x))\le P(E;B_r(x))$ otherwise there is nothing to prove. Arguing as
above using the $(\Lambda,r_0)$-minimality of $E$ we have 
 \begin{equation*}
\begin{aligned}
P(E;B_r(x))
&\le P(F;B_r(x))+\gamma\big(P_\eps(E)-P_\eps(F)\big)
+\Lambda |E\triangle F |\\
&\stackrel{\mathclap{\cref{eq:difPe3}}}{\le} P(F;B_r(x))+C\gamma\left(\frac{\abs{E\triangle F}}{\eps}\right)^{1-s_0}
P(E\triangle F)^{s_0} +\Lambda\abs{E\triangle F}
\end{aligned}
\end{equation*}
Then using $\abs{E\triangle F}\leq \abs{B_r(x)}=\om_n r^n$ and
\[
P(E\triangle F)\leq P(E;B_r(x))+P(F;B_r(x))\leq 2P(E;B_r(x))\leq C r^{n-1}
\]
by~\cref{eq:densityestim} yields the result.
\end{proof}
\begin{rmk}\label{rmk:regsmallscales}
~\cref{prp:almostregsmallscale} indeed yields classical almost-minimality for the perimeter at scales smaller than $\eps$ since letting $r= \eps \hat{r}$ and   $E=x+\eps \hat{E}$,
 we find for every $\hat{F}\triangle \hat{E}\csubset B_{\hat{r}}$,
 \[
  P(\hat{E};B_{\hat{r}})
\le P(\hat{F};B_{\hat{r}})+C \hat{r}^{n-s_0} +\Lambda \eps |\hat{E}\triangle \hat{F}|.
 \]

\end{rmk}

\subsection{Basic properties of the excess}

The cylindrical excess and spherical excess are respectively defined in~\cref{dfn:sphericalexcess}
and~\cref{dfn:cylindricalexcess}.
We refer to~\cite[Chapter~22.1]{Mag2012} for more details on the excess, however we recall two basic
properties that we use extensively in the rest of the paper.

\begin{prp}[Scaling properties]\label{prp:scaleexcess}
 Let $E\subsq\R^n$ be a set of finite perimeter, $x\in\partial E$, $\nu\in\bbsn$ and $0<r<R$. Then we
have
\[
\e(E,x,r,\nu) \le \left(\frac{R}{r}\right)^{n-1}\e(E,x,R,\nu)\qquad\text{ and }
\]
In addition, setting $E_{x,r}\coloneqq r^{-1}(E-x)$ we have
\[
\e(E_{x,r},0,1,\nu) =\e(E,x,R,\nu).
\]
\end{prp}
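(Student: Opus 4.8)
The statement to prove is Proposition (Scaling properties of the excess): for a set of finite perimeter $E$, $x\in\partial E$, $\nu\in\bbsn$, $0<r<R$,
\[
\e(E,x,r,\nu)\le\left(\frac{R}{r}\right)^{n-1}\e(E,x,R,\nu),
\]
and with $E_{x,r}=(E-x)/r$, $\e(E_{x,r},0,1,\nu)=\e(E,x,R,\nu)$ — wait, this last display is evidently a typo in the excerpt and should read $\e(E_{x,r},0,1,\nu)=\e(E,x,r,\nu)$. So there are really two assertions: a scaling identity and a monotonicity-up-to-a-factor inequality.
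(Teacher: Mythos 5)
Your submission stops at the point where a proof would have to begin. Spotting that the second display should read $\e(E_{x,r},0,1,\nu)=\e(E,x,r,\nu)$ (rescaling by $r$, not $R$) is a fair observation, and splitting the statement into a monotonicity inequality and a scaling identity is the right way to organize things — but neither assertion is then actually proved, so as it stands there is no argument to assess. (The paper itself gives no proof either, treating the proposition as standard and pointing to~\cite[Chapter~22.1]{Mag2012}, so the burden on you was precisely to supply the two short computations.)

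Concretely, what is missing: for the inequality, you need to use that $\cy(x,r,\nu)\subsq\cy(x,R,\nu)$ for $r<R$ and that the integrand $\tfrac{1}{2}\abs{\nu-\nu_E}^2$ is nonnegative, so that
\[
\e(E,x,r,\nu)=\frac{1}{r^{n-1}}\int_{\partial^* E\cap \cy(x,r,\nu)}\frac{\abs{\nu-\nu_E}^2}{2}\dhn
\le \frac{1}{r^{n-1}}\int_{\partial^* E\cap \cy(x,R,\nu)}\frac{\abs{\nu-\nu_E}^2}{2}\dhn
=\Big(\frac{R}{r}\Big)^{n-1}\e(E,x,R,\nu).
\]
For the identity, you need the change of variables $y=x+rz$: it maps $\partial^* E_{x,r}\cap\cy(0,1,\nu)$ onto $\partial^* E\cap\cy(x,r,\nu)$, preserves the outer normal, $\nu_{E_{x,r}}(z)=\nu_E(x+rz)$, and scales the measure, $\calH^{n-1}(r\,A)=r^{n-1}\calH^{n-1}(A)$, whence the two normalized integrals coincide. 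Both steps are elementary, but they are the entire content of the proposition and must be written out for the proposal to count as a proof.
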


Note that this property holds for the spherical excess as well.

\begin{prp}[Changes of direction]\label{prp:changedir}
Let $\gamma\in(0,1)$ and $\eps>0$.  There exists $C=C(n,\gamma)>0$ such that for every
$(\Lambda,r_0)$-minimizer $E$ of $\calF_{\eps,\gamma}$ with $\Lambda r_0\le 1-\gamma$, every
$\nu,\nu_0\in\bbsn$, $x\in\partial E$ and $r>0$ such that $\sqrt{2} r \le r_0$, we have
\[
\e(E,x,r,\nu)\le C\left(\e(E,x,\sqrt{2}r,\nu_0)+\abs{\nu-\nu_0}^2\right).
\]
\end{prp}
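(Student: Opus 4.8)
The plan is to compare integrals over nested regions and then invoke the perimeter upper bound from \cref{prp:wquasimin}. The key geometric observation is the chain of inclusions
\[
\cy(x,r,\nu)\subset B_{\sqrt{2} r}(x)\subset \cy(x,\sqrt{2} r,\nu_0),
\]
valid for \emph{any} $\nu,\nu_0\in\bbsn$: a point $x+y+t\nu$ of the first cylinder satisfies $|y+t\nu|^2=|y|^2+t^2<2r^2$ (using $y\perp\nu$ and $|\nu|=1$), while a point of $B_{\sqrt{2} r}(x)$ has both its component along $\nu_0$ and its component in $\nu_0^\perp$ of length $<\sqrt{2} r$. In particular $\partial^* E\cap\cy(x,r,\nu)$ is contained both in $\partial^* E\cap\cy(x,\sqrt{2} r,\nu_0)$ and in $\partial^* E\cap B_{\sqrt{2} r}(x)$.

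Next I would insert the elementary inequality $|\nu-\nu_E(y)|^2\le 2|\nu-\nu_0|^2+2|\nu_0-\nu_E(y)|^2$ into the definition~\cref{eq:defcylexcess} of $\e(E,x,r,\nu)$, which splits $r^{n-1}\,\e(E,x,r,\nu)$ into two pieces. For the piece carrying $|\nu_0-\nu_E(y)|^2$, enlarging the domain of integration from $\cy(x,r,\nu)$ to $\cy(x,\sqrt{2} r,\nu_0)$ (allowed by the inclusion above) bounds it by $2(\sqrt{2} r)^{n-1}\,\e(E,x,\sqrt{2} r,\nu_0)$. For the piece carrying $|\nu-\nu_0|^2$, which is constant in $y$, what remains is $2|\nu-\nu_0|^2\,\Hn(\partial^* E\cap\cy(x,r,\nu))\le 2|\nu-\nu_0|^2\,P(E;B_{\sqrt{2} r}(x))$.

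The last ingredient is that, since $E$ is a $(\Lambda,r_0)$-minimizer with $\Lambda r_0\le 1-\gamma$ and $\sqrt{2} r\le r_0$, the density estimate~\cref{eq:densityestim} may be applied at scale $\sqrt{2} r$ and yields $P(E;B_{\sqrt{2} r}(x))\le \frac{C(n)}{1-\gamma}(\sqrt{2} r)^{n-1}$. Adding the two pieces and dividing by $r^{n-1}$ then gives $\e(E,x,r,\nu)\le C(n,\gamma)\bigl(\e(E,x,\sqrt{2} r,\nu_0)+|\nu-\nu_0|^2\bigr)$ after absorbing the powers of $\sqrt{2}$ and the factor $(1-\gamma)^{-1}$ into the constant. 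There is no genuine obstacle here; the only points requiring a little care are verifying the two geometric inclusions and observing that it is at scale $\sqrt{2} r$, not $r$, that the density bound is needed — which is precisely why the hypothesis is phrased as $\sqrt{2} r\le r_0$.
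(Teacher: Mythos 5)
Your proposal is correct and follows essentially the same route as the paper's proof: the splitting $|\nu-\nu_E(y)|^2\le 2|\nu-\nu_0|^2+2|\nu_0-\nu_E(y)|^2$, the inclusions $\cy(x,r,\nu)\subseteq \cy(x,\sqrt{2}r,\nu_0)$ and $\cy(x,r,\nu)\subseteq B_{\sqrt{2}r}(x)$, and the perimeter density bound of \cref{eq:densityestim} at scale $\sqrt{2}r$. No issues to report.
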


The proof is identical to the one in~\cite[Proposition~22.5]{Mag2012} and relies only
on the density estimates for minimizers. Since it is very short, we write it for the reader's
convenience.

\begin{proof}
Using the inequality $\abs{\nu-\nu_{E}(y)}^2\le
2\abs{\nu-\nu_0}^2+2\abs{\nu_0-\nu_{E}(y)}^2$, and the facts that $\cy(x,r,\nu)\subsq
\cy(x,\sqrt{2}r,\nu_0)$ (recall the definition~\cref{eq:cyl}) and $\cy(x,r,\nu)\subsq B_{\sqrt{2}r}(x)$, we have
\[
\e(E,x,r,\nu)
\le \frac{2}{r^{n-1}}\int_{\partial E\cap C(x,\sqrt{2}r,\nu_0)}
\frac{\abs{\nu_0-\nu_E(y)}^2}{2}\dhn_y+ \frac{P(E;B_{\sqrt{2}r}(x))}{r^{n-1}}\abs{\nu-\nu_0}^2.
\]
The results follows from~\cref{eq:densityestim}.
\end{proof}

\subsection{The height bound}

Thanks to the density estimates of~\cref{prp:wquasimin}, $(\Lambda,r_0)$-minimizers of
$\calF_{\eps,\gamma}$ satisfy the same \enquote{height bound} property as quasi-minimizers of the
perimeter (see~\cite[Theorem~22.8]{Mag2012}). This property is a crucial tool for the Lipschitz approximation theorem and the Caccioppoli inequality.

\begin{prp}[The height bound]\label{prp:heightbound}
Let $\eps>0$, $\gamma\in(0,1)$, $\Lambda>0$ and $r_0>0$ with $\Lambda r_0\le 1-\gamma$.
There exist positive constants $\tau_{\rm height}=\tau_{\rm height}(n,\gamma)$ and $C=C(n,\gamma)$ such that the following holds.
For every $(\Lambda,r_0)$-minimizer $E$ of $\calF_{\eps,\gamma}$, every $x\in\partial E$,
$\nu\in\bbsn$ and $r>0$ with $2r\le r_0$, if
\[
\e_n(x,2r) < \tau_{\rm height},
\]
then
\[
\sup \Big\{\abs{x_n-y_n}~:~ (y',y_n)\in\partial E\cap \cy_r(x)\Big\}
\le Cr \e_n(x,2r)^{\frac{1}{2(n-1)}}.
\]
\end{prp}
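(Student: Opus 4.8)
The plan is to reduce the statement to the classical height bound for $(\Lambda,r_0)$-minimizers of the perimeter, proved in~\cite[Theorem~22.8]{Mag2012}, by exploiting the weak quasi-minimality property established in~\cref{prp:wquasimin}. Indeed, the proof of the height bound in~\cite{Mag2012} relies \emph{only} on the density estimates~\cref{eq:densityestim} together with the compactness and semicontinuity of the perimeter; it never uses the Euler--Lagrange equation or any finer minimality. Since~\cref{prp:wquasimin} furnishes exactly these density bounds for $(\Lambda,r_0)$-minimizers of $\calF_{\eps,\gamma}$ whenever $\Lambda r_0\le 1-\gamma$, the argument carries over verbatim.

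More concretely, I would first recall (or re-run) the covering/iteration argument behind the height bound. After rescaling and translating so that $x=0$, $r=1$ and $\nu=e_n$, one works inside $\cy_2$. The key geometric input is that, by the relative isoperimetric inequality and the density estimates, the measure-theoretic boundary $\partial E\cap\cy_2$ is, up to a set of small $\Hn$-measure controlled by $\e_n(2)$, a Lipschitz-type graph; more precisely one shows that if $(y',y_n),(z',z_n)\in\partial E\cap\cy_{3/2}$ then $\abs{y_n-z_n}\le C\e_n(2)^{1/(2(n-1))}$ by a chaining argument over dyadic scales, using at each scale that a large excess deficit would contradict the lower perimeter density bound. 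The smallness hypothesis $\e_n(2)<\tau_{\mathrm{height}}$ is what guarantees this chaining does not degenerate, and the constant $C$ depends only on $n$ and on the constants appearing in~\cref{eq:densityestim}, hence only on $n$ and $\gamma$.

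The only point requiring care is to double-check that every lemma invoked in Maggi's Chapter~22 proof of the height bound — the small-excess position lemma, the relative isoperimetric inequality on cylinders, and the basic excess measure estimates — depends solely on quantities we control uniformly in $\eps$. This is the case: they are statements about sets of finite perimeter satisfying two-sided density bounds, with no reference to the functional being minimized. Consequently the constants $\tau_{\mathrm{height}}$ and $C$ inherit dependence only on $n$ and $\gamma$ (through the $(1-\gamma)$-powers in~\cref{eq:densityestim}), uniformly in $\eps$ and $\Lambda$.

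The main (mild) obstacle is purely bookkeeping: making sure that the scaling hypothesis $2r\le r_0$ — rather than the $\Lambda r_0\le 1-\gamma$ used to get density estimates at \emph{all} scales $\le r_0$ — suffices, which it does since the height bound is a statement purely inside $\cy_r(x)\subset B_{2r}(x)$ and only uses the density bounds at scales $\le 2r$. I would therefore simply state that the proof is identical to~\cite[Theorem~22.8]{Mag2012}, substituting~\cref{eq:densityestim} from~\cref{prp:wquasimin} for Maggi's density estimates for $(\Lambda,r_0)$-perimeter minimizers, and track that the constants depend only on $n$ and $\gamma$.
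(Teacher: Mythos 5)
Your reduction is the same as the paper's: rescale and translate, invoke the height bound of \cite[Theorem~22.8]{Mag2012}, and observe that the density estimates \cref{eq:densityestim} from \cref{prp:wquasimin} are the only input, uniformly in $\eps$ and $\Lambda$, so that $\tau_{\rm height}$ and $C$ depend only on $n$ and $\gamma$. The one place where you are too quick is the blanket claim that Maggi's proof \enquote{never uses any finer minimality}: as written in \cite{Mag2012}, the small-excess position lemma (Lemma~22.10 there) is proved by a compactness/closure argument for $(\Lambda,r_0)$-perimeter minimizers, so perimeter almost-minimality does enter at that step, and saying that the lemmas \enquote{are statements about sets of finite perimeter satisfying two-sided density bounds} does not by itself show that their \emph{proofs} use nothing else. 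To close this you must either reprove the small-excess position lemma from the density estimates alone --- which is exactly what the paper does by citing \cite[Lemma~7.2]{DHV2019} --- or supply a compactness argument for the quasi-minimizers produced by \cref{prp:wquasimin}, which is additional work you have not done. (Your description of the remainder of the argument as a dyadic chaining is also not quite how the proof in \cite{Mag2012} proceeds, but that is immaterial: once the small-excess position is available from density estimates, the rest of the proof, and your bookkeeping on $2r\le r_0$ and on the dependence of the constants, go through as you describe.)
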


\begin{proof}
As recalled by \Citeauthorsc{Mag2012}, the only step where the almost-minimality with respect to the
perimeter is used in the proof of~\cite[Theorem~22.8]{Mag2012} is to obtain the
\enquote{small-excess position} of Lemma~22.10 therein. In fact, this lemma holds as long as we have
density estimates on the perimeter for $E$, as  shown in~\cite[Lemma~7.2]{DHV2022}. Hence, thanks
to~\cref{eq:densityestim}, the same height bound holds for $(\Lambda,r_0)$-minimizers of
$\calF_{\eps,\gamma}$, whenever $\Lambda r_0\le 1-\gamma$ and $2r\le r_0$.
\end{proof}

\section{Lipschitz approximation theorem}\label{sec:lipapprox}

This section is devoted to the proof of the Lipschitz approximation theorem for
$(\Lambda,r_0)$-minimizers of $\calF_{\eps,\gamma}$, which is divided into two parts.
We first state that a small excess of an almost-minimizer $E$ in a cylinder implies that
the boundary of $E$ in that cylinder is almost entirely covered by the graph of a Lipschitz
function~$u$. Secondly, we state that the aforementioned function $u$ is close to a harmonic
function as long as the scale is much larger than $\eps$.

\subsection{Lipschitz approximation and harmonic comparison}

Since the first part of the Lipschitz approximation theorem relies only on standard properties on
the excess, the density estimates and the height bound, by~\cref{prp:wquasimin,prp:heightbound},
the proof can be reproduced almost verbatim from Steps~1 to~4 of the proof of
\cite[Theorem~23.7]{Mag2012}. 

\begin{thm}[Lipschitz approximation I]\label{thm:lipapprox1}
Assume that $G$ satisfies~\cref{Hposrad,,Hmoment1,,Hmoment2}.
Let $\eps>0$, $\gamma\in(0,1)$, $\Lambda>0$ and $r_0>0$ with $\Lambda r_0\le 1-\gamma$.
There exist positive constants $\tau_{\mathrm{lip}}=\tau_{\mathrm{lip}}(n,\gamma)$, $\delta_0=\delta_0(n,\gamma)$ and
$C=C(n,\gamma)$ such that the following holds. If $E$ is a $(\Lambda,r_0)$-minimizer of
$\calF_{\eps,\gamma}$ with $0\in\partial E$ and, for some $r$ such that $4\Lambda r\le r_0$,
\[
\e_n(4r) \le \tau_{\mathrm{lip}},
\]
then, setting
\[
M \coloneqq \partial E\cap \cy_{2r},
\]
 there exists a $\frac{1}{2}$-Lipschitz function $u:\R^{n-1}\to\R$ such that:
\begin{enumerate}[(i)]
\item\label{lipapprox1:i} $\norm{u}_{L^\infty}\le
Cr\e_n(4r)^{\frac{1}{2(n-1)}}<r/4$;\\[4pt]
\item\label{lipapprox1:iv} $\hn(M\triangle\Gamma_u)\le C \e_n(4r)r^{n-1}$;
\item\label{lipapprox1:v} $\displaystyle\frac{1}{r^{n-1}}\int_{D_{2r}} \abs{\grad u}^2 \le
C\e_n(4r)$.
\end{enumerate}
\end{thm}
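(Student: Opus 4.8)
The plan is to follow the classical De Giorgi–style Lipschitz approximation argument as presented in \cite[Theorem~23.7]{Mag2012}, observing that only the density estimates and the height bound are needed — both of which we have available in our setting via \cref{prp:wquasimin,prp:heightbound}. The proof uses absolutely nothing about the nonlocal term beyond these two facts, so it transfers essentially unchanged. I will sketch the four steps.

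\medskip

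\textit{Step 1 (Small-excess position and the height bound at many scales).}
First, after normalizing by scaling so that $r$ is convenient, I would use the height bound \cref{prp:heightbound} to control the oscillation of $\partial E$ inside $\cy_{2r}$ in the $e_n$-direction: assuming $\e_n(4r)\le\tau_{\rm lip}$ with $\tau_{\rm lip}\le\tau_{\rm height}$, we get
\[
\sup\{|y_n| : y\in\partial E\cap\cy_{2r}\}\le Cr\,\e_n(4r)^{\frac{1}{2(n-1)}}<\frac r4,
\]
which already gives the $L^\infty$ bound claimed in~\cref{lipapprox1:i} (for the graph we will build). This also confines the relevant part of $\partial E$ to a thin slab.

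\textit{Step 2 (Decomposition of the boundary into a good set and a bad set).}
Following Magnani's Step~2, one introduces the set of points of $\partial E\cap\cy_{2r}$ where the boundary behaves like a ``nice'' multiplicity-one graph. Concretely, using the density estimates \cref{eq:densityestim} together with a maximal-function / Besicovitch covering argument applied to the measure $\mu := \Hn\res(\partial E\cap\cy_{4r})$ and the ``tilt excess density'' $|\nu_E-e_n|^2/2$, one splits $D_{2r}=G\cup B$ where over the good set $G$ the slice of $\partial E$ is a single point with small normal tilt, and the bad set $B\subseteq D_{2r}$ satisfies $\Hn(B)\le C\e_n(4r)r^{n-1}$. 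The key input here is the standard fact that a point of $\partial E$ with small enough excess at all small scales lies on a Lipschitz graph; this uses only density bounds, which we have.

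\textit{Step 3 (Lipschitz extension).}
On the good set $G$ the function $y'\mapsto$ (height of the unique slice point) is $\tfrac12$-Lipschitz (again by the small-excess / height-bound estimates combined with the density bounds, exactly as in Magnani). One then extends it to a $\tfrac12$-Lipschitz function $u:\R^{n-1}\to\R$ by Kirszbraun/McShane extension, preserving the $L^\infty$ bound from Step~1. This gives~\cref{lipapprox1:i}.

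\textit{Step 4 (Measure and energy estimates).}
Finally, $M\triangle\Gamma_u$ is contained in the part of $\partial E$ lying over the bad set $B$ (plus a negligible set), so by the density bound on $P(E;\cdot)$ from \cref{eq:densityestim} and $\Hn(B)\le C\e_n(4r)r^{n-1}$ we obtain~\cref{lipapprox1:iv}. For~\cref{lipapprox1:v}, one uses the elementary inequality $|\nabla u|^2\le C(1-\nu_E^{(n)})\le C\,\frac{|e_n-\nu_E|^2}{2}$ valid wherever the gradient is not too large (which holds here since $u$ is $\tfrac12$-Lipschitz), integrates over the good set where $\Gamma_u$ coincides with $\partial E$, and controls the contribution of the bad set by the area estimate together with the $\tfrac12$-Lipschitz bound; altogether
\[
\frac{1}{r^{n-1}}\int_{D_{2r}}|\nabla u|^2\le C\,\e_n(4r).
\]

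\medskip

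The main obstacle is not really a mathematical one here but a bookkeeping one: verifying that each lemma of \cite[Sections~22–23]{Mag2012} used in Steps~1--4 invokes the almost-minimality of $E$ \emph{only} through the perimeter density estimates and the height bound, so that nothing in the argument secretly uses the first variation or a specific Euler--Lagrange equation. This has already been checked in the literature (e.g.\ \cite[Section~7]{DHV2019}), and in particular the ``small-excess position'' lemma \cite[Lemma~22.10]{Mag2012} — the one place where one might worry — is shown there to follow from density estimates alone. Given this, the proof is a verbatim transcription, and we simply refer to \cite[Theorem~23.7]{Mag2012}.
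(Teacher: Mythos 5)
Your proposal is correct and is essentially the paper's own argument: the paper likewise observes that Steps 1--4 of \cite[Theorem~23.7]{Mag2012} use the almost-minimality only through the perimeter density estimates (\cref{prp:wquasimin}) and the height bound (\cref{prp:heightbound}), both available here, and then invokes that proof verbatim. (Minor point: the cited author is Maggi, not ``Magnani''.)
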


We show that the function $u$ in the conclusion of~\cref{thm:lipapprox1} is almost a
solution to a nonlocal linear equation of the form $(\Delta-\gamma \Delta_{G_\eps}) u=0$ in $D_r$.

\begin{thm}[Lipschitz approximation II]\label{thm:lipapprox2}
There exists $C=C(n,\gamma, I^2_{\nabla G})>0$ such that under the same assumptions as~\cref{thm:lipapprox1}, the function $u$ satisfies for every $\vphi\in C^1_c(D_r)$.
\begin{multline*}
\frac{1}{r^{n-1}}
\left( \int_{D_r} \grad u\cdot\grad\vphi
-\gamma \iint_{D_{2r}\times D_{2r}} (u(x')-u(y'))(\vphi(x')-\vphi(y'))G_\eps(x'-y',0)
\dx'\dy'\right)\\
\le C \norm{\grad\vphi}_{L^\infty}
\left(\e_n(4r)+Q\left(\frac{r}{4\eps}\right)+\Lambda r\right).
\end{multline*}
\end{thm}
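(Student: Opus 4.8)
The plan is to start from the Euler--Lagrange equation for $\calF_{\eps,\gamma}$ given by \cref{lem:firstvarf}, applied to vector fields of the form $T = \vphi(x') e_n$ with $\vphi \in C^1_c(D_r)$ (extended suitably in the $x_n$-variable, or rather $T(x)=\vphi(x')e_n$ times a cutoff that is $1$ on $\cy_{2r}$), and to massage it into the stated weak formulation for $u$. Writing $\delta\calF_{\eps,\gamma}(E)[T]=0$ splits the identity into the perimeter first variation $\int_{\partial^* E}\dvg_E T\,\dhn$ and the nonlocal part with factor $-2\gamma$. The perimeter term is exactly what is treated in the classical proof (e.g.\ \cite[Theorem~23.7, Step~5]{Mag2012}): using the Lipschitz approximation from \cref{thm:lipapprox1} — in particular the area estimate $\hn(M\triangle\Gamma_u)\le C\e_n(4r)r^{n-1}$ and the gradient bound $\int_{D_{2r}}|\grad u|^2\le C\e_n(4r)r^{n-1}$ — one transfers the integral over $\partial^* E$ to an integral over the graph $\Gamma_u$, linearizes $\dvg_E T$, and shows
\[
\frac{1}{r^{n-1}}\left|\int_{\partial^* E}\dvg_E T\,\dhn - \int_{D_r}\grad u\cdot\grad\vphi\right|
\le C\norm{\grad\vphi}_{L^\infty}\,\e_n(4r),
\]
up to also absorbing the $\Lambda$-error coming from the fact that $E$ is only a $(\Lambda,r_0)$-minimizer (so the EL equation holds only up to $\Lambda\|T\|_{L^1}\le C\Lambda r^n\|\vphi\|_\infty$, contributing the $\Lambda r$ term after dividing by $r^{n-1}$; one should be slightly careful to phrase this as a one-sided inequality, since almost-minimizers only give one-sided bounds on the first variation).

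The substantive new work is the nonlocal term
\[
N(T)\coloneqq \iint_{E\times E^\compl}\dvg T(x)\,G_\eps(x-y)\dx\dy
+\int_{\partial^* E}\int_E G_\eps(x-y)\bigl(T(x)-T(y)\bigr)\cdot\nu_E(y)\dx\dhn_y,
\]
which I would like to identify, up to controlled errors, with the nonlocal bilinear form
\[
\tfrac12\iint_{D_{2r}\times D_{2r}}(u(x')-u(y'))(\vphi(x')-\vphi(y'))\,G_\eps(x'-y',0)\dx'\dy'.
\]
Here I expect to follow the scheme announced in Step~2 of the introduction. First, \emph{localization} (the analogue of \cref{lem:lipapprox2_step1}): since $\dvg T$ and $T$ are supported in $\cy_{2r}$ while $G_\eps$ has first moment of order $\eps\ll r$, the contributions to $N(T)$ coming from pairs $(x,y)$ with $|x-y|\gtrsim r$ are negligible — the long-range tail is controlled by $Q(r/(4\eps))$ (this is precisely where the rate function $Q$ and assumption \cref{Hdec} enter), and the cutoff region near $\partial\cy_{2r}$ contributes a term controlled by $\e_n(4r)+\Lambda r$ via the height bound and density estimates. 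So one may replace $E$ by $E\cap\cy_{2r}$ (and $E^\compl$ by $E^\compl\cap\cy_{2r}$) at the cost of these errors. Second, \emph{passing to the graph} (the analogue of \cref{lem:lipapprox2_step2}): on the good set where $\partial E\cap\cy_{2r}=\Gamma_u$ one rewrites both the double integral over $E\times E^\compl$ and the integral over $\partial^* E\times E$ in terms of $u$; the measure-theoretic error incurred by the bad set $M\triangle\Gamma_u$ is $\lesssim \e_n(4r)$ by \cref{thm:lipapprox1}\ref{lipapprox1:iv}, combined with the $L^\infty$ bound on $u$ and the fact that $\int|z|G_\eps=I_G^1$ is finite. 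Third, \emph{linearization}: with $T=\vphi e_n$ one computes, using Fubini and the subgraph structure $x=(x',x_n)$, $x_n<u(x')$, that
\[
\iint_{E\times E^\compl}\dvg T(x)\,G_\eps(x-y)\dx\dy
+\int_{\Gamma_u}\!\int_E G_\eps(x-y)(\vphi(x')-\vphi(y'))\,(\nu_E)_n\dx\dhn_y
\]
reduces, after carrying out the $x_n$- and $y_n$-integrations against the ($x_n$-independent) kernel restricted to the vertical line and Taylor-expanding $G_\eps(x'-y',x_n-y_n)$ around $x_n-y_n=0$, to the claimed bilinear form in $u$ and $\vphi$, with remainder terms quadratic in $\|u\|_\infty/\eps$-type quantities; since $\|u\|_{L^\infty}\le Cr\,\e_n(4r)^{1/(2(n-1))}$ by \cref{thm:lipapprox1}\ref{lipapprox1:i}, each such remainder is $\lesssim \e_n(4r)$ after dividing by $r^{n-1}$ — here one uses crucially that the vertical oscillation of $\Gamma_u$ is small relative to $r$, and that the relevant $y'$-variable stays within a bounded multiple of the support of $\vphi$ plus $O(\eps)$. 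Summing the perimeter estimate and these three steps, dividing by $r^{n-1}$, and collecting the errors $\e_n(4r)$, $Q(r/(4\eps))$ and $\Lambda r$ gives the claim.

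The main obstacle, in my view, is the third step — the linearization of the nonlocal term and the verification that the error in replacing $G_\eps(x'-y',x_n-y_n)$ by $G_\eps(x'-y',0)$ is genuinely of lower order. Unlike the perimeter case, $G_\eps$ is singular at the origin (only $|z|G_\eps(z)\in L^1$ is assumed), so one cannot naively Taylor-expand pointwise; instead one must integrate first in $x_n,y_n$ over intervals of length $O(r)$ and exploit that the small-scale singularity of $G$ is no worse than that of the $s_0$-fractional kernel (assumption \cref{Hint0}, via \cref{lem:diffPeps}\ref{diffPeps:iii} and the bounds \cref{diffPeps:claim1}--\cref{diffPeps:claim2}), so that the "thickened" kernel $\int_{|t|<r}G_\eps(z',t)\dt$ behaves well. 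Keeping careful track of which error is absorbed into $\e_n(4r)$ (coming from $\|\grad u\|_{L^2}^2$ and $\|u\|_\infty$) versus $Q(r/(4\eps))$ (the genuine long-range tail) versus $\Lambda r$ (the almost-minimality defect and the cutoff) is the delicate bookkeeping that the proof must get right.
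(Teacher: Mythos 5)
Your overall scheme (Euler--Lagrange equation for a vertical vector field, localization with errors $Q(r/(4\eps))+\Lambda r$, transfer to the graph $\Gamma_u$ with error $\e_n(4r)$, then linearization) is the same as the paper's, which carries it out in \cref{lem:lipapprox2_step1,lem:lipapprox2_step2} and the final proof of \cref{thm:lipapprox2}. The gap is in the step you yourself single out as the main obstacle: the linearization of the nonlocal term. After the vertical integrations one must compare
\[
\int_0^{u(x')-u(y')} G_\eps(x'-y',t)\dt \qquad\text{with}\qquad \big(u(x')-u(y')\big)\,G_\eps(x'-y',0),
\]
and no amount of integrability of a \emph{thickened} kernel (your proposed route via~\cref{Hint0} and \cref{lem:diffPeps}~\cref{diffPeps:iii}, i.e.\ the bounds~\cref{diffPeps:claim1}--\cref{diffPeps:claim2}) can produce this comparison: the target bilinear form involves the trace $G_\eps(\cdot,0)$ of the kernel on the hyperplane, which is not controlled by integral bounds on $G$ alone, and averaging in the vertical variable would replace $G_\eps(x'-y',0)$ by a smeared kernel, not the one appearing in the statement. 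What is actually needed is a modulus of continuity of $t\mapsto G_\eps(z',t)$, i.e.\ hypothesis~\cref{Hmoment2}: the paper writes the difference as $\int_0^1\nabla G_\eps(x'-y',st(u_{x'}-u_{y'}))\cdot(\dots)\ds$, observes that along the graph map $\Phi_u$ of the $\tfrac12$-Lipschitz function $u$ one has $\abs{\Phi_u(x',y')}\ge\abs{x'-y'}$, and invokes the change-of-variables estimate of \cref{lem:techlem1} for the kernel $\abs{\cdot}\,\abs{\nabla G}$, which bounds the error by $C\,I^2_{\abs{\nabla G}}\int_{D_{2}}\abs{\nabla u}^2\le C\,I^2_{\abs{\nabla G}}\,\e_n(4r)$. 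This is precisely why the constant in the statement depends on $I^2_{\nabla G}$ and not on $s_0$; your bookkeeping, which never uses $\nabla G$, cannot close this step.

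Two smaller points. First, after the vertical integration there is a genuinely unbounded ``bulk'' contribution $\iint(\vphi_{x'}-\vphi_{y'})\int_{-1}^{0}G_\eps(x'-y',s)\ds\,\dx'\dy'$ which is not small in any norm; the paper kills it exactly by antisymmetry in $(x',y')$ (\cref{lipapprox2:eq16}), a cancellation your plan does not identify and which cannot be absorbed into the error terms. Second, a cosmetic correction: the tail error $Q(r/(4\eps))$ only uses the definition~\cref{eq:defQ} and~\cref{Hmoment1}; hypothesis~\cref{Hdec} plays no role in this theorem (it is only used later, in \cref{thm:decexcess}), so attributing the $Q$-term to~\cref{Hdec} misplaces where that assumption enters the argument.
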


By scaling it is enough to prove~\cref{thm:lipapprox2} for $r=1$. Since the proof is quite long, we  postpone it to the next section and show first how it leads to a harmonic approximation result.

\begin{prp}[Harmonic approximation]\label{prp:harmapprox}
Let $\gamma\in(0,1)$ and assume that $G$ satisfies~\cref{Hposrad,Hmoment1}.
There exists $\eps_{\mathrm{harm}}\in(0,1)$ such that for every $\tau>0$, there exists
$\sigma=\sigma(n,G,\gamma,\tau)>0$ with the following property. If for some $\eps\in(0,\eps_{\mathrm{harm}})$,
$u\in H^1(D_2)$ satisfies
\[
\int_{D_2} \abs{\grad u}^2\le 1
\]
and, for all $\vphi\in C^1_c(D_{1})$,
\[
\abs*{\int_{D_1} \grad u\cdot\grad\vphi
-2\gamma\iint_{D_2\times D_2} (u(x')-u(y'))(\vphi(x')-\vphi(y'))G_\eps(x'-y',0)\dx'\dy'}
\le\norm{\grad\vphi}_{L^\infty} \sigma,
\]
then there exists a harmonic function $v$ on $D_1$ such that
\[
\int_{D_1} \abs{\grad v}^2\le 1\quad\text{ and }\quad
\int_{D_1} \abs{u-v}^2 \le\tau.
\]
\end{prp}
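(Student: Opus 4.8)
The plan is to argue by contradiction and compactness, in the spirit of the classical harmonic approximation lemma (e.g.\ \cite[Lemma~25.1]{Mag2012}), tracking the nonlocal term carefully. Suppose the statement fails for some fixed $\tau>0$: then for every $k\in\N$ there exist $\eps_k\in(0,\eps_{\mathrm{harm}})$ and $u_k\in H^1(D_2)$ with $\int_{D_2}|\grad u_k|^2\le 1$ satisfying the almost-harmonicity inequality with $\sigma=1/k$, yet $\int_{D_1}|u_k-v|^2>\tau$ for every harmonic $v$ on $D_1$ with $\int_{D_1}|\grad v|^2\le 1$. Normalizing by subtracting the average $\dashint_{D_2}u_k$ (which changes neither $\grad u_k$ nor the nonlocal bilinear form since its integrand only sees differences $u_k(x')-u_k(y')$), Poincaré gives a uniform $H^1(D_2)$ bound, so up to a subsequence $u_k\rightharpoonup u$ weakly in $H^1(D_2)$ and strongly in $L^2(D_2)$.

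The first key step is to pass to the limit in the weak formulation. The local term $\int_{D_1}\grad u_k\cdot\grad\vphi\to\int_{D_1}\grad u\cdot\grad\vphi$ by weak convergence. The crux is to show the nonlocal term is negligible in the limit, i.e.\ that
\[
\gamma\iint_{D_2\times D_2}(u_k(x')-u_k(y'))(\vphi(x')-\vphi(y'))G_{\eps_k}(x'-y',0)\dx'\dy'\to 0
\]
as $k\to\infty$, \emph{provided} $\eps_k\to 0$; this forces the choice $\eps_{\mathrm{harm}}$ to be taken together with the requirement that the bad sequence has $\eps_k$ genuinely small — here one must be slightly careful, because $\eps_k$ need not go to zero a priori. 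The honest way to handle this: either (a) observe that the statement as phrased is uniform over all $\eps\in(0,\eps_{\mathrm{harm}})$, so one can first prove it for each fixed small $\eps$ and then note that the bilinear form $B_\eps(w,\vphi):=\iint (w(x')-w(y'))(\vphi(x')-\vphi(y'))G_\eps(x'-y',0)$ has operator norm (as a form on $H^1\times W^{1,\infty}$) bounded uniformly for $\eps$ in a compact subinterval, and then separately treat $\eps\to 0$; or, more simply, (b) split the subsequence into the case $\eps_k\to\eps_\infty>0$ and the case $\eps_k\to 0$. In case $\eps_k\to 0$, the Cauchy--Schwarz / bound-by-perimeter estimate underlying~\cref{prp:boundpg} (applied to the $(n-1)$-dimensional kernel $G_\eps(\cdot,0)$, which has finite first moment by~\cref{Hmoment1}) gives $|B_{\eps_k}(u_k,\vphi)|\le C\|\grad\vphi\|_{L^\infty}\,I^1_{G_{\eps_k}(\cdot,0)}\int_{D_2}|\grad u_k|$, but in fact the right estimate is $|B_{\eps_k}(u_k,\vphi)|\lesssim \|\grad u_k\|_{L^2}\|\grad\vphi\|_{L^\infty}\,\eps_k\!\int_{\R^{n-1}}|z|^2 G(z,0)\dz$ type bound — using $|u_k(x')-u_k(y')|$ pointwise controlled in $L^2$ by $|x'-y'|$ times a maximal function, together with $|\vphi(x')-\vphi(y')|\le\|\grad\vphi\|_{L^\infty}|x'-y'|$ and the scaling $G_{\eps_k}(z,0)=\eps_k^{-n}G(z/\eps_k,0)$ which concentrates the kernel. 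This shows $B_{\eps_k}(u_k,\vphi)=O(\eps_k)\to0$. In case $\eps_k\to\eps_\infty>0$, $G_{\eps_\infty}(\cdot,0)$ is a fixed kernel and $B_{\eps_\infty}(\cdot,\vphi)$ is continuous under $L^2$ (hence weak-$H^1$) convergence of the first argument, so $B_{\eps_k}(u_k,\vphi)\to B_{\eps_\infty}(u,\vphi)$, which need not vanish — so the limit $u$ satisfies $\int_{D_1}\grad u\cdot\grad\vphi = 2\gamma B_{\eps_\infty}(u,\vphi)$ rather than being harmonic. This tells me the hypothesis should really be read as forcing $\eps$ small enough that this second scenario is excluded in the application; concretely, I expect $\eps_{\mathrm{harm}}$ is chosen so small that one only ever needs $\eps_k\to0$, i.e.\ the statement should be understood with $\eps_{\mathrm{harm}}$ absolute and the contradiction sequence automatically having $\eps_k\to 0$ because otherwise $\eps_k$ bounded below gives a fixed-$\eps$ statement one proves by a cheaper direct argument. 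The main obstacle is exactly this bookkeeping of the nonlocal term across the two regimes.

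Granting that $u$ is harmonic in $D_1$, the remaining steps are routine. By weak lower semicontinuity $\int_{D_1}|\grad u|^2\le\liminf_k\int_{D_1}|\grad u_k|^2\le 1$, so $v:=u$ is an admissible competitor. By the strong $L^2(D_2)$ convergence $u_k\to u$ in $L^2(D_1)$, so $\int_{D_1}|u_k-u|^2\to 0$, contradicting $\int_{D_1}|u_k-v|^2>\tau$ for all admissible $v$ once $k$ is large. This contradiction proves the proposition. I would also record, for use downstream, that by interior elliptic estimates the harmonic limit satisfies $\sup_{D_{1/2}}|\grad^2 v|\le C(n)\int_{D_1}|\grad v|^2\le C(n)$, though strictly this is not part of the statement. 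The one genuinely delicate point to get right in writing this up is the quantitative decay $|B_{\eps_k}(u_k,\vphi)|\le C\eps_k\|\grad\vphi\|_{L^\infty}$, which requires either a Fubini-type slicing of $u_k$ along segments combined with Jensen, or the bound-by-perimeter trick of~\cref{prp:boundpg} applied after noticing that $|u_k(x')-u_k(y')|\le \int_0^1|\grad u_k(x'+t(y'-x'))||x'-y'|\dt$ and then using the finiteness of $I^2_G$ (equivalently the $(n-1)$-dimensional second moment of $G(\cdot,0)$, which follows from~\cref{Hmoment2}) to absorb the extra factor $|x'-y'|$ and produce the gain $\eps_k$ after rescaling $z\mapsto \eps_k z$.
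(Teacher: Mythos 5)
Your overall strategy (contradiction, normalization by the mean, Poincaré, weak $H^1$/strong $L^2$ compactness, lower semicontinuity) is the same as the paper's, but the step you yourself identify as the crux is wrong: the nonlocal bilinear form does \emph{not} vanish as $\eps_k\to 0$. Your claimed bound $|B_{\eps_k}(u_k,\vphi)|=O(\eps_k)$ rests on the scaling $G_{\eps}(z,0)=\eps^{-n}G(z/\eps,0)$, whereas the paper's kernels are normalized by $G_\eps=\eps^{-(n+1)}G(\cdot/\eps)$. With the correct normalization the relevant quantity is scale-invariant:
\begin{equation*}
\int_{\R^{n-1}}\abs{z'}^2\,G_\eps(z',0)\dz'
=\eps^{(n-1)+2-(n+1)}\int_{\R^{n-1}}\abs{w'}^2\,G(w',0)\,\mathrm{d}w'
=\int_{\R^{n-1}}\abs{w'}^2\,G(w',0)\,\mathrm{d}w',
\end{equation*}
so the two-Lipschitz-factor estimate gives $O(1)$, not $O(\eps_k)$, and indeed for a fixed smooth $u$ the form converges to a nonzero multiple of $\int\grad u\cdot\grad\vphi$. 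This is exactly what the paper proves: using polar coordinates, the normalization~\cref{fixedmoment1} and the identity $\int_{\bbs^{n-2}}(x\cdot\sigma)(y\cdot\sigma)\dH^{n-2}_\sigma=\tfrac12\bigl(\int_{\bbs^{n-1}}\abs{\sigma_1}\dhn_\sigma\bigr)x\cdot y$, one gets
\begin{equation*}
\lim_k \iint_{D_2\times D_2}(u_k(x')-u_k(y'))(\vphi(x')-\vphi(y'))G_{\eps_k}(x'-y',0)\dx'\dy'
=\frac12\int_{D_1}\grad u\cdot\grad\vphi,
\end{equation*}
so that the limit equation reads $(1-\gamma)\int_{D_1}\grad u\cdot\grad\vphi=0$, and harmonicity follows \emph{because} $\gamma\in(0,1)$. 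In your version the hypothesis $\gamma<1$ plays no role, which is a sign the limit has been misidentified; the fact that you still land on "u harmonic" is an accident of the erroneous estimate, not a proof.

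A secondary, unresolved point is your treatment of the regime where $\eps_k$ stays bounded away from $0$: you observe that there the limit equation is genuinely nonlocal and then only "expect" that $\eps_{\mathrm{harm}}$ excludes this case, which is not an argument. The paper's contradiction sequence simply has $\eps_k\to 0$ (consistent with how the proposition is invoked, with $\eps$ small), so this case does not arise there; if you want to keep your two-case split you must actually prove a fixed-$\eps$ version of the statement or restructure the quantifiers, neither of which your proposal does. The decisive gap, however, is the false $O(\eps_k)$ decay of the nonlocal term; replacing it by the correct identification of its limit (as above) is precisely the content of the paper's proof.
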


\begin{proof}

As there is no risk of confusion, to simplify the notation we use $x,y$ instead of $x',y'$ for
points in $\R^{n-1}$, and write $G_\eps(x)$ instead of $G_\eps(x',0)$.
Arguing by contradiction, let us assume that there exist vanishing sequences $(\eps_k)\subsq
(0,1)$ and $(\sigma_k)\subsq(0,1)$, a positive constant $\tau>0$ and a sequence $(u_k)\subsq H^1(D_2)$
such that the following holds:
\begin{enumerate}[(i)]
\item\label{harmapprox:eq0}
$\displaystyle\int_{D_2} \abs{\grad u_k}^2\le 1$ for all $k\in\N$;
\item\label{harmapprox:eq1} for every $k$ and every $\vphi\in C^\infty_c(D_1)$, we have
\[
\abs*{\int_{D_1} \grad u_k\cdot\grad\vphi
-2\gamma\iint_{D_2\times D_2} (u_k(x)-u_k(y))(\vphi(x)-\vphi(y))G_{\eps_k}(x-y)\dx\dy}
\le \sigma_k\norm{\grad\vphi}_{L^\infty};
\]
\item\label{harmapprox:eq2} there exist arbitrarily large $k$ such that there is no harmonic function $u$ on $D_1$ such that
\[
\int_{D_1} \abs{\grad u}^2\le 1\quad\text{ and }\quad
\int_{D_1} \abs{u_k-u}^2 \le\tau.
\]
\end{enumerate}
Without loss of generality, up to adding a constant to each $u_k$, one may assume that $\int_{D_2}
u_k=0$, so that by Poincaré--Wirtinger inequality, we have
\[
\int_{D_2} \abs{u_k}^2 \le C\int_{D_2} \abs{\grad u_k}^2 \le C,\qquad\forall k\in\N.
\]
In particular, $(u_k)$ is bounded in $H^1(D_2)$. Thus, up to extraction, there exists $u\in H^1(D_2)$ such that $u_k$ converges strongly to $u$ in $L^2(D_2)$ and
$\grad u_k$ converges weakly to~$\grad u$ in $L^2(D_2;\R^{n-1})$.
We claim that for every $\vphi\in C^\infty_c(D_1)$,
\begin{equation}\label{harmapprox:claim}
\lim_k \iint_{D_2\times D_2} (u_k(x)-u_k(y))(\vphi(x)-\vphi(y))G_{\eps_k}(x-y)\dx\dy
= \frac{1}{2}\int_{D_1} \grad u\cdot \grad\vphi,
\end{equation}
which we prove further below.
By the weak convergence of $\grad u_k$ to $\grad u$, the fact that $\gamma\neq 1$ and
\cref{harmapprox:eq1}, this implies
\[
\int_{D_1} \grad u\cdot\grad \vphi=0,\qquad\forall\vphi\in C^\infty_c(D_1);
\]
in other words, $u$ is harmonic.
By~\cref{harmapprox:eq0} and lower semicontinuity with respect to the weak $H^1$ convergence, we have
\begin{equation}\label{harmapprox:eq3}
\int_{D_1} \abs{\grad u}^2\le 1,
\end{equation}
and since $u_k$ converges to $u$ in $L^2(D_1)$, we have, for every $k$ large enough,
\[
\int_{D_1} \abs{u_k-u}^2 \le\tau.
\]
With~\cref{harmapprox:eq3}, this contradicts~\cref{harmapprox:eq2}.\\
We now prove~\cref{harmapprox:claim}.
Using the change of variable $z=x-y$, we have
\begin{equation}\label{harmapprox:eq4}
\begin{aligned}
&\iint_{D_2\times D_2} (u_k(x)-u_k(y))(\vphi(x)-\vphi(y))G_{\eps_k}(x-y)\dx\dy\\
&~=\int_0^1\int_0^1 \iint_{D_2\times D_2} \big(\grad u_k(x+t(y-x))\cdot (x-y)\big)
\big(\grad \vphi(x+s(y-x))\cdot (x-y)\big)
G_{\eps_k}(x-y)\dx\dy\ds\dt\\
&~=\int_0^1\int_0^1 \int_{\R^{n-1}} \int_{D_2}\ind_{D_2}(x+z)
\left(\grad u_k(x+tz)\cdot z\right)
\left(\grad \vphi(x+sz)\cdot z\right)
 G_{\eps_k}(z)\dx\dz\ds\dt.
\end{aligned}
\end{equation}
Let us set $g_\eps(r)\coloneqq \eps^{-(n+1)}g(\eps^{-1}r)$ for every $r>0$ (recall $G(x)=g(\abs{x})$
for every $x\in\R^n\setminus\{0\}$). Then for each $s,t\in(0,1)$ and each $x\in D_2$, using polar
coordinates, we have
\begin{multline}\label{harmapprox:eq5}
 \int_{\R^{n-1}} \ind_{D_2}(x+z)
\left(\grad u_k(x+tz)\cdot z\right)
\left(\grad \vphi(x+sz)\cdot z\right)
 G_{\eps_k}(z)\dz\\
\quad=\int_0^4 r^ng_{\eps_k}(r) \int_{\bbs^{n-2}}\ind_{D_2}(x+r\sigma)
\big(\grad u_k(x+tr\sigma)\cdot\sigma\big)\big(\grad \vphi(x+sr\sigma)\cdot\sigma\big)
\dH^{n-2}_\sigma\dr.
\end{multline}
Using the fact that for every $s,t\in(0,1)$ we have
$\abs{\grad\vphi(x+sr\sigma)-\grad\vphi(x+tr\sigma)}\le r\norm{D^2\vphi}_{L^\infty}$ and
Cauchy--Schwarz inequality, we deduce that for every $s,t\in(0,1)$ and every $\sigma\in\bbs^{n-2}$,
\begin{multline*}\label{harmapprox:eq6}
\left|\int_0^4 r^ng_{\eps_k}(r)\int_{D_2}\ind_{D_2}(x+r\sigma)
\big(\grad u_k(x+tr\sigma)\cdot\sigma\big)\big(\grad \vphi(x+sr\sigma)\cdot\sigma\big)
\dx\dr\right.\\
\left.-\int_0^4 r^ng_{\eps_k}(r)\int_{D_2}\ind_{D_2}(x+r\sigma)
\big(\grad u_k(x+tr\sigma)\cdot\sigma\big)\big(\grad
\vphi(x+tr\sigma)\cdot\sigma\big)\dx\dr\right|\\
\le C\norm{D^2\vphi}_{L^\infty}\left(\int_0^4 r^{n+1}g_{\eps_k}(r)\dr\right)\left(\int_{D_2} \abs{\grad
u}^2\right)^{\frac{1}{2}}.
\end{multline*}
Notice that
\[
\lim_{k\to\infty}~ \int_0^4 r^{n+1}g_{\eps_k}(r)\dr = 0
\]
since $r\mapsto r^ng(r)\in L^1(\R)$ and
\[
\begin{aligned}
\int_0^4 r^{n+1}g_{\eps_k}(r)\dr
&=\int_0^{4/\eps_k} (\eps_k r)r^{n}g(r)\dr\\
&=\int_0^{4/\sqrt{\eps_k}} (\eps_k r)r^{n}g(r)\dr
+\int_{4/\sqrt{\eps_k}}^{4/\eps_k} (\eps_k r)r^{n}g(r)\dr\\
&\le 4\sqrt{\eps_k}\int_0^\infty r^ng(r)\dr
+4\int_{4/\sqrt{\eps_k}}^{\infty} r^{n}g(r)\dr.
\end{aligned}
\]
Therefore, in view of~\cref{harmapprox:eq4,harmapprox:eq5}, in order to prove~\cref{harmapprox:claim}, we only  need to compute the limit of
\[
\begin{aligned}
&\int_0^1\int_0^4 r^ng_{\eps_k}(r) \int_{\bbs^{n-2}}\int_{D_2}
\ind_{D_2}(x+r\sigma) \big(\grad u_k(x+tr\sigma)\cdot\sigma\big)\big(\grad
\vphi(x+tr\sigma)\cdot\sigma\big) \dx\dH^{n-2}_\sigma\dr\dt\\
&\qquad=\int_0^1\int_0^4 r^ng_{\eps_k}(r)
\int_{\bbs^{n-2}}\int_{\R^{n-1}}\ind_{D_2}(y-tr\sigma)\ind_{D_2}(y+(1-t)r\sigma)
\big(\grad u_k(y)\cdot\sigma\big)\big(\grad \vphi(y)\cdot\sigma\big)\\
&\hphantom{\qquad=\int_0^1\int_0^4 r^ng_{\eps_k}(r)
\int_{\bbs^{n-2}}\int_{\R^{n-1}}\ind_{D_2}(y-tr\sigma)\ind_{D_2}(y+(1-t)r\sigma) \big(\grad
u_k(y)\cdot\sigma\big)}
\dy\dH^{n-2}_\sigma\dr\dt\\
&\qquad=\int_0^1\int_0^{\frac{4}{\eps_k}} r^n g(r)\int_{\bbs^{n-2}}\int_{D_1} 
\ind_{D_2}(y-t\eps_k r\sigma)\ind_{D_2}(y+(1-t)\eps_k r\sigma)
\big(\grad u_k(y)\cdot\sigma\big)\big(\grad \vphi(y)\cdot\sigma\big)\\
&\hphantom{\qquad=\int_0^1\int_0^4 r^ng_{\eps_k}(r)
\int_{\bbs^{n-2}}\int_{\R^{n-1}}\ind_{D_2}(y-tr\sigma)\ind_{D_2}(y+(1-t)r\sigma) \big(\grad
u_k(y)\cdot\sigma\big)}
\dy\dH^{n-2}_\sigma\dr\dt,
\end{aligned}
\]
where we  used a  change of variables and the fact that $\vphi\in C^\infty_c(D_1)$.
By the weak convergence of $\grad u_k$ to $\grad u$, for any $r>0$, $t\in(0,1)$ and
$\sigma\in\bbs^{n-2}$, we have
\[
\lim_k~\int_{D_1} \big(\grad u_k\cdot\sigma\big)\big(\grad \vphi\cdot\sigma\big)
=\int_{D_1} \big(\grad u\cdot\sigma)\big(\grad\vphi\cdot\sigma)
\]
and
\[
\begin{aligned}
&\left|\int_{D_1} \ind_{D_2}(y-t\eps_k r\sigma)\ind_{D_2}(y+(1-t)\eps_k r\sigma)
\big(\grad u_k(y)\cdot\sigma\big)\big(\grad \vphi(y)\cdot\sigma\big)\dy\right.
-\left.\int_{D_1} 
\big(\grad u_k\cdot\sigma\big)\big(\grad \vphi\cdot\sigma\big)\right|\\
&\qquad\le \int_{D_1\setminus\big(D_2(t\eps_k r)\cup D_2((1-t)\eps_k r\big)} \abs{\grad
u_k}\abs{\grad\vphi}\\
&\qquad\le \norm{\grad u_k}_{L^2(D_1)} \left(\int_{D_1\setminus\big(D_2(t\eps_k r)\cup D_2((1-t)\eps_k
r\big)} \abs{\grad\vphi}^2\right)^{\frac{1}{2}}
\xrightarrow{k\to\infty} 0,
\end{aligned}
\]
where we used the inequality $\norm{\grad u_k}_{L^2(D_1)}\le 1$ to pass to the limit. Thus,
for any $r>0$, $t\in(0,1)$ and~$\sigma\in\bbs^{n-2}$, we have
\[
\lim_k \int_{D_1}\ind_{D_2}(y-t\eps_k r\sigma)\ind_{D_2}(y+(1-t)\eps_k r\sigma)\
\big(\grad u_k(y)\cdot\sigma\big)\big(\grad \vphi(y)\cdot\sigma\big)\dy
=\int_{D_1} \big(\grad u\cdot\sigma\big)\big(\grad \vphi\cdot\sigma\big).
\]
Hence, using once more $\norm{\grad u_k}_{L^2(D_1)}\le 1$ and the Cauchy--Schwarz inequality,
applying the dominated convergence theorem yields
\begin{multline}\label{harmapprox:eq9}
\lim_k \int_0^1\hspace{-1pt}\int_0^{\frac{4}{\eps_k}} r^n g(r)\int_{\bbs^{n-2}}\int_{D_1} 
\ind_{D_2}(y-t\eps_k r\sigma)\ind_{D_2}(y+(1-t)\eps_k r\sigma)
\big(\grad u_k(y)\cdot\sigma\big)\big(\grad \vphi(y)\cdot\sigma\big)\dy\dH^{n-2}_\sigma\dr\dt\\
=\int_0^\infty r^n g(r)\int_{\bbs^{n-2}}\int_{D_1} 
\big(\grad u(y)\cdot\sigma\big)\big(\grad \vphi(y)\cdot\sigma\big)\dy\dH^{n-2}_\sigma\dr.
\end{multline}
This concludes the proof of~\cref{harmapprox:claim} in view of the normalization~\cref{fixedmoment1}
and the following identity. For every $x,y\in\R^{n-1}$, there holds
\begin{multline*}
\int_{\bbs^{n-2}} (x\cdot\sigma)(y\cdot\sigma)\dH^{n-2}_\sigma
=x\cdot  \left(\int_{\bbs^{n-2}} \sigma\otimes\sigma \dH^{n-2}_\sigma\right) y\\
=\lt(\int_{\bbs^{n-2}}\abs{\sigma_1}^2 \dH^{n-2}_\sigma\right) x\cdot y
= \frac{1}{2} \lt(\int_{\bbs^{n-1}} |\sigma_1| \dhn_\sigma\rt) x\cdot y,
\end{multline*}
where the last equality comes from a direct computation (see~\cite[Lemma~3.13]{Peg2021}).
\end{proof}

\subsection{Proof of \texorpdfstring{\cref{thm:lipapprox2}}{the Lipschitz approximation theorem II}}
\label{subsec:prooflipapprox2}

We start by \enquote{localizing} the Euler--Lagrange equation implied by the
$(\Lambda,r_0)$-minimality condition and the first variation of $\calF_{\eps,\gamma}$ given by
\cref{lem:firstvarf}.

\begin{lem}\label{lem:lipapprox2_step1}
Under the assumptions of~\cref{thm:lipapprox1}, there exists $C=C(n,\gamma)>0$ such that for every
$\vphi\in C^1_c(D_{1})$, denoting also $\vphi$ the function $\R^n\ni (x',x_n)\mapsto \vphi(x')$  (with a
slight abuse of notation),
\begin{multline}\label{lipapprox2:resstep1}
\Bigg|\int_{\partial E\cap \cy_2}  (\grad \vphi \cdot \nu_E)(\nu_{E}\cdot e_n)
+2\gamma \int_{\partial E\cap \cy_{2}} \int_{E\cap \cy_2}
G_\eps(x-y)(\vphi(x)-\vphi(y)) (\nu_{E}(y)\cdot e_n)  \dx\dhn_y \Bigg|\\
\le C \lt(Q\left(\frac{1}{4\eps}\right)+ \Lambda\rt)\norm{\grad
\vphi}_{L^\infty}.
\end{multline}
\end{lem}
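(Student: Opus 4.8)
The plan is to derive \cref{lipapprox2:resstep1} by testing the $(\Lambda,r_0)$-minimality of $E$ against the one-parameter family $f_t=\id+tT$ generated by the vertical vector field $T(x)=\vphi(x')\chi(x_n)e_n$, where $\chi\in C^1_c((-3/2,3/2))$ is a cutoff with $\chi\equiv 1$ on $(-1,1)$ and $0\le\chi\le 1$ (we identify $\vphi$ with $(x',x_n)\mapsto\vphi(x')$). After reducing to $r=1$ by the scaling of \cref{prp:rescaling}, note that $\spt T\subseteq\cy_2$, so $f_t(E)\triangle E\csubset\cy_2\subseteq B_{2\sqrt 2}$, and that $\abs{f_t(E)\triangle E}\le C\abs{t}\norm{\vphi}_{L^\infty}\le C\abs{t}\norm{\grad\vphi}_{L^\infty}$, the first bound coming from the density estimates and the second from the Poincaré inequality on $D_1$ (valid since $\vphi$ has compact support there). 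Applying the minimality for $t$ of both signs and letting $t\to 0$ yields $\abs{\delta\calF_{\eps,\gamma}(E)[T]}\le C\Lambda\norm{\grad\vphi}_{L^\infty}$, and we expand $\delta\calF_{\eps,\gamma}(E)[T]=\int_{\partial^*E}\dvg_E T\dhn-\gamma\,\delta P_\eps(E)[T]$ via \cref{lem:firstvarf}.

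First I would handle the perimeter term. Since $\e_n(4)\le\tau_{\mathrm{lip}}$ is small, the height bound (\cref{prp:heightbound}) confines $\partial E\cap\cy_2$ to a slab $\{\abs{x_n}\le h\}$ with $h$ as small as desired; there $\chi\equiv 1$ and $\chi'\equiv 0$, so a direct computation of $\dvg_E T=\dvg T-\nu_E\cdot\grad T\nu_E$ gives $\int_{\partial^*E}\dvg_E T\dhn=-\int_{\partial E\cap\cy_2}(\grad\vphi\cdot\nu_E)(\nu_E\cdot e_n)\dhn$, which is exactly minus the first term of \cref{lipapprox2:resstep1}.

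The bulk of the work is to show that $\gamma\,\delta P_\eps(E)[T]$, with $\delta P_\eps(E)[T]=2\iint_{E\times E^\compl}\dvg T(x)G_\eps(x-y)\dx\dy+2\int_{\partial^*E}\int_E G_\eps(x-y)(T_x-T_y)\cdot\nu_E(y)\dx\dhn_y$, differs from the second term of \cref{lipapprox2:resstep1} by at most $C(Q(1/(4\eps))+\Lambda)\norm{\grad\vphi}_{L^\infty}$. The idea is to split each of these two nonlocal integrals into a \emph{local} part, in which both interacting points lie in $\cy_2$ and the cutoffs $\chi(x_n),\chi(y_n)$ equal $1$ (once more by the height bound, applied now to the sets carrying $\dvg T$, $T_x$ and $T_y$), and a collection of \emph{long-range} remainders: (a) the term $\iint_{E\times E^\compl}\dvg T\,G_\eps$, whose integrand lives where $\chi'\neq 0$, i.e.\ on slabs meeting no boundary, so that $E$ is locally constant there and the nearest point of $E^\compl$ is at distance $\gtrsim 1$; (b) interaction terms with one point in $\spt T$ and the other outside $\cy_2$; (c) the discrepancies coming from $\chi(x_n)\neq 1$, which again force $\abs{x-y}\gtrsim 1$. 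The local parts recombine into the second term of \cref{lipapprox2:resstep1}, while each remainder --- whose two points are separated by some $c\gtrsim 1$ --- is estimated by integrating \emph{first} over the volume variable, which ranges over the bounded set $\spt T$: indeed $\int_{\spt T}G_\eps(x-y)\dx\le c^{-1}\int_{\abs{z}\ge c}\abs{z}G_\eps(z)\dz=c^{-1}Q(c/\eps)\le CQ(1/(4\eps))$, and the remaining integral over the (possibly unbounded) reduced boundary or bulk is controlled by the density estimates of \cref{prp:wquasimin} together with the decay of $G$ at infinity from \cref{Hdec}, via a dyadic splitting in the distance to $\spt T$. Combining the three steps gives \cref{lipapprox2:resstep1}.

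The step I expect to be the main obstacle is precisely this last decomposition: setting it up so that the local pieces reassemble \emph{exactly} into the target nonlocal term and every remainder is visibly long-range, and, above all, estimating the surface integrals over the a priori unbounded reduced boundary --- where the naive pointwise bound $G_\eps\lesssim\eps^{-1}$ at unit distance is useless, and one must always integrate the bounded volume variable first so as to produce a factor $Q(\cdot/\eps)$, then sum the contributions of the far parts of $\partial^*E$ using the density bounds and the tail decay of $G$.
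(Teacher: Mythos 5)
Most of your plan coincides with the paper's proof: the vertical field $T(x)=\vphi(x')\chi(x_n)e_n$, the Euler--Lagrange inequality $\abs{\delta\calF_{\eps,\gamma}(E)[T]}\le C\Lambda\norm{\grad\vphi}_{L^\infty}$ obtained from $(\Lambda,r_0)$-minimality with $\pm t$, the identification of the perimeter term via the height bound, the treatment of the $\iint_{E\times E^\compl}\dvg T\,G_\eps$ term, and the estimate of every remainder whose \emph{bounded-measure} factor lies in $\spt T$ (these are exactly the paper's applications of the elementary tail bound~\cref{eqmu}). The gap is in the one remainder you yourself flag as the main obstacle, namely
\[
\int_{\partial E\setminus \cy_2}\int_{E\cap\spt T} G_\eps(x-y)\,T_x\cdot\nu_E(y)\dx\dhn_y .
\]
Your recipe --- integrate the bounded volume variable first, then sum over the far boundary dyadically using the density estimates and the decay from~\cref{Hdec} --- does not close. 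First, \cref{Hdec} is not available here: the lemma is stated (and later used in~\cref{thm:lipapprox2}) under the hypotheses of~\cref{thm:lipapprox1}, i.e.~\cref{Hposrad,Hmoment1,Hmoment2} only. Second, the density estimates control $\hn(\partial E\cap B_R)$ only through a covering at scale $r_0$, giving growth like $R^{n}$, while for $y$ in the shell at distance $\sim 2^k$ the inner integral is only bounded by $2^{-k}Q(2^k/\eps)$; the resulting series $\sum_k 2^{k(n-1)}Q(2^k/\eps)$ can diverge when $G$ merely has a finite first moment (e.g. $g(r)\sim r^{-(n+1)}\log^{-2}r$ at infinity), and indeed the quantity with absolute values can be infinite for an almost-minimizer with unbounded total perimeter, so no rearrangement of this brute-force estimate can work. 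Third, even granting~\cref{Hdec}, the sum is only $\lesssim \eps^{n-1+p_0}$, which is not dominated by $Q(1/(4\eps))$ since no lower bound on $Q$ is assumed (and such a weaker bound is not the stated inequality, even if it might still feed the final excess-decay scheme).

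The paper's resolution of precisely this term is structural rather than quantitative: it never estimates an integral over the far part of $\partial E$. One applies the divergence theorem in $y$ on $E\setminus\cy_2$ to the field $y\mapsto G_\eps(x-y)T_x$ (legitimate since $\abs{x-y}\ge 1$ there and $G\in W^{1,1}_\loc(\Rpn\setminus\{0\})$ by~\cref{Hmoment2}), which trades $\partial E\setminus\cy_2$ for the fixed surface $E\cap\partial\cy_2$ plus a bulk term involving $\nabla G_\eps$; then Fubini and a second integration by parts in $x$ over $E\cap\spt T$ (where $T$ has compact support) converts that bulk term into integrals whose bounded-measure factor is $\partial E\cap\cy_1$ or $E\cap\cy_1$ and whose unbounded factor carries only Lebesgue measure. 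Each resulting piece has its two domains separated by a distance $\ge 1/4$ and is then controlled by~\cref{eqmu}, producing exactly $C\,Q(1/(4\eps))$ with no decay hypothesis beyond the first moment. You should replace your dyadic summation by this double integration by parts (or an equivalent device exploiting the divergence structure) to obtain the lemma as stated.
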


\begin{proof}
By~\cref{prp:heightbound} we may choose $\tau_{\mathrm{lip}}=\tau_{\mathrm{lip}}(n,\gamma)$ small
enough so that
\begin{equation}\label{lipapprox2_step1:Eflat}
\left\{  x_n<-\frac{1}{4}\right\}\cap \cy_{2}
\ \subsq\ E\cap \cy_{2}
\ \subsq\ \left\{  x_n<\frac{1}{4}\right\}\cap \cy_2.
\end{equation}
To simplify notation we write $\nu$ for $\nu_E$ and recall the convention $T_x$ for $T(x)$. We may assume without loss of generality that $\norm{\grad
\vphi}_{L^\infty}=1$. We start with the following simple observation.
For every measure~$\mu$ and every sets $A, B$ such that  $A\times B\subsq \{(x,y)\in \R^{n}\times \R^n \, : \ |x-y|> 1/4\}$, 
\begin{equation}\label{eqmu}
 \int_{A\times B} G_\eps(x-y) \mathrm{d}\mu(x) \dy\le  4\mu(A) \int_{\R^n\backslash B_{\frac{1}{4}}} |z| G_\eps(z) \dz= 4\mu(A) Q\lt(\frac{1}{4\eps}\rt).
\end{equation}
 Let now $\alpha\in C^1_c((-1,1);[0,1])$ be such that  $\alpha\equiv 1$ in $(-\frac{1}{2},\frac{1}{2})$ and
$\norm{\alpha'}_{L^\infty}\le 4$. We then consider the vector field $T\in C^1_c(\cy_{1})$
defined by $T(x)=\vphi(x')\alpha(x_n)e_n$ for all $x\in\Rpn$. We first claim that 
\begin{equation}\label{lipapprox2:resstep1claim}
\Bigg|\int_{\partial E} \nu\cdot(\grad T\nu)
+2\gamma \int_{\partial E\cap \cy_{2}} \int_{E\cap \cy_2}
G_\eps(x-y)(T_x-T_y) \cdot\nu_y \dx\dhn_y \Bigg|
\le C\lt(Q\left(\frac{1}{4\eps}\right)+ \Lambda\rt).
\end{equation}
This would conclude the proof of~\cref{lipapprox2:resstep1} since $T(x)=\vphi(x') e_n$ in $D_{2}\times
(-\frac{1}{2},\frac{1}{2})$ and 
\begin{multline*}
 \lt|\int_{\partial E\cap \cy_2}\int_{E\cap\big(\cy_2\backslash\left(D_{2}\times
(-\frac{1}{2},\frac{1}{2})\right)\big)}
G_\eps(x-y)(T_x-\vphi_{x'} e_n) \cdot\nu_y \dx\dhn_y\rt|\\
\le C \int_{\partial E\cap \cy_2}\int_{E\cap\big(\cy_2\backslash\left(D_{2}\times
(-\frac{1}{2},\frac{1}{2})\right)\big)}
G_\eps(x-y) \dx\dhn_y\stackrel{\cref{eqmu}}{\le} C Q\lt(\frac{1}{4\eps}\rt),
\end{multline*}
where we used~\cref{eqmu} with $\mu= \hn\LL \partial E$ and the fact that  $P(E; \cy_2)\le C$ by
\cref{eq:densityestim}.\\
We thus prove~\cref{lipapprox2:resstep1claim}. Notice that $\dvg T(x)=\vphi(x')\alpha'(x_n)$.  In particular, by~\cref{lipapprox2_step1:Eflat}, $\dvg T$ vanishes in
\[
(\partial E\cap \cy_1)\cup \lt(E\cap \lt\{
x_n\le -1 \text{ or }x_n\ge -\frac{1}{2}\rt\}\rt).
\]
By $(\Lambda,r_0)$-minimality of $E$, setting $f_t(x)=x+tT(x)$ we have
\begin{equation}\label{lipapprox2:eq0}
\calF_{\eps,\gamma}(E)\le \calF_{\eps,\gamma}(f_t(E))+\Lambda\abs{E\triangle f_t(E)}.
\end{equation}
On the one hand, it is standard that for any $\abs{t}$ small enough
\[
\abs{E\triangle f_t(E)}\le 2\abs{t}\abs*{\int_{\partial E} T\cdot\nu}
\,\stackrel{\mathclap{\cref{eq:densityestim}}\vphantom{\big|}}{~\le~}\, C\abs{t}.
\]
On the other hand, for any $\abs{t}$ small enough, we have
\[
\calF_{\eps,\gamma}(f_t(E))
\le \calF_{\eps,\gamma}(E)+t\big(\delta\calF_{\eps,\gamma}(E)[T]\big)+o(t).
\]
Hence, by~\cref{lem:firstvarf},~\cref{lipapprox2:eq0} implies, for any $\abs{t}$ small enough
\begin{multline*}
-t 
\left[\int_{\partial^* E} \dvg_E T\dhn
-2\gamma\left(\iint_{E\times E^\compl} \dvg T(x)G_\eps(x-y) \dx\dy\right.\right.\\
\left.\left.+\int_{\partial^* E}\int_E
G_\eps(x-y)\left(T(x)-T(y)\right)\cdot\nu_E(y)\dx\dhn_y\right)\right]
\le C\abs{t}(\Lambda+o(1)).
\end{multline*}
Since this holds for $\pm t$ and for arbitrary small $\abs{t}$, in terms of $T$ this gives
\begin{multline*}
\left|\int_{\partial E} \nu\cdot(\grad T \nu)
+2\gamma\left(\int_{E\cap \cy_1\cap \{x_n\le-\frac{1}{2}\}} \int_{E^\compl} \dvg
T(x)G_\eps(x-y)\dy\dx\right.\right.\\
\left.\left.+ \int_{\partial E} \int_{E} G_\eps(x-y)(T_x-T_y) \cdot\nu_y \dx\dhn_y \right)\right|
\le C\Lambda.
\end{multline*}
Using again that  $P(E; \cy_2)\le C$ by~\cref{eq:densityestim} and~\cref{eqmu} with $A=E\cap
\cy_1\cap \{x_n\le-\frac{1}{2}\}$, $B=E^c$ and $\mu$ the Lebesgue measure (recall that $E^c\cap
\cy_2\subsq \{(x',x_n)~:~ x_n\ge -\frac{1}{4}\}$) we see that in order to prove
\cref{lipapprox2:resstep1claim} it is enough to show that
\begin{multline}\label{lipapprox2:claimreduced}
 \lt|\int_{\partial E} \int_{E} G_\eps(x-y)(T_x-T_y) \cdot\nu_y \dx\dhn_y\rt.\\
\lt.-\int_{\partial E\cap \cy_{2}} \int_{E\cap \cy_2}
G_\eps(x-y)(T_x-T_y) \cdot\nu_y \dx\dhn_y \rt|\le CQ\lt(\frac{1}{4\eps}\rt).
\end{multline}
Recalling that $T=0$ in $\cy_1^c$ we write
\begin{multline*}
 \int_{\partial E} \int_{E} G_\eps(x-y)(T_x-T_y) \cdot\nu_y \dx\dhn_y-\int_{\partial E\cap \cy_2} \int_{E\cap \cy_2} G_\eps(x-y)(T_x-T_y) \cdot\nu_y \dx\dhn_y\\
 = -\int_{\partial E\cap \cy_1} \int_{E\backslash \cy_2} G_\eps(x-y)T_y \cdot\nu_y \dx\dhn_y +\int_{\partial E\backslash \cy_2} \int_{E\cap \cy_1} G_\eps(x-y) T_x\cdot\nu_y \dx\dhn_y.  
\end{multline*}
Considering the last term on the right-hand side and using integration by parts we have 
\begin{multline*}
  \int_{\partial E\backslash \cy_2} \int_{E\cap \cy_1} G_\eps(x-y) T_x\cdot\nu_y \dx\dhn_y\\
  =\int_{E\cap \partial \cy_2}\int_{E\cap \cy_1} G_\eps(x-y) T_x\cdot\nu_{\cy_2}(y) \dx\dhn_y -\int_{E\backslash \cy_2}\int_{E\cap \cy_1} \nabla G_\eps(x-y)\cdot T_x \dx \dy.
  \end{multline*}
  Using Fubini and integration by parts again leads to 
  \begin{equation*}
  \begin{aligned}
  &\int_{E\backslash \cy_2}\int_{E\cap \cy_1} \nabla G_\eps(x-y)\cdot T_x \dx \dy\\
  &\qquad=\int_{E\cap \cy_1} \int_{E\backslash \cy_2} \nabla G_\eps(x-y)\cdot T_x \dy \dx\\
  &\qquad=\int_{\partial E\cap \cy_1} \int_{E\backslash \cy_2}  G_\eps(x-y) T_x\cdot \nu_x  \dy \dhn_x -\int_{E\cap \cy_1} \int_{E\backslash \cy_2} G_\eps(x-y) \dvg T(x) \dy \dx.
 \end{aligned}
\end{equation*}
Putting everything together we find
\begin{multline*}
 \int_{\partial E} \int_{E} G_\eps(x-y)(T_x-T_y) \cdot\nu_y \dx\dhn_y-\int_{\partial E\cap \cy_2} \int_{E\cap \cy_2} G_\eps(x-y)(T_x-T_y) \cdot\nu_y \dx\dhn_y\\
 = -2\int_{\partial E\cap \cy_1} \int_{E\backslash \cy_2} G_\eps(x-y)T_y \cdot\nu_y \dx\dhn_y+\int_{E\cap \partial \cy_2}\int_{E\cap \cy_1} G_\eps(x-y) T_x\cdot\nu_{\cy_2}(y) \dx\dhn_y\\
 +\int_{E\cap \cy_1} \int_{E\backslash \cy_2} G_\eps(x-y) \dvg T(x) \dy \dx.
\end{multline*}
Using~\cref{eqmu} with either $A=\partial E\cap \cy_1$, $B=E\backslash \cy_2$, $\mu= \hn\LL \partial E$ (and $P(E;\cy_1)\le C$), $A=E\cap \partial \cy_2$, $B=E\cap \cy_1$,
$\mu=\hn\LL \partial \cy_2$ or $A=E\cap \cy_1$, $B=E\backslash \cy_2$ and $\mu$ the Lebesgue measure we conclude the proof of~\cref{lipapprox2:claimreduced}.
\end{proof}

We now transfer this information to the graph of $u$.

\begin{lem}\label{lem:lipapprox2_step2}
Under the assumptions of~\cref{thm:lipapprox1}, there exists $C=C(n,\gamma)>0$ such that for every
$\vphi\in C^1_c(D_{1})$ we have
\begin{multline}\label{lipapprox2_step2:res}
\Bigg|\int_{\Gamma_u} (\nabla \vphi\cdot \nu_{E_u})(\nu_{E_u}\cdot e_n)+2\gamma \int_{\Gamma_u} \int_{E_u}
G_\eps(x-y)(\vphi(x)-\vphi(y)) (\nu_{E_u}(y)\cdot e_n) \dx\dhn_y \Bigg|\\
\qquad\le C \lt(\e_n(4)+Q\left(\frac{1}{4\eps}\right)\rt),
\end{multline}
where
\[
E_u \coloneqq \Big\{ (x',x_n)~:~ x'\in D_{2} \text{ and } x_n<u(x') \Big\}.
\]

\end{lem}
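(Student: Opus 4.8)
The plan is to deduce the estimate from~\cref{lem:lipapprox2_step1}, which is the very same inequality but with $\partial E\cap\cy_2$ and $E\cap\cy_2$ in place of $\Gamma_u$ and $E_u$, by controlling the errors produced when one replaces $\partial E\cap\cy_2$ by the graph $\Gamma_u$ and $E\cap\cy_2$ by the subgraph $E_u$. Throughout I would normalise $\norm{\grad\vphi}_{L^\infty}=1$ (as in the proof of~\cref{lem:lipapprox2_step1}) and write $M\coloneqq\partial E\cap\cy_2$. Choosing $\tau_{\mathrm{lip}}$ small, \cref{prp:heightbound} puts $E\cap\cy_2$ in flat position and~\cref{thm:lipapprox1} gives $\norm{u}_{L^\infty}<1/4$, so that $\Gamma_u\subsq\cy_2$ and $E_u\cap\cy_2=\{x'\in D_2:\,-2<x_n<u(x')\}$. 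One elementary fact about the kernel will be used repeatedly: since $\vphi$ depends on $x'$ only, $\abs{\vphi(x)-\vphi(y)}\le\abs{x'-y'}\le\abs{x-y}$, so every integrand below is dominated by $G_\eps(x-y)\abs{x-y}$, with $\int_{\R^n}G_\eps(z)\abs{z}\dz=I^1_{G_\eps}=I^1_G=1/\mathbb{K}_{1,n}$ by~\cref{fixedmoment1}; in particular the double integral in the statement is absolutely convergent.

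For the local term, set $B_1\coloneqq\int_M(\grad\vphi\cdot\nu_E)(\nu_E\cdot e_n)$ and $A_1\coloneqq\int_{\Gamma_u}(\grad\vphi\cdot\nu_{E_u})(\nu_{E_u}\cdot e_n)$. On $M\cap\Gamma_u$ the integrands agree $\hn$-a.e., while on $M\triangle\Gamma_u$ each is bounded by $\abs{\grad\vphi}\le1$; hence $\abs{A_1-B_1}\le C\hn(M\triangle\Gamma_u)\le C\e_n(4)$ by the Lipschitz approximation estimate of~\cref{thm:lipapprox1}.

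The nonlocal double integral is where the real difficulty lies. Writing $A_2$ and $B_2$ for the double integrals over $\Gamma_u\times E_u$ and $M\times(E\cap\cy_2)$, I would split $A_2-B_2$ into (a) the change of inner domain from $E\cap\cy_2$ to $E_u$ at fixed $y\in\Gamma_u$, and (b) the change of outer domain from $M$ to $\Gamma_u$ at fixed inner domain $E\cap\cy_2$. Step~(b) goes like the local term: on $M\triangle\Gamma_u$ (mass $\le C\e_n(4)$) the inner integral $\int_{E\cap\cy_2}G_\eps(x-y)\abs{\vphi(x)-\vphi(y)}\dx$ is $\le\int_{\R^n}G_\eps(z)\abs{z}\dz=C$, so this costs $\le C\e_n(4)$. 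In step~(a) the two inner domains differ by $\big((E\triangle E_u)\cap\cy_2\big)\cup\big(E_u\cap\{x_n\le-2\}\big)$. On the unbounded piece one has $\abs{x-y}\ge7/4$ for $y\in\Gamma_u$, so its total contribution is $\le C\int_{\{\abs{z_n}\ge7/4\}}G_\eps(z)\abs{z}\dz\le CQ(7/(4\eps))\le CQ(1/(4\eps))$, also using $\hn(\Gamma_u)\le C$. For the bounded piece, the set $S$ of those $x'\in D_2$ over which the vertical slices of $E$ and $E_u$ differ has $\abs{S}\le C\e_n(4)$ --- the standard passage, by slicing $M\triangle\Gamma_u$ over $D_2$ in flat position, from the $\hn$-closeness of $M$ and $\Gamma_u$ to $L^1$-closeness of the two sets, see~\cite[Theorem~23.7]{Mag2012} --- but since $G_\eps$ concentrates near the origin one cannot bound $\int G_\eps$ over this thin set by its measure. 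Instead I would integrate out the vertical variable first: for each $y\in\Gamma_u$, $\int_{(E\triangle E_u)\cap\cy_2}G_\eps(x-y)\abs{\vphi(x)-\vphi(y)}\dx\le\int_S K(x'-y')\dx'$ with $K(h)\coloneqq\abs{h}\int_\R G_\eps(h,s)\ds$; then $\norm{K}_{L^1(\R^{n-1})}=\int_{\R^n}\abs{z'}G_\eps(z)\dz\le I^1_{G_\eps}=C$, and combining this with $\abs{S}\le C\e_n(4)$ and the Lipschitz parametrisation of $\Gamma_u$ yields a contribution $\le C\e_n(4)$.

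Putting the three bounds together with~\cref{lem:lipapprox2_step1}, and undoing the normalisation to restore the factor $\norm{\grad\vphi}_{L^\infty}$, gives the claim. I expect the main obstacle to be step~(a) for the nonlocal term: one must simultaneously tame the infinite vertical extent of $E_u$ --- which is exactly where the decay rate $Q$ of the first moment of $G$ enters --- and control the cost of replacing $E\cap\cy_2$ by $E_u$ inside $\cy_2$, which needs both the $\hn\to L^1$ upgrade and the observation that, $\vphi$ being horizontal, the kernel may be integrated out in the vertical direction before one invokes the smallness of the difference set.
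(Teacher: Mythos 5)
Your proposal follows essentially the same route as the paper's proof: reduce to \cref{lem:lipapprox2_step1}, bound the local term by $\hn(M\triangle\Gamma_u)\le C\e_n(4)$, and split the nonlocal difference into an inner-domain change (from $E\cap\cy_2$ to $E_u$) and a surface change (from $M$ to $\Gamma_u$), each controlled through the first moment of $G_\eps$. Your treatment of the inner-domain change is fine and matches the paper's proof of~\cref{eq:claimlip2step2_1}: the thin difference set is handled by integrating out the vertical variable (the paper does the same computation with $\abs{x-y}$ in place of $\abs{x'-y'}$), and your explicit $Q(1/(4\eps))$ bound for the unbounded part $E_u\cap\{x_n\le -2\}$ is a legitimate (indeed slightly more careful) treatment of a tail the paper glosses over; also, as in the paper's own reduction, your final bound inherits a $\Lambda$ term from \cref{lem:lipapprox2_step1}, which is consistent with \cref{thm:lipapprox2}.

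There is, however, one genuine (though fixable) gap in your step (b). You assert that the surface change ``goes like the local term'', i.e.\ that one only pays on $M\triangle\Gamma_u$; this presupposes that the two integrands coincide $\hn$-a.e.\ on $M\cap\Gamma_u$. For the local term this is true because the integrand is quadratic in the normal, hence invariant under $\nu\mapsto-\nu$; but the nonlocal integrand $G_\eps(x-y)(\vphi(x)-\vphi(y))(\nu(y)\cdot e_n)$ is \emph{linear} in $\nu$, and on $M\cap\Gamma_u$ one only knows $\nu_{E_u}=\pm\nu_E$ a.e., so the set where $\nu_{E_u}=-\nu_E$ must be controlled as well (this is exactly the content of~\cref{eq:claimlip2step2_2} in the paper, where one sets $\Gamma_1\coloneqq\Gamma_u\cap M\cap\{\nu_{E_u}=\nu_E\}$ and shows $\hn((M\cap\Gamma_u)\setminus\Gamma_1)\le C\e_n(4)$ as in \cite[(23.51)]{Mag2012}). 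A quick self-contained fix along your lines: since $u$ is $\frac{1}{2}$-Lipschitz, $\nu_{E_u}\cdot e_n\ge 2/\sqrt{5}$ on $\Gamma_u$, so at any point of $M\cap\Gamma_u$ where $\nu_{E_u}=-\nu_E$ one has $1-\nu_E\cdot e_n\ge 1+2/\sqrt{5}\ge c>0$, whence the $\hn$-measure of this bad set is bounded by $C\e_n(2)\le C\e_n(4)$, and its contribution is again $\le C\e_n(4)$ by the first-moment bound on $G_\eps$. With this extra estimate your argument coincides with the paper's.
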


\begin{proof}
As above we may assume without loss of generality that $\norm{\grad
\vphi}_{L^\infty}=1$. To simplify notation we write $\nu$ for $\nu_E$ and $\nu^u$ for $\nu_{E_u}$ and will use the convention $T_x=T(x)$. We recall that $M=\partial E\cap \cy_2$. Since it is classical (see e.g. the proof of~\cite[Theorem 23.7]{Mag2012}) that 
\[
 \Bigg|\int_{\Gamma_u} (\nabla \vphi\cdot \nu_{E_u})(\nu_{E_u}\cdot e_n)-\int_{M}  (\grad \vphi \cdot \nu_E)(\nu_{E}\cdot e_n)\Bigg|\le C \e_n(4),
\]
from~\cref{lipapprox2:resstep1} it is enough to prove that 
\begin{multline}\label{eq:toprovelipapprox2_step2}
 \Bigg|\int_{\Gamma_u} \int_{E_u}
G_\eps(x-y)(\vphi_x-\vphi_y) (\nu^u_y\cdot e_n) \dx\dhn_y-\int_{M} \int_{E\cap \cy_2}
G_\eps(x-y)(\vphi_x-\vphi_y) (\nu_{y}\cdot e_n)  \dx\dhn_y \Bigg|\\
\qquad\le C  \e_n(4) \norm{\grad
\vphi}_{L^\infty}.
\end{multline}
To this aim we write 
\begin{multline*}
 \int_{\Gamma_u} \int_{E_u}
G_\eps(x-y)(\vphi_x-\vphi_y) (\nu^u_y\cdot e_n) \dx\dhn_y-\int_{M} \int_{E\cap \cy_2}
G_\eps(x-y)(\vphi_x-\vphi_y) (\nu_{y}\cdot e_n)  \dx\dhn_y \\
=\int_{\Gamma_u} \lt(\int_{E_u} G_\eps(x-y)(\vphi_x-\vphi_y) \dx-\int_{E\cap \cy_2} G_\eps(x-y)(\vphi_x-\vphi_y)\dx\rt)(\nu^u_y\cdot e_n)\dhn_y\\
+\int_{\Gamma_u} \int_{E\cap \cy_2}
G_\eps(x-y)(\vphi_x-\vphi_y) (\nu^u_y\cdot e_n) \dx\dhn_y\\
-\int_{M} \int_{E\cap \cy_2}
G_\eps(x-y)(\vphi_x-\vphi_y) (\nu_{y}\cdot e_n)  \dx\dhn_y.
\end{multline*}
We claim that 
\begin{equation}\label{eq:claimlip2step2_1}
\int_{\Gamma_u} \int_{E_u\triangle (E\cap \cy_2)} G_\eps(x-y)|\vphi_x-\vphi_y|\dx\dhn_y\le C \e_n(4)
\end{equation}
and 
\begin{multline}\label{eq:claimlip2step2_2}
 \lt|\int_{\Gamma_u} \int_{E\cap \cy_2}
G_\eps(x-y)(\vphi_x-\vphi_y) (\nu^u_y\cdot e_n) \dx\dhn_y\rt.\\
\lt.-\int_{M} \int_{E\cap \cy_2}
G_\eps(x-y)(\vphi_x-\vphi_y) (\nu_{y}\cdot e_n)  \dx\dhn_y\rt| \le C \e_n(4),
\end{multline}
from which~\cref{eq:toprovelipapprox2_step2} would follow. We start with
\cref{eq:claimlip2step2_1}.\\
By~\cref{lipapprox1:iv} of~\cref{thm:lipapprox1}, there exists a set $A\subsq D_{2}$ such that
$\calH^{n-1}(A)\le C\e_n(4)$ and
\[
E\cap \big\{(x',t)~:~ t\in(-2,2)\big\} = E_u\cap\big\{(x',t)~:~t\in(-2,2)\big\},
\quad\forall x'\in D_{2}\setminus A,
\]
since
\[
 \Big\{y'\in D_{2}~:~
\Pi_n^{-1}(\{y'\})\cap E\cap \cy_{2} \neq\Pi_n^{-1}(\{y'\})\cap
\Gamma_u^{-}\cap \cy_{2}\Big\}=\Pi_n(M\triangle \Gamma_u),
\]
where $\Pi_n: (y',y_n)\mapsto y_n$.
Thus, since $\vphi$  and $u$ are Lipschitz continuous we find
\begin{equation*}
 \begin{aligned}
  &\int_{\Gamma_u} \int_{E_u\triangle (E\cap \cy_2)} G_\eps(x-y)|\vphi_x-\vphi_y|\dx\dhn_y\\
  &\qquad \le \int_{\Gamma_u} \int_{A} \int_{-2}^2 G_\eps( (x',t)-y)|(x',t)-y| \dt \dx'\dhn_y\\
  &\qquad \le C \int_{A} \int_{D_2} \int_{\R} G_\eps( (x',t)-u_{y'})|(x',t)-u_{y'}| \dt \dy' \dx'\\
  &\qquad \le C \hn(A) \int_{\R^n} |z| G(z) \dz\le C \hn(A).
 \end{aligned}
\end{equation*}
We now turn to~\cref{eq:claimlip2step2_2}. Notice that $\hn$-a.e. on $\Gamma_u\cap M$ we have $\nu^u=\pm \nu$. Moreover, setting $\Gamma_1\coloneqq \Gamma_u\cap M\cap\{\nu^u=\nu\}$ and 
arguing exactly as in~\cite[(23.51)]{Mag2012}, we have $\hn((M\cap \Gamma_u)\backslash \Gamma_1)\le C \e_n(4)$. Recalling that by~\cref{lipapprox1:iv} of~\cref{thm:lipapprox1}, 
$\hn(M\triangle \Gamma_u)\le C \e_n(4)$, we find 
\begin{multline*}
   \lt|\int_{\Gamma_u} \int_{E\cap \cy_2}
G_\eps(x-y)(\vphi_x-\vphi_y) (\nu^u_y\cdot e_n) \dx\dhn_y\rt.\\
\lt.-\int_{M} \int_{E\cap \cy_2} G_\eps(x-y)(\vphi_x-\vphi_y) (\nu_{y}\cdot e_n)  \dx\dhn_y\rt|\\
\begin{aligned}
&\qquad\le \int_{(M\triangle \Gamma_u)\cup ((M\cap \Gamma_u)\backslash \Gamma_1)} \int_{E\cap \cy_2} G_\eps(x-y)|x-y| \dx \dhn_y\\
&\qquad\le C \e_n(4) \int_{\R^n} |z|G_\eps(z)\dz\le C\e_n(4).
 \end{aligned}
\end{multline*}
This proves the lemma.
\end{proof}

In order to conclude the proof of~\cref{thm:lipapprox2}, we are left with the linearization of~\cref{lipapprox2_step2:res}.

\begin{proof}[Proof of~\cref{thm:lipapprox2}]
Since arguing verbatim as in~\cite[Theorem 23.7]{Mag2012} we have 
\[
 \lt|\int_{D_1} \nabla u\cdot \nabla \vphi -\int_{\Gamma_u} (\nabla \vphi\cdot \nu_{E_u})(\nu_{E_u}\cdot e_n)\rt|\le C \e_n(4),
\]
by~\cref{lem:lipapprox2_step2} it is enough to prove that (recall the notation $u_{x'}=u(x')$)

\begin{multline}\label{lipapprox2:claimstep3}
\left|\int_{D_{2}}\int_{D_{2}} \int_{-2}^{u_{x'}}
G_\eps(x'-y',t-u_{y'})(\vphi_{x'}-\vphi_{y'})\dt\dx'\dy'\rt.\\
\lt.-\iint_{D_{2}\times D_{2}}(u_{x'}-u_{y'})(\vphi_{x'}-\vphi_{y'})G_\eps(x'-y',0)\dx'\dy'\right|\le C \lt(\e_n(4)+Q\lt(\frac{1}{4\eps}\rt)\rt).
\end{multline}
For $x'\neq y'$ we have
\begin{multline}\label{lipapprox2:eq15}
\int_{-2}^{u_{x'}} G_\eps(x'-y',t-u_{y'})\dt
=\int_{-2-u_{y'}}^{u_{x'}-u_{y'}} G_\eps(x'-y',s)\ds\\
\quad=\int_{-2-u_{y'}}^{-1} G_\eps(x'-y',s)\ds
+\int_{-1}^0 G_\eps(x'-y',s)\ds
+\int_0^{u_{x'}-u_{y'}} G_\eps(x'-y',s)\ds.
\end{multline}
On the one hand we observe that 
\begin{equation}\label{lipapprox2:eq16}
\iint_{D_{2}\times D_{2}} (\vphi_{x'}-\vphi_{y'}) \int_{-1}^0
G_\eps(x'-y',s)\ds\dx'\dy'
=0.
\end{equation}
On the other hand, since  $\norm{u}_{L^\infty}\le 1$, for any $y'$ we have $-2-u_{y'}<-1$. Thus,
using the fact that $\vphi$ is $1$-Lipschitz,  we compute
\begin{align}
\nonumber
&\abs*{\int_{D_2}\int_{D_2} \int_{-2-u_{y'}}^{-1}
G_\eps(x'-y',s)(\vphi_{x'}-\vphi_{y'})\ds\dx'\dy'}\\
\nonumber
&\phantom{\abs*{\int_{D_2}\int_{D_2} \int_{-2-u_{y'}}^{-1}}G_\eps(x'-y',s)}
\le \int_{D_2}  \int_{-2-u_{y'}}^{-1} \int_{D_2}
\abs{x'-y'} G_\eps(x'-y',s) \dx'\ds\dy'\\
\label{lipapprox2:eq17}
&\phantom{\abs*{\int_{D_2}\int_{D_2} \int_{-2-u_{y'}}^{-1}}G_\eps(x'-y',s)}
\le \int_{D_2}  \int_{\R^n\backslash B_{1}} 
\abs{z}G_\eps(z) \dz\dy'\le C Q\left(\frac{1}{4\eps}\right).
\end{align}
Combining~\cref{lipapprox2:eq15,,lipapprox2:eq16,,lipapprox2:eq17} yields
\begin{multline}\label{lipapprox2:eq18}
\left|\int_{D_2}\int_{D_2} \int_{-2}^{u_{x'}}
G_\eps(x'-y',t-u_{y'})(\vphi_{x'}-\vphi_{y'})\dt\dx'\dy'\right.\\
\left.-\iint_{D_2\times D_2} \int_0^{u_{x'}-u_{y'}}
G_\eps(x'-y',t)(\vphi_{x'}-\vphi_{y'}) \dt\dx'\dy'\right|\le CQ\left(\frac{1}{4\eps}\right).
\end{multline}
Using again  that $\vphi$ is $1$-Lipschitz and Fubini's theorem, we
estimate
\begin{align}
\nonumber
&\abs*{\iint_{D_2\times D_2}\int_0^{u_{x'}-u_{y'}}
(G_\eps(x'-y',t)-G_\eps(x'-y',0))(\vphi_{x'}-\vphi_{y'}) \dt\dx'\dy'}\\
\nonumber
&\phantom{\iint_{D_2\times D_2}\int_0^{u_{x'}}}
\le \int_0^1\iint_{D_2\times D_2} \int_0^{\abs{u_{x'}-u_{y'}}} t\,\abs{x'-y'}
\abs{\grad G_\eps(x'-y',st)} \dt\dx'\dy'\ds\\
\nonumber
&\phantom{\iint_{D_2\times D_2}\int_0^{u_{x'}}}
=\int_0^1 \int_0^{1}  t\iint_{D_2\times D_2} |u_{x'}-u_{y'}|^2 \,\abs{x'-y'}
\abs{\grad G_\eps(x'-y',st |u_{x'}-u_{y'}|)} \dx'\dy'\dt\ds\\
\label{lipapprox2:eq19}
&\phantom{\iint_{D_2\times D_2}\int_0^{u_{x'}}}
\le \int_0^1 \int_0^{1} \iint_{D_2\times D_2} |u_{x'}-u_{y'}|^2 \,\abs{x'-y'}
\abs{\grad G_\eps(x'-y',st |u_{x'}-u_{y'}|)} \dx'\dy'\dt\ds.
\end{align}
Set $\widetilde{G}_\eps\coloneqq \eps^{-(n+1)} \widetilde{G}(\cdot/\eps)$ where $\widetilde{G}\coloneqq |\cdot| |\nabla G|$ and  $\Phi_{stu}(x',y')\coloneqq(x'-y',st(u_{x'}-u_{y'}))$. 
Observing that $\abs{\Phi_{stu}(x',y')}\ge |x'-y'|$ we have for every fixed $s,t$,
\begin{multline*}
 \iint_{D_2\times D_2} |u_{x'}-u_{y'}|^2 \,\abs{x'-y'}
\abs{\grad G_\eps(x'-y',st |u_{x'}-u_{y'}|)} \dx'\dy'\\\le \iint_{D_2\times D_2} |u_{x'}-u_{y'}|^2 \widetilde{G}_\eps(\abs{\Phi_{stu}(x',y')}) \dx'\dy'.
\end{multline*}
Observing that $ I^1_{\widetilde{G}}=I^2_{|\nabla G|}$,~\cref{lem:techlem1} below yields 
\[
 \iint_{D_2\times D_2} |u_{x'}-u_{y'}|^2 \widetilde{G}_\eps(\abs{\Phi_{stu}(x',y')}) \dx'\dy'\le C I^2_{|\nabla G|} \int_{D_2} |\nabla u|^2\le C I^2_{|\nabla G|} \e_n(14),
\]
where we used that by~\cref{lipapprox1:v} of~\cref{thm:lipapprox1}, $\int_{D_2} |\nabla u|^2\le C \e_n(4)$. Combining this with~\cref{lipapprox2:eq19} and~\cref{lipapprox2:eq18} concludes the proof of~\cref{lipapprox2:claimstep3}.

\end{proof}

In the proof of~\cref{thm:lipapprox2} above, we used the following technical lemma.

\begin{lem}\label{lem:techlem1}
Let $G: \R^n\mapsto \R^+$ be a radial kernel such that (recall definition~\cref{defIk}) $
I_G^1 <\infty$.
For $u\in\Lip(D_2)$, we define the map~$\Phi_u:D_2\times D_2\to \R^{n-1}\times\R$ by
\[
\Phi_u(x',y')=(x'-y',u(x')-u(y')).
\]
There exists a  constant $C=C(n)>0$ such that if  $\norm{\grad u}_{L^\infty(D_2)}\le \frac{1}{2}$ then
\begin{equation}\label{techlem1:skernel}
\iint_{D_2\times D_2} (u(x')-u(y'))^2 G(\Phi_{u}(x',y'))\dx'\dy'
\le CI_G^1\int_{D_2} \abs{\grad u}^2.
\end{equation}
\end{lem}

\begin{proof}
 We start by estimating 
\[
\begin{aligned}
&\iint_{D_2\times D_2}(u(x')-u(y'))^2 G(\Phi_u(x',y')) \dx'\dy'\\
&\quad\le \int_0^1 \iint_{\R^{n-1}\times \R^{n-1}} \ind_{D_2}(x')\ind_{D_2}(y')
\abs{\grad u(x'+t(y'-x'))}^2 \abs{x'-y'}^2G(\Phi_u(x',y'))\dx'\dy'\dt\\
&\quad= \int_0^1 \iint_{\R^{n-1}\times \R^{n-1}} \ind_{D_2}(\hat{x}'-tz')\ind_{D_2}(\hat{x}'+(1-t)z')
\abs{\grad u(\hat{x})}^2 \abs{z'}^2G(\Phi_u(\hat{x}'-tz',\hat{x}'+(1-t)z'))\\
&\hphantom{\quad= \int_0^1 \iint_{\R^{n-1}\times \R^{n-1}} \ind_{D_2}(\hat{x}'-tz')\ind_{D_2}(\hat{x}'+(1-t)z')
\abs{\grad u(\hat{x})}^2 \abs{z'}^2G(\Phi_u(\hat{x}'-tz',\hat{x}'+(1-}
\dhx'\dz'\dt\\
&\quad\le \int_0^1 \int_{D_2} \abs{\grad u(x')}^2 \lt[\int_{\R^{n-1}}
 \abs{z'}^2G(\Phi_u(x'-tz',x'+(1-t)z'))\dz'\rt]\dx' \dt,
\end{aligned}
\]
where we made the change of variables $z'=y'-x'$, $\hat{x}'=x'+tz'$,  and used that by convexity
of $D_2$, $ D_2(tz')\cap D_2(-(1-t)z')\subsq D_2$. We finally claim that 
 for every fixed $t\in[0,1]$ and $x'\in D_2$,
 \begin{equation}\label{claimtechnical1}
  \int_{\R^{n-1}}
 \abs{z'}^2G(\Phi_u(x'-tz',x'+(1-t)z'))\dz'\le C I^1_G,
 \end{equation}
 which would conclude the proof of~\cref{techlem1:skernel}. For this we set $G(z)=g(|z|)$ for some $g: \R^+\mapsto\R^+$. 
and  write using polar coordinates
\begin{multline*}
   \int_{\R^{n-1}} \abs{z'}^2G(\Phi_u(x'-tz',x'+(1-t)z'))\dz'\\
=\int_{\bbs^{n-2}}\int_0^\infty  r^n g \big(\sqrt{r^2+ |u(x'-t r\sigma )-u(x'+ (1-t)r\sigma)|^2}\big)\dr\dH^{n-2}_\sigma.
\end{multline*}
 We finally notice that for every fixed $t\in [0,1]$, $x'\in D_2$ and  $\sigma\in \bbs^{n-2}$, the function $\Psi(r):=\sqrt{r^2+ |u(x'-tr\sigma)-u(x'+ (1-t)r\sigma)|^2}$ is  Lipschitz continuous with 
\[\frac{\sqrt{5}}{2} r\ge \Psi(r)\ge r \qquad \textrm{and} \qquad \frac{5}{4}\ge
\Psi'(r)\ge \frac{3}{2\sqrt{5}}
\]
so that making the change of variables $s=\Psi(r)$ we find
\[\int_{\R^{n-1}} \abs{z'}^2G(\Phi_u(x'-tz',x'+(1-t)z'))\dz'\le C \int_0^\infty  s^n g (s)\ds=C I^1_G.
\]
This concludes the proof of~\cref{claimtechnical1}.
\end{proof}

\section{Caccioppoli inequality}\label{sec:caccio}

First, we introduce the notion of \textsl{flatness} for sets of finite perimeter.

\begin{dfn}[Flatness]\label{dfn:flatness}
For any set of finite perimeter $E\subsq\R^n$ we define the flatness of
$E$ in~$x\in\partial E$ at scale $r>0$ with respect to the direction $\nu\in\bbsn$ by
\[
\f(E,x,r,\nu) \coloneqq \inf_{c\in\R}~ \frac{1}{r^{n-1}}\int_{\partial^* E\cap\cy(x,r,\nu)}
\frac{\abs{(y-x)\cdot\nu-c}^2}{r^2} \dhn_y.
\]
When $\nu=e_n$, we write $\f_n(E,x,r)$ for $\f(E,x,r,e_n)$ and we write $\f_n(E,r)$ for $\f_n(E,0,r)$.
\end{dfn}

Using the harmonic approximation result given by~\cref{prp:harmapprox}, we will be able to show in~\cref{lem:excess_improv} that there exists a direction $\nu$ such that 
$\f(E,\lambda r,\nu)\lesssim\lambda^2\e_n(E,r)$ for $(\Lambda,r_0)$-minimizers of~$\calF_{\eps,\gamma}$, as long as $r$ is much larger than $\eps$.
To pass this estimate to the excess at scale $\lambda r/2$, we establish in this section a
Caccioppoli-type (or Reverse Poincaré) inequality.
The key argument is to prove first that for sets which are sufficiently flat, the quasi-minimality
condition~\cref{wquasimin:ineq} can be upgraded.

Let us introduce some additional notation. For the convenience of the reader, these also appear in
the \nameref{subsec:notation} section at the beginning of the paper.

For any $t>0$ and $z\in\R^{n-1}$, we define $\ky_t(z)\coloneqq D_t(z)\times\left(-1,1\right)$, and we
simply write $\ky_t$ when $z=0$.
For any cylinder $\ky_t(z)$, any set of locally finite perimeter $E$, and any constant $c\in \R$, we define
the quantities
\[
\scrF(E,\ky_t(z),c)\coloneqq \int_{\ky_t(z)\cap\partial^* E} \frac{(x_n-c)^2}{t^2}\dhn
\]
and
\[
\scrE(E,\ky_t(z))\coloneqq P(E;\ky_t(z))-\calH^{n-1}(D_t(z)).
\]
When $z=0$, we make the abuse of notation $\scrE(E,t)=\scrE(E,\ky_t(0))$ and
$\scrF(E,t,c)=\scrE(E,\ky_t(0),c)$.
Let us point out that these two quantities are respectively linked with the (non-scale-invariant)
flatness and excess of $E$ at scale $t$ in the direction $e_n$.
Indeed, if $0\in \partial E$ and if for some $h\in(0,t)$
\[
\left\{ (x',x_n)\in \ky_t~:~ x_n<-h\right\} \subsq E\cap \cy_t
\subsq \left\{ (x',x_n)\in \ky_t~:~ x_n<h\right\},
\]
then
\[
\f_n(E,t) = \inf_{c\in\R}~ \frac{1}{t^{n-1}} \scrF(E,t,c),
\]
and
\[
\calH^{n-1}(D_t) = \int_{\partial^* E\cap \cy_t} \nu_E\cdot e_n
\]
(see~\cite[Lemma~22.11]{Mag2012}), thus, for any $t\in (0,1)$,
\begin{equation}\label{eq:linkscrEexcess}
\scrE(E,t)
= \int_{\partial^* E\cap \cy_t} (1-\nu_E\cdot e_n)\dH^{n-1}
= \frac{1}{2}\int_{\partial^* E\cap \cy_t} \abs{\nu_E-e_n}^2\dH^{n-1}
=\left( \frac{t^{n-1}}{2}\right)\e_n(E,t).
\end{equation}
 Notice in particular that $\scrE(E,\cdot)$ is nondecreasing in $(0,1)$.
Eventually, recalling the definition of the function $Q$ in~\cref{eq:defQ}, for any $\theta\in
[0,1]$ we define the function~$Q_{1-\theta}$ by
\begin{equation}\label{def:Qtheta}
Q_{1-\theta}(t)\coloneqq Q(t^{1-\theta}),\qquad\forall t>0.
\end{equation}

\subsection{A refined quasi-minimality condition}

We improve the quasi-minimality condition~\cref{wquasimin:ineq} for sets which are sufficiently
flat. For any $\eps>0$, let us define the \enquote{critical} energy functional
\[
\calF_{\eps}(E)\coloneqq \calF_{1,\eps}(E)=(P-P_\eps)(E).
\]

\begin{prp}\label{prp:benoitmin}
Assume that $G$ satisfies~\cref{Hposrad,,Hmoment1}, and let $\eps\in(0,1)$, $\gamma\in(0,1)$,
$\theta\in (0,1]$ and $\Lambda>0$ with $4\Lambda\le 1-\gamma$.
There exists $C=C(n)>0$ such that if $E$ is a $(\Lambda,4)$-minimizer of
$\calF_{\eps,\gamma}$ with
\[
\left\{  x_n<-\frac{1}{4}\right\}\cap \ky_{3}
\subsq E\cap \ky_{3}
\subsq\left\{ x_n<\frac{1}{4}\right\}\cap \ky_{3},
\]
then the following holds.
If $t\in(0,2)$ is such that $\calH^{n-1}(\partial \ky_t\cap\partial E)=0$  then for any set $F$ of finite perimeter such that $E\triangle F\subsq \ky_t$
and 
\[
\left\{  x_n<-\frac{1}{4}\right\}\cap \ky_t
\subsq F\cap \ky_t
\subsq\left\{  x_n<\frac{1}{4}\right\}\cap \ky_t,
\]
we have
\begin{multline}\label{benoitmin:res}
\scrE(E,t) \le \left(\frac{1+\gamma}{1-\gamma}\right)\scrE(F,t)
+\frac{2\gamma}{(1-\gamma)} \big[\scrE(E,t+\eps^\theta)-\scrE(E,t)\big]\\
+\frac{C}{(1-\gamma)}Q_{1-\theta}\left(\frac{1}{\eps}\right)
+\frac{\Lambda}{(1-\gamma)} \abs{E\triangle F} 
+\frac{1+3\gamma}{(1-\gamma)}\hn(\partial^* F\cap \partial \ky_t)	.
\end{multline}
\end{prp}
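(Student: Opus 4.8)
The plan is to test the $(\Lambda,4)$-minimality of $E$ against $F$, to split $\calF_{\eps,\gamma}=(1-\gamma)P+\gamma\calF_{\eps}$, to localise each piece, and to reduce the nonlocal one — through a slicing representation of $P-P_\eps$ — to a purely geometric minimisation of a direction‑averaged shadow. Since $t<2$ gives $\ol{\ky_t}\subseteq\ol{B_{\sqrt{5}}}\subset B_3$, one has $E\triangle F\csubset B_3\subseteq B_4$ and the minimality inequality $\calF_{\eps,\gamma}(E)\le\calF_{\eps,\gamma}(F)+\Lambda\abs{E\triangle F}$ applies directly. Writing $\calF_{\eps,\gamma}=(1-\gamma)P+\gamma\calF_{\eps}$ and localising $P$ as in the proof of \cref{prp:wquasimin} — using $E=F$ off $\ol{\ky_t}$ and $\hn(\partial E\cap\partial\ky_t)=0$, so that $P(E)-P(F)=\scrE(E,t)-\scrE(F,t)-\hn(\partial^*F\cap\partial\ky_t)$ — one obtains
\[
(1-\gamma)\scrE(E,t)\le(1-\gamma)\scrE(F,t)+(1-\gamma)\hn(\partial^*F\cap\partial\ky_t)+\gamma\bigl(\calF_{\eps}(F)-\calF_{\eps}(E)\bigr)+\Lambda\abs{E\triangle F}.
\]
Dividing by $1-\gamma$, \cref{benoitmin:res} reduces to the bound $\calF_{\eps}(F)-\calF_{\eps}(E)\le 2\scrE(F,t)+2\bigl[\scrE(E,t+\eps^\theta)-\scrE(E,t)\bigr]+C\,Q_{1-\theta}(1/\eps)+4\,\hn(\partial^*F\cap\partial\ky_t)$, with the numerical constants precisely those forced by the statement.

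\emph{Localisation of the nonlocal term.} Since $\calF_\eps=P-P_\eps$ and the perimeter difference above is already localised, what remains is an upper bound for $P_\eps(E)-P_\eps(F)$. I would split $P_\eps=2\iint G_\eps$ according to whether both points lie in the enlarged cylinder $\ky_{t+\eps^\theta}$ (which lies inside $\ky_3$, as $t+\eps^\theta<3$) or not. Because $E=F$ off $\ky_t$, the interactions involving a point outside $\ky_{t+\eps^\theta}$ differ for $E$ and $F$ only through $E\triangle F\subseteq\ky_t$, hence by at most $\abs{E\triangle F}\int_{\abs{w}\ge\eps^\theta}G_\eps(w)\dd w$, which by the decay of $G$ and $\abs{E\triangle F}\le\abs{\ky_t}\le C$ is $\lesssim Q_{1-\theta}(1/\eps)$; and the coincidence $E=F$ on the shell $\ky_{t+\eps^\theta}\setminus\ky_t$ is responsible for the $\scrE(E,t+\eps^\theta)-\scrE(E,t)$ term. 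One is thus left with comparing the nonlocal energies of the two \emph{sandwiched} sets $E,F$ through their restrictions to $\ky_{t+\eps^\theta}$.

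\emph{Slicing and shadow minimality.} Here I would use the directional slicing of $P-P_\eps$ from \cite{MP2021}: writing $x=a+s\omega$ with $a\in\omega^\perp$, $s\in\R$, one has, up to a dimensional constant, $(P-P_\eps)(A)=\int_{\bbsn}\int_{\omega^\perp} j_\eps\bigl(A^{a,\omega}\bigr)\dhn_a\,\dhn_\omega$ with $A^{a,\omega}:=\{s\in\R:a+s\omega\in A\}$, where the one-dimensional functional $j_\eps$ vanishes on half-lines precisely because of the normalisation \cref{fixedmoment1}. The lines whose trace in $\ky_{t+\eps^\theta}$ avoids the slab $\{\abs{x_n}\le\frac14\}$ are handled by the same kernel-decay and shell estimates as above; along the remaining lines the sandwiching forces the slices of $E$ and of $F$ to be \emph{obstructing} configurations of controlled width, and, summing the corresponding one-dimensional deficits and re-expressing them $n$-dimensionally, the excess of $P-P_\eps$ over the flat configuration becomes — up to $O(Q_{1-\theta}(1/\eps))$ and shell contributions — the average over lighting directions of the shadow of the boundary inside the cylinder. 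The decisive input is then the geometric lemma \cref{defshadow}, asserting that among all sets obstructing $D_t\times\R$ the flat disk minimises this averaged shadow; combined with the elementary bound ``shadow $\le$ perimeter'' applied to $F$ (which absorbs the slack into $\scrE(F,t)$) and with the mismatch of $F$ on $\partial\ky_t$ (producing $\hn(\partial^*F\cap\partial\ky_t)$), this yields the required estimate for $\calF_\eps(F)-\calF_\eps(E)$.

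\textbf{Main difficulty.} The heart of the proof is this last step: setting up the slicing identity for $P-P_\eps$ and tracking the cancellations with enough precision, handling the nearly-horizontal lines that merely graze the slab (whose slices are long, with $j_\eps$ genuinely spread at scale $\eps$ — which is exactly what forces the cut-off scale $\eps^\theta$ and the error $Q_{1-\theta}(1/\eps)$), and, above all, proving \cref{defshadow}, a sharp isoperimetric-type minimality of the flat disk for the direction-averaged shadow of an obstructing set, which carries no perturbative structure and is the genuinely new ingredient beyond the classical regularity theory for the perimeter.
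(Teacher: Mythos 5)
Your plan coincides with the paper's own proof: test the $(\Lambda,4)$-minimality against $F$, localize the perimeter using $\calH^{n-1}(\partial\ky_t\cap\partial E)=0$, localize $P-P_\eps$ to the enlarged cylinder $\ky_{t+\eps^\theta}$ at the cost of $C\,Q_{1-\theta}(1/\eps)$, slice $P-P_\eps$ into one-dimensional deficits bounded per line in terms of $\calH^0(\partial^*F\cap L)$, and conclude via the averaged-shadow minimality of the flat disk (\cref{lem_shadow}), with your bookkeeping reproducing exactly the constants $\tfrac{1+\gamma}{1-\gamma}$, $\tfrac{2\gamma}{1-\gamma}$, $\tfrac{1+3\gamma}{1-\gamma}$ of \cref{benoitmin:res}. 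The only part left as a sketch is the per-line analysis when the slice of $F$ has a single boundary point, in particular when it lies within $\eps^\theta$ of an endpoint of the enlarged segment (where the paper compares directly with the slice of $E$), but you correctly single this out, together with the grazing lines, as the main difficulty.
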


\begin{proof}
To simplify notation a bit set $\eta\coloneqq \hn(\partial^*F\cap \partial \ky_t)$. Since
$\calH^{n-1}(\partial \ky_t\cap\partial E)=0$ and $E\triangle F\subsq \ky_t$ we have 
\begin{equation}\label{eq:difPKyt}
P(E)-P(F)=P(E;\ky_t)-P(F;\ky_t)-\eta.
\end{equation}
By $(\Lambda,4)$-minimality of $E$ we find
\begin{equation}\label{benoitmin:eq1}
(1-\gamma) P(E;\ky_t)\le (1-\gamma)P(F;\ky_t)
+\gamma\big[\calF_\eps(F)-\calF_\eps(E)\big] +\Lambda\abs{E\triangle F}+(1-\gamma)\eta.
\end{equation}
In the next two steps we prove that 
\begin{equation}\label{eq:claimdeltaCalF}
 \calF_\eps(F)-\calF_\eps(E)\le  2 \scrE(F,t+\eps^\theta) +CQ_{1-\theta}\left(\frac{1}{\eps}\right).
\end{equation}
\medskip

\noindent\textit{Step 1.} In this first step we localize the estimate. Setting for simplicity
\[
\wt{D}_t\coloneqq D_{t+\eps^\theta},\qquad \wt{\ky}_t\coloneqq \ky_{t+\eps^\theta}
\]
and defining the \enquote{localized} functional
\[
\calF_\eps^{\loc}(E) \coloneqq 
P(E;\wt{\ky}_t)- \iint_{(E\cap \wt{\ky}_t)\times (E^\compl\cap \wt{\ky}_t)}G_\eps(x-y)\dx\dy,
\]
we claim that 
\begin{equation}\label{benoitmin:resstep1}
\calF_\eps(F)-\calF_\eps(E)
\le \calF_\eps^\loc(F)-\calF_\eps^\loc(E)+CQ_{1-\theta}(\eps^{-1}).
\end{equation}
Since  $E\triangle F\subsq \ky_t\csubset \wt{\ky}_t$,
$P(E)-P(F)=P(E;\wt{\ky}_t)-P(F;\wt{\ky}_t)$ and thus in order to prove~\cref{benoitmin:resstep1}, we  just
need to consider the nonlocal part. Setting $\Phi(A,B)=\iint_{A\times B} G_\eps(x-y) \dx\dy$ and
using that $E\triangle F\subsq \ky_t$ and $\Phi(A,B)\geq 0$, we have 
\begin{equation*}
 \begin{aligned}
  &P_\eps(E)-P_\eps(F)= \Phi(E,E^c)-\Phi(F,F^c)\\
  &=
\lt[\Phi(E\cap \wt{\ky}_t,E^c\cap \wt{\ky}_t)-\Phi(F\cap\wt{\ky}_t,F^c\cap \wt{\ky}_t)\rt]
+\Phi(E\cap \ky_t,E^c\backslash \wt{\ky}_t)-\Phi(F\cap \ky_t, E^c\backslash \wt{\ky}_t)\\
&\qquad 
+\Phi(E\backslash \wt{\ky}_t,E^c\cap \ky_t)-\Phi(E\backslash \wt{\ky}_t,F^c\cap \ky_t)\\
&\le \lt[\Phi(E\cap \wt{\ky}_t,E^c\cap \wt{\ky}_t)-\Phi(F\cap\wt{\ky}_t,F^c\cap \wt{\ky}_t)\rt]+\Phi(E\cap \ky_t,E^c\backslash \wt{\ky}_t)+\Phi(E\backslash \wt{\ky}_t,E^c\cap \ky_t)\\
&\le \lt[\Phi(E\cap \wt{\ky}_t,E^c\cap \wt{\ky}_t)-\Phi(F\cap\wt{\ky}_t,F^c\cap \wt{\ky}_t)\rt]+2\Phi(\ky_t,(\wt{\ky}_t)^c).
 \end{aligned}
\end{equation*}
In order to prove~\cref{benoitmin:resstep1} it is thus enough to estimate $\Phi(\ky_t,(\wt{\ky}_t)^c)$. For this we write that 
\begin{equation*}
 \begin{aligned}
  \Phi(\ky_t,(\wt{\ky}_t)^c)&=\iint_{\ky_t\times (\wt{\ky}_t)^c} G_\eps(x-y)\dx\dy\\
  &=\int_{\R^n\times\R^n} \ind_{\ky_t}(x)\ind_{\wt{\ky}_t^c}(x+z)G_\eps(z)\dx\dz\\
  &=\int_{\R^n\times\R^n}\ind_{\ky_t}(x)\ind_{\wt{\ky}_t^c}(x+z) \ind_{\R^n\backslash B_{\eps^\theta}}(z)G_\eps(z)\dx\dz\\
  &\le \int_{\R^n\times\R^n}\ind_{\ky_t}(x)\ind_{\ky_t^c}(x+z) \ind_{\R^n\backslash B_{\eps^\theta}}(z)G_\eps(z)\dx\dz\\
  &\le \frac{1}{2}\lt(\int_{\R^n\backslash B_{\eps^\theta}} |z|G_\eps(z) \dz\rt) P(\ky_t)\\
  &\le C Q_{1-\theta}(\eps^{-1}),
 \end{aligned}
\end{equation*}
where we used~\cref{boundpk} with $K=\ind_{\R^n\backslash B_{\eps^\theta}}G_\eps$.\\
\medskip

\noindent\textit{Step 2.} In this step we show
\begin{equation}\label{benoitmin:resstep2}
\calF_\eps^\loc(F)-\calF_\eps^\loc(E) \le 2\scrE(F,t+\eps^\theta)+C Q_{1-\theta}(\eps^{-1}).
\end{equation}

Together with~\cref{benoitmin:resstep1} this would conclude the proof of~\cref{eq:claimdeltaCalF}. To this aim,
 we use a slicing argument  (see \cite[Section~3]{MP2021} or \cite{Lud2014}), rewriting
$P$ and $P_\eps$ as an average over $1$-dimensional slices.
Let us set $\rho(t)\coloneqq \om_{n-1}\abs{t}^{n-1}g(\abs{t})$ and $\rho_\eps(t)\coloneqq \eps^{-2}
\rho(\eps^{-1} t)$ for~$t\in\R\setminus\{0\}$. For every line segment $L\subsq\R^n$, we
define the one-dimensional nonlocal perimeter functional in $L$
\[
\begin{aligned}
P_\eps^{\unD}(E;L)
\coloneqq& \iint_{L\times L} \abs{\ind_E(x)-\ind_E(y)} \rho_\eps(x-y)\dH^1_x\dH^1_y
=\,2\iint_{(E\cap L)\times (E^\compl\cap L)} \rho_\eps(x-y)\dH^1_x\dH^1_y
\end{aligned}
\]
and the one-dimensional critical energy in $L$ by
\begin{equation}\label{eq:defF1D}
\calF_\eps^{\unD}(E;L)\coloneqq
\calH^0(\partial^* E\cap L)-P_\eps^{\unD}(E;L).
\end{equation}
Proceeding as in~\cite[Proposition~3.1 \& Corollary~3.3]{MP2021} (the only difference is the
restriction to $\wt{\ky}_t$) we have
\[
\calF_\eps^{\loc}(E)=\frac{1}{2\om_{n-1}}\int_{\bbsn}\int_{\sigma^\perp} 
\calF_\eps^{\unD}(E;\wt{L}_{\sigma,x})\dhn_x\dhn_\sigma,
\]
where we set
\begin{equation}\label{Ltilde}
L_{\sigma,x}\coloneqq x+\R\sigma
\qquad\text{and}\qquad
\wt{L}_{\sigma,x}\coloneqq L_{\sigma,x}\cap \wt{\ky}_t.
\end{equation}
In particular
\begin{equation}\label{benoitmin:eq5}
\calF_\eps^{\loc}(F)-\calF_\eps^{\loc}(E)
=\frac{1}{2\om_{n-1}}\int_{\bbsn}\int_{\sigma^\perp} 
(\calF_\eps^{\unD}(F;\wt{L}_{\sigma,x})
-\calF_\eps^{\unD}(E;\wt{L}_{\sigma,x}))\dx\dhn_\sigma.
\end{equation}
\smallskip
\textit{Step 2.1.}
We claim that for every $\sigma\in\bbsn$ and $\calH^{n-1}$-a.e $x\in\{\sigma\}^\perp$,
we have
\begin{multline}\label{benoitmin:claim1}
\calF_\eps^{\unD}(F;\wt{L}_{\sigma,x})-\calF_\eps^{\unD}(E;\wt{L}_{\sigma,x})\\
\le 
\left\{\begin{array}{ll}
0&\qquad\text{ it }\partial^* F\cap \wt{L}_{\sigma,x}=\emptyset\text{,}\\
2\left(\calH^0(\partial^* F\cap \wt{L}_{\sigma,x})-1\right)+C Q_{1-\theta}(\eps^{-1})&
\qquad\text{ otherwise.}
\end{array}\right.
\end{multline}
 By the standard properties of one-dimensional restrictions of sets of finite perimeter (see e.g.~\cite{AFP2000}), 
 it is enough to prove~\cref{benoitmin:claim1} for $E,F\subsq \R$ and $\wt{L}_{\sigma,x}=L=(0,a)$ for some $a>0$. Notice that since $E$ and~$F$ are of finite perimeter in $L$,
 they are just a finite union of disjoint intervals.  \\
By~\cite[Remark~3.2]{MP2021}, for any set of finite perimeter $E\subsq \R$, we
have $P^{\unD}_\eps(E;\R)\le \calH^0(\partial E)$, which implies $\calF^{\unD}_\eps(E;L)\ge
\calF^{\unD}_\eps(E;\R)\ge 0$.
Thus, if $\calH^0(\partial F\cap L)=0$ (that is, $\partial F\cap L=\emptyset$), then
$\calF_\eps^{\unD}(F;L)-\calF_\eps^{\unD}(E;L)\le \calF_\eps^{\unD}(F;L)\le -P_\eps^\unD(F;L)\le
0$.
If $\calH^0(\partial F\cap L)\ge 2$, then
\[
\calF^{\unD}_\eps(F;L)-\calF^{\unD}_\eps(E;L)
\le \calF^{\unD}_\eps(F;L)
\le \calH^0(\partial F\cap L)
\le 2\left(\calH^0(\partial F\cap L)-1\right).
\]
There remains to focus on the case where $\calH^0(\partial F\cap L)=1$. In this case we claim that 
\begin{equation}\label{eq:toprovestep2}
 \calF^{\unD}_\eps(F;L)-\calF^{\unD}_\eps(E;L)
\le C Q_{1-\theta}(\eps^{-1}).
\end{equation}
Let $t_F$ be such
that $L\cap\partial F=\{t_F\}$ then either $F\cap L=(0,t_F)$ or $F\cap L=(t_F,a)$. Since
both cases are similar, we treat only the case $F\cap L=(0,t_F)$. We distinguish two
sub-cases.\medskip

\noindent
\textit{Case 1: $\dist(t_F,L^\compl)\ge \eps^\theta$.} In this case we argue somewhat similarly to
\cref{benoitmin:resstep1}. Using the fact that
\begin{equation}\label{benoitmin:eq5b}
2\int_{-\infty}^{c}\int_{c}^\infty\rho_\eps(s-t)\ds\dt=2\int_0^\infty
t\rho_\eps(t)\dt=1,\qquad\forall c\in\R,
\end{equation}
we compute
\[
\begin{aligned}
\calF^\unD_\eps(F; L)
&=1-2\int_0^{t_F} \int_{t_F}^a \rho_\eps(s-t)\ds\dt\\
&=2\int_{-\infty}^{t_F} \int_{t_F}^\infty \rho_\eps(s-t)\ds\dt
-2\int_0^{t_F} \int_{t_F}^a \rho_\eps(s-t)\ds\dt\\
&=2\int_{-\infty}^{t_F}\int_a^\infty \rho_\eps(s-t)\ds\dt+2\int_{-\infty}^0 \int_{t_F}^a
\rho_\eps(s-t)\ds\dt\\
&\le2\int_{-\infty}^{0}\left(\int_{a-t_F}^\infty \rho_\eps(s-t)\ds\dt
+\int_{t_F}^\infty \rho_\eps(s-t)\ds\right)\dt\\
&\le 4\int_{-\infty}^0 \int_{\eps^\theta}^\infty \rho_\eps(s-t)\ds\dt\ = 4\int_{\eps^\theta}^\infty (t-\eps^\theta)\rho_\eps(t)\dt.
\end{aligned}
\]
Thus,
\[
\calF^\unD_\eps(F; L) \le  C \int_{\R^n\backslash B_{\eps^\theta}}\abs{z}G_\eps(z)\dz
= C Q_{1-\theta}(\eps^{-1}),
\]
proving~\cref{eq:toprovestep2} in this case.\medskip\\
\textit{Case 2: $\dist(t_F,L^\compl)< \eps^\theta$.} Either $0<t_F<\eps^\theta< a$ or
$0<a-\eps^\theta<t_F<a$. Both cases are similar, we only treat the case $0<a-\eps^\theta<t_F<a$.

Notice that $E\triangle F\subsq \ky_t$ implies $t_F\in \partial E$ and 
\begin{equation}\label{benoitmin:eq6}
F^\compl\cap (a-\eps^\theta,a)=E^\compl\cap(a-\eps^\theta,a)=(a-\eps^\theta,a).
\end{equation}
Let us write $E\cap (0,a)=\bigcup_{j=1}^k I_j$, where $k\ge 1$ and $I_j\subsq (0,a)$ are open
intervals. Then let $\{s_1,\dotsc,s_{k_1},t_1,\dotsc,t_{k_2}\}$ be the elements of $\partial
E\cap(0,a)$ (notice that $0$ and $a$ are excluded) such that
\begin{itemize}
\item $s_1<s_2<\dotsc<s_{k_1}$ and $t_1<t_2<\dotsc<t_{k_2}=t_F$;
\item for each $j$, $s_j$ is a left endpoint of some $I_i$, and $t_j$ is a right endpoint of some
$I_i$.
\end{itemize}
Note that $s_1$ may not be the left endpoint of $I_1$ (if $I_1=(0,t_1)$), so that $k_1$ may be
different from $k_2$ (see~\cref{fig:fig1}). The fact that $t_{k_2}=t_F$ is due to~\cref{benoitmin:eq6}.

 \begin{figure}[ht]
     \begin{minipage}{0.48\textwidth}
         \begin{center}
 		\captionsetup{width=.95\textwidth}
 		\includestandalone[width=.95\textwidth]{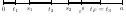}
 		\end{center}
 	\end{minipage}
 	\hspace{10pt}
     \begin{minipage}{0.48\textwidth}
         \begin{center}
 		\captionsetup{width=.95\textwidth}
 		\includestandalone[width=.95\textwidth]{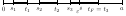}
 		\end{center}
 	\end{minipage}
 	\caption{Two examples of the situation in Step~2.1, Case 2 in the proof of~\cref{prp:benoitmin}. On the left,
 	$k_1=2$ and $k_2=3$, and on the right, $k_1=k_2=3$. The thick segments represent the set $E\cap (0,a)$.}
 	\label{fig:fig1}
 \end{figure}

For $1\le j\le k_1$, we denote by $A_j$ the connected component of $E^\compl\cap L$ which is
immediately on the left side of $s_j$ (that is, its right endpoint is $s_j$), and by $\wt{B}_j$ the
union of all the connected components of $E\cap L$ on the right side of $s_j$.
Similarly, for $1\le j\le k_2$, we denote by $B_j$ the connected component of $E\cap L$ which is
immediately on the left side of $t_j$, and by $\wt{A}_j$ the union of all the connected components of
$E^\compl\cap L$ on the right side of $t_j$. See~\cref{fig:fig2}.
 
\begin{figure}[ht]
\centering
\includestandalone[width=.55\textwidth]{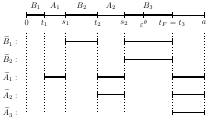}
\caption{An example of the situation in Step~2.1, Case 2 in the proof of~\cref{prp:benoitmin}, when
$k_1=2$ and $k_2=3$, with the representation of the $\wt{A}_j$ and the~$\wt{B}_j$.}
\label{fig:fig2}
\end{figure}

Then using that $\calH^0(\partial E\cap L)=k_1+k_2$, recalling the definition of $\calF_\eps^\unD$
in~\cref{eq:defF1D} and decomposing the domain of integration of $P_\eps^\unD(E;L)$ we see that
\[
\calF_\eps^\unD(E;L)
=\sum_{j=1}^{k_1}\left[1-2\iint_{A_j\times \wt{B}_j} \rho_\eps(s-t)\ds\dt\right]
+\sum_{j=1}^{k_2}\left[1-2\iint_{B_j\times \wt{A}_j} \rho_\eps(s-t)\ds\dt\right].
\]
Each term of each sum is nonnegative by~\cref{benoitmin:eq5b}, and since $\wt{A}_{k_2}=(t_F,a)$ by
\cref{benoitmin:eq6} and $B_{k_2}\subsq (0,t_F)$ this implies in particular
\begin{equation*}\label{benoitmin:eq7}
\calF_\eps^\unD(E;L)
\ge 1-2\iint_{(0,t_F)\times (t_F,a)} \rho_\eps(s-t)\ds\dt
= \calF_\eps^\unD(F;L),
\end{equation*}
concluding the proof of~\cref{eq:toprovestep2} in this case as well.\smallskip

\noindent
\textit{Step 2.2.} For $\sigma\in \bbsn$ we define $\pi_\sigma$ as the projection on $\{\sigma\}^\perp$. We then set
\begin{align}
\nonumber
\Sha(\Sigma;\wt{\ky}_t)
\coloneqq & {\frac{1}{2\om_{n-1}}}\int_{\bbsn}\calH^{n-1}(\pi_{\sigma^\perp}(\Sigma\cap \wt{\ky}_t))\dhn_\sigma\\
\label{defshadow}
=&{\frac{1}{2\om_{n-1}}}\int_{\bbsn}\int_{\sigma^\perp} \ind_{\{\wt{L}_{\sigma,x}\cap\Sigma\neq\emptyset\}}
\dhn_x\dhn_\sigma,
\end{align}
where we recall the definition~\cref{Ltilde} of the segments $\wt{L}_{\sigma,x}$.
Since for any $(n-1)$-rectifiable set $\Sigma\subset\R^{n-1}$ there holds
\begin{equation}\label{Hn-1=In-1}
\calH^{n-1}(\Sigma)=\frac{1}{2\om_{n-1}}\int_{\bbsn}\int_{\sigma^\perp} 
\calH^0(\Sigma\cap L_{\sigma,x})\dhn_x\dhn_\sigma,
\end{equation}
we have
\begin{align*}
P(F;\wt{\ky}_t)
&=\frac1{2\om_{n-1}}\int_{\bbsn}\int_{\sigma^\perp} \calH^0(\partial^* F\cap \wt{L}_{\sigma,x})
\dhn_x\dhn_\sigma\\
&\ge \frac1{2\om_{n-1}} \int_{\bbsn}\int_{\sigma^\perp} \ind_{\{\wt{L}_{\sigma,x}\cap \partial^* F\neq\emptyset\}}
\dhn_x\dhn_\sigma.
\end{align*}
Thus, inserting~\cref{benoitmin:claim1} into~\cref{benoitmin:eq5} and using the fact that 
\[
 \int_{\bbsn}\int_{\sigma^\perp} \ind_{\{\wt{L}_{\sigma,x}\neq\emptyset\}}
\dhn_x\dhn_\sigma\le C
\]
gives
\begin{equation}\label{benoitmin:consclaim1}
\calF_\eps^{\loc}(F)-\calF_\eps^{\loc}(E)
\le 2\lt(P(F;\wt{\ky}_t)-\Sha(\partial^*F;\wt{\ky}_t)\rt)+CQ_{1-\theta}(\eps^{-1}).
\end{equation}
By~\cref{lem_shadow} below, $\Sha(\partial^*F;\wt{\ky}_t)$ is minimal when $F\cap \wt{\ky}_t=\wt{D}_t\times (-1,0)$, which gives
\[
\Sha(\partial^*F;\wt{\ky}_t)\ge \calH^{n-1}(\wt{D}_t).
\]
In view of~\cref{benoitmin:consclaim1}, this concludes the proof of~\cref{benoitmin:resstep2}.
\medskip

\noindent
\textit{Step 3.} We may now conclude the proof of~\cref{benoitmin:res}. By~\cref{eq:claimdeltaCalF} and 
\cref{benoitmin:eq1}, we find
\[
(1-\gamma) P(E;\ky_t)\le (1-\gamma)P(F;\ky_t)
+\gamma\left[2\scrE(F,t+\eps^\theta)+CQ_{1-\theta}(\eps^{-1})
\right] +\Lambda\abs{E\triangle F}+ (1-\gamma)\eta.
\]
Subtracting $(1-\gamma)\calH^{n-1}(D_t)$ from the previous inequality and using that by~\cref{eq:difPKyt}
\[
 \scrE(F,t+\eps^\theta)-\scrE(F,t)=\scrE(E,t+\eps^\theta)-\scrE(E,t)+\eta,
\]
yields
\begin{align*}
(1-\gamma) \scrE(E,t)
&\le (1-\gamma)\scrE(F,t)+2\gamma\scrE(F,t+\eps^\theta)
+C\gamma Q_{1-\theta}(\eps^{-1})
+\Lambda\abs{E\triangle F} +(1+\gamma)\eta\\
&=(1+\gamma)\scrE(F,t)+2\gamma\big[\scrE(F,t+\eps^\theta)-\scrE(F,t)\big]+C\gamma Q_{1-\theta}(\eps^{-1})
+\Lambda\abs{E\triangle F}+(1+\gamma)\eta\\
&=(1+\gamma)\scrE(F,t)+2\gamma\big[\scrE(E,t+\eps^\theta)-\scrE(E,t)\big]+C\gamma Q_{1-\theta}(\eps^{-1})
+\Lambda\abs{E\triangle F}+(1+3\gamma)\eta.
\end{align*}
Dividing by $(1-\gamma)$ concludes the proof of~\cref{benoitmin:res}.
\end{proof}

We now prove that the quantity $\Sha(\partial^*E;\ky_t)$ introduced in~\cref{defshadow} is minimal among obstructing sets (in the sense of~\cref{ineg_shadow_1_dir_lem_1d_0} below) when $\partial^* E$ is a horizontal hyperplane. 

\begin{lem}
\label{lem_shadow}
For any $t>0$, and any set of  finite
perimeter $E\subsq\R^n$ such that
\begin{equation}\label{ineg_shadow_1_dir_lem_1d_0}
\{x_n< -1/4\}\cap \ky_t\ \subsq\ {E}\cap \ky_t\ \subsq\ \{x_n< 1/4\}\cap\ky_t,
\end{equation}	
we have
\begin{equation}\label{lem_shadow:res}
 \Sha(\partial^*E;\ky_t)\ge\Sha(D_t\times\{0\};\ky_t)
=\calH^{n-1}(D_t).
\end{equation}
\end{lem}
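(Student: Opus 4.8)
The strategy is to show that no admissible obstruction casts a smaller average shadow than the flat one $D_t\times\{0\}$. Write $S:=\partial^*E\cap\ky_t$. From the pinching hypothesis the \emph{open} sets $\{x_n<-1/4\}\cap\ky_t$ and $\{x_n>1/4\}\cap\ky_t$ lie respectively in $E$ and in $E^\compl$, so $S$ meets neither and hence $S\subseteq D_t\times[-1/4,1/4]$. Moreover, slicing $\ind_E$ along the vertical segments $\{x'\}\times(-1,1)$ and using that such a one–dimensional slice jumps from the value $1$ (near $x_n=-1$) to $0$ (near $x_n=1$), we see that for $\hn$–a.e. $x'\in D_t$ this segment meets $S$; by the slicing theorem for sets of finite perimeter, $u(x'):=\min\{y:(x',y)\in S\}$ is then a well–defined measurable function with values in $[-1/4,1/4]$ and $(x',u(x'))\in S$ for a.e. $x'$. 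Setting $\Gamma_u:=\{(x',u(x')):x'\in D_t\}\subseteq S$ (up to an $\hn$–null set) and using that $\pi_{\sigma^\perp}$ is $1$–Lipschitz, so that $\hn(\pi_{\sigma^\perp}(S))\ge\hn(\pi_{\sigma^\perp}(\Gamma_u))$ for every $\sigma$, it suffices to prove that
\[
\frac{1}{2\om_{n-1}}\int_{\bbsn}\hn\big(\pi_{\sigma^\perp}(\Gamma_u)\big)\dhn_\sigma\ \ge\ \hn(D_t)
\qquad\text{for every measurable }u:D_t\to[-1/4,1/4].
\]

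The next step is a dimensional reduction of this purely geometric inequality. Disintegrating the sphere around $e_n$, write $\sigma=\cos\theta\,e_n+\sin\theta\,\om$ with $\om\in\bbs^{n-2}\subseteq e_n^\perp$, $\theta\in[0,\pi]$, so that $\dhn_\sigma=\sin^{n-2}\theta\,\dt\,\dhn_\om$. Since $\pi_{\sigma^\perp}$ acts as the identity on $(\Span\{e_n,\om\})^\perp$, Fubini in that $(n-2)$–plane expresses $\hn(\pi_{\sigma^\perp}(\Gamma_u))$ as an integral of $\mathcal H^1$ of the projections of the planar graphs $s\mapsto u(\bar x+s\om)$ over the intervals $I_{\bar x,\om}=\{s:\bar x+s\om\in D_t\}$; each such planar graph is $1$–rectifiable with finite $\mathcal H^1$, being a slice of $S$. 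Reordering the integrations, the inequality above reduces to the following two–dimensional statement: for every bounded interval $I=(a,b)$ and every measurable $v:I\to[-1/4,1/4]$,
\[
\int_0^\pi\mathcal H^1\big(\pi_{u_\theta}(\gamma_v)\big)\,\sin^{n-2}\theta\,\dt\ \ge\ (b-a)\int_0^\pi|\cos\theta|\,\sin^{n-2}\theta\,\dt ,
\qquad \gamma_v:=\{(s,v(s)):s\in I\},
\]
where $\pi_{u_\theta}$ denotes the orthogonal projection onto the line orthogonal to $\cos\theta\,\mathbf e_n+\sin\theta\,\mathbf e_{n-1}$ in the plane. When $\gamma_v$ is connected (i.e.\ $v$ continuous) this is transparent: $\mathcal H^1(\pi_{u_\theta}(\gamma_v))$ is the width of $\mathrm{conv}(\gamma_v)$ in the direction $u_\theta$; as $\mathrm{conv}(\gamma_v)$ contains a segment joining two of its points of extreme first coordinate, of horizontal extent $b-a$ and making some angle $\alpha$ with the horizontal axis, its width dominates $\tfrac{b-a}{|\cos\alpha|}\,|\cos(\theta+\alpha)|$, and one is left with the elementary trigonometric inequality $\int_0^\pi|\cos(\theta+\alpha)|\sin^{n-2}\theta\,\dt\ge|\cos\alpha|\int_0^\pi|\cos\theta|\sin^{n-2}\theta\,\dt$ (equality iff $\alpha=0$); for $n=2$ the left side with weight $\equiv 1$ is, by Cauchy's formula, $\mathrm{Per}(\mathrm{conv}\,\gamma_v)\ge 2\,\mathrm{diam}(\mathrm{conv}\,\gamma_v)\ge 2(b-a)$.

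The main obstacle is precisely that the selection $u$, and hence the profiles $v$, need not be continuous, so $\gamma_v$ may be disconnected and the identity ``$\mathcal H^1$ of the projection $=$ width of the convex hull'' fails; and, as the tilted–segment example shows, no pointwise–in–$\theta$ bound such as $\mathcal H^1(\pi_{u_\theta}(\gamma_v))\ge(b-a)|\cos\theta|$ can hold, so the averaging over $\theta$ really is essential. This is exactly the type of one–dimensional estimate that the slicing techniques of \cite{MP2021} are designed to handle: one uses that $\gamma_v$ is $1$–rectifiable and projects onto $(a,b)$ in the $s$–variable, covers it by countably many Lipschitz pieces, and tracks the contribution of each piece through the $\theta$–integral via the area formula, the uniform bound $|v|\le 1/4$ (coming from the slab confinement of $S$) preventing the pieces from cancelling on average. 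Finally the bound is sharp, which is a useful consistency check: for $E_0=D_t\times(-1,0)$ one has $\partial^*E_0\cap\ky_t=D_t\times\{0\}$, so $\hn(\pi_{\sigma^\perp}(D_t\times\{0\}))=|\sigma_n|\,\hn(D_t)$, and since $\int_{\bbsn}|\sigma_n|\dhn_\sigma=2\om_{n-1}$ we get $\Sha(E_0;\ky_t)=\hn(D_t)$, as asserted in \eqref{lem_shadow:res}.
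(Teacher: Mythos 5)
There is a genuine gap, and it sits exactly at your reduction step ``it suffices to prove the inequality for the graph $\Gamma_u$ of a measurable selection $u$'': the reduced statement is false. Replacing $S=\partial^*E\cap\ky_t$ by the selection graph discards precisely the vertical portions of the boundary produced by jumps of $u$ (they project onto an $\calH^{n-1}$-null set of $D_t$, so the selection never sees them), and with them the connectedness of $\partial E$ across the cylinder — which is what actually makes the lemma true. Concretely, in your two-dimensional reduced claim take $I=(0,L)$ with $L\in\N$ large and $v$ a sawtooth oscillating between $-1/4$ and $1/4$ with teeth of horizontal width $1$ (slope $1/2$). Then $\gamma_v$ is a union of $L$ parallel segments of vector $(1,\tfrac12)$, whose shadows in the direction orthogonal to $u_\theta$ are translates spaced $|\cos\theta|$ apart and of length $\abs{\cos\theta-\tfrac12\sin\theta}$. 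For $\tan\theta\le 4$ these shadows are pairwise disjoint up to endpoints, so the total shadow equals $L\abs{\cos\theta-\tfrac12\sin\theta}<L\abs{\cos\theta}$ on $(0,\arctan 4)$, while for all other $\theta$ the shadow is at most $L\abs{\cos\theta}+O(1)$. Hence
\[
\int_0^\pi \calH^1\big(\pi_{u_\theta}(\gamma_v)\big)\sin^{n-2}\theta\,\dd\theta
\;\le\; L\int_0^\pi\abs{\cos\theta}\sin^{n-2}\theta\,\dd\theta \;-\;cL\;+\;C,
\qquad c=c(n)>0,
\]
which violates your bound for $L$ large. The lemma itself is of course safe for the corresponding subgraph set $E_v$ (which satisfies the pinching hypothesis), because $\partial E_v\cap\ky_t$ also contains the $L$ vertical jump segments whose shadows restore the missing length — but these are exactly what $\Gamma_u$ throws away. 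This also shows that the concluding hand-wave (``cover $\gamma_v$ by Lipschitz pieces and track each piece through the $\theta$-integral, the bound $\abs{v}\le 1/4$ preventing cancellation'') cannot be made to work: the pieces' shadows genuinely overlap on average once the horizontal extent is large compared with the slab height, and the lemma is stated for every $t>0$.

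The missing idea is the one the paper's proof is built on: keep the whole boundary and extract its connectivity. After the symmetrization $\sigma\leftrightarrow S\sigma$, the paper proves a pointwise (in the pair of directions) inequality: in the planar case it decomposes $E\cap\ky_t$ into indecomposable components (\cite{ACMM2001}), uses that the external boundary of the component containing the bottom slab is a Jordan curve, and extracts from it a connected arc inside $[-t,t]\times[-\tfrac14,\tfrac14]$ joining the two lateral faces; the shadow of this arc contains, in every direction, the shadow of its chord $I$, and an explicit trigonometric computation shows that the symmetrized shadow of any such chord is minimized by the horizontal one. Higher dimensions are then handled by slicing with the two-planes spanned by $e_n$ and the horizontal part of $\sigma$. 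Your continuous-graph computation is essentially that chord estimate, but without the connectivity input (the Jordan arc crossing the cylinder, which survives jumps of the graph through its vertical pieces) there is no chord to compare with, and averaging over $\theta$ alone cannot compensate, as the sawtooth shows.
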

The last identity in~\cref{lem_shadow:res} follows from the choice of the factor $1/(2\om_{n-1})$, as in~\cref{Hn-1=In-1}.
\begin{proof}[Proof of~\cref{lem_shadow}]
 We start by setting some notation.
We denote by $S:\R^n\to\R^n$ the symmetry with respect to the vertical line
$\{0_{\R^{n-1}}\}\times\R$, that is, for $\xi=(\xi',\xi_n)$,
\[
S\xi\coloneqq (-\xi',\xi_n).
\]
 We write 
\[
 \Sha(\partial^*E;\ky_t)=\frac{1}{4\om_{n-1}} \int_{\bbsn} \lt[\calH^{n-1}(\pi_{\sigma^\perp}(\partial^{*} E\cap
 \ky_t)) +
 \calH^{n-1}(\pi_{(S\sigma)^\perp}(\partial^{*} E\cap \ky_t))\rt]\dhn_\sigma.
\]
We establish below that for every $\sigma\in \bbsn$,  the {integrand} is minimal when
$\partial^{*} E$ is horizontal in $\ky_t$, that is,
\begin{equation}
\label{ineg_shadow_1_dir}
\calH^{n-1}(\pi_{\sigma^\perp}(\partial^{*} E\cap \ky_t)) + \calH^{n-1}(\pi_{(S\sigma)^\perp}(\partial^{*}
E\cap \ky_t)) \ge \calH^{n-1}(\pi_{\sigma^\perp}(D_t)) +
\calH^{n-1}(\pi_{(S\sigma)^\perp}(D_t)).
\end{equation}
The inequality~\cref{lem_shadow:res} and thus the lemma then follows by integration. The proof of~\cref{ineg_shadow_1_dir} is done in two steps. In a first step we treat the case $n=2$ and in a second step we use slicing to reduce the problem to the previous case.

\medskip
\noindent
\textit{Step 1. Proof~\cref{ineg_shadow_1_dir} for $n=2$.}

\noindent
{\textit{Step 1.1.}}    By~\cite{ACMM2001}, we may decompose {the set of
finite perimeter} $\wt{E}\coloneqq E\cap {\ky_{t}}$  into its (measure theoretic) connected
components. By assumption~\cref{ineg_shadow_1_dir_lem_1d_0} one of these components denoted $\wt{E}_1$
contains $(-t,t)\times(-1,-\frac{1}{4})$. Its external boundary is a Jordan curve $\gamma\in
C^0([0,1),[-t,t]\times[-1,\frac{1}{4}])$ and we have $\gamma([0,1))\subsq \partial^M
\wt{E}_1\subsq \partial \wt{E}_1$ up to a $\calH^1$-negligible set{, where $\partial^M \wt{E}_1$ is the essential
boundary and $\partial \wt{E}_1$ the usual topological boundary of $\wt{E}_1$}.
Moreover, by~\cref{ineg_shadow_1_dir_lem_1d_0} we may assume up to a reparameterization that
$\gamma_{\left|\left[0,\frac{1}{2}\right]\right.}$ is a parameterization of the broken line made of
the three oriented segments joining $(-t,-\frac{1}{4})$ to $(-t,-1)$, then  $(-t,-1)$ to
$(t,-1)$ and $(t,-1)$ to $(t,-\frac{1}{4})$.

 \begin{figure}[ht]
     \begin{center}
 		\captionsetup{width=.95\textwidth}
 		\includestandalone[width=.95\textwidth]{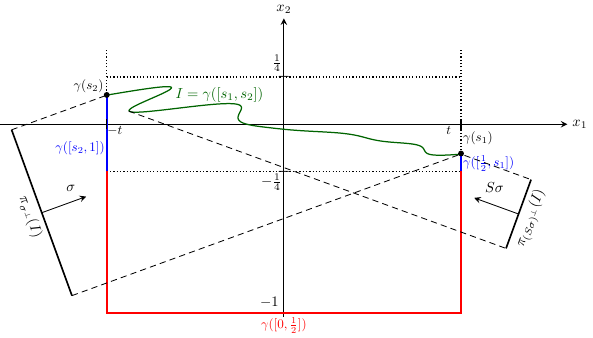}
    \end{center}
 	\caption{The situation in Step~1.1 of the proof of~\cref{lem_shadow}. For readability, the projections
    $\pi_{\sigma^\perp}(I)$ and $\pi_{(S\sigma)^\perp}(I)$ are shifted in the directions
    $\sigma$ and $S\sigma$ respectively.}
 	\label{fig:fig_shadow}
 \end{figure}

Denoting 
\begin{align*}
s_1 & \coloneqq \max\{s\in[{\textstyle\frac{1}{2}},1):\gamma(s)\in \{t\}\times\R\},\\
s_2 & \coloneqq \min \{s\in (s_1,1):\gamma(s)\in \{-t\}\times\R\},
\end{align*}
 we obtain the parameterization of an arc $\gamma:[s_1,s_2]\to
 [-t,t]\times[-\frac{1}{4},\frac{1}{4}]$ with
\[
\gamma(s_1)\in\{t\}\times\R,\qquad\gamma(s_2)\in\{-t\}\times\R,\qquad \gamma((s_1,s_2))\subsq
(-t,t)\times[-{\textstyle\frac{1}{4}},{\textstyle\frac{1}{4}}].
\]
Let $I$ be the segment $[{\gamma(s_1),\gamma(s_2)}]$. See~\cref{fig:fig_shadow}
below.
Obviously, any straight line intersecting $I$ also intersects the arc $\gamma([s_1,s_2])$, hence for
every $\sigma\in \bbs^1$, $\pi_{\sigma^\perp}(\gamma([s_1,s_2]))$ contains $\pi_{\sigma^\perp}(I)$, so
that
\begin{equation}
\label{ineg_shadow_1_dir_1d_1}
\calH^1(\pi_{\sigma^\perp}(I))
\le \calH^1(\pi_{\sigma^\perp}(\gamma([s_1,s_2])))\le \calH^1(\pi_{\sigma^\perp}(\partial^M E\cap \ky_t))
{=\calH^1(\pi_{\sigma^\perp}(\partial^* E\cap \ky_t))}.
\end{equation}

\noindent
{\textit{Step 1.2.}} For  $\sigma\in \bbs^1$, let $\theta\in[0,{\textstyle\frac{\pi}{2}}]$ be such that 
\[
\lt\{\R\sigma,\R (S\sigma)\rt\}=
\lt\{\R\binom{\cos\theta}{\sin\theta},\R\binom{-\cos\theta}{\sin\theta}\rt\}
\]
and  $\varphi\in(-{\textstyle\frac{\pi}{2}},{\textstyle\frac{\pi}{2}})$ be  such that $I$ has direction
$\binom{\cos\varphi}{\sin\varphi}$. We compute
\begin{align*}
\calH^1(\pi_{\sigma^\perp}(I))+\calH^1(\pi_{(S\sigma)^\perp}(I)) &= \dfrac{2t}{\cos\varphi}
\lt(|\sin(\theta-\varphi)| +|\sin(\theta+\varphi)|\rt)\\
&=\begin{cases}
4t |\tan\varphi|\cos\theta&\text{if }0\le \theta\le|\varphi|<\frac{\pi}{2},\\
 4t \sin\theta&\text{if }|\varphi|<\theta\le\frac{\pi}{2}.
\end{cases}
\end{align*}
Since $\tan \vphi$ is increasing in $(\theta,\frac{\pi}{2})$ we have $ |\tan\varphi|\cos\theta\ge \sin\theta$ if $\theta\le|\varphi|<\frac{\pi}{2}$ and thus
\[
\calH^1(\pi_{\sigma^\perp}(I))+\calH^1(\pi_{(S\sigma)^\perp}(I)) 
\ge 4t \sin\theta = \calH^1(\pi_{\sigma^\perp}(I_t))+\calH^1(\pi_{(S\sigma)^\perp}(I_t)).
\]
Together with~\cref{ineg_shadow_1_dir_1d_1} applied both to $\sigma$ and $S\sigma$, this proves the~\cref{ineg_shadow_1_dir} when $n=2$.\medskip\\
\textit{Step 2.} The case $n>2$. There exists $e \in \bbsn\cap[\R^{n-1}\times\{0\}]\sim\bbs^{n-2}$ such that
\[
\sigma= (\sigma\cdot e) e + \sigma_n e_n,\qquad S\sigma= -(\sigma\cdot e) e + \sigma_n e_n.
\]
Denoting $P\coloneqq \mathrm{Span}\{e,e_n\}$, $V\coloneqq P^\perp$ and $P_y\coloneqq y+P$ for $y\in
V$, we have
\begin{align*}
\calH^{n-1}(\pi_{\sigma^\perp}(\partial^{*} E\cap \ky_t)) &= \int_{V\cap B_t}
\calH^1(\pi_{\sigma^\perp}(\partial^{*} E\cap \ky_t\cap P_y))\,d\calH^{n-2}_y,\\
\calH^{n-1}(\pi_{(S\sigma)^\perp}(\partial^{*} E\cap \ky_t)) &= \int_{V\cap B_t}
\calH^1(\pi_{(S\sigma)^\perp}(\partial^{*} E\cap \ky_t\cap P_y))\,d\calH^{n-2}_y.
\end{align*}
Next, for almost every $y\in V$, $E\cap P_y$ is a set with finite perimeter in the plane $P_y$ and
up to a $\calH^1$-negligible set, $\partial^*_{P_y} (E\cap P_y)=(\partial^*_{\R^n} E)\cap
P_y$.

 We notice that for $|y|\ge t$, $\ky_t\cap P_y=\emptyset$ and that for $|y|<t$,
 \[
 \ky_t\cap P_y= y+\lt\{x_1 e+ x_2 e_n \ : \ |x_1|< \sqrt{t^2-|y|^2}, \ |x_2|< 1\rt\}\cong (-
 \sqrt{t^2-|y|^2},\sqrt{t^2-|y|^2})\times(-1,1),
 \]
 where $\cong$ denotes equality up to an isometry. Eventually, we conclude the proof using {\it Step
 1} in $\ky_t\cap P_y$.
 \end{proof}

\begin{rmk}
In~\cref{prp:benoitmin}, we introduced a parameter $\theta\in(0,1]$ to find a proper balance between
the terms
\[
 \big[\scrE(E,t+\eps^\theta)-\scrE(E,t)\big]\qquad\qquad\text{ and }\qquad\qquad
 Q_{1-\theta}\left(\eps^{-1}\right).
\]
As we will see, through an averaging argument, we can roughly estimate
\begin{equation}\label{rmktheta:eq}
 \big[\scrE(E,t+\eps^\theta)-\scrE(E,t)\big]\lesssim \eps^\theta \e(E,2t)
\end{equation}
the first quantity gets smaller the closer $\theta$ is to~$1$.
However, $Q_{1-\theta}(\eps^{-1})$ gets larger as $\theta$ goes to~$1$. In particular when
$\theta=1$, $Q_{1-\theta}(r/\eps)=Q(1)$ is a constant (non-zero unless $G$ is compactly
supported in~$B_1$). This would prevent us from obtaining a decay of the excess through iteration.
We choose $\theta$ later, small enough so that $Q_{1-\theta}(r/\eps)$ stays sufficiently small
down to any scale $\eps^{1-\beta}$ with $\beta\in(0,1)$. As long as $\theta$ is non-zero,
\cref{rmktheta:eq} will be sufficient to proceed with the iteration.
\end{rmk}

\subsection{A Caccioppoli-type inequality}

From the improved quasi-minimality condition given by~\cref{prp:benoitmin}, we first obtain an
intermediate weaker form of a Caccioppoli inequality.
\begin{prp}\label{prp:weakpoincare}
Assume that $G$ satisfies~\cref{Hposrad,,Hmoment1}, and let $\eps\in(0,1)$, $\gamma\in(0,1)$,
$\theta\in (0,1]$ and $\Lambda>0$ such that $\eps^\theta\in(0,\textstyle\frac{1}{4})$ and
$4\Lambda\le 1-\gamma$.
Then for every $(\Lambda,4)$-minimizer $E$ of $\calF_{\eps,\gamma}$ with
\[
\left\{  x_n<-\frac{1}{8}\right\}\cap \ky_3
\subsq E\cap \ky_{3}
\subsq\left\{ x_n<\frac{1}{8}\right\}\cap\ky_3,
\]
the following holds.
For every $c\in\R$ such that $\abs{c}<\frac{1}{4}$ and every $t\in(4\eps^\theta,1)$, we have
\begin{equation}\label{weakpoincare:res}
\scrE\lt(E,t/2\rt)
\le C\left(\big(\scrE(E,t)\scrF(E,t,c)\big)^{\frac{1}{2}}
+\left(\frac{\eps^\theta}{t}\right)\scrE(E,t)+Q_{1-\theta}(\eps^{-1})+\Lambda t^{n-1}\right),
\end{equation}
where $C$ depends only on $n$ and $\gamma$.
\end{prp}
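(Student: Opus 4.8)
The plan is to test the refined quasi-minimality inequality of~\cref{prp:benoitmin} against a one-parameter family of cut-and-paste competitors and average over the parameter. Write $c$ and $t\in(4\eps^\theta,1)$ as in the statement and recall from~\cref{eq:linkscrEexcess} that $\scrE(E,\cdot)$ is nondecreasing on $(0,1)$, so $\scrE(E,t/2)\le\scrE(E,s)$ for all $s\in(t/2,t)$. Since $E$ is in particular a weak quasi-minimizer of the perimeter (\cref{prp:wquasimin}), the flatness hypothesis ensures, by a standard argument, that $\e_n(E,2)\le\tau_{\mathrm{lip}}$, so~\cref{thm:lipapprox1} applies at the scale $r=t/2$: it produces a $\tfrac12$-Lipschitz $u$ on $D_t$ with $\norm{u}_{L^\infty}<\tfrac14$, whose subgraph $E_u$ coincides with $E$ inside $\ky_t$ up to a set of $\hn$-measure $\lesssim\scrE(E,t)$, with $\int_{D_t}|\grad u|^2\lesssim\scrE(E,t)$ and $\int_{D_t}|u-c|^2\lesssim t^2\,\scrF(E,t,c)$ (the latter via the coarea formula). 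We may assume $\scrF(E,t,c)<\scrE(E,t)$, since otherwise monotonicity already gives $\scrE(E,t/2)\le\scrE(E,t)\le(\scrE(E,t)\scrF(E,t,c))^{1/2}$.

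The competitors flatten $E$ to the level $c$ inside $\ky_s$ away from a thin collar at $\partial\ky_s$, where $E$ is essentially untouched so that no contribution from $\partial^*F\cap\partial\ky_s$ arises. Fix $\delta\in(0,1]$ to be chosen and, for $s\in(t/2,3t/4)$, pick $\eta=\eta_{s,\delta}\in C^1_c(D_s;[0,1])$ with $\eta\equiv1$ on $D_s\setminus D_{s-\delta t/2}$, $\eta\equiv0$ on $D_{s-\delta t}$, $|\grad\eta|\le C/(\delta t)$; set $g_s:=c+\eta(u-c)$ and
\[
F_s:=\big(E\setminus\ky_s\big)\cup\big\{(x',x_n)\in\ky_s:\ x_n<g_s(x')\big\}.
\]
Then $E\triangle F_s\subseteq\ky_s$, $F_s$ is sandwiched between $\{x_n<-1/4\}$ and $\{x_n<1/4\}$ inside $\ky_s$, $\hn(\partial\ky_s\cap\partial E)=0$ for a.e.\ $s$, and $F_s$ agrees with $E_u$ near $\partial\ky_s$. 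Inside $\ky_s$, $\partial^*F_s$ is the graph of $g_s$, so $\scrE(F_s,s)=P(F_s;\ky_s)-\hn(D_s)\le\tfrac12\int_{D_s}|\grad g_s|^2$, and since $\grad g_s=\eta\grad u+(u-c)\grad\eta$ is supported in $D_s\setminus D_{s-\delta t}$,
\[
\int_{D_s}|\grad g_s|^2\le C\int_{D_s\setminus D_{s-\delta t}}|\grad u|^2+\frac{C}{\delta^2t^2}\int_{D_s\setminus D_{s-\delta t}}|u-c|^2.
\]

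Applying~\cref{prp:benoitmin} with $t$ and $F$ replaced by $s$ and $F_s$, using $\scrE(E,t/2)\le\scrE(E,s)$, and averaging over $s\in(t/2,3t/4)$ gives
\[
\scrE(E,t/2)\le\frac{C}{1-\gamma}\Big(\frac4t\int_{t/2}^{3t/4}\tfrac12\!\int_{D_s}|\grad g_s|^2\,\dd s+\frac{\eps^\theta}{t}\,\scrE(E,t)+Q_{1-\theta}(\eps^{-1})+\Lambda t^{n-1}\Big),
\]
where the $\eps^\theta$-term uses $\int_{t/2}^{3t/4}[\scrE(E,s+\eps^\theta)-\scrE(E,s)]\dd s\le\eps^\theta\scrE(E,\tfrac{3t}4+\eps^\theta)\le\eps^\theta\scrE(E,t)$ (valid as $\eps^\theta<t/4$), $Q_{1-\theta}(\eps^{-1})$ is independent of $s$, and $|E\triangle F_s|\le|\ky_s|\lesssim t^{n-1}$. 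Since each $x'\in D_t$ lies in the collar $\{s-\delta t<|x'|<s\}$ for $s$ in an interval of length $\le\delta t$, Fubini yields
\[
\frac4t\int_{t/2}^{3t/4}\!\int_{D_s}|\grad g_s|^2\,\dd s\lesssim\delta\!\int_{D_t}|\grad u|^2+\frac{1}{\delta\,t^2}\!\int_{D_t}|u-c|^2\lesssim\delta\,\scrE(E,t)+\frac1\delta\,\scrF(E,t,c),
\]
and the choice $\delta:=\sqrt{\scrF(E,t,c)/\scrE(E,t)}\in(0,1]$ gives $\delta\,\scrE(E,t)+\delta^{-1}\scrF(E,t,c)\le2(\scrE(E,t)\scrF(E,t,c))^{1/2}$, which is exactly the leading term of~\cref{weakpoincare:res}; the remaining three terms are already in the required form.

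The one genuinely delicate point --- which I expect to be the main obstacle --- is the treatment of the collar near $\partial\ky_s$ and, more broadly, of the part of $\partial E$ not covered by $\Gamma_u$. Naively the construction above produces a boundary term $\hn(\partial^*F_s\cap\partial\ky_s)\lesssim\scrE(E,t)$, which is too large because it enters~\cref{benoitmin:res} only to the first power; likewise replacing $E$ by $E_u$ contaminates $\int_{D_t}|u-c|^2$ by a term of size $\scrE(E,t)$, which blows up once $\delta$ is optimized. To absorb these one must match $F_s$ to $E$ itself (not to $E_u$) in the outermost shell --- keeping $F_s=E$ on a very thin shell and interpolating inward from the actual BV height function of $E$, whose absolutely continuous and singular parts are controlled by the same cutoff --- and choose $s$ so that $\partial E\cap\partial\ky_s$ meets the exceptional set as little as possible, invoking the height bound~\cref{prp:heightbound} to render the residual mismatch of order $(\scrE(E,t)\scrF(E,t,c))^{1/2}$ or smaller. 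This bookkeeping, together with the slicing estimates for sets of finite perimeter that are only known to be flat rather than a priori Lipschitz graphs, is where the real work lies; the rest is the standard cut-and-paste and averaging machinery.
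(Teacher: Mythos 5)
Your overall skeleton is the right one (and matches the paper's): flatten $E$ to the level $c$ inside $\ky_s$ over a collar of width proportional to $\delta t$, feed the competitor into~\cref{prp:benoitmin}, average over $s\in(\tfrac t2,\tfrac{3t}4)$ so that the $\eps^\theta$-increment and the collar contributions become integrable, and optimize $\delta=\sqrt{\scrF/\scrE}$. But the construction you actually write down --- interpolating between the level $c$ and the Lipschitz graph $u$ from~\cref{thm:lipapprox1}, so that $F_s$ agrees with $E_u$ rather than with $E$ near $\partial\ky_s$ --- does not close, and you say so yourself. The boundary term $\hn(\partial^*F_s\cap\partial\ky_s)$ then contains the vertical wall between the traces of $E$ and $E_u$ on $\partial\ky_s$, whose average over $s$ is of order $|E\triangle E_u|/t\lesssim \scrE(E,t)$; likewise transferring $\scrF(E,t,c)$ from $\partial^*E$ to $\Gamma_u$ costs an additive $O(\hn(M\triangle\Gamma_u))=O(\scrE(E,t))$, which after division by the optimized $\delta$ produces $\scrE^{3/2}/\scrF^{1/2}$. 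Both are fatal when $\scrF\ll\scrE$, which is exactly the regime the Caccioppoli inequality is for. Flagging this as ``where the real work lies'' is honest, but it means the proof is not complete at its only genuinely delicate point.

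The paper avoids the Lipschitz approximation entirely. It approximates $E$ by smooth sets $E_k$ with $|E\triangle E_k|\to0$, $P(E_k)\to P(E)$ (Maggi, Theorem~13.8), invokes Morse--Sard so that $\partial\ky_{st}\cap\partial E_k$ is a hypersurface for a.e.\ $s$, and then uses the cone-type competitor of Maggi's Lemma~24.6 with $a=(1-\lambda)st$, $b=st$: this competitor coincides with $E_k$ \emph{exactly} on $\partial\ky_{st}$, and its excess is controlled by the slice quantities $V_{\scrE}(st)=\frac{\de}{\de a}\scrE(E_k,a)|_{a=st}$ and $V_{\scrF}(st)=\frac{\de}{\de a}(a^2\scrF(E_k,a,c))|_{a=st}$, which after integration in $s$ reproduce precisely $\lambda\,\scrE(E_k,t)+\lambda^{-1}\scrF(E_k,t,c)$ with no additive $O(\scrE)$ error. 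The only residual boundary term is $\hn((E\triangle E_k)\cap\partial\ky_{st})$, which vanishes as $k\to\infty$ for the relevant $s$. If you want to salvage your route, you must replace the interpolation from $\Gamma_u$ by an interpolation from the actual trace of (an approximation of) $E$ on $\partial\ky_s$, and control its cost by slice integrals over $\partial\ky_s\cap\partial E_k$ rather than by $\int_{D_t}|\grad u|^2$ and $\int_{D_t}|u-c|^2$; that is exactly the content of the Maggi lemma your sketch is missing.
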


\begin{proof}
For almost every
$t\in(4\eps^\theta,1)$, we have
\begin{equation}\label{weakpoincare:eq0a}
\calH^{n-1}(\partial \ky_{t}\cap\partial E)=0.
\end{equation}
Let us fix such a $t$.
If $\scrF(E,t,c)\ge \frac{1}{16}\scrE(E,t)$, then using the fact that
$\scrE(E,\cdot)$ is nondecreasing, we have
\[
\scrE(E,t/2) \le \scrE(E,t)\le \sqrt{\scrE(E,t)}\sqrt{\scrE(E,t)}
\le 4\big(\scrE(E,t)\scrF(E,t,c)\big)^{\frac{1}{2}}
\]
thus~\cref{weakpoincare:res} holds. Hence, we now assume $\scrF(E,t,c)<
\frac{1}{16}\scrE(E,t)$, and set $\lambda\coloneqq
\sqrt{\frac{\scrF(E,t,c)}{\scrE(E,t)}}\in(0,\textstyle\frac{1}{4})$. 
As in~\cite[Lemma 24.9]{Mag2012} we want to use for $s\in (0,1)$ the construction of~\cite[Lemma~24.6]{Mag2012} as competitor inside $\ky_{st}$ for~\cref{prp:benoitmin}. 
To this aim using for instance~\cite[Theorem~13.8]{Mag2012}, we  approximate $E$ by smooth sets $E_k$ with $|E\triangle E_k|\to 0$, $P(E_k)\to P(E)$ and 
\begin{equation}\label{weakpoincare:eq0b}
\left\{x_n<-\frac{1}{4}\right\}\cap \ky_3
\subsq E_k\cap \ky_3
\subsq\left\{ x_n<\frac{1}{4}\right\}\cap \ky_3.
\end{equation}
By the Morse--Sard lemma, for almost every $s\in(0,1)$,
\[
\partial \ky_{st}\cap\partial E_k \text{ is a } (n-2)\text{-dimensional hypersurface}.
\]
For every such $s$ we may apply~\cite[Lemma~24.6]{Mag2012} with $a=(1-\lambda)st$ and $b=st$, and  use the inequalities
$\sqrt{1+t^2}\le 1+t^2$ and $1-(1-\lambda)^{n-1}\le (n-1)\lambda$, to construct an open set of finite
perimeter $F_{s}$ such that~\cref{weakpoincare:eq0b} holds for $F_s$,
\begin{equation}\label{weakpoincare:eq3}
F_{s}\cap\partial \ky_{st}=E_{k}\cap\partial \ky_{st},
\end{equation}
and
\begin{equation}\label{weakpoincare:eq5}
\scrE(F_{s},st)\le C\left(\lambda st V_{\scrE}(st)+\frac{1}{\lambda
st}V_{\scrF}(st)\right),
\end{equation}
where we have set
\[
V_{\scrE}(a)\coloneqq \frac{\dd}{\dd a}(\scrE(E_k,a))=\calH^{n-2}(\partial \ky_a\cap\partial
E_k)-\calH^{n-2}(\partial D_a)
\]
and
\[
V_{\scrF}(a)\coloneqq \frac{\dd}{\dd a}(a^2\scrF(E_k,a,c))
=\int_{\partial \ky_a\cap\partial E_k} (x_n-c)^2 \dH^{n-2}.
\]
Applying
\cref{prp:benoitmin} with, $F=(F_s\cap \ky_{st})\cup (E\backslash \ky_{st})$ and noticing that by~\cref{weakpoincare:eq3} and~\cite[Theorem~16.16]{Mag2012}, for a.e. $s$,
$\hn(\partial^* F_s\cap \partial \ky_{st})=\hn( (E\triangle E_k)\cap \partial \ky_{st})$
 we find for such $s$,
 \begin{multline}\label{weakpoincare:eq7}
\scrE(E,st) \le C\Big(\scrE(F_s,st)
+ \big[\scrE(E,st+\eps^\theta)-\scrE(E,st)\big] +\Lambda \abs{\ky_{st}}
+Q_{1-\theta}(\eps^{-1})\\
\hphantom{\le C\Big(\scrE(F_s,st) + \big[\scrE(E,st+\eps^\theta)-\scrE(E,st)\big]+\Lambda\abs{\ky_{st}}}
+\hn( (E\triangle E_k)\cap \partial \ky_{st})\Big)\\
\stackrel{\mathclap{\cref{weakpoincare:eq5}}}{\le}C\lt(\lambda st V_{\scrE}(st)+\frac{1}{\lambda
st}V_{\scrF}(st)
+\big[\scrE(E,st+\eps^\theta)-\scrE(E,st)\big] +\Lambda t^{n-1}
+Q_{1-\theta}(\eps^{-1})\rt.\\
\hphantom{\le C\Big(\scrE(F_s,st) + \big[\scrE(E,st+\eps^\theta)-\scrE(E,st)\big]+\Lambda\abs{\ky_{st}}}
\lt.\vphantom{\frac{1}{\lambda st}V_{\scrF}(st)}
+\hn( (E\triangle E_k)\cap \partial \ky_{st})\rt).
\end{multline}
We now integrate~\cref{weakpoincare:eq7} for $s\in(1/2,3/4)$. First, since $\scrE(E,\cdot)$ is nondecreasing, we have
\begin{equation}\label{weakpoincare:eq9}
\frac{1}{4}\scrE(E,t/2)\le \int_{\frac{1}{2}}^{\frac{3}{4}} \scrE(E,st)\ds.
\end{equation}
Second, we compute
\begin{multline}\label{weakpoincare:eq10}
\int_{\frac{1}{2}}^{\frac{3}{4}} 
[\scrE(E,st+\eps)-\scrE(E,st)] \ds
=\frac{1}{t}\int_{\frac{t}{2}}^{\frac{3t}{4}} [\scrE(E,a+\eps^\theta)-\scrE(E,a)]\dd a\\
=\frac{1}{t}\left(\int_{\frac{t}{2}+\eps^\theta}^{\frac{3t}{4}+\eps^\theta} \scrE(E,a) \dd a
-\int_{\frac{t}{2}}^{\frac{3t}{4}} \scrE(E,a)\dd a\right)
\le \frac{1}{t}\int_{\frac{3t}{4}}^{\frac{3t}{4}+\eps^\theta} \scrE(E,a) \dd a
\le \left(\frac{\eps^\theta}{t}\right) \scrE(E,t),
\end{multline}
where for the last inequality we used again that $\scrE(E,\cdot)$ is
nondecreasing.
Third,
\begin{equation}\label{weakpoincare:eq11}
\int_{\frac{1}{2}}^{\frac{3}{4}} st V_{\scrE}(st)\ds
\le \frac{3}{4} \int_{\frac{t}{2}}^{\frac{3t}{4}} V_{\scrE}(a) \dd a
=\frac{3}{4} \lt(\scrE\left(E_k,\frac{3t}{4}\right)-\scrE\lt(E_k,\frac{t}{2}\rt)\rt)\le\frac{3}{4}\scrE(E_k,t).
\end{equation}
Finally, with a similar argument using that  $a\mapsto a^2\scrF(E_k,a,c)$ is nondecreasing, we have 
\begin{equation}\label{weakpoincare:eq12}
\int_{\frac{1}{2}}^{\frac{3}{4}} \frac{1}{st} V_{\scrF}(st)\ds
\le 2\scrF(E_k,t,c).
\end{equation}
Inserting~\cref{weakpoincare:eq9},~\cref{weakpoincare:eq10},~\cref{weakpoincare:eq11,weakpoincare:eq12}
into~\cref{weakpoincare:eq7} yields
\[
\scrE(E,t/2)
\le C\left[\lambda\scrE(E_k,t)+\frac{1}{\lambda} \scrF(E_k,t,c)
+\left(\frac{\eps^\theta}{t}\right) \scrE(E,t)
+\Lambda t^{n-1} +Q_{1-\theta}(\eps^{-1})+|E\triangle E_k|\right].
\]
By~\cref{weakpoincare:eq0a}  we can send $k\to \infty$ to obtain 
\[
 \scrE(E,t/2)
\le C\left[\lambda\scrE(E,t)+\frac{1}{\lambda} \scrF(E,t,c)
+\left(\frac{\eps^\theta}{t}\right) \scrE(E,t)
+\Lambda t^{n-1} +Q_{1-\theta}(\eps^{-1})\right].
\]
Recalling that
$\lambda=\sqrt{\frac{\scrF(E,t,c)}{\scrE(E,t)}}$ concludes the proof of~\cref{weakpoincare:res} for a.e. $t\in(4\eps^\theta,1)$. By the left-continuity of $\scrE(E,\cdot)$ and
$\scrF(E,\cdot,c)$ this actually holds for every $t\in(4\eps^\theta,1)$. 
\end{proof}

We now post-process~\cref{weakpoincare:res} to obtain the desired stronger Caccioppoli inequality.
The main difference with~\cite[Theorem~24.1]{Mag2012} is that in our case we cannot apply~\cref{weakpoincare:res} at scales which are smaller than $\eps^{\theta}$. 

\begin{prp}[Caccioppoli inequality]\label{prp:strongcaccio}
Assume that $G$ satisfies~\cref{Hposrad,,Hmoment1}, and let $\eps\in(0,1)$, $\gamma\in(0,1)$,
$\Lambda>0$ and $r_0>0$ with $\Lambda r_0\le 1-\gamma$.
There exist  constants $\tau_{\mathrm{cac}}=\tau_{\mathrm{cac}}(n)>0$ and  $M_{\mathrm{cac}} > 1$ such that the following holds.
Let  $E$ be a $(\Lambda,r_0)$-minimizer of $\calF_{\eps,\gamma}$ and assume that  $0\in\partial
E$. If for some  $\nu\in\bbsn$, $r_0>M_{\mathrm{cac}} r>0$ and $\theta\in (0,1]$, 
\[
\e(E,M_{\mathrm{cac}} r,\nu)+\left(\frac{\eps}{r}\right)^{\theta}\le \tau_{\mathrm{cac}},
\]
then
\begin{equation}\label{strongcaccio:res}
\e(E,r/2,\nu) \le
C\left(\f(E,r,\nu)+\left(\frac{\eps}{r}\right)^{\theta}\e(E,r,\nu)+\Lambda r
+Q_{1-\theta}\left(\frac{r}{\eps}\right)\right),
\end{equation}
where $C=C(n,G,\gamma)$.
\end{prp}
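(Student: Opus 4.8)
The plan is to derive \cref{strongcaccio:res} from the weak Poincaré inequality of \cref{prp:weakpoincare} by the bootstrap argument used for the reverse Poincaré inequality for perimeter almost-minimizers, see~\cite[Section~24.3]{Mag2012}, keeping track of the two additional error terms $Q_{1-\theta}(r/\eps)$ and $\Lambda r$ and of the fact that \cref{prp:weakpoincare} is only available at scales $\gtrsim\eps^\theta$. First I would reduce to a normalized situation: by \cref{prp:rescaling} and a rotation we may take $\nu=e_n$ and, dividing all lengths by $r$, $r=1$; writing $\hat\eps\coloneqq\eps/r$ and $\hat\Lambda\coloneqq\Lambda r$, the set $E$ becomes a $(\hat\Lambda,r_0/r)$-minimizer of $\calF_{\hat\eps,\gamma}$ with $\hat\eps^\theta\le\tau_{\mathrm{cac}}$, $r_0/r>M_{\mathrm{cac}}$ and $\hat\Lambda<(1-\gamma)/M_{\mathrm{cac}}$, and we must show $\e_n(E,1/2)\le C(\f_n(E,1)+\hat\eps^\theta\e_n(E,1)+\hat\Lambda+Q_{1-\theta}(\hat\eps^{-1}))$. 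Taking $M_{\mathrm{cac}}$ large and $\tau_{\mathrm{cac}}$ small, the height bound \cref{prp:heightbound} applied at the scale $M_{\mathrm{cac}}$---where the cylindrical excess in the direction $e_n$ is small, by the smallness hypothesis together with \cref{prp:scaleexcess,prp:changedir}---forces $E$ to be trapped between the hyperplanes $\{x_n=\pm\tfrac18\}$ inside a cylinder $\ky_{R_0}$ with $R_0$ as large as we wish. In particular the flatness containment hypothesis of \cref{prp:weakpoincare} holds, and the constant $c$ realizing $\f_n(E,1)=\scrF(E,1,c)$ satisfies $\abs{c}<\tfrac14$. Below, all excesses are with respect to $e_n$, and we freely use $\scrE(E,t)=\tfrac{t^{n-1}}2\e_n(E,t)$ (recall \cref{eq:linkscrEexcess}) and the monotonicity of $t\mapsto\scrE(E,t)$.

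The core of the argument is to pass from \cref{prp:weakpoincare}, which controls $\scrE(E,t/2)$ by data at scale $t$, to a two-scale inequality valid for all $4\hat\eps^\theta<s<t\le1$ of the form
\[
\scrE(E,s)\le\tfrac12\scrE(E,t)+\frac{C}{(t-s)^2}\,\f_n(E,1)+\frac{C\hat\eps^\theta}{t-s}\,\scrE(E,t)+\frac{C}{(t-s)^{n-1}}\,Q_{1-\theta}(\hat\eps^{-1})+C\hat\Lambda .
\]
This is obtained by covering $\ky_s$ with finitely many horizontally translated subcylinders $\ky_{t-s}(z)$, with $z\in\R^{n-1}$, whose half-cylinders $\ky_{(t-s)/2}(z)$ still cover $\ky_s$, applying \cref{prp:weakpoincare} on each of them---which is legitimate since the flatness containment is inherited by these subcylinders (it holds on $\ky_{R_0}$), $(\hat\Lambda,4)$-minimality is translation invariant, and \cref{prp:weakpoincare} does not require its center to lie on $\partial E$---then summing. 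One uses that $\scrE(E,\ky_t(z))=\tfrac12\int_{\ky_t(z)\cap\partial E}\abs{\nu_E-e_n}^2$ (so that the excess is subadditive with respect to the covering), the elementary bound $\scrF(E,\ky_\rho(z),c)\le\rho^{-2}\,\f_n(E,1)$ which absorbs the $t$-dependence, the density estimates \cref{eq:densityestim} to control the number of subcylinders and the overlaps, and the Cauchy--Schwarz and Young inequalities to turn $\sum_i\sqrt{\scrE_i\scrF_i}$ into a small multiple of $\scrE(E,t)$ plus a multiple of $\f_n(E,1)$. It is essential here that the $Q$-error in \cref{prp:weakpoincare} is evaluated at $\eps^{-1}$, independently of the scale, so that the covering does not modify its argument.

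From the two-scale inequality one concludes by a finite hole-filling iteration in the spirit of \cite[proof of Theorem~24.1]{Mag2012}: since the coefficient $\tfrac12+\frac{C\hat\eps^\theta}{t-s}$ of $\scrE(E,t)$ stays $<1$ as long as $t-s\gtrsim\hat\eps^\theta$, and $\hat\eps^\theta\le\tau_{\mathrm{cac}}$, iterating along scales $1=s_0>s_1>\dots>s_N=1/2$ with suitably controlled gaps yields $\scrE(E,1/2)\le C(\f_n(E,1)+\hat\eps^\theta\scrE(E,1)+Q_{1-\theta}(\hat\eps^{-1})+\hat\Lambda)$; undoing the rescaling and using $\scrF(E,1,c)=\f_n(E,1)$ together with \cref{eq:linkscrEexcess} gives exactly \cref{strongcaccio:res}. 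The main obstacle is the bookkeeping in the two-scale step and in the iteration: one must check that the subcylinders meet the hypotheses of \cref{prp:weakpoincare} (flatness containment after translation, admissible radii $>4\hat\eps^\theta$), and---most delicately---arrange the iteration so that the smallness $\hat\eps^\theta$ is genuinely preserved and reappears as the factor $(\eps/r)^\theta$ in front of $\e(E,r,\nu)$, rather than merely a fixed small constant (which would be useless for the excess-improvement iteration of \cref{lem:excess_improv}), while all multiplicative constants remain of the form $C(n,G,\gamma)$; in particular the number of subcylinders, hence the inflation of the $Q_{1-\theta}$ and $\Lambda$ terms, must depend only on $n$ and on the fixed ratios of scales actually used.
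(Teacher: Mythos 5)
Your reduction, the trapping via the height bound, and the two-scale inequality obtained by covering $\ky_s$ with translated subcylinders of radius $t-s$ and applying \cref{prp:weakpoincare} to each (legitimate, since that proposition needs neither the center on $\partial E$ nor any recentering of the minimality) are all sound, and up to that point your argument runs parallel to the paper's. The genuine gap is in the final absorption step, which is precisely the point you flag but do not resolve. After normalizing $r=1$, the conclusion requires the excess at scale $1$ to enter with the factor $\eps^\theta$ and with $C=C(n,G,\gamma)$ independent of $\eps$. An iteration along scales $1=s_0>\dots>s_N=\tfrac12$ with a bounded number of steps (gaps bounded below by a constant) gives at best $\scrE(E,\ky_{1/2})\le c_0^N\,\scrE(E,\ky_1)+C_N\big(\f_n(E,1)+Q_{1-\theta}(\eps^{-1})+\Lambda\big)$ with a \emph{fixed} $c_0^N<1$, which, as you yourself observe, is useless for \cref{lem:excess_improv}. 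To drive that coefficient down to $\eps^\theta$ you must take $N\sim\log(1/\eps^{\theta})$ steps with geometrically shrinking gaps, stopping when the gap reaches the minimal admissible size $\sim\eps^\theta$ (below which \cref{prp:weakpoincare} is unavailable and the coefficient $\tfrac12+C\eps^\theta/(t-s)$ is no longer $<1$; note also that the admissibility constraint is on the gap $t-s$, not on $s$). You must then verify two quantitative facts absent from your sketch: (i) the per-step contraction ratio beats the geometric growth of the error weights $(t-s)^{-2}$ and $(t-s)^{-(n-1)}$, so that the flatness and $Q_{1-\theta}$ contributions sum to a dimensional constant rather than an $\eps$-dependent one; (ii) the contraction accumulated by the time the gaps reach $\eps^\theta$ is indeed $\lesssim\eps^\theta$. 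These can be arranged (one needs the contraction ratio below $\mu^{\max(2,n-1)}$ and below $\mu$, where $\mu$ is the gap ratio), but they are the heart of the proof, and as written ("suitably controlled gaps yields\dots") the claimed inequality is not established.

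For comparison, the paper sidesteps any iteration: it sets $h=\inf_{|c|<1/4}\int_{\cy_1\cap\partial^* E}(x_n-c)^2$ and $\Psi=\sup\{s^2\scrE(E,\ky_s(z)):D_{2s}(z)\subseteq D_1,\ s\in(4\eps^\theta,\tfrac12)\}$, and argues by dichotomy. If the supremum comes from scales $s\le 8\eps^\theta$, then $\scrE(E,\ky_{1/2})\le C\Psi\le C\eps^{2\theta}\scrE(E,\ky_1)$ directly. Otherwise, covering $D_s(z)$ by $N(n)$ discs of radius $s/4$ and applying the weak inequality at each center yields the self-improving bound $\Psi\le C\big(\sqrt{\Psi h}+\eps^\theta\Psi+Q_{1-\theta}(\eps^{-1})+\Lambda\big)$, whence either $\Psi\le Ch$ or $\Psi\le C\big(\eps^\theta\scrE(E,\ky_1)+Q_{1-\theta}(\eps^{-1})+\Lambda\big)$, using $\Psi\le\scrE(E,\ky_1)$. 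In every branch the $\eps^\theta$ factor appears with constants $C(n,G,\gamma)$ and no $\eps$-dependent bookkeeping. So either carry out the $\eps$-dependent geometric iteration sketched above in full, or replace your last step by this supremum/dichotomy argument; a minor additional remark is that the overlap and cardinality counts in your covering are purely geometric and do not require \cref{eq:densityestim}.
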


\begin{proof}
Up to  a rotation and rescaling, we may assume that  $\nu=e_n$ and $r=1$. Therefore, up to choosing~$M_{\mathrm{cac}}$ large enough and
$\tau_{\mathrm{cac}}$ small enough,  $E$ is a $(\Lambda,4)$-minimizer of
$\calF_{\eps,\gamma}$ with $4\Lambda\le 1-\gamma$ and $16\eps^{\theta}<1$. 
Thus,~\cref{strongcaccio:res} amounts to proving
\begin{equation}\label{strongcaccio:res0}
\e_n(E,{\textstyle\frac{1}{2}}) \le
C\left(\f_n(E,1)+\eps^\theta\e_n(E,1)+\Lambda
+Q_{1-\theta}\left(\frac{1}{\eps}\right)\right).
\end{equation}
By~\cref{prp:heightbound}, choosing $M_{\mathrm{cac}}$ even larger if necessary and $\tau_{\mathrm{cac}}=\tau_{\mathrm{cac}}(n)$ small
enough, we have
\[
\left\{ (x',x_n)\in \cy_{4}~:~ x_n<-\frac{1}{8}\right\}
\subsq E\cap \cy_{4}
\subsq\left\{ (x',x_n)\in \cy_{4}~:~ x_n<\frac{1}{8}\right\}.
\]
Thus for every $z\in D_1$, we have
\[
\left\{ (x',x_n)\in \ky_3(z)~:~ x_n<-\frac{1}{8}\right\}
\subsq E\cap \ky_3(z)
\subsq\left\{ (x',x_n)\in \ky_3(z)~:~ x_n<\frac{1}{8}\right\},
\]
so we can apply~\cref{prp:weakpoincare} to $E+(z,0)$ with $2s$ for every
$s\in(2\eps^\theta,\frac{1}{2})$.
For every $s\in (2\eps^\theta,\frac{1}{2})$ such that $D_{2s}(z)\subsq D_1$, we get that
\begin{equation}\label{strongcaccio:eq3}
\scrE(E,\ky_{s}(z))
\le C\left(\big(\scrE(E,\ky_{2s}(z))\scrF(E,\ky_{2s}(z),c)\big)^{\frac{1}{2}}
+\left(\frac{\eps}{s}\right)^\theta\scrE(E,\ky_{2s}(z))\right.
\left.+Q_{1-\theta}\left(\frac{1}{\eps}\right)+\Lambda s^{n-1}\right).
\end{equation}
Setting
\[
h\coloneqq \inf_{\abs{c}<\frac{1}{4}} \int_{\cy_1\cap\partial^* E} (x_n-c)^2\dhn
\]
multiplying~\cref{strongcaccio:eq3} by $s^2$ and taking the infimum over $\abs{c}<\frac{1}{4}$,
using the fact that
\[
s^2\scrF(E,\ky_{2s}(z),c)\le \frac{\scrF(E,\ky_1,c)}{4} \le \frac{h}{4}
\]
for every $s\in (2\eps^\theta,\frac{1}{2})$ with $D_{2s}(z)\subsq D_1$, we find
\begin{equation}\label{strongcaccio:eq4}
s^2\scrE(E,\ky_s(z))
\le C\left(\big(s^2\scrE(E,\ky_{2s}(z))h\big)^{\frac{1}{2}}
+\eps^\theta\scrE(E,\ky_{2s}(z))
+s^2Q_{1-\theta}\left(\frac{1}{\eps}\right)+\Lambda\right).
\end{equation}
Set 
\[
 \Psi\coloneqq \sup \left\{s^2\scrE(E,\ky_{s}(z))~:~ D_{2s}(z)\subsq D_1\text{ and }
s\in \left(4\eps^\theta,\frac{1}{2}\right)\right\}.
\]
If 
\[
 \Psi=\sup\left\{s^2\scrE(E,\ky_{s}(z))~:~ D_{2s}(z)\subsq D_1\text{ and }
s\in \left(4\eps^\theta,8\eps^\theta\rt)\right\}
\]
then~\cref{strongcaccio:res0} holds. Indeed, using the left-continuity of $t\mapsto \scrE(E,\ky_t)$
and the fact that $\scrE(E,\ky_s(z))\le \scrE(E,\ky_1)$ whenever $D_{2s}(z)\subsq D_1$, in that case
we find
\[
\scrE(E,\ky_{\frac{1}{2}})\le \frac{\Psi}{4}\le C \eps^{2\theta} \scrE(E,\ky_1),
\]
which gives~\cref{strongcaccio:res0} recalling that
$\e_n(E,{\textstyle\frac{1}{2}})=2\scrE(E,\ky_{\frac{1}{2}})$ (see~\cref{eq:linkscrEexcess}).
We can thus take the supremum over $s> 4 \eps^{\theta}$ or $s> 8 \eps^{\theta}$ for $\Psi$.
For any $z$ and $s$ such that $D_{2s}(z)\subsq D_1$ and $s\in(\textstyle
8\eps^{\theta},\frac{1}{2})$, we cover $D_s(z)$ by $N=N(n)$ balls $D_{\frac{s}{4}}(z_k)$ with
centers $z_k\in D_s(z)$. Then since $\frac{s}{4}>2\eps^\theta$ and $D_{\frac{s}{2}}(z_k)\subsq D_1$, we can apply~\cref{strongcaccio:eq4} to each
$\left(\frac{s}{4}\right)^2\scrE(E,\ky_{\frac{s}{4}}(z_k))$.
Thus, by the subadditivity of $\scrE$, and by definition of $\Psi$, for such $z$ and
$s\in(8\eps^\theta,\frac{1}{2})$, we deduce
\begin{equation}\label{strongcaccio:eq5}
\begin{aligned}
s^2\scrE(E,\ky_s(z))
&\le \frac{1}{16} \sum_{k=1}^N \left(\frac{s}{4}\right)^2\scrE(E,\ky_{\frac{s}{4}}(z_k))\\
&\le C\sum_{k=1}^N 
\left(\big(s^2\scrE(E,\ky_{\frac{s}{2}}(z_k))h\big)^{\frac{1}{2}}
+\eps^\theta\scrE(E,\ky_{\frac{s}{2}}(z_k))
+s^2 Q_{1-\theta}\left(\frac{1}{\eps}\right)+\Lambda \right)\\
&\le C \left(\sqrt{\Psi h}
+\eps^\theta \Psi
+ Q_{1-\theta}\left(\frac{1}{\eps}\right)+\Lambda\right).
\end{aligned}
\end{equation}
Recall that $\Psi$ is in fact obtained by taking the supremum over the $s$, $z$ such that
$D_{2s}(z)\subsq D_1$ and $s\in(\textstyle 8\eps^{\theta},\frac{1}{2})$ by the above discussion.
Therefore,~\cref{strongcaccio:eq5} yields
\begin{equation}\label{strongcaccio:eq6}
\begin{aligned}
\Psi
&\le C \left(\sqrt{\Psi h}
+\eps^\theta \Psi
+ Q_{1-\theta}\left(\frac{1}{\eps}\right)+\Lambda \right).
\end{aligned}
\end{equation}
If
\[
\eps^{\theta} \Psi +Q_{1-\theta}\left(\frac{1}{\eps}\right)+\Lambda
<\sqrt{\Psi h},
\]
then~\cref{strongcaccio:eq6} implies $\Psi\le C h$.
Otherwise,~\cref{strongcaccio:eq6} implies
\[
\Psi
\le C\left(\eps^\theta \Psi +Q_{1-\theta}\left(\frac{1}{\eps}\right)+\Lambda \right)
\le C\left(\eps^\theta \scrE(E,\ky_1) +Q_{1-\theta}\left(\frac{1}{\eps}\right)+\Lambda \right).
\]
Recalling $\e_n(E,{\textstyle\frac{1}{2}})=2\scrE(E,\ky_{\frac{1}{2}})$, the left-continuity of
$t\mapsto\scrE(E,\ky_t)$ and the definition of $h$, combining the different cases yields
\cref{strongcaccio:res0}. This concludes the proof.
\end{proof}

\section{Uniform regularity}\label{sec:reg}

\subsection[sec]{\for{toc}{Excess decay \texorpdfstring{for $r\les\eps$}{at small scales}}\except{toc}{Excess decay \texorpdfstring{for \forcebold{$r\les\eps$}}{at small scales}}}

If $G$ satisfies assumptions~\cref{Hposrad,Hmoment1,Hint0}, by
\cref{prp:almostregsmallscale,rmk:regsmallscales}, it is standard to obtain power decay of the
excess at small scales. Let us recall the following well-known result.
\begin{prp}\label{classicalexcessdecay}
Let  $\omega>0$, $\alpha\in (0,1)$ and $r_0>0$ be fixed. Then, there exists $\tau=\tau(n,\alpha)>0$ and $C=C(n,\alpha)$ such that the following holds. If
$E$ is such that for every $r\le r_0$ and every $E\triangle F\subset B_r$, 
 \[
  P(E;B_r)\le P(F;B_r)+ \omega r^{n-1+\alpha},
 \]
assuming that $0\in \partial E$ and 
\[
 \e(E,R)+\omega R^{\alpha}\le \tau 
\]
for some $R\le r_0$, then 
\[
 \e(E,r)\le C \lt(R^{-\alpha}\e(E,R)+ \omega \rt)r^\alpha \qquad\forall
r\in(0,R).
\]
\end{prp}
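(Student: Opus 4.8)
The plan is to reduce this to the classical De Giorgi--Almgren--Tamanini excess-decay iteration for almost-minimizers of the perimeter with a H\"older error term, in its $\eps$-free form, quoting \cite{Mag2012} for the perimeter toolkit. The hypothesis on $E$ says exactly that $E$ is an $(\omega,r_0)$-almost-minimizer of the perimeter with deviation $\omega r^{n-1+\alpha}$; in particular $E$ is a weak quasi-minimizer of the perimeter, so the density estimates of the type \cref{eq:densityestim} and the height bound (the perimeter case of \cref{prp:heightbound}, with $\eps$ playing no role) hold. Consequently the Lipschitz approximation theorem applies just as in \cref{thm:lipapprox1}, and --- the nonlocal term being absent --- the approximating function $u$ is directly $L^2$-close to a harmonic function with error of order $\omega s^\alpha$ at the scale $s$ under consideration (the classical specialisation of \cref{thm:lipapprox2} and of the harmonic approximation, the term $Q(s/\eps)$ being gone).

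\emph{Step 1: excess improvement by tilting.} Combining this harmonic approximation with the Caccioppoli inequality (the $\eps$-free case of \cref{prp:strongcaccio}) yields the standard decay lemma: there exist $\lambda_0\in(0,1)$, $C_\ast=C_\ast(n)$ and $\eps_\ast=\eps_\ast(n)$ such that, whenever $x\in\partial E$, a fixed dimensional multiple of $s$ is $\le r_0$, and $\e(E,x,s)+\omega s^\alpha\le\eps_\ast$, then for every $\lambda\in(0,\lambda_0)$ there is $\nu'\in\bbsn$ with $\abs{\nu'-\nu}^2\le C_\ast\,\e(E,x,s)$ ($\nu$ a near-optimal direction at scale $s$) and, after using the comparability between spherical and cylindrical excess and between nearby directions (\cref{prp:changedir}) together with the scaling \cref{prp:scaleexcess} to absorb the fixed dimensional factors in the radii,
\[
\e(E,x,\lambda s)\le C_\ast\lambda^2\,\e(E,x,s)+C_\ast\,\omega s^\alpha .
\]

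\emph{Step 2: choice of $\lambda$ and iteration.} Since $\alpha<2$, fix $\lambda=\lambda(n,\alpha)\in(0,\lambda_0)$ so small that $C_\ast\lambda^2\le\tfrac12\lambda^\alpha$, and set $r_k\coloneqq\lambda^k R$. Applying Step 1 at scale $r_k$ gives the recursion
\[
\e(E,r_{k+1})\le\tfrac12\lambda^\alpha\,\e(E,r_k)+C_\ast\,\omega\,r_k^\alpha ,
\]
valid as long as $\e(E,r_k)+\omega r_k^\alpha\le\eps_\ast$. One then argues by induction on $k$: assuming this smallness for all $j\le k$, the recursion above holds through step $k$, and the affine recursion satisfied by $c_k\coloneqq\e(E,r_k)/r_k^\alpha$ (namely $c_{k+1}\le\tfrac12 c_k+C_\ast\omega\lambda^{-\alpha}$, with contraction factor $\tfrac12$) gives
\[
\e(E,r_k)\le C_1\bigl(R^{-\alpha}\e(E,R)+\omega\bigr)r_k^\alpha\le C_1\bigl(\e(E,R)+\omega R^\alpha\bigr)\le C_1\tau ,
\]
so that $\e(E,r_{k+1})+\omega r_{k+1}^\alpha\le(C_1+1)\tau\le\eps_\ast$ provided $\tau=\tau(n,\alpha)$ is chosen small enough; the base case $k=0$ is the assumption $\e(E,R)+\omega R^\alpha\le\tau$. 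This closes the induction and establishes $\e(E,r_k)\le C_1(R^{-\alpha}\e(E,R)+\omega)r_k^\alpha$ for all $k\ge0$.

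\emph{Step 3: general radii, and the main obstacle.} For arbitrary $r\in(0,R)$, pick $k$ with $r_{k+1}<r\le r_k$; since $r$ and $r_k$ differ only by the fixed factor $\lambda$, \cref{prp:scaleexcess} gives $\e(E,r)\le C(n)\,\e(E,r_k)$, and $r_k^\alpha\le\lambda^{-\alpha}r^\alpha$, so Step 2 upgrades to $\e(E,r)\le C(R^{-\alpha}\e(E,R)+\omega)r^\alpha$, which is the claim. The only substantial ingredient is the excess-improvement estimate of Step 1: it combines the Lipschitz approximation, the harmonic comparison, and the Caccioppoli inequality, and the delicate point is to check that the almost-minimality error enters exactly at the scale-invariant order $\omega s^\alpha$ (and not at a worse power), which is precisely what makes the iterated errors summable at rate $\lambda^\alpha$ in Step 2. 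For the bare perimeter functional all three tools and this bookkeeping are entirely classical --- they are the $\eps$-free specialisations of the results proved above for $\calF_{\eps,\gamma}$ --- so Step 1 may be imported from \cite{Mag2012}; the choice of $\lambda$, the smallness-preserving induction, and the passage to all radii are then routine.
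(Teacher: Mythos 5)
Your proposal is correct and follows essentially the same route as the paper: both reduce the statement to the classical excess-improvement-by-tilting lemma for perimeter almost-minimizers with error of order $\omega r^{\alpha}$ (quoted from the literature rather than reproved — the paper cites~\cite{GNR2021}/Tamanini, you invoke the toolkit of~\cite{Mag2012}), then fix $\lambda$ with $C\lambda^{2}\le\tfrac12\lambda^{\alpha}$, run the smallness-preserving induction along the radii $\lambda^{k}R$, and pass to arbitrary radii via the scaling property of the excess. The only differences are cosmetic (the paper first normalizes $R=1$, absorbing $R^{\alpha}$ into $\omega$).
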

\begin{proof} Although this result is standard and can be reconstructed from e.g.~\cite{Tam1982}, 
we provide a short proof since this precise  statement is not easily accessible in the literature. By scaling we may assume without loss of generality that $R=1$ (replacing $\omega$ by $\omega R^\alpha$).
We thus want to prove that there exist $\tau=\tau(n)>0$ and $C=C(n,\alpha)>0$ such that provided 
\begin{equation}\label{hyptamanini}\e(E,1)+\omega\le \tau 
\end{equation}
 then 
\begin{equation}\label{toproveCampanatoTamanini}
 \e(E,r)\le C\lt(\e(E,1)+ \omega\rt)r^\alpha  \qquad\forall
r\in(0,1).
\end{equation}
We recall that by the tilt Lemma  (see for instance~\cite[Lemma 4.6]{GNR2021} applied to $\Lambda =\omega r^\alpha$ and $\ell=0$ combined with 
the beginning of the proof of~\cite[Proposition 4.1]{GNR2021}), for every $\lambda$ small enough,
there exists $\tau_{\rm tilt}=\tau_{\rm tilt}(n,\lambda)>0$, $C_{\rm tilt}= C_{\rm tilt}(n)>0$ and $C_\lambda>0$ such that provided 
\begin{equation}\label{hyptiltTamanini}
 \e(E,r)+ \omega r^\alpha \le \tau_{\rm tilt},
\end{equation}
we have 
\begin{equation}\label{iterTamanini}
 \e(E,\lambda r)\le C_{\rm tilt} \lambda^2\e(E,r) + C_\lambda \omega r^{\alpha}.
\end{equation}
We now choose $\lambda$ such that $C_{\rm tilt}\lambda^2\le \lambda^\alpha/2$ and then set $C=2C_\lambda/\lambda^\alpha$. Letting for $k\ge 0$, $r_k= \lambda^k$, we claim that for every $k\ge 0$,
\begin{equation}\label{claimTamanini}
 \e(E,r_k)\le C  (\e(E,1)+ \omega)r_k^\alpha.
\end{equation}
By the scaling properties of the excess, see~\cref{prp:scaleexcess}, this would conclude the proof of~\cref{toproveCampanatoTamanini}.\\

We start by noting that the statement holds for $k=0$. Now if it holds for $k-1$, up to choosing
$\tau$ small enough,~\cref{hyptiltTamanini} holds for $r=r_{k-1}$ by~\cref{hyptamanini}. Therefore, by~\cref{iterTamanini} and the induction hypothesis,
\[
 \e(E,r_k)\le C_{\rm tilt} \lambda^2\e(E,r_{k-1}) + C_\lambda \omega r_{k-1}^{\alpha}\le \frac{C}{2} (\e(E,1)+\omega) r_k^\alpha+\frac{C_\lambda}{\lambda^\alpha} \omega r_k^\alpha\le C (\e(E,1)+\omega) r_k^\alpha.
\]
This proves~\cref{claimTamanini}.
\end{proof}
As a consequence, we have the following power decay of the excess for small scales.

\begin{prp}[Excess decay at small scales]\label{prp:smallscales_decexcess}
Assume that $G$ satisfies~\cref{Hposrad,,Hmoment1,,Hint0}, and let $\gamma\in(0,1)$, $\eps>0$, 
$\Lambda>0$ and $r_0>0$ with $\Lambda (r_0+\eps)\le 1-\gamma$.
 There exist positive constants
$\tau^{\mathbf{s}}_{\mathrm{dec}}=\tau^{\mathbf{s}}_{\mathrm{dec}}(n,G,\gamma)$ and $C=C(n,G,\gamma)$
such that the following holds. If $E$ is a $(\Lambda,r_0)$-minimizer of $\calF_{\eps,\gamma}$ with $0\in \partial E$ and such that 
 for some  $R\le r_0$,
\begin{equation}\label{smallinit}
\e(E,R)+ \frac{R}{\eps} \le \tau^{\mathbf{s}}_{\mathrm{dec}},
\end{equation}
then we have (recall that $s_0$ is given by~\cref{Hint0})
\begin{equation}\label{excessdecaysmall}
\e(E,r)\le C \lt(\frac{r}{R}\rt)^{1-s_0} \lt(\e(E,R) +\lt(\frac{R}{\eps}\rt)^{1-s_0}\rt)\qquad\forall
r\in(0,R).
\end{equation}
\end{prp}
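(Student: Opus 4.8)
The plan is to deduce this directly from the classical excess‑decay statement \cref{classicalexcessdecay}, used with the exponent $\alpha:=1-s_0\in(0,1)$. By \cref{prp:almostregsmallscale}, since $G$ satisfies \cref{Hposrad,Hmoment1,Hint0} and $\Lambda(r_0+\eps)\le 1-\gamma$ (so in particular $\Lambda r_0\le 1-\gamma$), the minimizer $E$ satisfies, for every competitor $F$ with $E\triangle F\csubset B_r(x)$ and $r\le r_0$,
\[
P(E;B_r(x))\le P(F;B_r(x))+\frac{C}{\eps^{1-s_0}}\,r^{n-s_0}+\Lambda\abs{E\triangle F}.
\]
I would then want to recognize the right-hand side, up to absorbing the volume term, as the structural hypothesis of \cref{classicalexcessdecay} with $\omega\sim\eps^{-(1-s_0)}$.

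The first task is to absorb $\Lambda\abs{E\triangle F}$ into the $\eps^{-(1-s_0)}r^{n-s_0}$ term. From $\Lambda(r_0+\eps)\le 1-\gamma$ one gets $\Lambda\le(1-\gamma)/\eps\le 1/\eps$, and from \cref{smallinit} (once we impose $\tau^{\mathbf{s}}_{\mathrm{dec}}\le 1$) one gets $R\le\eps$, hence $r\le\eps$ for every $r\in(0,R)$; together with $\abs{E\triangle F}\le \om_n r^n$ this yields
\[
\Lambda\abs{E\triangle F}\le \frac{\om_n r^n}{\eps}=\om_n\Big(\frac r\eps\Big)^{s_0}\frac{r^{n-s_0}}{\eps^{1-s_0}}\le \frac{\om_n}{\eps^{1-s_0}}\,r^{n-s_0}.
\]
Therefore $E$ satisfies $P(E;B_r(x))\le P(F;B_r(x))+\omega\,r^{n-1+\alpha}$ for all $r\le r_0$, with $\omega:=C'\eps^{-(1-s_0)}$ and $C'=C'(n,G,\gamma)$ — i.e. the hypothesis of \cref{classicalexcessdecay} holds (with balls centered at an arbitrary point).

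It remains to verify the smallness requirement $\e(E,R)+\omega R^{\alpha}\le\tau$ of \cref{classicalexcessdecay}, with $\tau=\tau(n,\alpha)$. Since $\omega R^\alpha=C'(R/\eps)^{1-s_0}$ and \cref{smallinit} bounds both $\e(E,R)$ and $R/\eps$ by $\tau^{\mathbf{s}}_{\mathrm{dec}}$, we get $\e(E,R)+\omega R^\alpha\le \tau^{\mathbf{s}}_{\mathrm{dec}}+C'(\tau^{\mathbf{s}}_{\mathrm{dec}})^{1-s_0}$, which is $\le\min(\tau,1)$ once $\tau^{\mathbf{s}}_{\mathrm{dec}}=\tau^{\mathbf{s}}_{\mathrm{dec}}(n,G,\gamma)$ is chosen small enough (this also vindicates $R\le\eps$ used above). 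Applying \cref{classicalexcessdecay} then gives, for all $r\in(0,R)$,
\[
\e(E,r)\le C\Big(\frac{\e(E,R)}{R^{1-s_0}}+\frac{C'}{\eps^{1-s_0}}\Big)r^{1-s_0}=C\Big(\frac rR\Big)^{1-s_0}\e(E,R)+CC'\Big(\frac rR\Big)^{1-s_0}\Big(\frac R\eps\Big)^{1-s_0},
\]
using $(r/\eps)^{1-s_0}=(r/R)^{1-s_0}(R/\eps)^{1-s_0}$; \cref{excessdecaysmall} follows after renaming the constant.

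As for the difficulty: there is no real obstacle here, the statement being a routine consequence of the classical regularity theory through \cref{prp:almostregsmallscale,classicalexcessdecay}. The only points needing a little care are the absorption of the $\Lambda$‑term — which works precisely because all the scales involved lie below $\eps$, so that $r/\eps<1$ — and the threshold bookkeeping, where working with $(R/\eps)^{1-s_0}$ instead of $R/\eps$ both forces the slightly weaker smallness hypothesis and produces the exponent $1-s_0$ on $R/\eps$ in \cref{excessdecaysmall}.
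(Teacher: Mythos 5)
Your proof is correct and follows essentially the same route as the paper: both deduce the statement from \cref{prp:almostregsmallscale} by absorbing the $\Lambda\abs{E\triangle F}$ term (using $\Lambda(r_0+\eps)\le 1-\gamma$ together with $R\le\eps$ from \cref{smallinit}) and then applying \cref{classicalexcessdecay} with $\alpha=1-s_0$, the only cosmetic difference being that the paper first rescales so that $\eps=1$ whereas you keep $\eps$ and take $\omega\sim\eps^{-(1-s_0)}$. One small bookkeeping remark: your absorption uses $r\le\eps$, so it yields the quasi-minimality hypothesis only for $r\le R$ rather than for all $r\le r_0$ as you assert, but this is harmless since one may simply apply \cref{classicalexcessdecay} with $r_0$ replaced by $R$.
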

\begin{proof}
By scaling (recall~\cref{prp:rescaling}) we can assume that $\eps=1$. By~\cref{prp:almostregsmallscale}, we have for $r\le R$,
 \[
  P( E;B_r)\le P( F; B_r) + C \lt(r^{n-s_0} +\Lambda  r^n\rt) \qquad \forall E\triangle F \subset B_r.
 \]
Since on the one hand, $\Lambda\le 1$ and on the other hand, up to choosing $ \tau^{\mathbf{s}}_{\mathrm{dec}}$ small enough, $R\le 1$ by~\cref{smallinit}, this reduces to 
\[
  P(E;B_r)\le P(F; B_r) + C r^{n-s_0} \qquad \forall E\triangle F \subset B_r.
\]
We can then apply~\cref{classicalexcessdecay} with $\alpha=1-s_0$ to conclude the proof.
\end{proof}

\subsection[sec]{\for{toc}{Excess decay \texorpdfstring{for $r\gg\eps$}{at large scales}}\except{toc}{Excess decay \texorpdfstring{for \forcebold{$r\gg\eps$}}{at large scales}}}

Starting with a small excess at a given scale much larger than $\eps$, we show that the excess is
smaller at a smaller scale, up to tilting the direction.

\begin{lem}[Tilt lemma]\label{lem:excess_improv}
Assume that $G$ satisfies~\cref{Hposrad,,Hmoment1,,Hmoment2}, and let $\gamma\in(0,1)$, $\eps>0$,
$\Lambda>0$ and $r_0>0$ with $\Lambda r_0\le 1-\gamma$.
Then, there exists a positive constant $\lambda_{\mathrm{tilt}}$ such that for every
$\lambda\in(0,\lambda_{\mathrm{tilt}})$, there exists $\tau_{\mathrm{tilt}}=\tau_{\rm tilt}(n,G,\gamma,\lambda)>0$ such that the following holds.
If $E$ is a $(\Lambda,r_0)$-minimizer of $\calF_{\eps,\gamma}$  with $0\in \partial E$ which satisfies, for some
 $0<r\le r_0$ and $\theta\in(0,1]$,
\begin{equation}\label{hyp:epstilt}
\e_n(E,r) +\Lambda r+\left(\frac{\eps}{r}\right)^\theta\le \tau_{\mathrm{tilt}},
\end{equation}
then there exists $\nu\in\bbsn$ such that (recall the definition~\cref{def:Qtheta} of $Q_{1-\theta}$)
\begin{equation}\label{excess_improv:res}
\begin{aligned}
\e(E,\lambda r,\nu) &\le C\lt(\lambda^2\e_n(E,r)+\lambda \Lambda r
+ Q_{1-\theta}\left(\frac{\lambda r}{\eps}\right)\rt),
\end{aligned}
\end{equation}
where $C=C(n,G,\gamma)$.
\end{lem}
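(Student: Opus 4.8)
The plan is to run the four-step scheme outlined in the introduction — Lipschitz approximation, harmonic approximation, decay of flatness, Caccioppoli inequality — while tracking the nonlocal error $Q$ and the $\Lambda$-defect. By \cref{prp:rescaling} we may rescale so that $r=1$, renaming the rescaled set $E$ and parameters $\eps,\Lambda,r_0$; the hypothesis becomes $e+\Lambda+\eps^\theta\le\tau_{\mathrm{tilt}}$ with $e\coloneqq\e_n(E,1)$, and it suffices to produce $\nu$ with $\e(E,\lambda,\nu)\le C(\lambda^2 e+\lambda\Lambda+Q_{1-\theta}(\lambda/\eps))$. The constants are fixed in the order $\lambda_{\mathrm{tilt}}=\lambda_{\mathrm{tilt}}(n,G,\gamma)$ (to validate the geometric inclusions below and $2M_{\mathrm{cac}}\lambda<1$), then, for a given $\lambda<\lambda_{\mathrm{tilt}}$, a small $\tau=\tau(\lambda)$, then $\sigma=\sigma(n,G,\gamma,\tau)$ from \cref{prp:harmapprox}, and finally $\tau_{\mathrm{tilt}}=\tau_{\mathrm{tilt}}(n,G,\gamma,\lambda)$. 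It is convenient to set $\mathbf{E}\coloneqq e+\Lambda+Q(\tfrac{1}{16\eps})$, which dominates every error term below and is small with $\tau_{\mathrm{tilt}}$ (for the last summand, $|z|G\in L^1$ by \cref{Hmoment1}, so $Q$ is non-increasing with $Q(t)\to0$).

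First I apply \cref{thm:lipapprox1} with basis radius $\rho\coloneqq1/4$ (so $\cy_{4\rho}=\cy_1$), which, for $\tau_{\mathrm{tilt}}\le\tau_{\mathrm{lip}}$, yields a $\tfrac12$-Lipschitz $u$ with $M=\partial E\cap\cy_{1/2}$, $\|u\|_{L^\infty}\lesssim e^{1/(2(n-1))}$, $\Hn(M\triangle\Gamma_u)\lesssim e$, and $\int_{D_{1/2}}|\grad u|^2\lesssim e$. Rescaling $u$ to the unit disc and dividing by $\sqrt{\mathbf{E}}$ produces $\hat u$ with $\int_{D_2}|\grad\hat u|^2\le1$; by \cref{thm:lipapprox2}, $\hat u$ satisfies the almost-harmonicity defect required by \cref{prp:harmapprox} with right-hand side $\lesssim\sqrt{\mathbf{E}}\,\|\grad\vphi\|_{L^\infty}\le\sigma\|\grad\vphi\|_{L^\infty}$ once $\tau_{\mathrm{tilt}}$ is small (the constraint $4\eps<\eps_{\mathrm{harm}}$ follows from $\eps^\theta\le\tau_{\mathrm{tilt}}$). \cref{prp:harmapprox} then provides a harmonic $v$ on $D_1$ with $\int_{D_1}|\grad v|^2\le1$ and $\int_{D_1}|\hat u-v|^2\le\tau$; undoing the normalisation, $\bar v\coloneqq\sqrt{\mathbf{E}}\,v$ is harmonic with $\int_{D_1}|\grad\bar v|^2\lesssim\mathbf{E}$ and $L^2$-close to $u$ (up to rescaling) with error $\lesssim\tau\mathbf{E}$.

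Interior estimates give $\bar v(y')=\bar v(0)+\grad\bar v(0)\cdot y'+w(y')$ with $|\grad\bar v(0)|^2\lesssim\mathbf{E}$ and $\|w\|_{L^\infty(D_{2\lambda})}\lesssim\lambda^2\sqrt{\mathbf{E}}$; set $\nu\coloneqq(-\grad\bar v(0),1)/\sqrt{1+|\grad\bar v(0)|^2}\in\bbsn$, so $|\nu-e_n|^2\lesssim\mathbf{E}$. Combining the closeness of $\partial E$ to $\Gamma_u$ — using the height bound \cref{prp:heightbound} and the density estimates to see that the exceptional set $M\triangle\Gamma_u$ contributes a term of higher order in $e$, absorbed once $\tau_{\mathrm{tilt}}\le c(\lambda)$ — the $L^2$-closeness of $u$ to $\bar v$, and the Taylor expansion of $\bar v$, one obtains $\f(E,2\lambda,\nu)\le C(\lambda^2\mathbf{E}+\lambda^{-N}\tau\mathbf{E})$ for some $N=N(n)$, hence, for $\tau\le c(\lambda)$ and using $\lambda^2 Q(\tfrac{1}{16\eps})\le Q_{1-\theta}(\tfrac\lambda\eps)$ (monotonicity of $Q$, valid since $\eps^{-1}$ is large and $\theta>0$), $\f(E,2\lambda,\nu)\le C(\lambda^2 e+\lambda\Lambda+Q_{1-\theta}(\tfrac\lambda\eps))$. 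Finally I apply \cref{prp:strongcaccio} at scale $2\lambda$ in direction $\nu$ — its hypotheses hold via \cref{prp:changedir}, \cref{prp:scaleexcess} and $\lambda<\lambda_{\mathrm{tilt}}$ — to get $\e(E,\lambda,\nu)\le C(\f(E,2\lambda,\nu)+(\tfrac{\eps}{2\lambda})^\theta\e(E,2\lambda,\nu)+\lambda\Lambda+Q_{1-\theta}(\tfrac{2\lambda}{\eps}))$; bounding $\e(E,2\lambda,\nu)\lesssim\lambda^{-(n-1)}e+\mathbf{E}$ by \cref{prp:changedir} and \cref{prp:scaleexcess}, the Caccioppoli error is $(\tfrac{\eps}{2\lambda})^\theta\e(E,2\lambda,\nu)\lesssim\lambda^{-\theta-(n-1)}\tau_{\mathrm{tilt}}\,e\le\lambda^2 e$ for $\tau_{\mathrm{tilt}}\le c(\lambda)$. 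Collecting terms (and $Q_{1-\theta}(\tfrac{2\lambda}{\eps})\le Q_{1-\theta}(\tfrac\lambda\eps)$) gives \cref{excess_improv:res} after relabelling $\lambda$.

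The main obstacle is precisely this error bookkeeping under the two defects $\Lambda$ and $Q$. In the classical perimeter case one normalises the Lipschitz approximation by $\sqrt{\e_n(E,r)}$, but here $\e_n(E,r)$ can be negligible compared with $\Lambda r+Q(r/\eps)$, so one must normalise by $\sqrt{\mathbf{E}}$ instead and then check, term by term, that $\lambda^2\mathbf{E}$ (Taylor), $\lambda^{-N}\tau\mathbf{E}$ ($L^2$-closeness), the higher-order exceptional-set contribution, $(\eps/\lambda r)^\theta\e(E,2\lambda r,\nu)$ (Caccioppoli error) and the various $Q$'s, all evaluated at the relevant scales, are dominated by $\lambda^2\e_n(E,r)+\lambda\Lambda r+Q_{1-\theta}(\lambda r/\eps)$; this dictates the quantifier order and in particular forces $\tau_{\mathrm{tilt}}$ to depend on $\lambda$. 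The other noteworthy feature, already flagged in the introduction, is that $Q_{1-\theta}(\lambda r/\eps)$ \emph{increases} as the scale decreases, so the error is largest at small scales; reconciling this with the eventual iteration is deferred to the excess-decay proposition and does not affect the present lemma.
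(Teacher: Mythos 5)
Your proposal is correct and follows essentially the same route as the paper's proof: Lipschitz approximation, normalization by the combined quantity $\e_n+\Lambda+Q$ (your $\mathbf{E}$, the paper's $L$), harmonic approximation and tilting to get flatness decay, then the Caccioppoli inequality with the $(\eps/\lambda r)^\theta$-error absorbed by taking $\tau_{\mathrm{tilt}}$ small depending on $\lambda$, exactly as in the paper. The only differences are cosmetic (rescaling to $r=1$ rather than $r=4$, applying the Caccioppoli inequality at scale $2\lambda$ instead of $\lambda$, and a generic $\tau(\lambda)$ in place of the explicit choice $\lambda^{n+3}$), and your bookkeeping of the $\Lambda$- and $Q$-errors matches the paper's.
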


\begin{proof}
We follow relatively closely the proof of~\cite[Theorem~25.3]{Mag2012}.
Let $\lambda\in(0,\lambda_{\mathrm{tilt}})$, with $\lambda_{\mathrm{tilt}}$ and
$\tau_{\mathrm{tilt}}$ to be chosen later.
Up to rescaling, we may assume that $r=4$, $\e_n(E,4)+4\Lambda+\eps^\theta\le\tau_{\mathrm{tilt}}$,
 and $E$ is a $(\Lambda,4)$-minimizer of $\calF_{\eps,\gamma}$ with $4\Lambda\le 1-\gamma$.
In the rest of the proof, we shall write $\e_n(r)$ for $\e_n(E,r)$ and $\f(r,\nu)$ for
$\f(E,r,\nu)$.

Assuming that $\tau_{\mathrm{tilt}}\le\tau_{\mathrm{lip}}$, we can apply~\cref{thm:lipapprox1} with
$r=1$. Let $C_1=C_1(n,G,\gamma)$ be a large constant, and set
\[
L\coloneqq C_1\left(\e_n(4)+Q\left(\frac{1}{\eps}\right)+\Lambda \right).
\]
We proceed in two steps.\smallskip\\
\textit{Step 1. }
We claim that if
\begin{equation}\label{excess_improv:s1eq1}
L\le\min(\lambda^{(n-1)(n+3)},\sigma^2),
\end{equation}
where $\sigma(n,G,\gamma,\lambda)$ is the constant given by~\cref{prp:harmapprox} with
$\tau=\lambda^{n+3}$, then there exists $\nu\in\bbs^{n-1}$ such that
\begin{equation}\label{excess_improv:claim1}
\f(\lambda,\nu) \le C\lambda^2 L,
\end{equation}
where $C=C(n,G,\gamma)$.
Let us assume that~\cref{excess_improv:s1eq1} holds, and let us set $u_0\coloneqq
\frac{u}{\sqrt{L}}$. By~\cref{thm:lipapprox1}, $u_0$ satisfies
\[
\int_{D_{2}} \abs{\grad u_0}^2 \le 1
\]
and, choosing $C_1$ large enough, for all $\vphi\in C^\infty_c(D_{1})$,
\[
\int_{D_{1}} \grad u_0\cdot\grad\vphi
-\gamma \iint_{D_{2}\times D_{2}} (u_0(x')-u_0(y'))(\vphi(x')-\vphi(y'))G_\eps(x'-y',0) \dx'\dy'\\
\le \sqrt{L}\norm{\grad\vphi}_{L^\infty}.
\]
Assuming $\tau_{\mathrm{tilt}}\le\eps_{\mathrm{harm}}$, then since $\sqrt{L}\le\sigma$ by
assumption,~\cref{prp:harmapprox} gives the existence of a harmonic function $v_0\in H^1(D_1)$ such
that
\[
\int_{D_1} \abs{\grad v_0}^2\le 1\quad\text{ and }\quad
\int_{D_1} \abs{u_0-v_0}^2 \le \lambda^{n+3}.
\]
Setting $v\coloneqq \sqrt{L}v_0$, $v$ is a harmonic function in $D_1$ such that
\begin{equation}\label{excess_improv2:eq1}
\int_{D_1} \abs{\grad v}^2\le L\quad\text{ and }\quad
\int_{D_1} \abs{u-v}^2 \le \lambda^{n+3}L.
\end{equation}
Consider $w(z)\coloneqq v(0)+\grad v(0)\cdot z$ the tangent map of $v$ at the origin.
Then since $v$ is harmonic, up to choosing $\lambda_{\mathrm{tilt}}$ small enough we have
\[
\norm{v-w}_{L^\infty(D_{\lambda})}^2\le C\lambda^4\norm{\grad v}_{L^2(D_1)}^2
\le C\lambda^4 L,
\]
thus with~\cref{excess_improv2:eq1}, this implies
\begin{equation}\label{excess_improv2:eq2}
\frac{1}{\lambda^{n+1}} \int_{D_{\lambda}} \abs{u-w}^2 \le C \lambda^2 L.
\end{equation}
Defining the new direction
\[
\nu \coloneqq \dfrac1{\sqrt{1+\abs{\grad v(0)}^2}}{(-\grad v(0),1)},
\]
using the estimates~\cref{excess_improv2:eq1},\cref{excess_improv2:eq2} and the consequences of~\cref{thm:lipapprox1},
and proceeding exactly as in Step~1 of the proof of~\cite[Theorem~25.3, pp~343]{Mag2012}, we obtain the
claim~\cref{excess_improv:claim1}.\medskip\\
\textit{Step 2.}
For $\lambda$ fixed, we can assume that $\tau_{\mathrm{tilt}}$ is chosen small enough depending on
$n$, $G$, $\gamma$ and $\lambda$ to enforce~\cref{excess_improv:s1eq1}.
Then, a key observation is that with that choice of $\nu$, we have
\[
\abs{\nu-e_n}^2 \le C\int_{D_1}\abs{\grad v}^2\le CL.
\]
Thus, since $\cy(0,r,\nu)\subsq \cy(0,{\sqrt{2}r},e_n)$, by~\cref{prp:changedir,prp:scaleexcess},
if $\lambda_{\mathrm{tilt}}$ is small enough so that $M_{\mathrm{cac}}\sqrt{2}\lambda < 4$, 
\begin{equation}\label{excess_improv2:eq3}
\e(M_{\mathrm{cac}}\lambda,\nu)
\le C\left(\e_n(M_{\mathrm{cac}}\sqrt{2}\lambda)+\abs{\nu-e_n}^2\right)
\le C\left(\frac{1}{\lambda^{n-1}}\e_n(4)+L\right)\le C \frac{L}{\lambda^{n-1}}.
\end{equation}
Whence, by~\cref{excess_improv:s1eq1}, up to choosing $\lambda_{\mathrm{tilt}}$ even smaller if
necessary
\[
\e(M_{\mathrm{cac}}\lambda,\nu)\le C\lambda^{(n-1)(n+2)}\le \tau_{\mathrm{cac}}.
\]
As a consequence, we can apply~\cref{prp:strongcaccio}, which yields (recall that $Q(1/\eps)\le Q_{1-\theta}(\lambda/\eps)$)
\begin{equation}\label{excess_improv2:eq4}
\e(\lambda/2,\nu) \le
C_2\left(\f(\lambda,\nu)+\left(\frac{\eps}{\lambda}\right)^{\theta}\e(\lambda,\nu)+\lambda\Lambda
+Q_{1-\theta}\left(\frac{\lambda}{\eps}\right)\right),
\end{equation}
where $C_2=C_2(n,G,\gamma)$.
Since $\eps^\theta\le\tau_{\mathrm{tilt}}$, up to choosing
$\tau_{\mathrm{tilt}}$ even smaller if necessary depending on $\lambda$, we have
\[
\left(\frac{\eps}{\lambda}\right)^{\theta}\e(\lambda,\nu)
\le C\left(\frac{\eps}{\lambda}\right)^{\theta}\e(M_{\mathrm{cac}}\lambda,\nu)
\stackrel{\cref{excess_improv2:eq3}}{\le} \frac{L\tau_{\mathrm{tilt}}}{\lambda^{n-1+\theta}}
\le \lambda^2 L.
\]
Thus, for $\lambda_{\mathrm{tilt}}$ small enough,~\cref{excess_improv:claim1,excess_improv2:eq4}
give
\[
\e(\lambda/2,\nu)
\le C\lt(\lambda^2 L+\lambda\Lambda+Q_{1-\theta}\left(\frac{\lambda}{\eps}\right)\rt).
\]
Since $Q$ is nonincreasing, this gives~\cref{excess_improv:res} with $r=4$ and $\lambda/2$ in place
of $\lambda$, which concludes the proof.
\end{proof}

As a corollary, iterating properly~\cref{lem:excess_improv}, we get the following power decay of the
excess down to scales which are large compared to $\eps$ (the constant $\eta$ below is typically large).

\begin{prp}\label{prp:largescales_decexcess}
Assume that $G$ satisfies~\cref{Hposrad,,Hmoment1,,Hmoment2}. Let $\gamma\in(0,1)$, $\eps>0$
$\Lambda>0$, $r_0>0$ and $\eta\ge 1$ with $\Lambda r_0\le 1-\gamma$.
Given any   $\theta\in(0,1)$,  there exists a  positive
constant
$\tau_{\mathrm{dec}}^{\mathbf{\ell}}=\tau_{\mathrm{dec}}^{\mathbf{\ell}}(n,G,\gamma, \theta)$ such that the following holds. If $E$ is a
$(\Lambda,r_0)$-minimizer of $\calF_{\eps,\gamma}$ with $0\in \partial E$  satisfying, for some
 $\eta \eps\le R\le r_0$,
\begin{equation}\label{hyp:eps:large}
\e(E,R)+\Lambda R +\eta^{-1}\le \tau_{\mathrm{dec}}^{\mathbf{\ell}}
\end{equation}
then, for all $r\in[\eta \eps,R]$, we have
\begin{equation}\label{largescales_decexcess:eqres1}
\begin{aligned}
\e(E,r)
&\le C\left[ \frac{r}{R}\big(\e(E,R)+\Lambda
R\big)+Q_{1-\theta}\left(\frac{r}{\lambda\eps}\right) \right]
\end{aligned}
\end{equation}
where $\lambda$ and $C$ depend only on $n,G,\gamma$.
\end{prp}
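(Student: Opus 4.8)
The plan is to run a Campanato-type iteration of the tilt lemma \cref{lem:excess_improv} from scale $R$ down to scales comparable to $\eta\eps$, carrying along a sequence of tilted directions. First I would reduce to $R=1$ by the scaling property \cref{prp:rescaling}: after dividing by $R$, $E$ becomes a $(\Lambda R,r_0/R)$-minimizer of $\calF_{\eps/R,\gamma}$ with $\Lambda R\le\tau_{\mathrm{dec}}^{\mathbf{\ell}}$, $\eps/R\le\eta^{-1}\le\tau_{\mathrm{dec}}^{\mathbf{\ell}}$, and the decay is to be proved on $[\eta(\eps/R),1]$. Then I would fix once and for all $\lambda\in(0,\lambda_{\mathrm{tilt}})$, depending only on $n,G,\gamma$, small enough that $C_{\mathrm{tilt}}\lambda^2\le\lambda/2$, where $C_{\mathrm{tilt}}=C_{\mathrm{tilt}}(n,G,\gamma)$ is the constant in \cref{lem:excess_improv}; this fixes $\tau_{\mathrm{tilt}}=\tau_{\mathrm{tilt}}(n,G,\gamma,\lambda)$. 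Finally I would choose $\tau_{\mathrm{dec}}^{\mathbf{\ell}}=\tau_{\mathrm{dec}}^{\mathbf{\ell}}(n,G,\gamma,\theta)$ so small that $(\tau_{\mathrm{dec}}^{\mathbf{\ell}}/\lambda)^{\theta}\le\tau_{\mathrm{tilt}}/6$ and $Q_{1-\theta}(\lambda\eta)\le\tau_{\mathrm{tilt}}/6$ whenever $\eta^{-1}\le\tau_{\mathrm{dec}}^{\mathbf{\ell}}$; the key consequence is that then, at every scale $t\ge\lambda\eta\eps$, both $(\eps/t)^{\theta}$ and $Q_{1-\theta}(t/\eps)$ stay below $\tau_{\mathrm{tilt}}/6$.

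Next I would set up the iteration. Put $\rho_k:=\lambda^{k}/\sqrt2$ and let $N$ be the largest index with $\rho_N\ge\eta\eps$. Taking $\nu_0$ optimal for the spherical excess $\e(E,1)$ and using $\cy(0,\rho_0,\nu_0)\subseteq B_1$ gives $\e(E,\rho_0,\nu_0)\le(\sqrt2)^{n-1}\e(E,1)\le\tau_{\mathrm{tilt}}/3$. I would then show by induction on $k\le N$ that there is $\nu_k\in\bbsn$ with $a_k:=\e(E,\rho_k,\nu_k)\le\tau_{\mathrm{tilt}}/3$: working in coordinates where $\nu_k=e_n$, the bound $a_k+\Lambda\rho_k+(\eps/\rho_k)^{\theta}\le\tau_{\mathrm{tilt}}$ holds, so \cref{lem:excess_improv} applied at scale $\rho_k$ with the given $\theta$ produces $\nu_{k+1}$ with
\[
a_{k+1}\le C_{\mathrm{tilt}}\bigl(\lambda^2 a_k+\lambda\Lambda\rho_k+Q_{1-\theta}(\lambda\rho_k/\eps)\bigr)\le\tfrac{\lambda}{2}a_k+C_{\mathrm{tilt}}\lambda\Lambda\rho_k+C_{\mathrm{tilt}}Q_{1-\theta}(\rho_{k+1}/\eps),
\]
and the last two terms are each $\le\tau_{\mathrm{tilt}}/6$ by the choice of $\tau_{\mathrm{dec}}^{\mathbf{\ell}}$, so $a_{k+1}\le\tau_{\mathrm{tilt}}/3$.

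Then I would sum the recursion. Unrolling it and using $\sum_{j}(\lambda/2)^{k-1-j}\le2$, $\rho_j=\lambda^j\rho_0$, the bound $a_0\le C\,\e(E,1)$, and the monotonicity of $Q_{1-\theta}$ (so $Q_{1-\theta}(\rho_{j+1}/\eps)\le Q_{1-\theta}(\rho_k/\eps)$ for $j\le k-1$), I get, for all $k\le N$,
\[
\e(E,\rho_k)\le a_k\le C\bigl(\rho_k\,\e(E,1)+\rho_k\Lambda+Q_{1-\theta}(\rho_k/\eps)\bigr),\qquad C=C(n,G,\gamma).
\]
For arbitrary $r\in[\eta\eps,1]$ I would pick $k\le N$ with $\rho_{k+1}<r\le\rho_k$ (treating $r>\rho_0$ directly) and use the excess scaling \cref{prp:scaleexcess}, $\e(E,r)\le(\rho_k/r)^{n-1}\e(E,\rho_k)\le\lambda^{1-n}\e(E,\rho_k)$, together with $\rho_k<r/\lambda$ and the monotonicity of $Q_{1-\theta}$, to obtain $\e(E,r)\le C\bigl(r(\e(E,1)+\Lambda)+Q_{1-\theta}(r/(\lambda\eps))\bigr)$; undoing the rescaling gives \cref{largescales_decexcess:eqres1}.

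The main obstacle — and what distinguishes this from the classical Campanato iteration for perimeter almost-minimizers — is that the error $Q_{1-\theta}(\cdot/\eps)$ generated by \cref{lem:excess_improv} (and the associated Caccioppoli inequality \cref{prp:strongcaccio}) \emph{grows} as the scale shrinks towards $\eps$, so it is not automatically summable and could overwhelm the geometric gain $\lambda^2$ per step. The hypothesis $R\ge\eta\eps$ with $\eta$ large, i.e.\ $\eta^{-1}\le\tau_{\mathrm{dec}}^{\mathbf{\ell}}$, is used exactly to keep this error, as well as the factor $(\eps/\rho_k)^{\theta}$ in the improved quasi-minimality, below $\tau_{\mathrm{tilt}}$ at \emph{every} scale $\rho_k\ge\eta\eps$, so that the tilt lemma remains applicable along the whole chain and the excess never leaves the small regime. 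The residual bookkeeping — re-centering coordinates after each tilt, and passing between spherical and cylindrical excess via $\cy(0,\rho,\nu)\subseteq B_{\sqrt2\rho}$ and \cref{prp:changedir} — is routine but must be carried out so that all constants depend only on $n,G,\gamma$.
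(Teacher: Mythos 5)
Your proposal is essentially the paper's proof: after reducing to $R=1$, both arguments fix $\lambda$ with $C_{\mathrm{tilt}}\lambda^2\le\lambda/2$, iterate the tilt lemma \cref{lem:excess_improv} along the dyadic scales $\lambda^k$ down to $\eta\eps$ (the smallness of $\eta^{-1}$, $\Lambda R$ and $\e(E,R)$ keeping $(\eps/\rho_k)^\theta$, $Q_{1-\theta}(\rho_k/\eps)$ and the excess below $\tau_{\mathrm{tilt}}$ at every step), and then pass to general $r$ by the scaling property \cref{prp:scaleexcess}; the only cosmetic difference is that the paper carries the quantitative decay bound as the induction hypothesis while you carry only smallness and unroll the linear recursion afterwards, which is equivalent. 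One small slip in your last step: from $\rho_{k+1}<r\le\rho_k$ you have $\rho_k<r/\lambda$, so monotonicity gives $Q_{1-\theta}(\rho_k/\eps)\le Q_{1-\theta}(r/\eps)$ but \emph{not} $\le Q_{1-\theta}(r/(\lambda\eps))$ (the inequality you invoke goes the wrong way); this is harmless, since the bound with $Q_{1-\theta}(r/\eps)$ already yields the statement (and its use in \cref{thm:decexcess} via \cref{Hdec}), and the exact stated form is recovered by instead comparing $r$ with the coarser dyadic scale $\rho_k\ge r/\lambda$, at the cost of an extra factor $\lambda^{-(n-1)}$ in $C$.
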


\begin{proof}
With~\cref{lem:excess_improv} at hand, the proof of~\cref{largescales_decexcess:eqres1} is very similar to the proof of~\cref{prp:smallscales_decexcess}.
By scaling (recall~\cref{prp:rescaling}) we may assume that $R=1$. Arguing as in~\cite[Proposition
4.1]{GNR2021} we may use the scaling properties of the excess (see~\cref{prp:scaleexcess})
to post-process~\cref{lem:excess_improv} and   replace the cylindrical excess by the spherical excess both in the hypothesis~\cref{hyp:epstilt} and the conclusion~\cref{excess_improv:res}.\\
Let $\lambda=\lambda(n,G,\gamma)\le 1$ to be chosen later and set for $k\ge 0$, $r_k=\lambda^k$. By the scaling properties of the excess (see~\cref{prp:scaleexcess}) and the monotonicity of $Q_{1-\theta}$, in order to prove~\cref{largescales_decexcess:eqres1}, it is enough to prove that there exists $C>0$ such that if $r_k\ge \eta \eps$ then 
\begin{equation}\label{claimlargescales}
 \e(E,r_k)\le C \lt( r_k (\e(E,1)+\Lambda)+ Q_{1-\theta}\lt(\frac{r_k}{\eps}\rt)\rt).
\end{equation}
Let $C_{\rm tilt}>0$ be the constant given in~\cref{excess_improv:res}. We then choose $\lambda$ such that $C_{\rm tilt}\lambda^2\le \lambda/2$ and set $C=2C_{\rm tilt}$. Since~\cref{claimlargescales} holds for $k=0$ 
it is enough to show that provided it holds for $k-1$ then it also holds for $k$. \\
By~\cref{hyp:eps:large}, the induction hypothesis and the fact that $Q$ vanishes at infinity, up to choosing $ \tau_{\mathrm{dec}}^{\mathbf{\ell}}$ small enough,~\cref{hyp:epstilt} is satisfied for $r=r_{k-1}$ 
(notice that $(\eps/r_{k-1})^{\theta}\le \eta^{-\theta}$). Therefore, by~\cref{excess_improv:res} and the monotonicity of $Q_{1-\theta}$,
\begin{multline*}
 \e(E,r_k)\le C_{\rm tilt}\lt( \lambda^2 \e(E,r_{k-1}) +\Lambda r_k + Q_{1-\theta}\lt(\frac{r_k}{\eps}\rt)\rt)\\
 \stackrel{\cref{claimlargescales}}{\le} \frac{C}{2}\lt( r_k (\e(E,1)+\Lambda)+ Q_{1-\theta}\lt(\frac{r_k}{\eps}\rt)\rt) +C_{\rm tilt} \lt(\Lambda r_k + Q_{1-\theta}\lt(\frac{r_k}{\eps}\rt)\rt)\\
 \le C\lt(  r_k (\e(E,1)+\Lambda)+ Q_{1-\theta}\lt(\frac{r_k}{\eps}\rt)\rt).
\end{multline*}
This concludes the proof.
\end{proof}

\subsection[sec]{\for{toc}{\texorpdfstring{$C^{1,\alpha}$-regularity}{Regularity}}\except{toc}{\texorpdfstring{{\forcebold{$C^{1,\alpha}$}}-regularity}{Regularity}}}

Combining~\cref{prp:smallscales_decexcess,prp:largescales_decexcess} we obtain that the excess is controlled by a power of the radius down to arbitrarily small scales.

\begin{thm}\label{thm:decexcess}
Assume that $G$ satisfies~\crefrange{Hposrad}{Hdec}, and let $\gamma\in(0,1)$,
$\Lambda>0$ and $r_0>0$ with $\Lambda r_0\le 1-\gamma$.
Then, for every $\alpha\in(0, \alpha_* )$ with 
\[\alpha_*=\frac{1}{2}\frac{p_0}{(n-s_0)(n+p_0)+ p_0}(1-s_0),\] there exist
$\beta=\beta(n,G,\alpha)$, $\tau_{\mathrm{reg}}=\tau_{\mathrm{reg}}(n,G,\gamma,\alpha)$ and
$\eps_{\rm reg}=\eps_{\rm reg}(n,G,\gamma,\alpha,\Lambda)$ such that the following holds.
If $E$ is a $(\Lambda,r_0)$-minimizer of $\calF_{\eps,\gamma}$ with $\eps\in(0,\eps_{\rm reg})$ and $0\in \partial E$ satisfying, for
some  $\eps^{1-\beta}\le R\le r_0$,
\[
\e(E,R)+\Lambda R \le \tau_{\mathrm{dec}}
\]
then
\begin{equation}\label{decexcess:res}
\begin{aligned}
\e(E,r) &\le C \lt( \frac{r}{R} \lt( \e(E,R) +\Lambda R\rt) + r^{2\alpha}\rt)
\qquad\textrm{for } 0<r\le R,
\end{aligned}
\end{equation}
where $C=C(n,G,\gamma,\alpha)$.	
\end{thm}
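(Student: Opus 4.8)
We sketch how \cref{decexcess:res} is obtained by gluing three estimates: the large-scale excess decay of \cref{prp:largescales_decexcess}, the crude scaling bound of \cref{prp:scaleexcess} on the intermediate band $\eps^\kappa\les r\les\eps^{1-\beta}$, and the small-scale decay of \cref{prp:smallscales_decexcess}. By \cref{prp:rescaling} one may rescale so that $R=1$, $\mathcal E_0:=\e(E,1)+\Lambda\le\tau_{\mathrm{reg}}$, $\Lambda r_0\le1-\gamma$ with $r_0\ge1$, and $\eps$ is as small as desired (the hypothesis $R\ge\eps^{1-\beta}$ becomes $\eps\le1$, while $\eps<\eps_{\mathrm{reg}}$ after rescaling is $\eps\le\eps_{\mathrm{reg}}^\beta$, so smallness is free). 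Then I fix $\kappa:=\tfrac{1-s_0}{1-s_0-2\alpha}>1$, a small $\theta\in(0,1)$, and $\beta\in(0,1)$ in the interval
\[
\frac{2\alpha(n-s_0)}{(1-s_0-2\alpha)\big((1-\theta)(n-1+p_0)-(n-1)\big)}\ \le\ \beta\ \le\ \frac{(1-s_0)-2\alpha(n+1-s_0)}{n(1-s_0-2\alpha)},
\]
which is nonempty precisely because $\alpha<\alpha_*$ and $\theta$ is small, and finally $\eps_{\mathrm{reg}}$ small (also in terms of $\Lambda$) to meet the smallness requirements below.

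First, on $[\eps^{1-\beta},1]$ I apply \cref{prp:largescales_decexcess} with $\eta=\eps^{-\beta}$ and this $\theta$ (its hypotheses hold since $\eta^{-1}=\eps^\beta$ and $\mathcal E_0+\eps^\beta$ is small); this yields $\e(E,r)\le C\big(r\,\mathcal E_0+Q_{1-\theta}(r/(\lambda\eps))\big)$, and by \cref{Hdec} one has $Q_{1-\theta}(t)=Q(t^{1-\theta})\le Ct^{-(1-\theta)(n-1+p_0)}$, so the error term is $\le C(\eps/r)^{(1-\theta)(n-1+p_0)}$; since this is decreasing and $r^{2\alpha}$ increasing in $r$, and the two match at $r=\eps^{1-\beta}$ thanks to the lower bound on $\beta$, the error is $\le Cr^{2\alpha}$ and \cref{decexcess:res} holds on $[\eps^{1-\beta},1]$. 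On the intermediate band $[\eps^\kappa,\eps^{1-\beta}]$ I use $\e(E,r)\le(\eps^{1-\beta}/r)^{n-1}\,\e(E,\eps^{1-\beta})$ together with the bound $\e(E,\eps^{1-\beta})\le C\big(\eps^{1-\beta}\mathcal E_0+\eps^{\beta(1-\theta)(n-1+p_0)}\big)$ from the previous step, and check separately for the $\mathcal E_0$-term and the $Q$-term (worst case $r=\eps^\kappa$) that the product is still $\le C(r\,\mathcal E_0+r^{2\alpha})$: for the $\mathcal E_0$-term this is exactly the \emph{upper} bound on $\beta$ (equivalently $(1-\beta)n\ge\kappa(n-1+2\alpha)$), and for the $Q$-term it is the \emph{lower} bound on $\beta$. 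In particular $\e(E,\eps^\kappa)\le C\eps^{2\alpha\kappa}$, so $\e(E,\eps^\kappa)+\eps^{\kappa-1}\le\tau^{\mathbf s}_{\mathrm{dec}}$ for $\eps$ small, which lets the small-scale regime be re-entered at scale $\eps^\kappa$.

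Finally, on $(0,\eps^\kappa]$ I apply \cref{prp:smallscales_decexcess} with $\eps^\kappa$ in place of $R$: $\e(E,r)\le C(r/\eps^\kappa)^{1-s_0}\big(\e(E,\eps^\kappa)+(\eps^\kappa/\eps)^{1-s_0}\big)$. Here the error term equals $(r/\eps)^{1-s_0}$, which is $\le r^{2\alpha}$ for $r\le\eps^\kappa$ \emph{by the very choice of $\kappa$}, while the remaining term is controlled by feeding in $\e(E,\eps^\kappa)\le C\eps^{2\alpha\kappa}$ from the intermediate step. Collecting the three ranges and undoing the rescaling gives \cref{decexcess:res}; the threshold $\alpha_*$ is exactly what one gets by eliminating $\beta$ between the two constraints $(1-\beta)n\ge\kappa(n-1+2\alpha)$ and $\beta\big((1-\theta)(n-1+p_0)-(n-1)\big)\ge\tfrac{2\alpha(n-s_0)}{1-s_0-2\alpha}$ and letting $\theta\to0$, which produces $2\alpha\big[(n-s_0)(n+p_0)+p_0\big]\le p_0(1-s_0)$.

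The main obstacle is this exponent book-keeping in the intermediate band. Because the tilt error $Q_{1-\theta}(r/\eps)$ \emph{grows} as $r$ decreases, the Campanato iteration of \cref{prp:largescales_decexcess} cannot be continued below $\eps^{1-\beta}$, and one is forced to cross the band $[\eps^\kappa,\eps^{1-\beta}]$ with the lossy scaling bound, which costs a factor $(\eps^{1-\beta}/r)^{n-1}$. This loss is compensated only because the excess has \emph{already decayed} at scale $\eps^{1-\beta}$ — on one side like $\eps^{1-\beta}$ (so $\beta$ cannot be too large) and on the other side like $Q_{1-\theta}(\eps^{-\beta})\simeq\eps^{\beta(1-\theta)(n-1+p_0)}$ via \cref{Hdec} (so $\beta$ cannot be too small) — and both decays must still suffice to make $\e(E,\eps^\kappa)$ small enough to restart the classical small-scale regularity; these competing requirements are simultaneously satisfiable precisely when $\alpha<\alpha_*$.
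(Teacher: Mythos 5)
Your three-regime strategy (Campanato iteration at large scales via \cref{prp:largescales_decexcess}, the lossy scaling bound of \cref{prp:scaleexcess} across an intermediate band, then \cref{prp:smallscales_decexcess} below a scale slightly smaller than $\eps$) is exactly the paper's, and your exponent arithmetic correctly reproduces the threshold $\alpha_*$: your $\kappa$ is the paper's $1+\beta''$ chosen so that the small-scale error $(r/\eps)^{1-s_0}$ saturates $r^{2\alpha}$ at the re-entry scale, and your two constraints on $\beta$ are the paper's matching conditions after reparametrization. There is, however, one genuine slip, and it sits precisely in the rescaling step you dismiss as harmless. After normalizing $R=1$ you prove $\e(E,\hat r)\le C(\hat r(\e(E,R)+\Lambda R)+\hat r^{2\alpha})$ with $\hat r=r/R$; undoing the rescaling this reads
\[
\e(E,r)\le C\lt(\frac{r}{R}\big(\e(E,R)+\Lambda R\big)+\Big(\frac{r}{R}\Big)^{2\alpha}\rt),
\]
which is weaker than~\cref{decexcess:res} by the factor $R^{-2\alpha}$. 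Since the excess is scale invariant but $r^{2\alpha}$ is not, the normalization matters, and this factor is unbounded exactly in the regime the theorem is designed for, namely $R$ as small as $\eps^{1-\beta}$ (it also propagates to~\cref{mainthm:holdreg}, where the additive constant $1$ would degrade to $R^{-2\alpha}$). A symptom of the loss is that your admissible interval for the proof parameter $\beta$ is nonempty for every $\alpha<\alpha_*$ \emph{independently} of the exponent $\beta$ in the hypothesis $R\ge\eps^{1-\beta}$, i.e.\ independently of how small $R$ may be relative to $\eps$; this decoupling is too good to be true.

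The fix is to target $R^{2\alpha}\hat r^{2\alpha}$ rather than $\hat r^{2\alpha}$ in the rescaled variables, or equivalently to run the whole computation in the original units, as the paper does with $R=\eps^{1-\beta}$, $r_+=\eps^{1-\beta'}$ and $r_-=\eps^{1+\beta''}$. Writing $R=\eps^{1-\beta_0}$ so that $\hat\eps=\eps^{\beta_0}$, your large-scale cutoff and re-entry scale become the paper's $\beta'=\beta_0\beta$ and $\beta''=\beta_0(\kappa-1)$, and every one of your matching conditions acquires a factor $\beta_0$; eliminating the parameters then yields $2\alpha\le \beta_0\,p(1-s_0)/\big((n-s_0)(n+p)+\beta_0 p\big)$ with $p=(1-\theta)(n-1+p_0)-(n-1)$, so reaching a given $\alpha<\alpha_*$ forces the hypothesis exponent $\beta_0$ (as well as $p\to p_0$, $\theta\to 0$) to be chosen accordingly, with $\beta_0\to1$ as $\alpha\to\alpha_*$. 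This coupling between the admissible $\alpha$ and the admissible smallness of $R$ relative to $\eps$ is part of the content of the statement, and it disappears in your normalization.
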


Before giving the proof, let us explain how we deduce~\cref{mainthm:minball} and
\cref{mainthm:holdreg}.

\begin{proof}[Proof of~\cref{mainthm:holdreg}]
The proof is a standard consequence of the excess decay proven in~\cref{thm:decexcess} and Campanato's criterion for Hölder-continuous functions. We refer to~\cite{Mag2012} for more details.
\end{proof}

\begin{proof}[Proof of~\cref{mainthm:minball}]
By~\cite[Theorem~B]{Peg2021} (see also~\cite[proof~of~Theorem~2.7]{MP2021} about the hypothesis
$G\in L^1$), if $\eps$ is small enough we have existence of  minimizers $E_\eps$ for~\cref{cminpb}.
Moreover, still by~\cite[Theorem~B]{Peg2021}, they converge up to translation to $B_1$ as $\eps\to 0$. 
The convergence is meant here both in $L^1$ for the sets  and in the Hausdorff distance for the
boundaries. In addition, we have convergence of the perimeters. This yields continuity of the excess
(see e.g.~\cite{Mag2012} for this fact).
Since $B_1$ is smooth, this implies that~\cref{mainthm:holdreg} may be applied at every point of the boundary of $E_\eps$ at a scale $R$ which is uniform in  $\eps$ (and  the point).
By a standard covering argument, see e.g.~\cite{CL2012} this upgrades the  Hausdorff convergence of the boundaries to~$C^{1,\alpha}$. In particular, for $\eps$ small enough, $\partial E_\eps$ are small $C^{1,\alpha}$ graphs over $\partial B_1$.    
 Then,~\cref{mainthm:minball} is an immediate consequence of~\cite[Theorem~3]{MP2021}.
\end{proof}

Eventually, we prove the power decay of the excess.

\begin{proof}[Proof of~\cref{thm:decexcess}]
Starting from a scale $R$, the idea of the proof is to use~\cref{largescales_decexcess:eqres1} to obtain the decay of the excess up to a scale $r_+$.
Then, in order to use~\cref{excessdecaysmall} we use the scaling properties of the excess (see~\cref{prp:rescaling}) to jump to a scale $r_-$.
Setting $L=\e(E,R)+\Lambda R$, since we want that in particular $\e(E,r_+)\le C ((r_+/R) L+ r_+^{2\alpha})$, in light of~\cref{largescales_decexcess:eqres1} we need to take $r_+=\eps^{1-\beta'}$ for some $\beta'\in(0,1)$.
Similarly, since we want $\e(E,r_-)\le C r_-^{2\alpha}$,~\cref{excessdecaysmall} imposes $r_-=\eps^{1+\beta''}$ for some $\beta''>0$. Therefore, when relying on~\cref{prp:rescaling} to pass from $r_+$ to $r_-$
we will lose a factor in the estimates. To compensate that we need to assume that the starting scale $R$ is much larger than $r_+$ i.e. $R= \eps^{1-\beta}$ for some $1>\beta>\beta'$.\\

We start by choosing $\beta$ arbitrarily close to $1$, $p\in (0,p_0)$ arbitrarily close to $p_0$,  and $\theta\in (0,1)$ such that 
\[
 (1-\theta)(n-1+p_0)=(n-1+p).
\]
For  $\beta'$ to be chosen later, if $r_+=\eps^{1-\beta'}$ we have that for $\eps$ small enough depending on $\beta'$ that~\cref{hyp:eps:large} is satisfied (with $\eta=\eps^{-\beta'}$) so that~\cref{largescales_decexcess:eqres1} together with~\cref{Hdec} yield
\[
 \e(E,r)\le  C \lt(\lt(\frac{r}{R}\rt) L + \lt(\frac{\eps}{r}\rt)^{n-1+p}\rt) \qquad \textrm{for } R\ge r\ge r_+.
\]
 This gives~\cref{decexcess:res} provided 
 \[
  \lt(\frac{\eps}{r}\rt)^{n-1+p}\le r^{2\alpha} \qquad \textrm{for } R\ge r\ge r_+
 \]
which is equivalent to 
\[
  \lt(\frac{\eps}{r_+}\rt)^{n-1+p}\le r_+^{2\alpha}.
\]
Since $r_+=\eps^{1-\beta'}$, this gives the condition
\begin{equation}\label{firstcondalpha}
 2\alpha\le  \frac{\beta'}{1-\beta'}(n-1+p).
\end{equation}
Thanks to~\cref{prp:rescaling} and $L\le 1$, we then have 
\begin{equation}\label{eqdecroismeso}
 \e(E,r)\le \lt(\frac{r_+}{r}\rt)^{n-1}\e(E,r_+)\le C \lt(\frac{r_+}{r}\rt)^{n-1}\lt(\frac{r_+}{R} + \lt(\frac{\eps}{r_+}\rt)^{n-1+p}\rt) \qquad \textrm{for } r_+\ge r\ge r_-.
\end{equation}
In order to have $\e(E,r)\le C r^{2\alpha}$ in this range it is enough to have this estimate for $r=r_-$ which  means 
\[
 \eps^{-(\beta'+\beta'')(n-1)}(\eps^{\beta-\beta'}+\eps^{\beta'(n-1+p)})\le \eps^{2\alpha(1+\beta'')}.
\]
In particular, the optimal choice is 
\begin{equation}\label{condbeta'}
 \beta'= \frac{\beta}{n+p},
\end{equation}
for  which  the condition becomes
\begin{equation}\label{secondcondalpha}
 2\alpha\le \frac{\beta' p-(n-1)\beta''}{1+\beta''}.
\end{equation}
Let us point out that~\cref{secondcondalpha} is stronger than~\cref{firstcondalpha}. Notice that under~\cref{secondcondalpha}, we have in particular that~\cref{smallinit} is satisfied at $R=r_-$ provided $\eps$ is small enough. We may thus use~\cref{excessdecaysmall} to obtain
\[
 \e(E,r)\le C\lt(\frac{r}{r_-}\rt)^{1-s_0} \lt(\e(E,r_-) +\lt(\frac{r_-}{\eps}\rt)^{1-s_0}\rt)\qquad \textrm{for } r_-\ge r>0.
\]
By~\cref{eqdecroismeso} (with the choice~\cref{condbeta'}) this reduces to 
\[
 \e(E,r)\le C\lt(\frac{r}{r_-}\rt)^{1-s_0} \lt(\eps^{\beta'p-(n-1)\beta''} +\eps^{(1-s_0)\beta''}\rt)\qquad \textrm{for } r_-\ge r>0. 
\]
In particular $\e(E,r)\le C r^{2\alpha}$ for $r\le r_-$ provided it holds for $r=r_-$ i.e. 
\[
 \eps^{\beta'p-(n-1)\beta''} +\eps^{\beta''(1-s_0)}\le \eps^{2\alpha(1+\beta'')}
\]
which gives the condition 
\[
 2\alpha \le \min\lt( \frac{\beta'p -(n-1)\beta''}{1+\beta''},\frac{\beta''}{1+\beta''} (1-s_0)\rt).
\]
This constraint implies~\cref{secondcondalpha} (which itself implies~\cref{firstcondalpha}). Since the first term is decreasing in $\beta''$ and the second one is increasing, we see that the optimal choice of $\beta''$ is obtained when both terms are equal, that is 
\[
 \beta''=\frac{\beta p}{(n-s_0) (n+p)}
\]
which gives 
\[
 2\alpha\le \frac{\beta p}{(n-s_0)(n+p)+\beta p}(1-s_0).
\]
Since $p$ can be  taken arbitrarily close to $p_0$ and $\beta$ arbitrarily close to $1$, this concludes the proof.
\end{proof}

\noindent
{\bf Acknowledgments.}
M. Goldman was partially supported by the ANR grant SHAPO.
B. Merlet and M. Pegon are partially supported by the Labex CEMPI (ANR-11-LABX-0007-01).

\printbibliography 

\makeatletter
\providecommand\@dotsep{5}
\makeatother
\listoftodos\relax

\end{document}